\tikzstyle{v}=[circle, draw, fill=white,
\tikzset{>=stealth}
\newcommand{\dr}[1]{\rule[-3ex]{0pt}{0pt}\rule[4ex]{0pt}{0pt} \raisebox{\dimexpr-.5\height+.5\ht\strutbox\relax}{\tikz\draw#1;}}
\newcommand{\vc}[1]{\rule[-3ex]{0pt}{0pt}\rule[4ex]{0pt}{0pt} \raisebox{\dimexpr-.5\height+.5\ht\strutbox\relax}{#1}}
\newtheorem{thm}{Theorem}[section]
\newtheorem{proposition}[thm]{Proposition}
\newtheorem{prop}[thm]{Proposition}
\newtheorem{cor}[thm]{Corollary}
\newtheorem{lemma}[thm]{Lemma}
\newtheorem{conj}[thm]{Conjecture}
\newtheorem{alg}[thm]{Algorithm}
\theoremstyle{definition}
\newtheorem*{definition}{Definition}
\newtheorem*{defn}{Definition}
\newtheorem{ex}[thm]{Example}
\newtheorem{example}[thm]{Example}
\newtheorem{question}[thm]{Question}
\theoremstyle{remark}
\newtheorem{rmk}[thm]{Remark}
\newcommand{\FK}{\mathcal E}
\newcommand{\RR}{\mathbb R}
\newcommand{\QQ}{\mathbb Q}
\newcommand{\ZZ}{\mathbb Z}
\newcommand{\Hilb}{\mathcal H}
\renewcommand{\H}{\ensuremath{\mathscr{H}}}
\newcommand{\eW}{\ensuremath{\widehat \S_n}}
\newcommand{\eH}{\ensuremath{\widehat{\H}}} 
\newcommand{\pH}{\widehat{\H}^+} 
\newcommand{\aW}{\ensuremath{\widetilde{\S}_n}}
\newcommand{\aWnil}{\ensuremath{\widetilde{\Nil}}}
\newcommand{\fWnil}{\ensuremath{\Nil}}
\newcommand{\eWnil}{\ensuremath{\widehat{\Nil}}}
\newcommand{\e}{\mathsf}
\newcommand{\idelm}{\mathrm{id}}
\renewcommand{\S}{\mathfrak S}
\newcommand{\rev}{\operatorname{rev}}
\newcommand{\supp}{\Pi}
\newcommand{\Nil}{\mathcal N}
\newcommand{\act}{*}
\newcommand{\factL}{!_{\scriptscriptstyle\searrow}}
\newcommand{\factR}{!_{\scriptscriptstyle\swarrow}}
\newcommand{\leftexp}[2]{{\vphantom{#2}}^{#1}{#2}}
\newcommand{\affineA}[1]{{\ensuremath{\tilde{A}_{#1}}}}
\newcommand{\cyc}{{\affineA{n-1}}}
\newcommand{\C}{\ensuremath{C^{\prime}}}
\renewcommand{\subset}{\subseteq}
\newlength{\cellsize}
\newcommand\tableau[1]{
\vcenter{
\let\\=\cr
\baselineskip=-16000pt
\lineskiplimit=16000pt
\lineskip=0pt
\halign{&\tableaucell{##}\cr#1\crcr}}}
\newcommand{\tableaucell}[1]{{%
\def \arg{#1}\def \void{}%
\ifx \void \arg
\vbox to \cellsize{\vfil \hrule width \cellsize height 0pt}%
\else
\unitlength=\cellsize
\begin{picture}(1,1)
\put(0,0){\makebox(1,1){$#1$}}
\put(0,0){\line(1,0){1}}
\put(0,1){\line(1,0){1}}
\put(0,0){\line(0,1){1}}
\put(1,0){\line(0,1){1}}
\end{picture}%
\fi}}
\begin{document}
\title{Subalgebras of the Fomin-Kirillov algebra}
\keywords{Fomin-Kirillov algebra, Coxeter group, nil-Coxeter algebra, Nichols algebra}
\author{Jonah Blasiak}
\address{Department of Mathematics, Drexel University, Philadelphia, PA 19104}
\email{jblasiak@gmail.com}
\thanks{J. Blasiak was partially supported by NSF Grant DMS 1161280.}

\author{Ricky Ini Liu}
\address{Department of Mathematics, University of Michigan, Ann Arbor, MI 48109}
\email{riliu@umich.edu}
\thanks{R. I. Liu was partially supported by NSF Postdoctoral Research Fellowship DMS 1004375.}

\author{Karola M\'esz\'aros}
\address{Department of Mathematics, Cornell University, Ithaca, NY 14853}
\email{karola@math.cornell.edu}
\thanks{K. M\'esz\'aros was partially supported by NSF Postdoctoral Research Fellowship DMS 1103933.}

\begin{abstract}
The Fomin-Kirillov algebra $\FK_n$ is a noncommutative quadratic algebra with a generator for every edge of the complete graph on $n$ vertices. For any graph $G$ on $n$ vertices, we define ${\FK_G}$ to be the subalgebra of $\FK_n$ generated by the edges of $G$. We show that these algebras have many parallels with Coxeter groups and their nil-Coxeter algebras: for instance, $\FK_G$ is a free $\FK_H$-module for any $H\subseteq G$, and if $\FK_G$ is finite-dimensional, then its Hilbert series has symmetric coefficients. We determine explicit monomial bases and Hilbert series for $\FK_G$ when $G$ is a simply-laced finite Dynkin diagram or a cycle, in particular showing that $\FK_G$ is finite-dimensional in these cases.
We also present conjectures for the Hilbert series of $\FK_{\tilde{D}_n}$,  $\FK_{\tilde{E}_6}$, and $\FK_{\tilde{E}_7}$,
as well as for which graphs $G$ on six vertices $\FK_G$ is finite-dimensional.
\end{abstract}

\maketitle

\section{Introduction}

The Fomin-Kirillov algebra $\FK_n$  \cite{FK} is a certain noncommutative algebra with generators $x_{ij}$ for $1 \leq i < j \leq n$ that satisfy a simple set of quadratic relations. 
While it was originally introduced as a tool to study the structure constants for Schubert polynomials, since then the Fomin-Kirillov algebra and its generalizations have received much attention from the perspectives of both combinatorics and algebra: see, for instance, \cite{Bazlov, FP, Kirillov, KirillovMaeno,  Lenart, LenartMaeno, Majid, MPP,  MS, P, Vendramin}. But despite its simple presentation, even some basic questions about $\FK_n$ have eluded an answer thus far, such as whether or not it is finite-dimensional for $n \geq 6$.

In order to better understand the structure of $\FK_n$, we consider the following subalgebras: 
\[
\parbox{14cm}{for any graph $G$ on vertices $1, 2, \dots, n$, the \emph{Fomin-Kirillov algebra ${\FK_G}$ of $G$} is the subalgebra of $\FK_n$ generated by $x_{ij}$ for all edges $\overline{ij}$ in $G$.}
\]
While this definition might seem hopelessly ingenuous, our initial computations using the algebra package \texttt{bergman} \cite{bergman} revealed that, remarkably, whenever $G$ is a graph with at most five vertices, $\FK_G$ has a one-dimensional top degree component, and in fact its Hilbert series has symmetric coefficients. (See the Appendix for these computations.) Our current study is therefore dedicated to an investigation of these beautiful, yet mysterious algebras.

\medskip

The first half of this paper is devoted to proving structural properties of Fomin-Kirillov algebras. We prove that any finite-dimensional $\FK_G$ has a Hilbert series with symmetric coefficients, as well as that $\FK_G$ is a free $\FK_H$-module whenever $H$ is a subgraph of $G$. We also demonstrate that the Fomin-Kirillov algebras exhibit a striking amount of structure, much of which parallels Coxeter groups and nil-Coxeter algebras: for instance, we describe analogues of minimal coset representatives, descent sets, Bruhat order, and long words.

The key tools to proving these facts can be derived from a braided Hopf algebra structure on $\FK_n$ \cite{FP, MS}---in this context, the subalgebras $\FK_G$ are \emph{(left) coideal subalgebras}. Coideal subalgebras are important objects in the study of Hopf algebras, and they sometimes possess freeness and symmetry properties analogous to the ones that we exhibit for $\FK_G$ \cite{Masuoka}. There also exists a certain symmetric bilinear form on $\FK_n$ whose nondegeneracy (known for $n \leq 5$ \cite{Grana}) implies that $\FK_n$ is a special type of braided Hopf algebra called a Nichols algebra \cite{AS, MS}. Though Nichols algebras have been studied since \cite{Nichols}, the many parallels to Coxeter groups demonstrated here have not been observed in the literature on Nichols algebras to our knowledge. In our exposition below, we will not assume familiarity with braided Hopf algebras or Nichols algebras. We refer the reader to \cite{AS} for more details about these objects.

\medskip

A particularly exciting part of this study is the abundance of graphs $G$ for which $\FK_G$ is finite-dimensional---a much larger class than finite-dimensional Coxeter groups---and the combinatorial mysteries awaiting discovery here. In the second half of this paper, we present results obtained so far about these finite-dimensional algebras. Specifically, we determine explicit monomial bases and Hilbert series for $\FK_G$ when $G$ is a simply-laced Dynkin diagram or a cycle. Here, another surprising connection between the Fomin-Kirillov algebras and Coxeter groups appears: for $G$ a simply-laced Dynkin diagram, the dimension of $\FK_G$ is equal to the dimension of the Weyl group of $G$ divided by the index of connection. In the case when $G$ is a cycle on $n$ vertices, we describe $\FK_G$ explicitly as a quotient of the nil-Coxeter algebra of the affine symmetric group, showing that it is essentially a $q=0$ version of the twisted product of the group algebra of the symmetric group and the ring of coinvariants. We also present intriguing conjectures for the Hilbert series of $\FK_{\tilde{D}_n}$,  $\FK_{\tilde{E}_6}$, and $\FK_{\tilde{E}_7}$, as well as for which graphs $G$ on six vertices $\FK_G$ is finite-dimensional.

\medskip

This paper is organized as follows. In Section 2, we introduce $\FK_n$ and briefly describe some examples of $\FK_G$. In Section 3, we discuss some structural properties of $\FK_n$ and prove that whenever $\FK_G$ is finite-dimensional, its Hilbert series has symmetric coefficients. In Section 4, we prove that when $H$ is a subgraph of $G$, $\FK_G$ is a free $\FK_H$-module. We also show that $\FK_n$ has a tensor product decomposition with factors given by certain complementary $\FK_G$. In Section 5, we discuss Coxeter groups and nil-Coxeter algebras, as well as their relationships and similarities to the subalgebras $\FK_G$. In Section 6, we describe $\FK_G$ for $G$ a simply-laced Dynkin diagram, computing its Hilbert series in each case. In Section 7, we describe $\FK_G$ when $G$ is a cycle (that is, an affine Dynkin diagram of type $\tilde{A}_{n-1}$). Finally, we close in Section 8 with some open questions and conjectures to guide further research. We also include an Appendix containing the Hilbert series of $\FK_G$ for all connected graphs $G$ on at most five vertices.

\section{Preliminaries}

We begin with the definition of the Fomin-Kirillov algebra \cite{FK}.

\begin{defn}
	The \emph{Fomin-Kirillov algebra} $\FK_n$ is the quadratic algebra (say, over $\QQ$) with generators
	$x_{ij}=-x_{ji}$ for $1 \leq i < j \leq n$ with the following relations:
	\begin{itemize}
		\item $x_{ij}^2 = 0$ for distinct $i,j$;
		\item $x_{ij}x_{kl} = x_{kl}x_{ij}$ for distinct $i,j,k,l$;
		\item $x_{ij}x_{jk}+x_{jk}x_{ki}+x_{ki}x_{ij}=0$ for distinct $i,j,k$.
	\end{itemize}
\end{defn}

Let $V$ be the vector space spanned by the generators $x_{ij}$. Then $\FK_n$ is a quotient of the tensor algebra $T(V) = \bigoplus_{n \geq 0} V^{\otimes n}$, the free associative algebra on the generators of $\FK_n$.

Since the relations are homogeneous, $\FK_n$ is graded with respect to the usual degree. We will denote the degree $d$ part by $\FK_n^d$. 

Note that $\FK_n$ has another grading with respect to the symmetric group $\S_n$: define the $\S_n$-degree of $x_{ij}$ to be $\sigma_{ij} \in \S_n$, the transposition switching $i$ and $j$, and extend this $\S_n$-degree to all monomials in $T(V)$ by multiplicativity. Since each of the relations in $\FK_n$ is homogeneous with respect to $\S_n$-degree, this gives an $\S_n$-grading on $\FK_n$. We will write $\sigma_P$ for the $\S_n$-degree of a homogeneous element $P \in \FK_n$. (We will always specify when we mean $\S_n$-degree; the use of ``degree'' unqualified will refer to the usual notion of degree.)

There is a third grading related to the support of each monomial. For a monomial $m \in \FK_n$, define $\supp(m)$ to be the coarsest set partition of $[n]$ for which $i$ and $j$ lie in the same part if $x_{ij}$ or $x_{ji}$ appears in $m$. For instance, $\supp(x_{12}x_{23}x_{45}x_{31}) = 123|45$. Then all of the relations of $\FK_n$ are homogeneous with respect to $\supp$. Note that $\supp(m_1m_2)$ is the common coarsening of $\supp(m_1)$ and $\supp(m_2)$.

The set of relations of $\FK_n$ is also symmetric with respect to the indices. In other words, for all $\sigma \in \S_n$, there exists an automorphism of $\FK_n$ given by $\sigma(x_{ij}) = x_{\sigma(i)\sigma(j)}$.

Also note that $\FK_n$ is isomorphic to its opposite algebra: the linear map $\rev\colon T(V) \to T(V)$ sending any monomial to the product of the same generators but in the reverse order preserves the set of relations and thus gives an antiautomorphism of $\FK_n$.

\begin{rmk}
The natural category for $\FK_n$ is the \emph{Yetter-Drinfeld category} over $\QQ[S_n]$, which is the braided monoidal category consisting of $S_n$-graded $S_n$-modules $M = \bigoplus_{\sigma \in S_n} M_\sigma$ satisfying $\sigma(M_\pi) \subset M_{\sigma\pi\sigma^{-1}}$.
\end{rmk}

\subsection{Hilbert series} \label{sec-hilbert}

Let $\Hilb_n(t)$ be the Hilbert series of $\FK_n$. Values of $\Hilb_n(t)$ for small values of $n$ are given as follows. (To simplify expressions, we will often write $[k] = 1+t+t^2+\cdots+t^{k-1}$.) 
\begin{align*}
\Hilb_1(t) =&\; 1\\
\Hilb_2(t) =&\; [2]\\
\Hilb_3(t) =&\; [2]^2[3]\\
\Hilb_4(t) =&\; [2]^2[3]^2[4]^2\\
\Hilb_5(t) =&\; [4]^4[5]^2[6]^4\\
\Hilb_6(t) =&\; 1+15t+125t^2+765t^3+3831t^4+16605t^5+64432t^6\\
&+228855t^7+755777t^8+2347365t^9+6916867t^{10}+\cdots
\end{align*}
A closed form for $\Hilb_n$ is not known for $n \geq 6$. It is not even known whether $\FK_n$ is finite-dimensional for $n \geq 6$.

\subsection{Subalgebras}

In order to better understand $\FK_n$, we will study its subalgebras. Such algebras are mentioned by Kirillov in the introduction of \cite{Kirillov}, and analogues for other root systems are also studied in unpublished work of Bazlov and Kirillov  \cite{Kirillov2}.
\begin{defn}
	For any graph $G$ with vertex set $[n]$, the \emph{Fomin-Kirillov algebra} ${\FK_G}$ of $G$ is the subalgebra of $\FK_n$ generated by $x_{ij}$ for all edges $\overline{ij}$ in $G$.
\end{defn}
We will write $\FK_G^d$ for the degree $d$ part of $\FK_G$ and $\FK_G^+ = \bigoplus_{d \geq 1} \FK_G^d$ for the positive degree part of $\FK_G$.

\medskip
Note that by this definition, $\FK_n=\FK_{K_n}$. If graphs $G$ and $G'$ are isomorphic, then so are the algebras $\FK_G$ and $\FK_{G'}$ since we can apply the automorphism of $\FK_n$ that permutes the indices appropriately. Moreover, since $\FK_n$ is a subalgebra of $\FK_{n+1}$ (as can be seen from considering $\supp$-degree), $\FK_G$ does not depend on the choice of $n$---that is, it does not change if we add or remove isolated vertices. Finally, if $G$ and $H$ are graphs on disjoint vertex sets, then all variables in $\FK_G$ commute with all variables in $\FK_H$, so $\FK_{G+H} \cong \FK_G \otimes \FK_H$.

\medskip
Given a subalgebra $\FK_G$, we will write $\Hilb_G(t)$ for its Hilbert series. Values of $\Hilb_G$ for all connected graphs $G$ with at most five vertices are given in the Appendix. Computations were performed using the algebra package \texttt{bergman} \cite{bergman}. Although one might not necessarily expect $\Hilb_G(t)$ to be particularly nice in general, a quick glance at the Appendix shows that in fact $\Hilb_G(t)$ is a product of cyclotomic polynomials for all $G$ with at most five vertices.

\subsection{Examples} \label{sec-examples}

Note that it is not easy to give a presentation of $\FK_G$: while $\FK_n$ is defined by quadratic relations, $\FK_G$ will usually have minimal relations that are not quadratic, possibly of much higher degree. (We may sometimes omit the word ``minimal'' when referring to minimal relations.) To illustrate this, we start with a few examples.

\begin{ex}[The Dynkin diagram $A_3$]\label{ex-a3}
	Let $G=A_3$ be the path with two edges, as shown in Figure~\ref{fig-a3}. The only quadratic relations in $\FK_{A_3}$ 
	are those that come from the definition of $\FK_3$, namely $\e{a}^2=\e{b}^2=0$.

	However, $\FK_{A_3}$ has other relations: in $\FK_3$, there is a quadratic relation involving the third edge $\e{c}$:
	\begin{equation} \label{eq1}
		0=\e{ab}+\e{bc}+\e{ca} \tag{$*$}
	\end{equation}
	Multiplying \eqref{eq1} on the right by $\e{a}$ gives
	\[0=\e{aba}+\e{bca}+\e{caa}=\e{aba}+\e{bca},\]
	while multiplying \eqref{eq1} on the left by $b$ gives
	\[0=\e{bab}+\e{bbc}+\e{bca}=\e{bab}+\e{bca}.\]
	Equating these, we deduce that $\e{aba}=\e{bab}$ in $\FK_{A_3}$, which we call a \emph{braid relation}. We will see in Theorem~\ref{thm-a} that these generate all relations, so that
	\[\FK_{A_3} = \langle \e{a},\e{b} \mid \e{a}^2=\e{b}^2=0, \;\; \e{aba}=\e{bab}\rangle.\]
	Then $\FK_{A_3}$ is the \emph{nil-Coxeter algebra} of type $A_2$. It has basis $\{\idelm, \e{a}, \e{b}, \e{ab}, \e{ba}, \e{aba}\}$ and Hilbert series
	\[\Hilb_{A_3}(t) = 1+2t+2t^2+t^3 = (1+t)(1+t+t^2) = [2][3].\]
\end{ex}

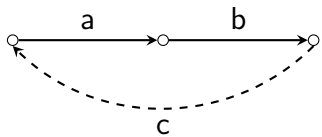
\begin{figure}
	\begin{center}
		\begin{tikzpicture}
			\node[v] (i) at (0,0) {};
			\node[v] (j) at (2,0) {};
			\node[v] (k) at (4,0) {};
			\draw[thick, ->] (i) -- node[above]{$\e{a}$} (j);
			\draw[thick, ->] (j) -- node[above]{$\e{b}$} (k);
			{\draw[dashed, thick, ->] (k.south) to [out = -135, in=-45] node[below]{$\e{c}$} (i.south);}
		\end{tikzpicture}
	\end{center}
	\caption{\label{fig-a3}
		The Dynkin diagram $A_3$ consists of the two solid edges. The label on an edge directed from vertex $i$ to vertex $j$ represents
		the generator $x_{ij}$.}
\end{figure}

\begin{ex}[The star on four vertices] \label{ex-star3}
	Consider the star graph $K_{1,3}$ on four vertices, as shown on the left of Figure~\ref{fig-star}.
	A presentation of $\FK_{K_{1,3}}$ is given by three quadratic relations:
	\[\e{a}^2=\e{b}^2=\e{c}^2=0;\]
	three braid relations:
	\[\e{aba}+\e{bab}=\e{bcb}+\e{cbc}=\e{cac}+\e{aca}=0;\]
	 and two \emph{claw relations}:
	\[\e{abca}+\e{bcab}+\e{cabc}=\e{acba}+\e{bacb}+\e{cbac}=0.\]
\end{ex}

The braid and claw relations are special cases of the following \emph{cyclic relations}.
\begin{lemma}  \label{lemma-cyclic}\cite[Lemma 7.2]{FK}
	 For $m=3, \ldots, n,$ and any distinct $a_1, \ldots, a_m \in [n]$ the following relation holds in $\FK_n$:
	\[\sum_{i=2}^m x_{a_1, a_i}x_{a_1, a_{i+1}}\cdots x_{a_1, a_m}x_{a_1, a_2}x_{a_1,a_3}\cdots x_{a_1, a_i}=0.\]
\end{lemma}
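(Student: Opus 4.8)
The plan is to induct on $m$. Throughout, abbreviate $y_i := x_{a_1 a_i}$ for the ``spokes'' through the common vertex $a_1$, and $z_{pq} := x_{a_p a_q}$ (with $z_{pq} = -z_{qp}$) for the ``cross'' generators, and record the three consequences of the defining relations that drive the argument: the square relation $y_i^2 = 0$; the commutation $z_{pq} y_r = y_r z_{pq}$ whenever $r \neq p,q$ (all four indices distinct); and the two forms of the three-term relation applied to the triple $\{a_1, a_p, a_q\}$,
\begin{equation*}
y_q y_p = y_p z_{pq} - z_{pq} y_q, \qquad y_p y_q = z_{pq} y_p - y_q z_{pq}.
\end{equation*}
These let me trade a product of two spokes for a product involving a single cross generator, which is the only mechanism by which the $z_{pq}$ (absent from the statement) can enter and then leave the computation.

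For the base case $m = 3$, the claim is the braid relation $y_2 y_3 y_2 + y_3 y_2 y_3 = 0$. Substituting the second identity and using $y_2^2 = 0$ gives $y_2 y_3 y_2 = (z_{23}y_2 - y_3 z_{23})y_2 = -y_3 z_{23} y_2$, while the first identity gives $y_3 y_2 y_3 = y_2 z_{23} y_3$. Re-substituting once more and using $y_3^2 = 0$ shows $y_2 z_{23} y_3 = y_3 y_2 y_3 = y_3 z_{23} y_2$, so the two summands are negatives of one another and the relation follows.

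For the inductive step, I would assume the relation for all index sets of size $m-1$ and write the $i$th summand as $T_i = y_i\,(y_{i+1}\cdots y_m\, y_2\cdots y_{i-1})\,y_i$. The natural move is to use the three-term relation to rotate the distinguished generator $y_m = x_{a_1 a_m}$ through each $T_i$, at which point the commutation relation lets each newly produced cross generator $z_{pq}$ slide past the spokes it does not involve; after regrouping, the purely-spoke monomials should reassemble into the size-$(m-1)$ cyclic sum, which vanishes by the inductive hypothesis.

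The hard part is precisely the fate of the leftover terms that still carry a cross generator: every application of the three-term relation spawns a $z_{pq}$, and one must order the rewritings so that these spawned terms pair off against one another (as already happens in miniature in the base case, where $y_2 z_{23} y_3 = y_3 z_{23} y_2$ forces the two braid terms to coincide up to sign). One clean way to package this would be to produce auxiliary quantities $F_i$, built from the cross generators, satisfying $T_i = F_i - F_{i+1}$ cyclically, so that $\sum_{i=2}^m T_i$ telescopes to $0$; finding the correct $F_i$ — equivalently, pinning down a normal form for these cyclic words modulo the defining relations — is the main obstacle, and is the content of \cite[Lemma 7.2]{FK}.
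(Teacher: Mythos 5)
Your base case is correct: with $y_i = x_{a_1 a_i}$ and $z_{pq}=x_{a_p a_q}$, the two groupings of $y_3y_2y_3$ give $y_2z_{23}y_3 = y_3z_{23}y_2$, while $y_2y_3y_2 = -y_3z_{23}y_2$, so $y_2y_3y_2+y_3y_2y_3=0$. But the inductive step is a genuine gap: it is a description of what a proof would need to accomplish, not an argument. Two specific problems. First, every defining relation of $\FK_n$ is homogeneous, so rewriting the degree-$m$ terms $T_i$ can never literally produce the degree-$(m-1)$ cyclic sum; the inductive hypothesis can only enter multiplied by extra generators (as in Example~\ref{ex-a3}, where the degree-$3$ braid relation is obtained from the degree-$2$ triangle relation by multiplying on the left and on the right), and you never specify which products of which smaller relations are supposed to appear, nor verify that they do. Second, and decisively, your closing sentence defers ``finding the correct $F_i$'' --- that is, the entire inductive step --- to ``the content of \cite[Lemma 7.2]{FK},'' which is precisely the statement being proved. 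That is circular; as written, your argument establishes only the case $m=3$. (The telescoping ansatz $T_i = F_i - F_{i+1}$ is also in tension with the rest of your plan: if such $F_i$ existed, the sum would vanish outright and no induction would be needed.)

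For context: the paper offers no proof of this lemma at all --- it is quoted from \cite[Lemma 7.2]{FK} and used only to recognize the braid and claw relations as special cases of cyclic relations --- so simply citing it would have been legitimate; what is not legitimate is a purported proof whose key step is outsourced to that same citation. If you want to complete the induction, a workable template is the paper's proof of its own cyclic relations in Lemma~\ref{l elem sym vanish}: there the relation for the $n$-cycle is deduced from the relation $R_1(C)$ for a smaller cycle through an auxiliary edge $\e{a}$ via the explicit identity $R_1(\cyc)=\e{0}\cdot R_1(C)+R_1(C)\cdot \e{1}$, where the quadratic relation $\e{0a}+\e{a1}=\e{10}$ fuses the two products into exactly the desired monomials. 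An identity of that concrete shape --- smaller cyclic sums (possibly involving the cross generators $z_{pq}$) multiplied by generators and summed to yield $\sum_i T_i$ --- is what your sketch is missing, and without it the lemma remains unproved for $m\geq 4$.
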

However, even star graphs have minimal relations that are not of this type.

\begin{ex}[The star on five vertices] \label{ex-star4}
	For the star graph $K_{1,4}$ on five vertices, as shown in the middle of Figure~\ref{fig-star}, $\FK_{K_{1,4}}$  has a presentation consisting of
	four quadratic relations, six braid relations, eight claw relations, six quartic cyclic relations, and three sextic relations of the following form:
	\[\e{abacdc}-\e{abcdca}+\e{acdcba}+\e{bacdcb}-\e{bcdcab}-\e{cabadc}+\e{cdabac}-\e{cdcaba}+\e{dabacd}-\e{dcabad}=0.\]
\end{ex}

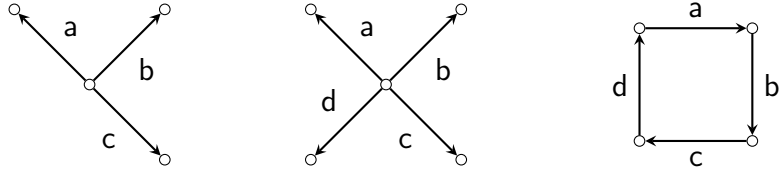
\begin{figure}
	\begin{center}
		\vc{
		\begin{tikzpicture}
			\node[v] (1) at (0,0){};
			\node[v] (2) at (-1,1){};
			\node[v] (3) at (1,1){};
			\node[v] (5) at (1, -1){};
			\draw[thick, ->] (1) -- node[above right]{$\e{a}$} (2);	
			\draw[thick, ->] (1) -- node[below right]{$\e{b}$} (3);
			\draw[thick, ->] (1) -- node[below left]{$\e{c}$} (5);		
		\end{tikzpicture}
		}
		\quad\quad\quad
		\vc{
		\begin{tikzpicture}
			\node[v] (1) at (0,0){};
			\node[v] (2) at (-1,1){};
			\node[v] (3) at (1,1){};
			\node[v] (4) at (-1,-1){};
			\node[v] (5) at (1, -1){};
			\draw[thick, ->] (1) -- node[above right]{$\e{a}$} (2);
			\draw[thick, ->] (1) -- node[below right]{$\e{b}$} (3);
			\draw[thick, ->] (1) -- node[below left]{$\e{c}$} (5);	
			\draw[thick, ->] (1) --node[above left]{$\e{d}$} (4);
		\end{tikzpicture}
		}
		\quad\quad\quad
		\vc{
		\begin{tikzpicture}
			\node[v] (1) at (0,0){};
			\node[v] (2) at (1.5,0){};
			\node[v] (3) at (1.5,-1.5){};
			\node[v] (4) at (0,-1.5){};
			\draw[thick, ->] (1) -- node[above]{$\e{a}$} (2);
			\draw[thick, ->] (2) -- node[right]{$\e{b}$} (3);
			\draw[thick, ->] (3) -- node[below]{$\e{c}$} (4);	
			\draw[thick, ->] (4) --node[left]{$\e{d}$} (1);
		\end{tikzpicture}
		}
	\end{center}
	\caption{\label{fig-star}
		On the left, the star $K_{1,3}$ on four vertices. In the middle, the star $K_{1,4}$ on five vertices. On the right, the $4$-cycle $\tilde{A}_3$.}
\end{figure}

\begin{ex}[The 4-cycle $\tilde{A}_3$] \label{ex-a3tilde}
	For the 4-cycle $\tilde{A}_3$, as shown on the right of Figure~\ref{fig-star}, $\FK_{\tilde{A}_3}$ has a presentation consisting of four quadratic relations, four braid relations, and the following three relations:
\[\e{abc}+\e{bcd}+\e{cda}+\e{dab} = 0,\]
\[\e{cba}+\e{dcb}+\e{adc}+\e{bad} = 0,\]
\[\e{abda}+\e{bcab}+\e{cdbc}+\e{dacd}+\e{acbd} + \e{bdac} = 0.\]
\end{ex}

The complexity of the relations in $\FK_G$ increases quickly as the number of edges increases. Despite this, the Fomin-Kirillov algebras often seem to be relatively well-behaved.

\section{Structure}

In this section, we will describe some structural properties of $\FK_n$ and $\FK_G$. In particular, we will show that if $\FK_G$ is finite-dimensional, then its Hilbert series has symmetric coefficients.

\subsection{A bilinear form}

The Fomin-Kirillov algebra $\FK_n$ admits an action on itself defined as follows.
\begin{prop} \label{prop-delta} \cite{FK}
There exists a unique linear map $\Delta_{ab}\colon \FK_n \to \FK_n$ satisfying
\[\Delta_{ab}(x_{ij}) = \begin{cases} 1, &\text{if $i=a$, $j=b$;}\\-1, & \text{if $i=b$, $j=a$;}\\0,&\text{otherwise;}\end{cases}\]
and $\Delta_{ab}(PQ) = \Delta_{ab}(P)\cdot Q +\sigma_{ab}(P)\cdot \Delta_{ab}(Q)$. The operators $\Delta_{ab}$ satisfy the relations of $\FK_n$, so they describe an action of $\FK_n$ on itself.
\end{prop}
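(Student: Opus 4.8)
The statement has three parts: existence and uniqueness of a linear map $\Delta_{ab}$ satisfying the specified values on generators together with the twisted Leibniz rule; the fact that the $\Delta_{ab}$ satisfy the relations of $\FK_n$; and the conclusion that they describe an action of $\FK_n$ on itself. My plan is to address these in order.

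For existence and uniqueness, the natural approach is to work on the free tensor algebra $T(V)$ first. The twisted Leibniz rule $\Delta_{ab}(PQ)=\Delta_{ab}(P)\cdot Q+\sigma_{ab}(P)\cdot\Delta_{ab}(Q)$ together with the prescribed values on $V$ determines a unique linear endomorphism $\widetilde{\Delta}_{ab}$ of $T(V)$: one defines it on each $V^{\otimes d}$ by induction on $d$, setting $\widetilde{\Delta}_{ab}(x_{i_1j_1}\cdots x_{i_dj_d})=\sum_{k=1}^d \sigma_{ab}(x_{i_1j_1}\cdots x_{i_{k-1}j_{k-1}})\cdot\Delta_{ab}(x_{i_kj_k})\cdot x_{i_{k+1}j_{k+1}}\cdots x_{i_dj_d}$, and one checks the Leibniz rule holds by a direct (associative) computation. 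Uniqueness is immediate since the values on $V$ and the Leibniz rule force the value on every monomial. The content is then to show $\widetilde{\Delta}_{ab}$ descends to the quotient $\FK_n=T(V)/I$, where $I$ is the ideal generated by the quadratic relations: it suffices to verify $\widetilde{\Delta}_{ab}(I)\subseteq I$, and because $\widetilde{\Delta}_{ab}$ is a twisted derivation, it suffices to check that $\widetilde{\Delta}_{ab}$ annihilates each of the three families of quadratic relations modulo $I$. This is a finite case-check: apply $\widetilde{\Delta}_{ab}$ to $x_{ij}^2$, to $x_{ij}x_{kl}-x_{kl}x_{ij}$, and to $x_{ij}x_{jk}+x_{jk}x_{ki}+x_{ki}x_{ij}$, splitting into cases according to how the pair $\{a,b\}$ meets the indices. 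The key mechanism is that $\sigma_{ab}$ permutes the generators and hence preserves $I$, so the output lands again in the span of relations.

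For the second claim, that the operators $\Delta_{ab}$ satisfy the relations of $\FK_n$, the statement is about the three defining relations with $x_{ij}$ replaced by $\Delta_{ij}$ as operators in $\mathrm{End}(\FK_n)$, namely $\Delta_{ij}^2=0$, $\Delta_{ij}\Delta_{kl}=\Delta_{kl}\Delta_{ij}$ for disjoint indices, and $\Delta_{ij}\Delta_{jk}+\Delta_{jk}\Delta_{ki}+\Delta_{ki}\Delta_{ij}=0$. I would verify these by evaluating both sides on an arbitrary monomial and comparing, again using the twisted Leibniz rule to reduce to the action on generators. A cleaner route is to observe that the whole setup lives in the Yetter–Drinfeld category over $\QQ[S_n]$ recalled in the Remark: the $\Delta_{ab}$ are the components of the braided differential (skew-primitive) structure dual to the generators $x_{ab}$, and the relations among them are the relations in the dual braided Hopf algebra, which is isomorphic to $\FK_n$ by the self-duality reflected in the antiautomorphism $\rev$. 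Whichever route is taken, the verification amounts to checking the identities on generators and degree-two monomials and then invoking the derivation property to extend; I would present the direct monomial computation since the paper does not assume Hopf-algebraic background.

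Finally, once the operators satisfy the relations, the universal property of $\FK_n$ as a quotient of $T(V)$ gives an algebra homomorphism $\FK_n\to\mathrm{End}(\FK_n)$, $x_{ab}\mapsto\Delta_{ab}$, which is by definition an action of $\FK_n$ on itself. The main obstacle is the second part: the verification that the $\Delta_{ab}$ satisfy the cyclic relation $\Delta_{ij}\Delta_{jk}+\Delta_{jk}\Delta_{ki}+\Delta_{ki}\Delta_{ij}=0$ is the only computation with genuine cancellation, because it involves three overlapping transpositions whose twists $\sigma_{ij},\sigma_{jk},\sigma_{ki}$ interact nontrivially; the squares and the disjoint-commuting cases are comparatively routine. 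I expect the cyclic case to hinge on the fact that the $\S_n$-degree twist exactly matches the cyclic symmetry of the relation, so that the three terms cancel in pairs after the Leibniz expansion.
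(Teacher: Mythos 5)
The paper gives no proof of this proposition---it is cited to \cite{FK}---so there is no internal argument to compare against; judging your proposal on its own merits, the main route (define a $\sigma_{ab}$-twisted derivation on $T(V)$, show it preserves the defining ideal, then verify the operators satisfy the quadratic, commutation, and cyclic relations) is the standard argument and is sound. One refinement to your first step: since the relations are quadratic and $\widetilde{\Delta}_{ab}$ lowers degree by one, $\widetilde{\Delta}_{ab}(r)$ lies in degree one, where the ideal $I$ has no nonzero elements; so ``annihilates the relations modulo $I$'' in fact forces $\widetilde{\Delta}_{ab}(r)=0$ exactly, which is what your case check yields (e.g.\ $\widetilde{\Delta}_{ab}(x_{ab}^2)=1\cdot x_{ab}+\sigma_{ab}(x_{ab})\cdot 1=x_{ab}-x_{ab}=0$). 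You also correctly note that $\sigma_{ab}(I)\subseteq I$ is needed for the Leibniz expansion of $\widetilde{\Delta}_{ab}(PrQ)$ to stay in $I$.

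There are two genuine problems in your second step. First, the ``cleaner route'' via self-duality is not available: the self-duality of $\FK_n$ as a braided Hopf algebra is exactly the nondegeneracy of the bilinear form, which is Conjecture~\ref{conj-nichols} (known only for $n\leq 5$), and the antiautomorphism $\rev$ is an algebra antiautomorphism, not a duality. Equivalently, defining $\Delta_{ab}$ as the adjoint of right multiplication would make the relations automatic, but that adjoint is only well defined given nondegeneracy---so this route is circular. Since you disavow it in favor of the direct computation, delete the aside. Second, your extension claim ``check the identities on generators and degree-two monomials and then invoke the derivation property to extend'' does not work as stated: a composite such as $\Delta_{ij}\Delta_{jk}$ is \emph{not} a twisted derivation, so low-degree vanishing does not propagate term by term. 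The correct repair is to prove that each full combination---$\Delta_{ab}^2$, the commutator $\Delta_{ij}\Delta_{kl}-\Delta_{kl}\Delta_{ij}$ for disjoint indices, and the cyclic sum $\Delta_{ij}\Delta_{jk}+\Delta_{jk}\Delta_{ki}+\Delta_{ki}\Delta_{ij}$---is \emph{itself} a twisted derivation. This follows from the intertwining identity $\sigma\Delta_{ab}\sigma^{-1}=\Delta_{\sigma(a)\sigma(b)}$ (which makes the cross terms in the Leibniz expansion cancel, e.g.\ $\sigma_{ij}\Delta_{jk}=\Delta_{ik}\sigma_{ij}=-\Delta_{ki}\sigma_{ij}$) together with the fact that the three twists $\sigma_{ij}\sigma_{jk}$, $\sigma_{jk}\sigma_{ki}$, $\sigma_{ki}\sigma_{ij}$ all equal the same $3$-cycle---this is the precise form of your intuition that the twist ``matches the cyclic symmetry.'' Once that lemma is in place, no degree-two check is needed at all: each combination lowers degree by two, hence vanishes on degrees $0$ and $1$, hence vanishes identically by induction via its own Leibniz rule. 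Alternatively, the fully expanded monomial computation you name as your primary route does close up (terms cancel in pairs), but organizing that cancellation requires the same intertwining identity, so you should isolate it as a lemma either way.
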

We will write $\Delta_P$ for the operator corresponding to an element $P \in \FK_n$. In other words, if $P = x_{i_1j_1}x_{i_2j_2}\cdots x_{i_kj_k}$, then we let $\Delta_P = \Delta_{i_1j_1}\Delta_{i_2j_2}\cdots \Delta_{i_kj_k}$, and then we extend to all of $\FK_n$ by linearity. Note that the operators $\Delta_P$ intertwine the automorphisms $\sigma \in \S_n$ in the following way: $\sigma(\Delta_P(Q))=\Delta_{\sigma(P)}(\sigma(Q))$.

We can think of $\Delta_{ab}$ as having degree $-1$ and $\S_n$-degree $\sigma_{ab}$: if $P$ is homogeneous with respect to both degree and $\S_n$-degree, then $\Delta_{ab}P$ has degree $\deg P-1$ and $\S_n$-degree $\sigma_{ab}\sigma_P$.

Similarly, there exists a dual (right) action of $\FK_n$ on itself, defined as follows. The proof is essentially the same as that of Proposition~\ref{prop-delta} (and can also be deduced from Proposition~\ref{prop-pairing}).
\begin{prop} \label{prop-nabla}
There exists a unique linear map $\nabla_{ab}\colon \FK_n \to \FK_n$ (acting on the right) satisfying $(x_{ij})\nabla_{ab} =\Delta_{ab}(x_{ij}) $ and $(PQ)\nabla_{ab} = P\cdot (Q)\nabla_{ab} + (P)(\sigma_Q\nabla_{ab})\cdot Q$, where $\sigma_Q\nabla_{ab} = \nabla_{\sigma_Q(a)\sigma_Q(b)}$. The operators $\nabla_{ab}$ satisfy the relations of $\FK_n$, so they describe an action of $\FK_n$ on itself.
\end{prop}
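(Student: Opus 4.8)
The plan is to prove Proposition~\ref{prop-nabla} by directly adapting the proof of Proposition~\ref{prop-delta}, exploiting the $\rev$ antiautomorphism to transfer everything from the left action to the right action. The statement has three parts: existence and uniqueness of the operators $\nabla_{ab}$, and the claim that they satisfy the defining relations of $\FK_n$. I would handle uniqueness first (it is immediate), then existence, and finally the relations.

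First I would establish uniqueness. The twisted Leibniz rule $(PQ)\nabla_{ab} = P\cdot (Q)\nabla_{ab} + (P)(\sigma_Q\nabla_{ab})\cdot Q$ together with the value on generators determines $(m)\nabla_{ab}$ for every monomial $m$ by induction on degree: writing $m = x_{i_1j_1}\cdots x_{i_kj_k}$ and peeling off the last factor, the rule expresses $(m)\nabla_{ab}$ in terms of lower-degree data. So at most one such linear map exists. For existence, the cleanest route is to define $\nabla_{ab}$ in terms of $\Delta$ via the reversal antiautomorphism: set
\[
(P)\nabla_{ab} = \rev\big(\Delta_{ab}(\rev(P))\big).
\]
Since $\rev$ reverses products and preserves the relations, one checks on generators that $(x_{ij})\nabla_{ab} = \rev(\Delta_{ab}(x_{ij})) = \Delta_{ab}(x_{ij})$ (as the values lie in degree $0$ or $1$, where $\rev$ acts trivially). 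It then remains to verify that this definition satisfies the twisted right Leibniz rule, which follows by applying $\rev$ to the left Leibniz rule from Proposition~\ref{prop-delta}: writing $\rev(PQ) = \rev(Q)\rev(P)$ and using $\Delta_{ab}(\rev(Q)\rev(P)) = \Delta_{ab}(\rev(Q))\cdot\rev(P) + \sigma_{ab}(\rev(Q))\cdot\Delta_{ab}(\rev(P))$, one reverses the whole identity and matches terms, tracking how $\sigma_{ab}$ commutes past $P$ to produce the twist $\sigma_Q\nabla_{ab}$.

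The main obstacle, and the genuinely content-bearing step, is verifying that the operators $\nabla_{ab}$ satisfy the relations of $\FK_n$ — that is, $(x_{ij})^2$, the commuting relation for disjoint indices, and the cyclic relation each act as zero on the right. Here the $\rev$ trick pays off: because Proposition~\ref{prop-delta} already guarantees that the $\Delta_{ab}$ satisfy these relations as operators acting on the left, and because $\rev$ is an antiautomorphism preserving the relation set, the conjugated operators $P \mapsto \rev(\Delta_P(\rev(P)))$ automatically inherit the relations but in the \emph{opposite} order. Since $\FK_n \cong \FK_n^{\mathrm{op}}$ and the relation set is stable under reversal (each quadratic relation is palindromic up to sign, and the cyclic relation of Lemma~\ref{lemma-cyclic} reverses to another instance of itself), the right operators satisfy the original relations.

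Alternatively — and this is the route the remark about Proposition~\ref{prop-pairing} hints at — one could define the pairing first and derive $\nabla_{ab}$ as the adjoint of left multiplication by $x_{ab}$ with respect to the bilinear form, in which case the relations for $\nabla$ follow from those for $\Delta$ by nondegeneracy-free adjunction. I would favor the direct $\rev$ argument as self-contained, flagging that the one calculation requiring care is the bookkeeping of the twist $\sigma_Q\nabla_{ab} = \nabla_{\sigma_Q(a)\sigma_Q(b)}$, since the $\S_n$-degree of $Q$ must be tracked correctly when reversing the Leibniz rule. That single index-chasing computation is where any error would hide, but it is routine rather than deep.
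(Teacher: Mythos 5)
Your existence step constructs the wrong operator, and the failure sits exactly in the index-chasing you defer as ``routine.'' Set $(P)\nabla'_{ab} = \rev\bigl(\Delta_{ab}(\rev(P))\bigr)$ as you propose. Applying $\rev$ to the Leibniz rule of Proposition~\ref{prop-delta} (and using that $\rev$ commutes with the automorphism $\sigma_{ab}$) gives
\[
(PQ)\nabla'_{ab} \;=\; P\cdot (Q)\nabla'_{ab} \;+\; (P)\nabla'_{ab}\cdot \sigma_{ab}(Q),
\]
so the twist lands on the \emph{element} $Q$, not on the \emph{operator index} as $\nabla_{\sigma_Q(a)\sigma_Q(b)}$. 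These two rules are not equivalent, and they determine different maps. Concretely, the proposition's rule forces $(x_{12}x_{23})\nabla_{12} = x_{12}\cdot\Delta_{12}(x_{23}) + (x_{12})\nabla_{13}\cdot x_{23} = 0$, whereas your map gives $\rev\bigl(\Delta_{12}(x_{23}x_{12})\bigr) = \rev\bigl(\sigma_{12}(x_{23})\cdot\Delta_{12}(x_{12})\bigr) = x_{13}\neq 0$. Since your uniqueness argument is correct---the generator values together with the stated rule pin down the map---your $\nabla'_{ab}$, which has the right generator values but a different value on $x_{12}x_{23}$, cannot satisfy the stated rule. No bookkeeping repairs this: $(P)\nabla'_{ab}\cdot\sigma_{ab}(Q)$ and $(P)\nabla'_{\sigma_Q(a)\sigma_Q(b)}\cdot Q$ are simply different elements in general (with $P=x_{12}$, $Q=x_{23}$, $ab=12$, the first is $x_{13}$ and the second is $0$).

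The conceptual point is that in this braided setting $\rev$ alone does not exchange left and right derivations. The true $\rev$-partner of $\nabla_{ij}$ is the coproduct action, as the paper's later remark $x_{ij}^\vee \act Q = \rev((\rev Q)\nabla_{ij})$ records, and $x_{ij}^\vee\act$ is \emph{not} $\Delta_{ij}$: by Lemma~\ref{lemma-leibniz}, $\Delta_{ij}$ deletes a letter equal to $\pm x_{ij}$ and twists the prefix by $\sigma_{ij}$, while $x_{ij}^\vee\act$ deletes a letter whose image under the $\S_n$-degree of the prefix is $\pm x_{ij}$ and leaves the remaining word untwisted. Your fallback route---defining $\nabla_{ab}$ as the adjoint of left multiplication under the pairing---is also unavailable: an operator is determined by adjointness only against a nondegenerate form, and nondegeneracy is precisely the open Conjecture~\ref{conj-nichols}; moreover, the pairing of Proposition~\ref{prop-pairing} is itself established using $\nabla$, so this would be circular. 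What the paper intends, and what works, is the direct argument parallel to Proposition~\ref{prop-delta}: define $\nabla_{ab}$ on the tensor algebra $T(V)$ by the stated twisted rule (equivalently, by the explicit formula in Lemma~\ref{lemma-leibniz}), check that it maps each defining relation of $\FK_n$ into the two-sided ideal generated by the relations so that it descends to $\FK_n$, and then verify that the operators themselves satisfy the quadratic, commutation, and three-term relations, hence assemble into a right action of $\FK_n$ on itself.
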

We will similarly write $\nabla_P$ for the operator corresponding to an element $P \in \FK_n$.

The following lemma will be useful for performing calculations involving $\Delta_{ab}$ and $\nabla_{ab}$. It follows easily from repeated use of the Leibniz rules given in Propositions~\ref{prop-delta} and \ref{prop-nabla}.
\begin{lemma} \label{lemma-leibniz}
For a monomial $P= p_1 \cdots p_d \in \FK_n$, write $P = P_k^L p_k P_k^R$. Then
\begin{align*}
\Delta_{ab}(P) &= \sum_{k=1}^d \Delta_{ab}(p_k) \cdot  \sigma_{ab}(P_k^L) P_k^R, \text{ and}\\
(P)\nabla_{ab} &= \sum_{k=1}^d (p_k)(\sigma_{P_k^R}\nabla_{ab}) \cdot P_k^LP_k^R.
\end{align*}
\end{lemma}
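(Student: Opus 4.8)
The plan is to prove both identities by induction on the length $d$ of the monomial $P = p_1\cdots p_d$, applying the twisted Leibniz rules of Propositions~\ref{prop-delta} and~\ref{prop-nabla} to strip off one generator at a time. The one structural observation that makes everything collapse into the stated form is that each local contribution $\Delta_{ab}(p_k)$, and likewise each $(p_k)(\sigma_{P_k^R}\nabla_{ab})$, lies in degree $0$: since $p_k$ is a single generator and $\Delta_{ab}$ (resp. $\nabla_{ab}$) lowers degree by $1$, these are scalars, hence central, and so may be slid to the front of each summand exactly as the claimed formulas require. The base case $d=1$ is immediate in both instances, as the left and right parts $P_1^L, P_1^R$ are then empty products.

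For the first identity I would peel off the leftmost generator, writing $P = p_1\cdot Q$ with $Q = p_2\cdots p_d$. Proposition~\ref{prop-delta} gives $\Delta_{ab}(P) = \Delta_{ab}(p_1)\cdot Q + \sigma_{ab}(p_1)\cdot\Delta_{ab}(Q)$. The first summand is precisely the $k=1$ term, since $P_1^L = \idelm$ and $P_1^R = Q$. To the second summand I apply the inductive hypothesis to $Q$; writing the left part of $p_k$ within $Q$ as $L_k = p_2\cdots p_{k-1}$, so that $P_k^L = p_1 L_k$, I slide each scalar $\Delta_{ab}(p_k)$ to the front and use that $\sigma_{ab}$ is an algebra automorphism, whence $\sigma_{ab}(p_1)\,\sigma_{ab}(L_k) = \sigma_{ab}(P_k^L)$. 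This turns the inductive terms into exactly those indexed by $k = 2,\dots,d$, completing the step.

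For the second identity the natural move is to peel off the rightmost generator, $P = P'\cdot p_d$ with $P' = p_1\cdots p_{d-1}$, because $\nabla_{ab}$ is twisted by the right-hand factor. Proposition~\ref{prop-nabla} gives $(P)\nabla_{ab} = P'\cdot(p_d)\nabla_{ab} + (P')(\sigma_{p_d}\nabla_{ab})\cdot p_d$, and the first summand is the $k=d$ term (using $P_d^R = \idelm$, $P_d^L P_d^R = P'$, and centrality of the scalar $(p_d)\nabla_{ab}$). For the second summand I apply induction to $P'$ with the shifted operator $\sigma_{p_d}\nabla_{ab} = \nabla_{\sigma_{p_d}(a)\sigma_{p_d}(b)}$. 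Here the one substantive point arises: for $k < d$ one has $P_k^R = Q_k\,p_d$ with $Q_k = p_{k+1}\cdots p_{d-1}$, and multiplicativity of the $\S_n$-degree yields $\sigma_{P_k^R} = \sigma_{Q_k}\sigma_{p_d}$, so that the twist generated inside the induction, $\sigma_{Q_k}(\sigma_{p_d}\nabla_{ab})$, coincides with the required $\sigma_{P_k^R}\nabla_{ab}$. After pulling the scalars forward and reattaching $p_d$ on the right of each $P_k^L Q_k$ to form $P_k^L P_k^R$, these become the terms for $k = 1,\dots,d-1$.

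I do not expect any genuine obstacle: the computation is routine once the inductions are organized. The only thing demanding care is the bookkeeping of the two flavors of twist---the automorphism $\sigma_{ab}$ in the $\Delta$ identity and the subscript-shift $\sigma_{P_k^R}$ in the $\nabla$ identity---and in the latter the verification that these shifts compose correctly, which is exactly the multiplicativity $\sigma_{PQ} = \sigma_P\sigma_Q$ of the $\S_n$-degree recorded in the preliminaries.
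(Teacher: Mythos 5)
Your proof is correct and is exactly the argument the paper has in mind: the paper dispenses with this lemma in one line ("follows easily from repeated use of the Leibniz rules given in Propositions~\ref{prop-delta} and \ref{prop-nabla}"), and your induction, peeling generators from the left for $\Delta_{ab}$ and from the right for $\nabla_{ab}$, is the natural way to carry that out. The two points you flag — that the local contributions are scalars and hence can be pulled to the front, and that the twists compose via $\sigma_{PQ}=\sigma_P\sigma_Q$ — are indeed the only details requiring care, and you handle both correctly.
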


\begin{ex} \label{ex-delta}
Here is a brief example of how to apply the $\Delta_{ij}$ and $\nabla_{ij}$ operators.
\begin{align*}
\Delta_{12}(x_{12}x_{23}x_{31}) &= \Delta_{12}(x_{12})\cdot x_{23}x_{31} + x_{21}\cdot \Delta_{12}(x_{23})\cdot x_{31} + x_{21}x_{13}\cdot \Delta_{12}(x_{31})\\ &= x_{23}x_{31},\\
(x_{12}x_{23}x_{31})\nabla_{12} &= x_{12}x_{23} \cdot (x_{31})\nabla_{12} + x_{12} \cdot (x_{23})\nabla_{32} \cdot x_{31} +  (x_{12})\nabla_{23} \cdot x_{23}x_{31}\\ &= -x_{12}x_{31}.
\end{align*}
\end{ex}

These actions are dual in the following sense.
\begin{prop} \label{prop-pairing}
If $P$ and $Q$ are homogeneous of the same degree,  then $\Delta_P(Q) = \Delta_Q(P) = (P)\nabla_Q = (Q)\nabla_P$. This defines a symmetric bilinear form $\langle P, Q \rangle$ on $\FK_n$. With respect to this form, the operators  $\Delta_P$ and  $\nabla_P$ are adjoint to right and left multiplication by $P$, respectively.
\end{prop}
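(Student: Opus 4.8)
The plan is to first observe that all four expressions are scalars, then to establish the two genuinely nontrivial equalities---the symmetry $\Delta_P(Q)=\Delta_Q(P)$ and the duality $\Delta_P(Q)=(Q)\nabla_P$---by a single induction on degree, and finally to read off the adjointness statements formally. To begin, if $P,Q$ are homogeneous of the same degree $d$, then $\Delta_P(Q)$ is homogeneous of degree $d-d=0$, and $\FK_n^0=\QQ\cdot\idelm$; the same applies to $\Delta_Q(P)$, $(P)\nabla_Q$, and $(Q)\nabla_P$. So each is a scalar and is bilinear in $(P,Q)$, which reduces every identity to the case of monomials. I would also record the two homomorphism properties that are immediate from the definitions, namely $\Delta_{PQ}=\Delta_P\Delta_Q$ (composition of operators, $\Delta_Q$ applied first) and $(X)\nabla_{PQ}=((X)\nabla_P)\nabla_Q$.

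The heart of the argument is the induction on $d$. In the base case $d=1$, writing degree-one elements in the basis $\{x_{ij}\}_{i<j}$, the formula for $\Delta_{ab}(x_{ij})$ in Proposition~\ref{prop-delta} is visibly a symmetric function of the two oriented edges, and Proposition~\ref{prop-nabla} gives $(x_{ij})\nabla_{ab}=\Delta_{ab}(x_{ij})$, so all four scalars coincide. For the inductive step, I would peel off a single generator using Lemma~\ref{lemma-leibniz}. Writing $P=P'\,x_{ab}$ so that $\Delta_P=\Delta_{P'}\Delta_{ab}$, the Leibniz rule expands $\Delta_P(Q)=\Delta_{P'}(\Delta_{ab}(Q))$ as a sum, over positions $k$ at which a letter $q_k$ of $Q=Q_k^L\,q_k\,Q_k^R$ is removed, of the scalars $\Delta_{ab}(q_k)$ times $\Delta_{P'}\!\big(\sigma_{ab}(Q_k^L)\,Q_k^R\big)$. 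Since each argument now has degree $d-1$, the inductive hypothesis applies to each term. I would then carry out the parallel computation for $(Q)\nabla_P$ with the right Leibniz rule of Proposition~\ref{prop-nabla}, and compare $\Delta_P(Q)$ with $\Delta_Q(P)$ in the same way, matching the two expansions term by term.

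I expect the main obstacle to be the bookkeeping of the $\S_n$-degree twists. The two Leibniz rules introduce different twists---the factor $\sigma_{ab}$ acting on the left part $Q_k^L$ for $\Delta$, and the conjugated operators $\sigma_Q\nabla_{ab}$ acting from the right for $\nabla$---and the entire content of the proposition is that these twists conspire so that the left-peeling computation via $\Delta$ and the right-peeling computation via $\nabla$ produce the same scalar. Conceptually this is the self-duality of the quantum-shuffle pairing attached to the symmetric degree-one form $\langle x_{ij},x_{kl}\rangle$: the twists are exactly the braiding of the Yetter--Drinfeld module, and verifying that they align is what must be done by hand. This is the step where care with signs and with the order of the removed letters is essential.

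Finally, granting the four-way equality $\langle P,Q\rangle:=\Delta_P(Q)=\Delta_Q(P)=(P)\nabla_Q=(Q)\nabla_P$, symmetry of the form is immediate, and the adjointness claims are then formal. Using symmetry together with $\Delta_{RP}=\Delta_R\Delta_P$, one has $\langle \Delta_P(Q),R\rangle=\langle R,\Delta_P(Q)\rangle=\Delta_R(\Delta_P(Q))=\Delta_{RP}(Q)=\langle Q,RP\rangle$, so $\Delta_P$ is adjoint to right multiplication by $P$. Dually, using $(X)\nabla_{PR}=((X)\nabla_P)\nabla_R$, one has $\langle (Q)\nabla_P,R\rangle=((Q)\nabla_P)\nabla_R=(Q)\nabla_{PR}=\langle Q,PR\rangle$, so $\nabla_P$ is adjoint to left multiplication by $P$, completing the proof.
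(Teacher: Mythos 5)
Your setup coincides with the paper's: induction on degree, peeling the last generator $p_d$ of $P$, expanding $\Delta_{p_d}(Q)$ by Lemma~\ref{lemma-leibniz}, applying the inductive hypothesis termwise, and deducing both adjointness statements formally at the end (your formal derivations there are correct, as is the degree-one base case). But the argument has a hole exactly where you write that the alignment of the twists ``must be done by hand'': you never do that verification, and it is the entire content of the inductive step. After applying the inductive hypothesis, the $k$th term of your expansion of $\Delta_P(Q)$ is $\Delta_{p_d}(q_k)\cdot \Delta_{\sigma_{p_d}(Q_k^L)\,Q_k^R}(P')$, whereas the $k$th term of the expansion of $\Delta_Q(P) = \Delta_Q(P'p_d)$ according to which letter $q_k$ of $Q$ annihilates the final letter $p_d$ is $\Delta_{q_k}(p_d)\cdot \Delta_{\sigma_{q_k}(Q_k^L)\,Q_k^R}(P')$. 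These carry different twists, $\sigma_{p_d}$ versus $\sigma_{q_k}$, and $\Delta_{\sigma_{p_d}(Q_k^L)\,Q_k^R}$ and $\Delta_{\sigma_{q_k}(Q_k^L)\,Q_k^R}$ are genuinely different operators in general, so the term-by-term matching you propose fails as stated.

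The missing idea---the paper's key step---is that the scalar factor $\Delta_{q_k}(p_d) = \Delta_{p_d}(q_k)$ is a pairing of two generators, hence vanishes unless $p_d = \pm q_k$, i.e.\ unless $\sigma_{p_d} = \sigma_{q_k}$. In every surviving term one may therefore replace $\sigma_{p_d}$ by $\sigma_{q_k}$, and only after this substitution do the two expansions agree term by term: the resulting sum is recognized (using Lemma~\ref{lemma-leibniz} again together with the intertwining relation $\sigma(\Delta_R(X)) = \Delta_{\sigma(R)}(\sigma(X))$) as the Leibniz expansion of $\Delta_Q(P'p_d) = \Delta_Q(P)$, and the identical substitution yields $(P)\nabla_Q$ from the inductive hypothesis applied to the $\nabla$ side. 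Without this vanishing observation, your statement that ``the twists conspire'' is a restatement of the proposition rather than a proof of it; with it, your outline becomes precisely the paper's proof.
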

\begin{proof}
We induct on the degree $d$ of $P$ and $Q$. Choose monomials $P=p_1\cdots p_d$ and $Q=q_1\cdots q_d$, and write $P' = P_d^L= p_1\cdots p_{d-1}$ and $Q=Q_k^Lq_kQ_k^R$ as in Lemma~\ref{lemma-leibniz}. Then
\[\Delta_P(Q) = \Delta_{P'} \Delta_{p_d}(Q)
= \sum_{k=1}^d \Delta_{P'} (\sigma_{p_d}(Q_k^L)\cdot Q_k^R) \cdot \Delta_{p_d}(q_k).\]

By induction, this equals 
\begin{equation} \label{eq-pairing}
\sum_{k=1}^d\Delta_{ \sigma_{p_d}Q_k^L}\Delta_{Q_k^R}(P')\cdot \Delta_{q_k}(p_d)
=\sum_{k=1}^d\Delta_{Q_k^L}(\sigma_{p_d}(\Delta_{Q_k^R}(P'))) \cdot \Delta_{q_k}(p_d). \tag{$*$}
\end{equation}
The $k$th term in the sum is only nonzero if $p_d = \pm q_k$, that is, if $\sigma_{p_d} = \sigma_{q_k}$. We therefore find that $\Delta_P(Q)$ equals
\[\sum_{k=1}^d\Delta_{Q_k^L}(\sigma_{q_k}(\Delta_{Q_k^R}(P'))) \cdot \Delta_{q_k}(p_d) = \Delta_Q(P' \cdot p_d) = \Delta_Q(P).\]
Also by induction, the left side of \eqref{eq-pairing} equals
\[
\sum_{k=1}^d(P')\nabla_{ \sigma_{p_d}Q_k^L}\nabla_{Q_k^R}\cdot (p_d)\nabla_{q_k} = (P' \cdot p_d)\nabla_Q = (P)\nabla_Q.
\] 
All that remains is to show the adjointness properties: \begin{align*}
\langle P_1P_2, Q\rangle &= \Delta_{P_1P_2}(Q) = \Delta_{P_1}(\Delta_{P_2}(Q)) = \langle P_1, \Delta_{P_2}(Q)\rangle\\[2mm]
\langle P_1P_2, Q\rangle &= (Q)\nabla_{P_1P_2} = ((Q)\nabla_{P_1})\nabla_{P_2} = \langle P_2, (Q)\nabla_{P_1} \rangle. \qedhere
\end{align*}
\end{proof}

\begin{ex}
By Example~\ref{ex-delta} and Proposition~\ref{prop-pairing}, 
\[\langle x_{12}x_{13}x_{12}, x_{12}x_{23}x_{31} \rangle = \langle x_{12}x_{13}, \Delta_{12}(x_{12}x_{23}x_{31}) \rangle = \langle x_{12}x_{13}, x_{23}x_{31} \rangle.\]
Similarly,
\[\langle x_{12}x_{13}, x_{23}x_{31}\rangle = \langle (x_{12}x_{13})\nabla_{23}, x_{31} \rangle = \langle -x_{13}, x_{31} \rangle = 1.\]
\end{ex}

Note that if $P$ and $Q$ are both homogeneous with respect to both the usual degree and $\S_n$-degree, then $\langle P, Q \rangle = 0$ unless $P$ and $Q$ have the same degree and $\sigma_P = \sigma_Q^{-1}$.

\theoremstyle{plain}
\newtheorem*{conj-nichols}{Conjecture \ref{conj-nichols}}
\begin{conj} \label{conj-nichols} \cite{MS}
The bilinear form $\langle\cdot,\cdot\rangle$ is nondegenerate on $\FK_n$.
\end{conj}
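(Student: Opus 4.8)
The plan is to recast nondegeneracy as a statement about a single ideal and then reduce it, by induction on degree, to a concrete finiteness assertion. Write $N = \{P \in \FK_n : \langle P, Q\rangle = 0 \text{ for all } Q\}$ for the radical of the form. Since $\langle P,Q\rangle$ vanishes unless $P$ and $Q$ share the same degree and satisfy $\sigma_P = \sigma_Q^{-1}$, the subspace $N$ is homogeneous for both the usual grading and the $\S_n$-grading, so it suffices to analyze each bigraded component separately. Moreover, the adjointness in Proposition~\ref{prop-pairing} shows $N$ is a two-sided ideal: if $P \in N$, then $\langle PR, Q\rangle = \langle P, \Delta_R(Q)\rangle = 0$ and $\langle RP, Q\rangle = \langle P, (Q)\nabla_R\rangle = 0$ for every $R$ and $Q$. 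Since $N$ is also a coideal (it is the kernel of the canonical braided Hopf pairing), the quotient $\FK_n/N$ is exactly the Nichols algebra $\mathfrak B(V)$, and so Conjecture~\ref{conj-nichols} is equivalent to the assertion $\FK_n = \mathfrak B(V)$, i.e. that the quadratic relations already cut out all relations of $\mathfrak B(V)$.

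First I would make the induction on degree explicit. Because the form is symmetric and right multiplication by $x_{ab}$ is adjoint to $\Delta_{ab}$, for a homogeneous $P$ of degree $d \geq 1$ and any monomial $Q = Q' x_{ab}$ of degree $d$ we have $\langle P, Q'x_{ab}\rangle = \langle \Delta_{ab}(P), Q'\rangle$. As every degree-$d$ monomial ends in some generator, it follows that $P \in N$ forces $\langle \Delta_{ab}(P), Q'\rangle = 0$ for all $a,b$ and all $Q'$, and conversely. Hence, \emph{assuming the form is already nondegenerate in degree $d-1$}, the degree-$d$ radical is exactly $\bigcap_{a,b}\ker\big(\Delta_{ab}|_{\FK_n^d}\big)$. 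Thus Conjecture~\ref{conj-nichols} is equivalent to the following clean statement: no nonzero positive-degree element of $\FK_n$ is annihilated by all the operators $\Delta_{ab}$.

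To establish that statement I see two plausible routes. The first is a Hilbert-series comparison: the canonical surjection $\FK_n \twoheadrightarrow \mathfrak B(V)$ gives $\Hilb_n(t) \geq \Hilb_{\mathfrak B(V)}(t)$ coefficientwise, with equality precisely when $N = 0$, so one would try to produce an explicit monomial (PBW-type) basis of $\FK_n$ whose size matches an independently computed $\dim \mathfrak B(V)$---exactly the strategy carried out for simply-laced Dynkin diagrams and cycles in Sections 6 and 7, now applied to $K_n$. The second route is to build the duality directly: using the index-permuting automorphisms $\sigma \in \S_n$ and the antiautomorphism $\rev$, both of which preserve the form, one would attempt to construct an explicit isomorphism of braided Hopf algebras $\FK_n \xrightarrow{\sim} \FK_n^{*}$ onto the graded dual, from which nondegeneracy is immediate. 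A possible third line is to induct on the number of edges using the freeness results of Section 4, but the form does not restrict compatibly to a free-module decomposition, so I expect this to fail without substantial new input.

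The hard part---and the reason this remains a conjecture---is the crux identified above: ruling out a nonzero element killed by every $\Delta_{ab}$ in high degree. This is essentially as hard as determining the full structure of $\FK_n$; in particular, any Hilbert-series match would also decide whether $\FK_n$ is finite-dimensional, which is open for $n \geq 6$. Gra\~na's verification for $n \leq 5$ is a finite rank computation that does not extend, and no uniform PBW basis or self-dual pairing is presently known, so the essential obstacle is the absence of any a priori control on $\FK_n$ in large degree.
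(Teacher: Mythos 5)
You were asked to prove something the paper itself does not prove: this statement is Conjecture~\ref{conj-nichols}, cited from \cite{MS}, and the paper's only content about it is the remark that it is equivalent to $\FK_n$ being a Nichols algebra (the quotient of $\FK_n$ by the kernel of the form) and that it is known for $n \leq 5$ by the computations of \cite{Grana}. So there is no proof in the paper to compare against, and your proposal---as you say yourself in the final paragraph---is not a proof either; it is a reduction plus a survey of strategies that you correctly judge to be out of reach.

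Within that scope, your reductions are sound and consistent with the paper's framing. The radical $N$ is bigraded because the form vanishes unless degrees agree and $\sigma_P = \sigma_Q^{-1}$; it is a two-sided ideal by the adjointness statements in Proposition~\ref{prop-pairing} ($\langle PR, Q\rangle = \langle P, \Delta_R(Q)\rangle$ and $\langle RP, Q\rangle = \langle P, (Q)\nabla_R\rangle$); and $\FK_n/N$ is the Nichols algebra, exactly as the paper notes after the conjecture. Your degree induction is also correct: every degree-$d$ element is spanned by monomials $Q'x_{ab}$, and $\langle P, Q'x_{ab}\rangle = \langle \Delta_{ab}(P), Q'\rangle$, so nondegeneracy in degree $d-1$ identifies the degree-$d$ radical with $\bigcap_{a,b}\ker\bigl(\Delta_{ab}|_{\FK_n^d}\bigr)$; hence the conjecture is equivalent to the statement that no nonzero positive-degree element is killed by all the $\Delta_{ab}$, which is the standard characterization of Nichols algebras. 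The genuine gap is the one you name: neither proposed route closes it. The Hilbert-series/PBW comparison of Sections 6 and 7 applies only to proper subgraphs $G \subsetneq K_n$ (Dynkin diagrams and cycles), not to $K_n$ itself, where no independent computation of $\dim \mathfrak{B}(V)$ or monomial basis is known for $n \geq 6$; and no explicit self-dual pairing $\FK_n \cong \FK_n^{\vee}$ is known. Present your write-up as a correct reformulation of an open problem, not as a proof attempt.
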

This conjecture is equivalent to $\FK_n$ being a special type of braided Hopf algebra called a Nichols algebra, which in this case is the quotient of $\FK_n$ by the kernel of the bilinear form (see \cite{AS}). It is known that Conjecture~\ref{conj-nichols} holds for $n \leq 5$ \cite{Grana}.

\subsection{Coproduct}

The Fomin-Kirillov algebra has the structure of a braided Hopf algebra. This was noted in \cite{MS} and can be derived from the Hopf algebra structure of the twisted version of the algebra described in \cite{FP}. For more information about braided Hopf algebras, see \cite{AS}. We describe only the coproduct here, as we will need it later.

The tensor product $\FK_n \otimes \FK_n$ has a braided product structure given by \[(P_1 \otimes Q_1)(P_2 \otimes Q_2) = (P_1 \sigma_{Q_1}(P_2)) \otimes (Q_1Q_2)\] for monomials $P_1$, $P_2$, $Q_1$, and $Q_2$. Then the coproduct $\Delta\colon \FK_n \to \FK_n \otimes \FK_n$ is defined to be the braided homomorphism such that $\Delta(x_{ij}) = x_{ij} \otimes 1 + 1 \otimes x_{ij}$.

Let $\FK_n^\vee$ be the graded dual of $\FK_n$, that is, the direct sum of the duals of each graded piece of $\FK_n$.  Then $\Delta$ defines an action of $\FK_n^\vee$ on $\FK_n$ as follows: if $p^\vee \in \FK_n^\vee$ and $Q \in \FK_n$, we let $p^\vee \act Q = \sum p^\vee(Q_{(1)}^i) \cdot Q_{(2)}^i$, where $\Delta(Q) = \sum Q_{(1)}^i \otimes Q_{(2)}^i$.

\begin{ex}
We calculate $\Delta(x_{12}x_{23})$ to be
\[(x_{12} \otimes 1+1 \otimes x_{12})(x_{23} \otimes 1 + 1 \otimes x_{23}) = x_{12}x_{23} \otimes 1 + x_{12}\otimes x_{23} + x_{13} \otimes x_{12} + 1 \otimes x_{12}x_{23}.\] Note that $(1 \otimes x_{12})(x_{23} \otimes 1) = x_{13} \otimes x_{12}$ due to the braiding.

Let $\{x_{ij}^\vee\} \subset \FK_n^\vee$ be the dual basis to $\{x_{ij}\} \subset \FK_n^1$. Then $x_{13}^\vee\act x_{12}x_{23}=x_{12}$.
\end{ex}

\begin{rmk}
If $x_{ij}^\vee$ is an element of the dual basis as above, then $x_{ij}^\vee \act Q = \rev ((\rev Q)\nabla_{ij}$).
\end{rmk}

\subsection{Properties of subalgebras}

It is important to note how our subalgebras $\FK_G$ behave with respect to the operators and bilinear form described above. The following lemma follows easily from the definitions of these operators.

\begin{lemma} \label{lemma-delta}
\begin{enumerate}[(a)]
\item If $\overline{ij} \not \in G$, then $\Delta_{ij}(\FK_G)=0$.
\item For any $\nabla_{ij}$ and any graph $G$, $(\FK_G) \nabla_{ij} \subset \FK_G$.
\item The coproduct $\Delta$ sends any element of $\FK_G$ into $\FK_n \otimes \FK_G$.
\item The left action of $\FK_n^\vee$ on $\FK_n$ restricts to an action on $\FK_G$.
\item If $H$ is a subgraph of $G$, then $\Delta(\FK_H^+\FK_G) \subset \FK_H^+\FK_n \otimes \FK_G + \FK_n \otimes \FK_H^+\FK_G$.
\end{enumerate}
\end{lemma}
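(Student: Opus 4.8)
The plan is to prove Lemma~\ref{lemma-delta} part by part, since each statement is a direct consequence of the Leibniz rules (Propositions~\ref{prop-delta} and~\ref{prop-nabla}) or the definition of the coproduct. For part (a), I would argue by induction on degree: $\Delta_{ij}$ annihilates $x_{kl}$ for every edge $\overline{kl}\in G$ because $\overline{ij}\notin G$ means $\{i,j\}\neq\{k,l\}$; then by the Leibniz rule $\Delta_{ij}(PQ)=\Delta_{ij}(P)\cdot Q+\sigma_{ij}(P)\cdot\Delta_{ij}(Q)$, both summands vanish on products of generators of $\FK_G$ (the first by induction, the second after noting $\sigma_{ij}$ fixes $\FK_G$ setwise, which it does since $\sigma_{ij}$ merely permutes indices $i,j$ that do not both meet any edge of $G$—this last point needs a small check). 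For part (b), I would again induct: $(x_{kl})\nabla_{ij}=\Delta_{ij}(x_{kl})$ is a scalar, hence in $\FK_G$ trivially, and the Leibniz rule for $\nabla_{ij}$ expresses $(PQ)\nabla_{ij}$ as a sum of terms each of which is a product of an element of $\FK_G$ with another $\nabla$-image, so closure under $\nabla$ follows once one checks that the twisted operator $\sigma_Q\nabla_{ab}=\nabla_{\sigma_Q(a)\sigma_Q(b)}$ again maps $\FK_G$ into itself—which holds because $\sigma_Q$ permutes the vertex set by a product of transpositions along edges of $G$, preserving $G$.

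For part (c), I would use that $\Delta$ is the braided homomorphism with $\Delta(x_{ij})=x_{ij}\otimes 1+1\otimes x_{ij}$. For an edge $\overline{ij}\in G$ the two tensor factors lie in $\FK_n\otimes\FK_G$ (the right factor being $1$ or $x_{ij}\in\FK_G$), and since $\Delta$ is multiplicative with respect to the braided product, it suffices to check that $\FK_n\otimes\FK_G$ is closed under that braided product; the braiding moves a generator from the right factor to the left via $\sigma_{Q_1}$, which lands in $\FK_n$ regardless, while the right factors simply multiply within $\FK_G$. Part (d) is then immediate from (c): if $p^\vee\in\FK_n^\vee$ and $Q\in\FK_G$, writing $\Delta(Q)=\sum Q_{(1)}^i\otimes Q_{(2)}^i$ with $Q_{(2)}^i\in\FK_G$ by (c), the element $p^\vee\act Q=\sum p^\vee(Q_{(1)}^i)\cdot Q_{(2)}^i$ is a linear combination of the $Q_{(2)}^i$, hence lies in $\FK_G$.

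Part (e) is the substantive one and where I expect the main effort. The strategy is to combine the multiplicativity of $\Delta$ with parts (c) and the analogue of (c) for $\FK_H$. For $H\subseteq G$, a spanning set of $\FK_H^+\FK_G$ consists of products $RS$ with $R\in\FK_H^+$ and $S\in\FK_G$. I would compute $\Delta(RS)=\Delta(R)\Delta(S)$ using the braided product. By the $H$-analogue of (c), $\Delta(R)\in\FK_H\otimes\FK_H$, and moreover, since $R$ has positive degree, every term of $\Delta(R)$ has at least one tensor factor of positive degree; grouping terms by whether the left factor has positive degree, one gets $\Delta(R)\in\FK_H^+\FK_n\otimes\FK_H+1\otimes\FK_H^+$ (the lone term with trivial left factor is $1\otimes R$ with $R\in\FK_H^+$). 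Meanwhile $\Delta(S)\in\FK_n\otimes\FK_G$ by (c). The hard part will be tracking how the braided multiplication mixes these: multiplying an element of $\FK_H^+\FK_n\otimes\FK_H$ against $\FK_n\otimes\FK_G$ keeps the left factor in $\FK_H^+\FK_n$ (the braiding $\sigma_{Q_1}$ preserves $\FK_n$, and left-multiplication by $\FK_H^+\FK_n$ stays in $\FK_H^+\FK_n$) and the right factor in $\FK_H\cdot\FK_G\subseteq\FK_G$, while the $1\otimes\FK_H^+$ part multiplied against $\FK_n\otimes\FK_G$ lands in $\FK_n\otimes\FK_H^+\FK_G$. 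Care is needed to verify that the braiding does not move positive-degree material from the wrong factor and spoil the $\FK_H^+$ bookkeeping on the right summand; I would handle this by checking it on monomials and using that $\supp$- and $\S_n$-gradings are respected, so that the two target summands $\FK_H^+\FK_n\otimes\FK_G$ and $\FK_n\otimes\FK_H^+\FK_G$ absorb the two pieces respectively.
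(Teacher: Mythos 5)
Parts (c) and (d) of your proposal are correct as written. In parts (a) and (b) the inductive skeleton is right, but both auxiliary claims you lean on are false: for $\overline{ij}\notin G$ the transposition $\sigma_{ij}$ need \emph{not} fix $\FK_G$ setwise (take $G$ to be the path with edges $\overline{12},\overline{23},\overline{34}$ and $\overline{ij}=\overline{13}$; then $\sigma_{13}(x_{34})=x_{14}\notin\FK_G$), and for a monomial $Q\in\FK_G$ the permutation $\sigma_Q$ need \emph{not} preserve $G$ (already $\sigma_{12}$ sends the edge $\overline{23}$ to $\overline{13}$). Fortunately neither claim is needed: in (a) the term $\sigma_{ij}(P)\cdot\Delta_{ij}(Q)$ vanishes because $\Delta_{ij}(Q)=0$ by the inductive hypothesis, irrespective of where $\sigma_{ij}(P)$ lies; in (b) the twisted operator $\nabla_{\sigma_Q(a)\sigma_Q(b)}$ is just another operator of the same kind, so it is covered by the inductive hypothesis precisely because the statement is quantified over \emph{all} pairs of indices (alternatively, Lemma~\ref{lemma-leibniz} gives (b) in one line, since each term there is a scalar times a submonomial of $P$, which again lies in $\FK_G$).

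The genuine gap is in (e). You assert that ``by the $H$-analogue of (c), $\Delta(R)\in\FK_H\otimes\FK_H$,'' but (c) only yields $\Delta(R)\in\FK_n\otimes\FK_H$, and the stronger containment is false: for $H$ the path with edges $\overline{12},\overline{23}$ and $R=x_{12}x_{23}\in\FK_H^+$, the paper's own computation gives $\Delta(R)=x_{12}x_{23}\otimes 1+x_{12}\otimes x_{23}+x_{13}\otimes x_{12}+1\otimes x_{12}x_{23}$, and the braiding term $x_{13}\otimes x_{12}$ has left factor $x_{13}\notin\FK_H$. The same term defeats your derived decomposition $\Delta(R)\in\FK_H^+\FK_n\otimes\FK_H+1\otimes\FK_H^+$, since the degree-one part of $\FK_H^+\FK_n$ is just $\FK_H^1$, which does not contain $x_{13}$; so the bookkeeping that follows has nothing to stand on. The repair is to isolate the lone term on the \emph{other} side: by (c) applied to $H$, together with the fact that the component of $\Delta(R)$ in $\FK_n^{\deg R}\otimes\FK_n^0$ is $R\otimes 1$ (a fact the paper uses elsewhere, e.g.\ in Proposition~\ref{prop-monic}), one gets $\Delta(R)\in R\otimes 1+\FK_n\otimes\FK_H^+$. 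Then $(R\otimes 1)\Delta(S)\subset R\,\FK_n\otimes\FK_G\subset\FK_H^+\FK_n\otimes\FK_G$ (the braiding twist is trivial because the right-hand factor of $R\otimes 1$ is $1$), while $(\FK_n\otimes\FK_H^+)(\FK_n\otimes\FK_G)\subset\FK_n\otimes\FK_H^+\FK_G$ (the twist keeps the left factor inside $\FK_n$, and the right factors multiply into $\FK_H^+\FK_G$). Summing the two pieces gives exactly the containment in (e); note the paper itself offers no written proof (it calls the lemma immediate from the definitions), so this corrected argument is what must be supplied.
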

In the language of braided Hopf algebras, Lemma~\ref{lemma-delta}(c) says that $\FK_G$ is a \emph{left coideal subalgebra} of $\FK_n$. Likewise, Lemma~\ref{lemma-delta}(e) implies that $\FK_H^+\FK_n$ is a \emph{coideal} of $\FK_n$.

One important consequence is the following.
\begin{lemma} \label{lemma-orthogonal}
Let $G_1$ be a graph on $n$ vertices and $G_2$ its complement. Then the left ideal $\FK_n\FK_{G_1}^+$ is orthogonal to $\FK_{G_2}$ with respect to $\langle \cdot, \cdot \rangle$.
\end{lemma}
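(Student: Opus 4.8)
The plan is to convert orthogonality to the left ideal $\FK_n\FK_{G_1}^+$ into a statement purely about the right-hand factors, using the adjointness of the operators $\Delta_P$ recorded in Proposition~\ref{prop-pairing} together with the vanishing in Lemma~\ref{lemma-delta}(a). Since $\langle\cdot,\cdot\rangle$ is bilinear and pairs only pieces of equal degree, it suffices to show that $\langle RP, Q\rangle = 0$ whenever $R \in \FK_n$, $P \in \FK_{G_1}^+$, and $Q \in \FK_{G_2}$ are homogeneous monomials with $\deg(RP) = \deg Q$; for unequal degrees the pairing vanishes automatically, and the general case follows by linearity.

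First I would reduce the computation to the single operator $\Delta_P$. By Proposition~\ref{prop-pairing} we have $\langle RP, Q\rangle = \Delta_{RP}(Q)$, and since $P \mapsto \Delta_P$ is multiplicative on monomials (equivalently, by the adjointness $\langle P_1P_2, Q\rangle = \langle P_1, \Delta_{P_2}(Q)\rangle$), this factors as $\Delta_{RP} = \Delta_R \circ \Delta_P$. Hence $\langle RP, Q\rangle = \Delta_R\bigl(\Delta_P(Q)\bigr)$, and it is enough to prove that $\Delta_P(Q) = 0$; the outer multiplier $R$ then becomes irrelevant.

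The key step is to observe that $\Delta_P$ annihilates all of $\FK_{G_2}$. Writing $P = p_1 \cdots p_d$ with $d \geq 1$ and each $p_s = x_{i_sj_s}$ an edge of $G_1$, we have $\Delta_P = \Delta_{p_1}\cdots\Delta_{p_d}$, whose innermost factor $\Delta_{p_d}$ acts on $Q$ first. Because $G_2$ is the complement of $G_1$, the edge $\overline{i_dj_d}$ lies in $G_1$ and therefore not in $G_2$, so Lemma~\ref{lemma-delta}(a) gives $\Delta_{p_d}(Q) = 0$. Thus $\Delta_P(Q) = 0$, whence $\langle RP, Q\rangle = 0$, and bilinearity completes the argument.

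I expect the main obstacle here to be conceptual rather than computational: the essential move is recognizing that the left multiplier $R$ can be stripped off via $\langle RP, Q\rangle = \Delta_R(\Delta_P(Q))$, so that everything hinges on the rightmost factor $P$ of the ideal element. The complementarity of $G_1$ and $G_2$ is precisely the hypothesis that makes Lemma~\ref{lemma-delta}(a) applicable to $\Delta_{p_d}$. The one bookkeeping point to keep in mind is that $P$ must have positive degree, so that $\Delta_P$ really contains a factor $\Delta_{p_d}$ with $\overline{i_dj_d}\in G_1$; this is exactly what $P \in \FK_{G_1}^+$ guarantees.
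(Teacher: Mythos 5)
Your proof is correct and takes essentially the same route as the paper: the paper's one-line argument invokes exactly the adjointness of right multiplication by elements of $\FK_{G_1}^+$ with the operators $\Delta_P$ (equivalently, your factorization $\Delta_{RP}=\Delta_R\circ\Delta_P$ from Proposition~\ref{prop-pairing}) combined with Lemma~\ref{lemma-delta}(a) applied to the trailing $G_1$-edge of $P$. Nothing to add.
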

\begin{proof}
This follows using the adjointness of right multiplication by $x_{ij}$ and the left action of $\Delta_{ij}$ for $\overline{ij} \in G_1$ together with Lemma~\ref{lemma-delta}(a).
\end{proof}

\subsection{Finite dimensionality} \label{sec-finitedim}
In this section, we will show that if $\FK_G$ is finite-dimensional, then its Hilbert series must have symmetric coefficients. (This was proven for $\FK_n$ in \cite{MS}.) We begin with a definition motivated by the theory of Coxeter groups.

\begin{definition}
For $w\in \FK_n$, the \emph{right descent set} of $w$, denoted $R(w)$, is the graph containing edge $\overline{ij}$ whenever $wx_{ij} = 0$. Similarly, define the \emph{left descent set} $L(w)$ as the graph containing $\overline{ij}$ whenever $x_{ij}w=0$.
\end{definition}
We now use these descent sets to prove a key lemma regarding the action of $\FK_n^\vee$ on $\FK_n$.

\begin{lemma} \label{lemma-integral}
For all $P \in \FK_n$, $\FK_n^\vee \act P$ is a left $\FK_{L(P)}$-module.
\end{lemma}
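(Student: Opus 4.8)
The plan is to show that $W := \FK_n^\vee \act P$ is a subspace of $\FK_n$ closed under left multiplication by each generator $x_{ij}$ with $\overline{ij} \in L(P)$. Since $W$ is the image of the linear map $p^\vee \mapsto p^\vee \act P$ (hence a subspace) and the $x_{ij}$ with $\overline{ij}\in L(P)$ generate $\FK_{L(P)}$ as an algebra, closure under each such $x_{ij}$ upgrades to closure under all of $\FK_{L(P)}$, which is exactly the claimed module structure.

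The engine of the argument is a Leibniz-type identity describing how the action $\act$ interacts with left multiplication. First I would compute the coproduct of a product $x_{ij}Q$ using that $\Delta$ is a braided homomorphism, so $\Delta(x_{ij}Q) = \Delta(x_{ij})\Delta(Q)$, together with $\Delta(x_{ij}) = x_{ij}\otimes 1 + 1 \otimes x_{ij}$. Writing $\Delta(Q) = \sum Q_{(1)} \otimes Q_{(2)}$ and applying the braided product rule, the cross term $(1 \otimes x_{ij})(Q_{(1)} \otimes Q_{(2)})$ produces a $\sigma_{ij}$-twist, yielding $\Delta(x_{ij}Q) = \sum x_{ij}Q_{(1)} \otimes Q_{(2)} + \sum \sigma_{ij}(Q_{(1)}) \otimes x_{ij}Q_{(2)}$. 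Pairing the first tensor factor against $p^\vee$ then gives $p^\vee \act (x_{ij}Q) = r^\vee \act Q + x_{ij}\,(q^\vee \act Q)$, where $r^\vee$ and $q^\vee$ are the graded-dual elements defined by $r^\vee(R) = p^\vee(x_{ij}R)$ and $q^\vee(R) = p^\vee(\sigma_{ij}(R))$; a brief degree and $\S_n$-degree bookkeeping confirms that both lie in $\FK_n^\vee$.

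Rearranging this identity to $x_{ij}\,(q^\vee \act Q) = p^\vee \act (x_{ij}Q) - r^\vee \act Q$ and specializing $Q = P$, the defining property $x_{ij}P = 0$ of the left descent set $\overline{ij} \in L(P)$ kills the first term, leaving $x_{ij}\,(q^\vee \act P) = -\,r^\vee \act P \in W$. Finally, since $\sigma_{ij}$ is an involutive automorphism of $\FK_n$, the induced map $p^\vee \mapsto q^\vee$ is a bijection of $\FK_n^\vee$, so as $p^\vee$ ranges over $\FK_n^\vee$ the element $q^\vee$ ranges over all of $\FK_n^\vee$ as well; thus $x_{ij}\,(q^\vee \act P)$ sweeps out all of $x_{ij}W$, and we conclude $x_{ij}W \subseteq W$.

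The step I expect to be the main obstacle, or at least the one requiring the most care, is the coproduct computation for the product $x_{ij}Q$: the braiding forces the $\sigma_{ij}$-twist to appear in the cross term, and getting this twist exactly right is precisely what makes $q^\vee$ the $\sigma_{ij}$-conjugate of $p^\vee$, and hence what makes the final bijectivity argument go through. Everything else is formal manipulation of the action $\act$ together with the definition of $L(P)$.
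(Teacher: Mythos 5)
Your proposal is correct and follows essentially the same route as the paper's proof: both rest on the braided coproduct identity $\Delta(x_{ij}Q) = \sum x_{ij}Q_{(1)} \otimes Q_{(2)} + \sum \sigma_{ij}(Q_{(1)}) \otimes x_{ij}Q_{(2)}$, pairing against a dual functional, and using $x_{ij}P=0$ to kill one term. The only difference is bookkeeping direction---the paper fixes $Q = q^\vee \act P$ and pairs $\Delta(x_{ij}P)$ against the twisted functional $\sigma_{ij}q^\vee$, whereas you pair against a generic $p^\vee$ and invoke bijectivity of the twist $p^\vee \mapsto p^\vee\circ\sigma_{ij}$ at the end; these are the same argument up to reindexing.
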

\begin{proof}
Let $Q=q^\vee \act P$ be an arbitrary element of $\FK_n^\vee \act P$. For $\overline{ij} \in L(P)$, we compute $\sigma_{ij}q^\vee \act x_{ij}P$, where $\sigma_{ij}q^\vee \in \FK_n^\vee$ is given by $(\sigma_{ij}q^\vee)(R) = q^\vee(\sigma_{ij}R)$ for all $R \in \FK_n$. If $\Delta(P) = \sum_k P_{(1)}^k \otimes P_{(2)}^k$, then
\begin{align*}
\Delta(x_{ij}P) &= \Delta(x_{ij})\Delta(P)\\
&= (x_{ij}\otimes 1 + 1 \otimes x_{ij}) \cdot \sum_k P_{(1)}^k \otimes P_{(2)}^k\\
&= \sum_k x_{ij}P_{(1)}^k \otimes P_{(2)}^k + \sum_k \sigma_{ij}P_{(1)}^k \otimes x_{ij}P_{(2)}^k.
\end{align*}
Thus
\begin{align*}
\sigma_{ij}q^\vee \act x_{ij}P &= \sum_k (\sigma_{ij}q^\vee)(x_{ij}P^k_{(1)})\cdot P_{(2)}^k + \sum_k (\sigma_{ij}q^\vee)(\sigma_{ij}P_{(1)}^k)\cdot x_{ij}P_{(2)}^k\\
&=-\sum_kq^\vee(x_{ij}\sigma_{ij}P^k_{(1)}) \cdot P^k_{(2)} + x_{ij}\cdot \sum_k q^\vee(P_{(1)}^k) \cdot P_{(2)}^k\\
&=-r^\vee \act P + x_{ij}Q,
\end{align*}
where $r^\vee \in \FK_n^\vee$ is given by $r^\vee(R) =q^\vee(x_{ij}\sigma_{ij}R)$. Since $\overline{ij} \in L(P)$, $x_{ij}P=0$, so  $x_{ij}Q = r^\vee \act P$.
\end{proof}
One can similarly show that $\FK_n^\vee \act P$ is a right $\FK_{R(P)}$-module. (See Proposition~\ref{prop-monicmcr} for a similar calculation.)

As a direct consequence of Lemma~\ref{lemma-integral}, we have the following result.

\begin{prop} \label{prop-monic}
Suppose that $G \subset L(P)$ for some $P \in \FK_n$. Then $\FK_G \subset \FK_n^\vee \act P$, and $\FK_G$ is finite-dimensional. If $P \in \FK_G$, then $\FK_n^\vee \act P = \FK_G$, and $P$ spans the top degree part of $\FK_G$.
\end{prop}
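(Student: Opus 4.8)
The plan is to prove that $M := \FK_n^\vee \act P$ is a finite-dimensional left $\FK_G$-module containing the identity $1$, whence $\FK_G = \FK_G\cdot 1 \subseteq M$; when $P\in\FK_G$ the opposite inclusion is automatic and a degree count isolates the top piece. I will assume $P$ is homogeneous of degree $d$ with $P\neq 0$, which is the natural setting since the phrase ``top degree part'' presumes homogeneity.

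First I would record two easy facts. Since $\FK_n$ is generated in degree $1$ by finitely many elements, each $\FK_n^e$ is finite-dimensional; and as every $p^\vee \act P$ has degree $d-\deg p^\vee\le d$, we get $M\subseteq\bigoplus_{e=0}^{d}\FK_n^e$, so $M$ is finite-dimensional. By Lemma~\ref{lemma-integral}, $M$ is a left $\FK_{L(P)}$-module, and since $G\subseteq L(P)$ yields $\FK_G\subseteq\FK_{L(P)}$, the space $M$ is closed under left multiplication by $\FK_G$.

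The crux is to exhibit $1\in M$. Writing $\Delta(P)=\sum_i P_{(1)}^i\otimes P_{(2)}^i$, the unique summand whose first tensor factor has degree $d$ is $P\otimes 1$: indeed $\Delta$ is a braided algebra map with $\Delta(x_{ij})=x_{ij}\otimes 1+1\otimes x_{ij}$, and selecting the left term in every factor reassembles $P\otimes 1$ (the braiding acts trivially against $1$). Hence for any $p^\vee\in(\FK_n^d)^\vee$ one has $p^\vee\act P=p^\vee(P)\,1$, and choosing $p^\vee$ with $p^\vee(P)=1$ (possible since $P\neq 0$) gives $1\in M$. Combining with the previous paragraph, $\FK_G=\FK_G\cdot 1\subseteq M=\FK_n^\vee\act P$, which proves the first two assertions.

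Finally, if $P\in\FK_G$, then Lemma~\ref{lemma-delta}(d) shows $\FK_n^\vee$ preserves $\FK_G$, so $M\subseteq\FK_G$; with $\FK_G\subseteq M$ this gives $\FK_n^\vee\act P=\FK_G$. For the top degree, the degree-$d$ part of $M$ is the image of the degree-$0$ functionals, spanned by $1^\vee\act P=P$ (only the $1\otimes P$ term of $\Delta(P)$ contributes), while every $p^\vee\act P$ with $\deg p^\vee\ge 1$ lands in degree $<d$; thus $\FK_G^d=\QQ P$ and $\FK_G^e=0$ for $e>d$, so $P$ spans the top degree part. I expect the identity-extraction step to be the main obstacle, as it rests on correctly pinning down the top-degree component of the braided coproduct; the remaining inclusions are formal given Lemmas~\ref{lemma-integral} and~\ref{lemma-delta}(d).
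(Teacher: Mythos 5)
Your proposal is correct and follows essentially the same route as the paper's proof: extract $1 \in \FK_n^\vee \act P$ from the $\FK_n^d \otimes \FK_n^0$ component $P \otimes 1$ of $\Delta(P)$, invoke Lemma~\ref{lemma-integral} to get $\FK_G \subset \FK_n^\vee \act P$ (hence finite-dimensionality by the degree bound), use Lemma~\ref{lemma-delta} for the reverse inclusion when $P \in \FK_G$, and identify the top degree via the $1 \otimes P$ component. You merely make explicit some details the paper leaves implicit, such as the choice of $p^\vee$ with $p^\vee(P)=1$ and the standing assumption $P \neq 0$.
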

\begin{proof}
We may assume that $P$ is homogeneous of degree $d$. Then the only component of $\Delta(P)$ that lies in $\FK_n^d \otimes \FK_n^0$ is $P \otimes 1$. Thus $1 \in \FK_n^\vee \act P$. By Lemma~\ref{lemma-integral}, we must have that $\FK_G \subset \FK_n^\vee \act P$. But clearly every element of $\FK_n^\vee \act P$ has degree at most $d$, so $\FK_G$ has bounded degree and is therefore finite-dimensional.

If $P \in \FK_G$, then by Lemma~\ref{lemma-delta}, $\FK_n^\vee \act P \subset \FK_G$, so we must have $\FK_n^\vee \act P = \FK_G$. But the highest degree part of $\FK_n^\vee \act P$ has degree $d$, and since the homogeneous part of $\Delta(P)$ in $\FK_n^0 \otimes \FK_n^d$ is $1 \otimes P$, it follows that $P$ spans $\FK_G^d$.
\end{proof}

We can now prove the main theorem of this section.

\begin{thm} \label{thm-finitedim}
Let $G$ be a graph such that $\FK_G$ is finite-dimensional. Then
\begin{enumerate}[(a)]
\item the top degree component of $\FK_G$ is spanned by a single monomial $w_0^G$ of degree $d_0$;
\item for any $Q \in \FK_G$, there exists $p^\vee \in \FK_n^\vee$ such that $Q = p^\vee \act w_0^G$;
\item for any nonzero $Q \in \FK_G$, there exists a monomial $P \in \FK_G$ such that $PQ = w_0^G$;
\item the coefficients of the Hilbert series $\Hilb_G(t)$ are symmetric; 
\item the subwords of $w_0^G$ span $\FK_G$; and
\item $\rev(w_0^G) = \pm w_0^G$.
\end{enumerate}
\end{thm}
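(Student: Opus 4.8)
The plan is to derive all six statements from Proposition~\ref{prop-monic}, the involutivity of $\rev$, and the shape of the coproduct, establishing them in the order (a), (b), (f), then (c), (d), (e) so that each part feeds the next.

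For (a), let $d_0$ be the largest degree with $\FK_G^{d_0}\neq 0$ (finite since $\FK_G$ is finite-dimensional) and pick any nonzero $P\in\FK_G^{d_0}$. Because $\FK_G^{d_0+1}=0$, every edge generator annihilates $P$ on the left, so $G\subseteq L(P)$; as $P\in\FK_G$ too, Proposition~\ref{prop-monic} shows $P$ spans $\FK_G^{d_0}$, whence $\dim\FK_G^{d_0}=1$. Since $\FK_G^{d_0}$ is spanned by degree-$d_0$ monomials in the edge generators, some such monomial $w_0^G$ is nonzero and therefore spans it. Part (b) is then immediate: applying Proposition~\ref{prop-monic} to $P=w_0^G$ (which lies in $\FK_G$ and satisfies $G\subseteq L(w_0^G)$) gives $\FK_n^\vee\act w_0^G=\FK_G$, so every $Q\in\FK_G$ has the form $p^\vee\act w_0^G$. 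For (f), recall that $\rev$ is a linear involution (it reverses each monomial) and an antiautomorphism carrying $\FK_G$ to itself and preserving degree; hence $\rev(w_0^G)\in\FK_G^{d_0}$, so $\rev(w_0^G)=\lambda w_0^G$ for some scalar $\lambda$, and applying $\rev$ once more forces $\lambda^2=1$, i.e.\ $\rev(w_0^G)=\pm w_0^G$.

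The heart of the argument is (c). I would first reduce to homogeneous $Q$: if $Q_e$ is the lowest-degree nonzero component of a general nonzero $Q$ and $P$ is a monomial of degree $d_0-e$ with $PQ_e$ a nonzero multiple of $w_0^G$, then each higher term $PQ_{e'}$ with $e'>e$ has degree exceeding $d_0$ and so vanishes in $\FK_G$, giving $PQ=PQ_e$. For homogeneous $Q$ of degree $e$ I induct on $d_0-e$. If $e=d_0$, take $P=\idelm$. If $e<d_0$, the key point is that some edge generator fails to annihilate $Q$ on the left: were $x_{ij}Q=0$ for all $\overline{ij}\in G$, then $G\subseteq L(Q)$ and Proposition~\ref{prop-monic} would force $Q$ to span $\FK_G^{d_0}$, contradicting $e<d_0$. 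Choosing $\overline{ij}\in G$ with $x_{ij}Q\neq 0$, the induction supplies a monomial $P'$ with $P'(x_{ij}Q)$ a nonzero multiple of $w_0^G$, and $P=P'x_{ij}$ works. (As $\FK_G^{d_0}$ is one-dimensional, $PQ$ is a priori only a nonzero scalar multiple of $w_0^G$, which is all that is used below; one rescales if exact equality is desired.)

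Part (d) follows by upgrading (c) to a perfect pairing. Define $B\colon\FK_G^e\times\FK_G^{d_0-e}\to\QQ$ by $PQ=B(Q,P)\,w_0^G$, which makes sense since $\FK_G^{d_0}=\QQ\,w_0^G$. Statement (c) says precisely that no nonzero $Q$ lies in the left radical of $B$. For the right radical, given nonzero $P\in\FK_G^{d_0-e}$, apply (c) to $\rev(P)\in\FK_G^{d_0-e}$ to obtain a monomial $P''$ of degree $e$ with $P''\,\rev(P)$ a nonzero multiple of $w_0^G$; applying the antiautomorphism $\rev$ gives $P\,\rev(P'')=\rev\big(P''\,\rev(P)\big)$, which by (f) is again a nonzero multiple of $w_0^G$, so $Q=\rev(P'')\in\FK_G^e$ pairs nontrivially with $P$. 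Thus $B$ is a perfect pairing of finite-dimensional spaces, whence $\dim\FK_G^e=\dim\FK_G^{d_0-e}$ and $\Hilb_G(t)$ has symmetric coefficients. Finally, (e) comes from (b) and the coproduct: writing $w_0^G=y_1\cdots y_{d_0}$ as a product of edge generators and expanding $\Delta(w_0^G)=\prod_k(y_k\otimes 1+1\otimes y_k)$ in the braided tensor product, one checks that every right-hand tensor factor is a genuine subword $y_{k_1}\cdots y_{k_m}$ with $k_1<\cdots<k_m$, the braiding only twisting the complementary left-hand factors. Hence each $p^\vee\act w_0^G$ is a linear combination of subwords of $w_0^G$, and by (b) these subwords span $\FK_G$. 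The main obstacle in this scheme is the inductive step of (c): everything else is bookkeeping around Proposition~\ref{prop-monic}, the involution $\rev$, and the coproduct, whereas (c) is exactly where the descent-set machinery of Lemma~\ref{lemma-integral} and Proposition~\ref{prop-monic} is genuinely needed to guarantee that any element below the top degree can be pushed upward by left multiplication.
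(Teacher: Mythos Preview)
Your proof is correct and follows essentially the same approach as the paper, deriving everything from Proposition~\ref{prop-monic}, the coproduct, and the involution $\rev$. The paper's argument for (c) is slightly slicker---it simply picks a monomial $P$ of maximal degree with $PQ\neq 0$, so that $x_{ij}PQ=0$ for all $\overline{ij}\in G$ and Proposition~\ref{prop-monic} forces $PQ$ into the top degree in one step---and (d) then follows from (c) alone (applied in each degree) without the detour through $\rev$ and (f).
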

\begin{proof}
These all follow easily from Proposition~\ref{prop-monic}:

For (a), any element $w_0^G$ of top degree $d_0$ satisfies $x_{ij}w_0^G=0$ for all $\overline{ij} \in G$, so $w_0^G$ spans $\FK_G^{d_0}$.

For (b), this is equivalent to $\FK_n^\vee \act w_0^G = \FK_G$.

For (c), let $P \in \FK_G$ be a maximum degree monomial such that $PQ$ is nonzero. Then $x_{ij}PQ=0$ for all $\overline{ij} \in G$, so $PQ$ must be a multiple of $w_0^G$.

For (d), the bilinear form $\FK_G^d \otimes \FK_G^{d_0-d} \to \QQ$ that sends $P \otimes Q$ to the coefficient of $w_0^G$ in $PQ$ is nondegenerate by (c), so $\FK_G^d$ and $\FK_G^{d_0-d}$ have the same dimension.

For (e), any element of $\FK_n^\vee \act w_0^G = \FK_G$ lies in the span of the subwords of $w_0^d$ by the definition of the $\act$ action. 

For (f), $\rev(w_0^G) = cw_0^G$ for some constant $c$, and since $\rev$ is an involution, $c=\pm 1$.
\end{proof}

\begin{ex}
For $G=K_{1,3}$, as in Example~\ref{ex-star3}, the lexicographically minimal choice for $w_0^G$ is $\e{abacabac}$. For $G=K_{1,4}$, as in Example~\ref{ex-star4}, the lexicographically minimal choice is $w_0^G=\e{abacabacdabacabacdabadcabacd}$.
\end{ex}

\begin{rmk}
Theorem~\ref{thm-finitedim} implies that $\FK_G$ is a \emph{Frobenius algebra} when finite-dimensional: its Frobenius form is the bilinear form given in the proof of part (d).
\end{rmk}

The results in Theorem~\ref{thm-finitedim} are analogues of known results about finite Coxeter groups. See Section~\ref{sec-coxeter} for more discussion of this relationship.


\section{Tensor product decomposition}

\subsection{Subgraphs}
We begin this section by describing the relationship between $\FK_G$ and $\FK_H$ when $H$ is a subgraph of $G$.

\begin{thm} \label{thm-subgraph}
Let $H$ be a subgraph of $G$. Then $\FK_G$ is a free (left or right) $\FK_H$-module. Specifically, $\FK_G \cong \FK_H \otimes (\FK_G/\FK_H^+\FK_G)$ as left $\FK_H$-modules and $\FK_G \cong (\FK_G/\FK_G\FK_H^+) \otimes \FK_H$ as right $\FK_H$-modules.
\end{thm}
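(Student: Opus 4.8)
The plan is to prove the left-module statement; the right-module statement then follows by applying the reversal antiautomorphism $\rev$ of Section~2, which fixes every generator (hence preserves both $\FK_H$ and $\FK_G$) and interchanges left ideals with right ideals, carrying $\FK_G\cong\FK_H\otimes(\FK_G/\FK_H^+\FK_G)$ to $\FK_G\cong(\FK_G/\FK_G\FK_H^+)\otimes\FK_H$. So I fix a homogeneous vector-space splitting $\FK_G=\FK_H^+\FK_G\oplus C$, so that the projection $q\colon\FK_G\to\FK_G/\FK_H^+\FK_G$ restricts to an isomorphism on $C$, and I aim to show that the multiplication map $\mu\colon\FK_H\otimes C\to\FK_G$ is an isomorphism. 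It is automatically left $\FK_H$-linear, and a homogeneous basis of $C$ is then a free basis of $\FK_G$, giving the stated decomposition with $C\cong\FK_G/\FK_H^+\FK_G$.

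Surjectivity of $\mu$ is a routine graded Nakayama argument. By the choice of $C$ we have $\FK_G=C+\FK_H^+\FK_G$; substituting this identity into itself and using that all algebras here are connected and nonnegatively graded (so $\FK_H^+$ strictly raises degree), the iterated terms $(\FK_H^+)^k\FK_G$ leave any fixed degree for $k$ large. Hence $\FK_G=\FK_H\cdot C=\operatorname{im}\mu$.

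Injectivity is the heart of the matter, and here I would use the coproduct together with the coideal structure of Lemma~\ref{lemma-delta}. Consider
\[
\rho=(\mathrm{id}\otimes q)\circ\Delta\colon\FK_G\longrightarrow\FK_n\otimes(\FK_G/\FK_H^+\FK_G),
\]
well-defined by Lemma~\ref{lemma-delta}(c). Grade the target by the degree of the right tensor factor. For homogeneous $a\in\FK_H$ and $c\in\FK_G$, connectedness gives $\Delta(a)\in a\otimes1+\FK_n\otimes\FK_H^+$, while the unique term of $\Delta(c)$ whose right tensor factor has degree $\deg c$ is $1\otimes c$. Expanding $\Delta(ac)=\Delta(a)\Delta(c)$ in the braided product and applying $\mathrm{id}\otimes q$, any contribution whose right factor has positive $\FK_H$-degree lands in $\FK_H^+\FK_G$ and is killed by $q$ (using Lemma~\ref{lemma-delta}(e)); survival therefore forces the left tensor factor coming from $\Delta(a)$ to be $a\otimes1$, whose trivial $\S_n$-twist makes the braiding harmless. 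Tracking right-factor degrees then shows that the top right-degree surviving part of $\rho(ac)$ is exactly $a\otimes q(c)$.

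With this in hand, injectivity follows cleanly. Suppose $\sum_i a_ic_i=0$ with $\{c_i\}$ a homogeneous basis of $C$, the $a_i\in\FK_H$ not all zero, reducing to a single total degree. Let $m$ be the maximal value of $\deg c_i$ over those $i$ with $a_i\neq0$. Since each $\rho(a_ic_i)$ has right-factor degree at most $\deg c_i$, the right-factor-degree-$m$ component of $\rho\bigl(\sum_i a_ic_i\bigr)=0$ is precisely $\sum_{\deg c_i=m}a_i\otimes q(c_i)$. As $q$ is injective on $C$, the $q(c_i)$ are linearly independent, forcing $a_i=0$ for all such $i$ and contradicting the choice of $m$. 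Thus $\mu$ is injective and the theorem follows. The step I expect to require the most care is the leading-term identification in the previous paragraph: one must confirm that the $\S_n$-braiding on $\FK_n\otimes\FK_n$ neither promotes a lower right-degree factor into the top slot nor cancels $a\otimes q(c)$. The point that makes this go through is that $q$-survival forces the relevant left factor to be $a\otimes 1$, for which the twist $\sigma_{(-)}$ is the identity; equivalently, one may phrase the whole argument as the relative Hopf-module structure theorem for the coideal $\FK_H^+\FK_G$ of Lemma~\ref{lemma-delta}(e), building from $\rho$ an explicit $\FK_H$-linear splitting of $\mu$.
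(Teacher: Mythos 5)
Your proposal is correct and takes essentially the same approach as the paper: surjectivity by the same graded induction, and injectivity by composing $\Delta$ with the projection to $\FK_G/\FK_H^+\FK_G$, using $\Delta(a)\equiv a\otimes 1 \pmod{\FK_n\otimes\FK_H^+}$ and the triviality of the braiding against $a\otimes 1$, then extracting an extremal-degree component --- the paper merely does the dual bookkeeping, pairing the left tensor factor with a dual-basis functional of minimal degree rather than reading off the maximal right-factor degree as you do. One cosmetic slip: the step where contributions with right factor in $\FK_H^+\FK_G$ are killed by $q$ needs only the braided product formula (right factors of $(\FK_n\otimes\FK_H^+)\cdot(\FK_n\otimes\FK_G)$ stay in $\FK_H^+\FK_G$), not Lemma~\ref{lemma-delta}(e), which concerns $\Delta$ applied to elements of $\FK_H^+\FK_G$.
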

\begin{proof}
We prove just that $\FK_G$ is a free left $\FK_H$-module; the other result follows by passing to the opposite algebra. Let $I=\FK_H^+\FK_G$, and let the projection map be $\pi\colon \FK_G\to \FK_G/I$.

We first claim that if $f\colon \FK_G/I \to \FK_G$ is any degree-preserving ($\QQ$-linear) section and $\mu$ is the multiplication map, then $\varphi = \mu \circ (\idelm \otimes f) \colon \FK_H \otimes (\FK_G/I) \to \FK_G$ is surjective. We will prove by induction that $\FK_G^d$ lies in the image of $\varphi$. Note that the image of $\varphi$ is clearly closed under left multiplication by $\FK_H$. Then if $\FK_G^d$ lies in the image, so does the degree $d+1$ part of $I$. Since any element of $\FK_G^{d+1}$ differs from an element in the image of $f$ by an element of $I$ of degree $d+1$, it follows that $\FK_G^{d+1}$ also lies in the image, completing the induction.

Next we show that $\varphi$ is injective. Choose bases $\{h_i\}$ of $\FK_H$ and $\{\bar{g}_j\}$ of $\FK_G/I$, and suppose that $\varphi(\sum_{i,j} c_{ij} h_i \otimes \bar{g}_j)= \sum_{i,j} c_{ij} h_ig_j= 0$ for some constants $c_{ij}$ not all zero, where $g_j = f(\bar{g}_j)$. By restricting to the degree $d$ part, we may assume that $\deg h_i + \deg \bar{g}_j = d$ for all $i$ and $j$.

Find $i'$ such that some $c_{i'j}$ is nonzero with $h_{i'}$ of minimum degree $d'$. Let $\{h_{i}^\vee \mid \deg h_i \leq d\} \subset \FK_H^\vee$ be the dual basis to $\{h_i \mid \deg h_i \leq d\} \subset \FK_H$, and extend each $h_i^\vee$ to an element of $\FK_n^\vee$ arbitrarily. We claim that
\[0 = \pi(h_{i'}^\vee \act \textstyle\sum_{i,j} c_{ij}h_ig_j) = \textstyle\sum_j c_{i'j}\bar{g}_j.\]
This will be a contradiction since the $\bar{g}_j$ are linearly independent.

To see why the claim is true, consider any term $c_{ij}h_ig_j$ with $c_{ij}$ nonzero. Then $\Delta(h_ig_j) = \Delta(h_i) \cdot \Delta(g_j)$. Since $\FK_n \otimes I$ is a right ideal of $\FK_n \otimes \FK_G$ and $\Delta(h_i)$ is congruent to $h_i \otimes 1$ modulo $\FK_n \otimes I$, we find that
\[\pi(h_{i'}^\vee \act h_ig_j) = (h_{i'}^\vee \otimes \pi)(\Delta(h_ig_j))= (h_{i'}^\vee \otimes \pi)((h_i \otimes 1) \cdot \Delta(g_j)).\]
Since $\deg h_i \geq d'$, $(h_i \otimes 1) \cdot \Delta(g_j)$ can only have a nonzero component in $\FK_n^{d'} \otimes \FK_G^{d-d'}$ when $\deg h_i = d'$, in which case this component is $(h_i \otimes 1)(1 \otimes g_j) = h_i \otimes g_j$. Applying $h_{i'}^\vee \otimes \pi$ then gives 0 unless $i = i'$, in which case it gives $\bar{g}_j$, as desired. This completes the proof.
\end{proof}
\begin{cor}
Let $H$ be a subgraph of $G$. Then $\Hilb_H(t)$ divides $\Hilb_G(t)$, and their quotient has positive coefficients.
\end{cor}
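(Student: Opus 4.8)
The plan is to read off both assertions directly from the free-module decomposition of Theorem~\ref{thm-subgraph}, the one extra ingredient being that the isomorphism produced there is graded. First I would record that the section $f\colon \FK_G/\FK_H^+\FK_G \to \FK_G$ in the proof of Theorem~\ref{thm-subgraph} is chosen to be degree-preserving, and that both the multiplication map $\mu$ and the identity on $\FK_H$ preserve degree; hence $\varphi = \mu \circ (\idelm \otimes f)$ is a degree-preserving isomorphism of graded vector spaces
\[
\FK_H \otimes (\FK_G/\FK_H^+\FK_G) \;\xrightarrow{\ \sim\ }\; \FK_G,
\]
where the source carries the total grading $\deg(h \otimes \bar g) = \deg h + \deg \bar g$.

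Next I would invoke the standard fact that the Hilbert series of a tensor product of graded vector spaces is the product of the two Hilbert series. Applied to the graded isomorphism above, this yields
\[
\Hilb_G(t) = \Hilb_H(t)\cdot Q(t), \qquad Q(t) = \sum_{d \geq 0} \dim_\QQ (\FK_G/\FK_H^+\FK_G)^d\, t^d,
\]
which exhibits $\Hilb_H(t)$ as a divisor of $\Hilb_G(t)$ with explicit quotient $Q(t)$.

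Finally, $Q(t)$ is by construction the Hilbert series of the graded vector space $\FK_G/\FK_H^+\FK_G$, so each of its coefficients is the dimension of a graded piece, hence a nonnegative integer (and the constant term is $1$, since $\FK_H^+\FK_G$ has no degree-$0$ part). This gives the asserted positivity of the coefficients of the quotient. The only step requiring genuine attention is confirming that the decomposition of Theorem~\ref{thm-subgraph} respects the grading, which is why I would open by noting the degree-preserving nature of $f$ and $\varphi$; once that is in hand, the corollary is a purely formal consequence and presents no real obstacle.
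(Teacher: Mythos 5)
Your proposal is correct and is essentially the paper's own argument: the paper proves the corollary in one line by observing that the quotient is the Hilbert series of $\FK_G/\FK_G\FK_H^+$ (the right-module version of the decomposition in Theorem~\ref{thm-subgraph}, while you use the equivalent left-module version $\FK_G/\FK_H^+\FK_G$). The only difference is that you spell out the verification that the isomorphism is degree-preserving, which the paper leaves implicit.
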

\begin{proof}
The quotient is the Hilbert series of $\FK_G/\FK_G\FK_H^+$.
\end{proof}
\begin{rmk}
As noted in Lemma~\ref{lemma-delta}(c), $\FK_G$ is a left coideal subalgebra of the braided Hopf algebra $\FK_n$. Compare Theorem~\ref{thm-subgraph} to the results of \cite{Masuoka}, which gives several conditions that imply that a Hopf algebra is a free module over a (left) coideal subalgebra.
\end{rmk}

In light of Theorem~\ref{thm-subgraph}, we make the following definition.

\begin{defn} \label{defn-mcr}
A subset $M \subset \FK_G$ is a set of left (resp. right) \emph{minimal coset representatives} for $\FK_H$ if it is a basis of $\FK_G$ as a right (resp. left) $\FK_H$-module.
\end{defn}
Equivalently, by Theorem~\ref{thm-subgraph}, the projections of left (resp. right) minimal coset representatives give a basis of $\FK_G/\FK_G\FK_H^+$ (resp. $\FK_G/\FK_H^+\FK_G$). For this reason, we will sometimes abuse terminology and consider left minimal coset representatives to be elements of $\FK_G/\FK_G\FK_H^+$.

\begin{ex}
Let $G=A_3$ be the path with two edges as in Example~\ref{ex-a3}, and let $H$ be the subgraph containing only edge $\e{a}$. Then since $\FK_H$ has basis $\{\idelm, \e{a}\}$ and $\FK_G$ has basis $\{\idelm, \e{a}, \e{b}, \e{ab}, \e{ba}, \e{aba}\}$, a set of left minimal coset representatives is $\{\idelm, \e{b}, \e{ab}\}$.
\end{ex}

We use the term ``minimal coset representatives'' by analogy to the case of Coxeter groups and their corresponding nil-Coxeter algebras (see Section~\ref{sec-coxeter} for more details). We will typically take the elements of $M$ to be represented by monomials.

Using Theorem~\ref{thm-subgraph}, we can prove the following lemma, which will be useful in the next section.
\begin{lemma} \label{lemma-intersect}
Let $H$ be a subgraph of $G$. Then $\FK_H^+\FK_n \cap \FK_G = \FK_H^+\FK_G$.
\end{lemma}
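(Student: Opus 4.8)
The inclusion $\FK_H^+\FK_G \subseteq \FK_H^+\FK_n \cap \FK_G$ is immediate: since $H$ is a subgraph of $G$ we have $\FK_H^+ \subseteq \FK_G$, so $\FK_H^+\FK_G \subseteq \FK_G$, while $\FK_H^+\FK_G \subseteq \FK_H^+\FK_n$ because $\FK_G \subseteq \FK_n$. The content of the lemma is the reverse inclusion, and the plan is to extract it from Theorem~\ref{thm-subgraph} applied not to the pair $H \subseteq G$, but to the pair $G \subseteq K_n$.

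The key structural step is to realize $\FK_G$ as a genuine direct summand of $\FK_n$. Applying Theorem~\ref{thm-subgraph} to $G \subseteq K_n$ gives $\FK_n \cong \FK_G \otimes (\FK_n/\FK_G^+\FK_n)$ as left $\FK_G$-modules, via the multiplication map on some degree-preserving section. Since $\FK_G^+\FK_n$ lies in positive degrees, the image of $1$ in $\FK_n/\FK_G^+\FK_n$ is nonzero, so I would choose the section to send that image to $1\in\FK_n$; then $1$ appears among the free basis elements and its summand is exactly $\FK_G\cdot 1 = \FK_G$. This produces a decomposition $\FK_n = \FK_G \oplus C$ of left $\FK_G$-modules, where $C$ is the sum of the remaining free summands. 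Restricting scalars along $\FK_H \hookrightarrow \FK_G$, it is also a decomposition of left $\FK_H$-modules. The reverse inclusion is then a one-line computation: because $\FK_G$ and $C$ are left $\FK_H$-submodules, left multiplication by $\FK_H^+$ keeps them separate, so $\FK_H^+\FK_n = \FK_H^+\FK_G \oplus \FK_H^+C$ with $\FK_H^+\FK_G \subseteq \FK_G$ and $\FK_H^+C \subseteq C$. Given $x \in \FK_H^+\FK_n \cap \FK_G$, writing $x = a + c$ with $a \in \FK_H^+\FK_G$ and $c \in \FK_H^+C$ forces $c = x - a \in \FK_G \cap C = 0$, whence $x = a \in \FK_H^+\FK_G$.

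The point demanding care—and the main obstacle—is that the reverse inclusion \emph{cannot} be read off from freeness of $\FK_G$ over $\FK_H$ alone. That inclusion is equivalent to injectivity of the induced map $\FK_G/\FK_H^+\FK_G \to \FK_n/\FK_H^+\FK_n$, and for a general injection of free modules $A \hookrightarrow B$ the map $A/\FK_H^+A \to B/\FK_H^+B$ need not be injective (as already for $2\ZZ \hookrightarrow \ZZ$ over $\ZZ$). The resolution is to invoke freeness in the complementary direction, namely freeness of $\FK_n$ over $\FK_G$, which is precisely what upgrades the inclusion $\FK_G \subseteq \FK_n$ to a split inclusion and makes $\FK_G$ a direct summand as an $\FK_H$-module. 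Once that summand structure is in place, the argument reduces to the elementary fact that intersecting a direct sum with one of its summands annihilates the other, so no manipulation of the coproduct or the $\act$-action is required.
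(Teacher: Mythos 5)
Your proof is correct, but it takes a different route from the paper's. The paper applies Theorem~\ref{thm-subgraph} \emph{twice}: it fixes right minimal coset representatives $M$ for $\FK_H$ in $\FK_G$ and $N$ for $\FK_G$ in $\FK_n$, so that $\FK_n=\bigoplus_{m,n}\FK_H\, mn$ is free as a left $\FK_H$-module; then $x\in\FK_H^+\FK_n$ forces every coefficient $h_{m,n}$ into $\FK_H^+$, while $x\in\FK_G$ forces $h_{m,n}=0$ unless $n$ is a constant, and combining the two conditions yields $x\in\FK_H^+\FK_G$. You instead apply Theorem~\ref{thm-subgraph} only to the pair $G\subseteq K_n$, normalize the degree-preserving section so that $1$ is one of the free basis vectors, and thereby split $\FK_n=\FK_G\oplus C$ as left $\FK_G$-modules (hence, by restriction, as left $\FK_H$-modules); the lemma then falls out of the elementary fact that intersecting with one summand kills the other. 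The trade-off: the paper's double-coset bookkeeping proves slightly more along the way --- it exhibits $\{mn\}$ as minimal coset representatives for $\FK_H$ in $\FK_n$, a transitivity-of-freeness statement in the spirit of the Coxeter analogy --- whereas your argument is more economical, never invoking freeness of $\FK_G$ over $\FK_H$ at all, and it isolates the actual mechanism: $\FK_G$ is an $\FK_H$-stable direct summand of $\FK_n$. Your cautionary remark is also well taken and consistent with the paper: freeness of $\FK_G$ over $\FK_H$ alone cannot give the reverse inclusion (injectivity of $\FK_G/\FK_H^+\FK_G\to\FK_n/\FK_H^+\FK_n$ is not a formal consequence of an injection of free modules), which is exactly why both proofs must lean on the freeness of $\FK_n$ over $\FK_G$ coming from Theorem~\ref{thm-subgraph} applied to $G\subseteq K_n$.
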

\begin{proof}
Let $M$ and $N$ be sets of right minimal coset representatives for $\FK_H$ in $\FK_G$ and for $\FK_G$ in $\FK_n$, respectively. Then any element $x \in \FK_n$ can be written uniquely in the form $x=\sum h_{m,n}mn$, where $h_{m,n} \in \FK_H$, $m \in M$, and $n \in N$. Thus $\{mn \mid m \in M, n \in N\}$ is a set of right minimal coset representatives for $\FK_H$ in $\FK_n$. If $x \in \FK_H^+\FK_n$, then each $h_{m,n}$ has positive degree, while if $x \in \FK_G$, then $h_{m,n}=0$ unless $n$ is a constant. Thus if both hold, then we can write $x=\sum h_{m,n}m$ with $h_{m,n} \in \FK_H^+$, so $x \in \FK_H^+\FK_G$.
\end{proof}

\subsection{Finite rank}
In the event that $\FK_G$ has finite rank as an $\FK_H$-module, we can show that there is essentially a unique minimal coset representative of maximum degree. (If $\FK_G$ itself is finite-dimensional, then this follows from Theorem~\ref{thm-finitedim}.) The general result will follow from the following proposition, akin to Proposition~\ref{prop-monic}.

\begin{prop} \label{prop-monicmcr}
Let $H$ be a subgraph of $G$. Suppose $P \in \FK_G$ such that $P \not\in \FK_H^+\FK_G$ but $Px_{ij} \in \FK_H^+\FK_G$ for all $x_{ij} \in \FK_G$. Then $P$ spans the top degree part of $\FK_G/\FK_H^+\FK_G$. 
\end{prop}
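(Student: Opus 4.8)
The plan is to follow the proof of Proposition~\ref{prop-monic}, but to replace the left action $\act$ of $\FK_n^\vee$ on $\FK_G$ by an induced action on the quotient $U:=\FK_G/\FK_H^+\FK_G$, and to replace the left-descent hypothesis by the right-descent hypothesis $Px_{ij}\in\FK_H^+\FK_G$. Write $\pi\colon\FK_G\to U$ for the projection. Since $\FK_H^+\FK_G$ is a right ideal that is homogeneous for both the usual degree and the $\S_n$-degree, I may reduce to the case where $P$ is homogeneous of some degree $d$ and some $\S_n$-degree $\sigma_P$ with $\bar P:=\pi(P)\ne 0$; the general statement then follows because both gradings descend to $U$.

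First I would produce the operators. Let $\bar{\FK_n}^\vee=\{\phi\in\FK_n^\vee:\phi|_{\FK_H^+\FK_n}=0\}$ and define, for $\phi\in\bar{\FK_n}^\vee$, a linear map on $U$ by $\phi\cdot\pi(Q):=\pi(\phi\act Q)$. This is well defined: for $Q\in\FK_H^+\FK_G$, Lemma~\ref{lemma-delta}(e) gives $\Delta(Q)\in\FK_H^+\FK_n\otimes\FK_G+\FK_n\otimes\FK_H^+\FK_G$, and $\phi\otimes\pi$ annihilates the first summand (as $\phi$ kills $\FK_H^+\FK_n$) and the second (as $\pi$ kills $\FK_H^+\FK_G$), so $\pi(\phi\act Q)=0$. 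Note also that $U$ is a cyclic right $\FK_G$-module generated by $\bar 1:=\pi(1)$, since $\FK_H^+\FK_G$ is a right ideal.

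The heart of the argument is to show that $\bar{\FK_n}^\vee\cdot\bar P:=\{\phi\cdot\bar P:\phi\in\bar{\FK_n}^\vee\}$ is a right $\FK_G$-submodule of $U$. Fix $\overline{ij}\in G$ and $\phi\in\bar{\FK_n}^\vee$, and write $\Delta(P)=\sum_k P_{(1)}^k\otimes P_{(2)}^k$ with $P_{(2)}^k\in\FK_G$ (Lemma~\ref{lemma-delta}(c)). Expanding $\Delta(Px_{ij})=\Delta(P)\Delta(x_{ij})$ via the braided product gives
\[\Delta(Px_{ij})=\sum_k P_{(1)}^k\,x_{\tau_k(i)\tau_k(j)}\otimes P_{(2)}^k+\sum_k P_{(1)}^k\otimes P_{(2)}^kx_{ij},\qquad\tau_k=\sigma_{P_{(2)}^k}.\]
Because $Px_{ij}\in\FK_H^+\FK_G$, applying $\phi\otimes\pi$ kills the left-hand side (again by Lemma~\ref{lemma-delta}(e)); the second sum becomes $(\phi\cdot\bar P)x_{ij}$, and the first sum equals $\psi\cdot\bar P$, where $\psi\in\bar{\FK_n}^\vee$ is defined on each $\S_n$-graded piece by $\psi(R)=\phi(Rx_{\tau(i)\tau(j)})$, with $\tau$ read off from the $\S_n$-degree of $R$ through $\sigma_{P_{(1)}^k}\sigma_{P_{(2)}^k}=\sigma_P$ (here I use that $P$ is $\S_n$-homogeneous, and that right multiplication preserves $\FK_H^+\FK_n$, so indeed $\psi\in\bar{\FK_n}^\vee$). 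Thus $(\phi\cdot\bar P)x_{ij}=-\psi\cdot\bar P$. This is the right-handed, quotient version of the computation in Lemma~\ref{lemma-integral}, and I expect the $\S_n$-degree bookkeeping required to define $\psi$ correctly, together with tracking the braiding in $\Delta(Px_{ij})$, to be the main obstacle.

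To finish I would show $\bar 1\in\bar{\FK_n}^\vee\cdot\bar P$. Since $P\notin\FK_H^+\FK_n$—which follows from $P\notin\FK_H^+\FK_G$ and Lemma~\ref{lemma-intersect}, stating $\FK_H^+\FK_n\cap\FK_G=\FK_H^+\FK_G$—there is a $\phi\in\bar{\FK_n}^\vee$ supported in degree $d$ with $\phi(P)=1$; as the only term of $\Delta(P)$ in $\FK_n^d\otimes\FK_n^0$ is $P\otimes1$, this gives $\phi\act P=1$ and hence $\phi\cdot\bar P=\bar 1$. Since $\bar{\FK_n}^\vee\cdot\bar P$ is a right $\FK_G$-submodule containing the cyclic generator $\bar 1$, it equals $U$. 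The conclusion then follows exactly as in Proposition~\ref{prop-monic}: each element $\phi\cdot\bar P=\pi(\phi\act P)$ has degree at most $d$, so $U$ is concentrated in degrees $\le d$ and, as $\bar P\in U^d$ is nonzero, its top degree is $d$; and since the only term of $\Delta(P)$ in $\FK_n^0\otimes\FK_n^d$ is $1\otimes P$, the degree-$d$ part of any $\phi\cdot\bar P$ is a scalar multiple of $\bar P$. Therefore $\bar P$ spans the top degree part $U^d$ of $\FK_G/\FK_H^+\FK_G$.
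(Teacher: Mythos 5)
Your proposal is correct and follows essentially the same route as the paper's own proof: the paper likewise works with $(\FK_n^\vee)^H = \{q^\vee : q^\vee(\FK_H^+\FK_n)=0\}$ acting on $\FK_G/\FK_H^+\FK_G$, performs the identical computation on $\Delta(Px_{ij})$ (with your $\psi$ matching its $r^\vee(R) = -q^\vee(R\,\sigma_R^{-1}\sigma_P(x_{ij}))$ up to sign) to show the span of $(\FK_n^\vee)^H \act P$ is a right $\FK_G$-module, and uses Lemma~\ref{lemma-intersect} in the same way to get $1$ into that span. Your $\S_n$-degree bookkeeping and the final top-degree argument also coincide with the paper's.
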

\begin{proof}
Let $(\FK_n^\vee)^H$ be the set of all $q^\vee \in \FK_n^\vee$ such that $q^\vee(\FK_H^+\FK_n) = 0$. By Lemma~\ref{lemma-delta}(e), $q^\vee \act \FK_H^+\FK_G \subset \FK_H^+\FK_G$, so $(\FK_n^\vee)^H$ gives a left action $\act$ on $\FK_G/\FK_H^+\FK_G$.

We may assume that $P$ is homogeneous of degree $d$. We claim that $(\FK_n^\vee)^H \act P$ spans $\FK_G/\FK_H^+\FK_G$. First, since $P\not\in \FK_H^+\FK_G$, by Lemma~\ref{lemma-intersect}, $P \not\in \FK_H^+\FK_n$, so there exists a homogeneous element $q^\vee \in (\FK_n^\vee)^H$ such that $q^\vee(P)=1$. Then since the component of $\Delta(P)$ in $\FK_n^d \otimes \FK_G^0$ is $P \otimes 1$, $q^\vee\act P=1$, so $1 \in (\FK_n^\vee)^H \act P$. Then the claim will follow if we can show that the span of $(\FK_n^\vee)^H \act P$ in $\FK_G/\FK_H^+\FK_G$ is a right $\FK_G$-module.

Let $Q = q^\vee \act P \in (\FK_n^\vee)^H \act P$, and let $x_{ij}\in \FK_G$. Write $\Delta(P) = \sum_k P^k_{(1)} \otimes P^k_{(2)}$, so that
\[\Delta(Px_{ij}) = \sum_k P^k_{(1)}\sigma_{P^k_{(2)}}(x_{ij}) \otimes P^k_{(2)} + \sum_k P^k_{(1)} \otimes P^k_{(2)}x_{ij}.\]

Then
\begin{align*}
q^\vee \act (Px_{ij}) &= \sum_k q^\vee(P^k_{(1)}\sigma_{P^k_{(2)}}(x_{ij})) \cdot P^k_{(2)} + \sum_k q^\vee(P^k_{(1)}) \cdot P^k_{(2)}x_{ij}\\
&= -r^\vee \act P + Qx_{ij},
\end{align*}
where $r^\vee \in \FK_n^\vee$ is defined (for $\S_n$-homogeneous $R$) by $r^\vee(R) = -q^\vee(R \sigma_R^{-1}\sigma_P(x_{ij}))$. If $R$ lies in $\FK_H^+\FK_n$, then so does $R \sigma_R^{-1}\sigma_P(x_{ij})$. Hence $q^\vee \in (\FK_n^\vee)^H$ implies $r^\vee \in (\FK_n^\vee)^H$. Then since $Px_{ij} \in \FK_H^+\FK_G$, it follows that $r^\vee \act P$ and $Qx_{ij}$ are congruent modulo $\FK_H^+\FK_G$.

Thus $(\FK_n^\vee)^H \act P$ spans $\FK_G/\FK_H^+\FK_G$. Since the component of $\Delta(P)$ in $\FK_n^0 \otimes \FK_G^d$ is $1 \otimes P$, the top degree part of $(\FK_n^\vee)^H \act P$ is spanned by $P$, which gives the result.
\end{proof}

As an easy consequence, we get the following theorem analogous to Theorem~\ref{thm-finitedim}.
\begin{thm}
Let $H$ be a subgraph of $G$ such that $\FK_G$ has finite rank as an $\FK_H$-module, and let $M$ be a set of right minimal coset representatives. Then 
\begin{enumerate}[(a)]
\item $M$ has a unique element $m_0$ of top degree;
\item for any $m \in M$, $m = q^\vee \act m_0$ in $\FK_G/\FK_H^+\FK_G$ for some $q^\vee \in \FK_n^\vee$ with $q^\vee(\FK_H^+\FK_n) = 0$; and
\item for any $m \in M$, there exists $g \in \FK_G$ such that $m_0 = mg$ in $\FK_G/\FK_H^+\FK_G$. 
\end{enumerate}
\end{thm}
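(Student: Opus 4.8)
The plan is to run the argument of Theorem~\ref{thm-finitedim} inside the quotient $\FK_G/\FK_H^+\FK_G$, with Proposition~\ref{prop-monicmcr} playing the role that Proposition~\ref{prop-monic} played there. Throughout I would work in $\FK_G/\FK_H^+\FK_G$, which the finite-rank hypothesis makes a finite-dimensional graded right $\FK_G$-module (it is a right module because $\FK_H^+\FK_G$ is a right ideal of $\FK_G$), and whose graded basis is given by the projections $\bar m$ of the monomials $m \in M$. Write $d_0 = \max_{m\in M}\deg m$ for its top degree.

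For (a), I would take any $m_0 \in M$ with $\deg m_0 = d_0$. For every edge $\overline{ij}$ of $G$ the element $m_0 x_{ij}$ sits in degree $d_0+1$, hence projects to $0$, so $m_0 x_{ij}\in \FK_H^+\FK_G$; and $m_0\notin\FK_H^+\FK_G$ because $\bar m_0$ is a basis vector. Proposition~\ref{prop-monicmcr} then shows that $m_0$ spans the top-degree part of $\FK_G/\FK_H^+\FK_G$, so that part is one-dimensional. Since the projections of any two distinct top-degree elements of $M$ are linearly independent yet both lie in this one-dimensional space, $m_0$ must be unique.

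Part (b) falls out of the proof of Proposition~\ref{prop-monicmcr} itself, which establishes that $(\FK_n^\vee)^H\act m_0$ spans $\FK_G/\FK_H^+\FK_G$, where $(\FK_n^\vee)^H=\{q^\vee : q^\vee(\FK_H^+\FK_n)=0\}$. Because $(\FK_n^\vee)^H$ is a linear subspace and $\act$ is linear in its first argument, a spanning set of the form $(\FK_n^\vee)^H\act m_0$ in fact realizes every vector of the quotient as a single $q^\vee\act m_0$; in particular each $\bar m=q^\vee\act m_0$ for some $q^\vee\in(\FK_n^\vee)^H$.

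For (c) I would mirror Theorem~\ref{thm-finitedim}(c): fix $m\in M$ and choose a homogeneous $g\in\FK_G$ of maximal degree with $\overline{mg}\neq 0$ (such $g$ exists since $g=\idelm$ works). Maximality of $\deg g$ forces $mgx_{ij}\in\FK_H^+\FK_G$ for every edge $\overline{ij}$, while $\overline{mg}\neq0$ gives $mg\notin\FK_H^+\FK_G$, so Proposition~\ref{prop-monicmcr} applies to $P=mg$ and shows $\overline{mg}$ is a nonzero multiple of $\bar m_0$; rescaling $g$ yields $m_0=mg$ in $\FK_G/\FK_H^+\FK_G$. I do not anticipate a serious obstacle here, since all the real content is packaged in Proposition~\ref{prop-monicmcr}; the only points needing care are the bookkeeping that makes $\FK_G/\FK_H^+\FK_G$ a finite-dimensional graded right $\FK_G$-module with a well-defined top degree $d_0$, and the convention (already in force in the paper) that $M$ may be taken to consist of monomials, so that the degrees of its elements are unambiguous.
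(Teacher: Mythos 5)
Your proof is correct and takes essentially the same approach as the paper: part (a) is exactly the paper's application of Proposition~\ref{prop-monicmcr} to the maximal-degree elements of $M$, and parts (b) and (c) are the same transcriptions of the proofs of Theorem~\ref{thm-finitedim}(b) and (c) that the paper invokes. The only additions are bookkeeping the paper leaves implicit (gradedness of $\FK_G/\FK_H^+\FK_G$, and linearity of $\act$ in $q^\vee$ so that spanning by $(\FK_n^\vee)^H \act m_0$ upgrades to equality), both of which you handle correctly.
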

\begin{proof}
By Proposition~\ref{prop-monicmcr}, the only $m \in M$ such that $mx_{ij} \in \FK_H^+\FK_G$ for all $x_{ij} \in \FK_G$ has maximum degree in $M$ (which exists since $\FK_G$ has finite rank), and this element spans the top degree of $\FK_G/\FK_H^+\FK_G$ so must be unique. Parts (b) and (c) then follow as in the proof of Theorem~\ref{thm-finitedim}(b) and (c).
\end{proof}

\subsection{Complementary graphs} \label{sec-complementary}

In some cases, the tensor product decomposition described in Theorem~\ref{thm-subgraph} is particularly simple.

\begin{thm} \label{thm-tensor}
Let $G$ be a graph, and let $G_1$ and $G_2$ be complementary subgraphs of $G$ such that any two vertices in the same connected component of $G_2$ have the same neighbors in $G_1$. Then the multiplication map $\mu\colon \FK_{G_1} \otimes \FK_{G_2} \to \FK_G$ is an isomorphism of $\FK_{G_1}$-$\FK_{G_2}$-bimodules. In particular, $\Hilb_{G_1}(t)\cdot \Hilb_{G_2}(t) = \Hilb_G(t)$.
\end{thm}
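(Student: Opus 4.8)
The plan is to reduce the statement to the freeness already proved in Theorem~\ref{thm-subgraph}. The map $\mu$ is automatically $\FK_{G_1}$-linear on the left and $\FK_{G_2}$-linear on the right, so it is a bimodule map and it suffices to show it is bijective. Both $\FK_{G_1}\otimes\FK_{G_2}$ and (by Theorem~\ref{thm-subgraph}) $\FK_G$ are free left $\FK_{G_1}$-modules and $\mu$ respects this structure; reducing modulo $\FK_{G_1}^+$ turns $\mu$ into the natural map $\bar\mu\colon\FK_{G_2}\to\FK_G/\FK_{G_1}^+\FK_G$, and by the freeness $\mu$ is an isomorphism if and only if $\bar\mu$ is. Before attacking $\bar\mu$, I would record two consequences of the hypothesis, writing $\mathcal C$ for the partition of the vertices into connected components of $G_2$ (the \emph{blocks}): (i) no edge of $G_1$ joins two vertices of the same block, for if $\overline{ij}\in G_1$ with $i,j$ in one block then $j\in N_{G_1}(i)=N_{G_1}(j)$, which is absurd; and (ii) between two distinct blocks the $G_1$-edges form a complete bipartite graph.

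For surjectivity of $\bar\mu$, equivalently $\FK_G=\FK_{G_1}\FK_{G_2}$, I would straighten an arbitrary monomial so that all of its $G_1$-generators precede all of its $G_2$-generators, by repeatedly rewriting a $G_2$-generator immediately followed by a $G_1$-generator. When the two edges are disjoint they commute. When they share a vertex, say $x_{uv}$ with $\overline{uv}\in G_2$ followed by a $G_1$-generator on $\{u,w\}$, property (ii) guarantees that $\overline{vw}$ is also an edge of $G_1$, so the two defining cyclic relations on $\{u,v,w\}$ let me rewrite $x_{uv}x_{uw}$ as a sum of a product of two $G_1$-generators and a product of a $G_1$-generator with $x_{uv}$. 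Each rewrite strictly decreases the pair (number of $G_2$-letters, number of inversions) in lexicographic order, so the process terminates with monomials of the desired shape; this shows $\FK_{G_2}$ spans $\FK_G/\FK_{G_1}^+\FK_G$.

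For injectivity I would use the grading of $\FK_n$ by $\supp$, under which all the relations, and hence the subspaces $\FK_{G_2}$ and $\FK_{G_1}^+\FK_G$, are homogeneous. Any monomial of $\FK_{G_2}$ uses only $G_2$-edges, so its support refines $\mathcal C$; on the other hand, any monomial $ab$ with $a\in\FK_{G_1}^+$ and $b\in\FK_G$ contains a $G_1$-generator, which by (i) joins two distinct blocks, so the support of $ab$ merges those blocks and does not refine $\mathcal C$. Now take $z\in\FK_{G_2}\cap\FK_{G_1}^+\FK_G$ and split it into $\supp$-homogeneous components; each component lies in both subspaces, hence has a support that simultaneously refines $\mathcal C$ and fails to, forcing it to vanish. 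Thus $\FK_{G_2}\cap\FK_{G_1}^+\FK_G=0$, $\bar\mu$ is injective, and the equality of Hilbert series follows at once.

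I expect the surjectivity straightening to be the main obstacle: one must check that the shared-vertex rewrite really produces only terms in the two ``good'' shapes and that the chosen statistic decreases in every case, and it is precisely here that the full strength of the hypothesis (equal $G_1$-neighborhoods within a block, i.e.\ property (ii)) is needed. The injectivity, by contrast, is clean once the $\supp$-grading dichotomy is spotted, and it uses only the weaker property (i).
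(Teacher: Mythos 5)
Your proposal is correct and takes essentially the same approach as the paper: surjectivity comes from the same cyclic-relation rewrite $x_{jk}x_{ij}=x_{ij}x_{ik}+x_{ik}x_{jk}$ (your explicit termination measure replaces the paper's induction on degree), injectivity comes from the identical $\supp$-grading argument showing $\FK_{G_2}\cap\FK_{G_1}^+\FK_G=0$, and Theorem~\ref{thm-subgraph} supplies the reduction in both cases. The only organizational difference is that the paper runs surjectivity through the right $\FK_{G_2}$-module structure (showing $\FK_{G_1}$ surjects onto $\FK_G/\FK_G\FK_{G_2}^+$), whereas you phrase both halves through the single left $\FK_{G_1}$-module map $\bar\mu$.
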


As a special case of this theorem, we have the following corollary.

\begin{cor} \label{cor-tensor}
Let $G_1$ be a complete multipartite graph on $n$ vertices, and let $G_2$ be its complement, a disjoint union of complete graphs. Then $\FK_n \cong \FK_{G_1} \otimes \FK_{G_2}$.
\end{cor}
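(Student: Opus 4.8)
The plan is to combine the freeness of $\FK_G$ over $\FK_{G_1}$ with an explicit straightening law. By Theorem~\ref{thm-subgraph}, $\FK_G \cong \FK_{G_1} \otimes (\FK_G/\FK_{G_1}^+\FK_G)$ as left $\FK_{G_1}$-modules, and $\mu$ is manifestly a map of $\FK_{G_1}$-$\FK_{G_2}$-bimodules and left $\FK_{G_1}$-linear; since $\FK_{G_1}$ is connected graded with finite-dimensional graded pieces, the graded Nakayama lemma reduces the claim that $\mu$ is an isomorphism to showing that the induced map $\bar\mu\colon \FK_{G_2} \to \FK_G/\FK_{G_1}^+\FK_G$ is an isomorphism of graded vector spaces. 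Surjectivity of $\bar\mu$ is equivalent to $\FK_{G_1}\FK_{G_2} = \FK_G$, and injectivity to $\FK_{G_2} \cap \FK_{G_1}^+\FK_G = 0$; once $\mu$ is bijective, the bimodule statement and the identity $\Hilb_{G_1}(t)\Hilb_{G_2}(t) = \Hilb_G(t)$ are immediate.

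Everything is driven by two consequences of the hypothesis. First, if $\overline{ij} \in G_2$ then $i$ and $j$ have the same $G_1$-neighbors, so the transposition $\sigma_{ij}$ fixes the edge set of $G_1$ and hence $\sigma_{ij}(\FK_{G_1}) = \FK_{G_1}$. Second, no triangle can have two edges in $G_2$ and one in $G_1$: if $\overline{ij},\overline{jk} \in G_2$ then $i,j,k$ lie in one connected component of $G_2$, so $\overline{ik} \in G_1$ would force $i$ to be a $G_1$-neighbor of $k$, hence of $i$ itself, which is absurd. For surjectivity I would show $x_{ij}\FK_{G_1} \subset \FK_{G_1} + \FK_{G_1}x_{ij}$ for every $\overline{ij}\in G_2$, so that $\FK_{G_1}\FK_{G_2}$ is a left ideal containing $1$ and therefore equals $\FK_G$. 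The point is that pushing $x_{ij}$ to the right past a single $G_1$-generator $x_e$ produces only terms of the allowed shape: if $e$ is disjoint from $\{i,j\}$ they commute, and if $e$ shares a vertex with $\{i,j\}$, say $e=\overline{ik}$, the hypothesis guarantees $\overline{jk}\in G_1$ as well, so the cyclic relation (Lemma~\ref{lemma-cyclic}) on $\{i,j,k\}$ rewrites $x_{ij}x_{ik}$ as a product of two $G_1$-generators plus (a $G_1$-generator)$\cdot x_{ij}$. Since the surviving $G_2$-generator is again $x_{ij}$, an induction on the length of a $G_1$-monomial closes the argument.

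For injectivity the key device is the operator $\Delta_{ij}$. Because $\overline{ij}\in G_2$ forces $\overline{ij}\notin G_1$, Lemma~\ref{lemma-delta}(a) gives $\Delta_{ij}(\FK_{G_1}) = 0$, and combined with $\sigma_{ij}(\FK_{G_1}) = \FK_{G_1}$ this yields the peeling identity $\Delta_{ij}(ab) = \sigma_{ij}(a)\,\Delta_{ij}(b)$ for $a\in\FK_{G_1}$ and $b\in\FK_{G_2}$. Iterating over a $G_2$-monomial $c = x_{e_1}\cdots x_{e_q}$ and setting $\Delta_c = \Delta_{e_q}\cdots \Delta_{e_1}$ and $\tau_c = \sigma_{e_q}\cdots\sigma_{e_1}$ (which again preserves $\FK_{G_1}$), I obtain $\Delta_c(ab) = \tau_c(a)\,\Delta_c(b)$, where $\Delta_c(b)$ is the scalar $\langle c,b\rangle$ when $\deg b = q$ and vanishes when $\deg b < q$. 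Fixing a graded basis $\{b_\lambda\}$ of $\FK_{G_2}$, applying maximal-degree operators $\Delta_c$ to a hypothetical relation $\sum_\lambda a_\lambda b_\lambda = 0$ (with $a_\lambda\in\FK_{G_1}$, and $q$ the top degree occurring) isolates $\sum_{\deg b_\lambda = q}\langle c,b_\lambda\rangle\,a_\lambda = 0$ for every $c\in\FK_{G_2}^q$, so that the top-degree part of the relation lies in $\FK_{G_1}$ tensored with the radical of $\langle\cdot,\cdot\rangle$ restricted to $\FK_{G_2}^q$.

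The main obstacle is precisely this last step: one needs the straightened products $\{a\,b : a\in\FK_{G_1},\ b\in\FK_{G_2}\}$ to be linearly independent, equivalently $\Hilb_G(t)\ge \Hilb_{G_1}(t)\Hilb_{G_2}(t)$ coefficientwise, which all of the surjectivity arguments alone cannot supply. I would organize this through the filtration of $\FK_G$ by the number of $G_2$-generators: the forbidden two-$G_2$-one-$G_1$ triangle is exactly what guarantees that in the associated graded the only surviving mixed relation is the clean commutation $x_{ij}\,a = \sigma_{ij}(a)\,x_{ij}$, so that $\mathrm{gr}\,\FK_G$ is presented as the braided semidirect product $\FK_{G_1}\rtimes \FK_{G_2}$ built from the Yetter--Drinfeld action of the $G_2$-generators on $\FK_{G_1}$. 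The spanning argument of the second paragraph gives $\Hilb_{\mathrm{gr}\,\FK_G}\le \Hilb_{G_1}\Hilb_{G_2}$, and the hard content is to prove the matching lower bound---that $\FK_G$ is a flat deformation of this braided tensor product and no extra relations collapse it. Establishing this flatness (equivalently, that the pairing obstruction above is trivial) is where I expect essentially all the difficulty to lie, since it must be carried out without assuming the nondegeneracy of $\langle\cdot,\cdot\rangle$ (Conjecture~\ref{conj-nichols}).
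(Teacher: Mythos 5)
Your reduction via Theorem~\ref{thm-subgraph} and your surjectivity argument are sound, and they coincide with the paper's: the paper straightens a monomial using the same quadratic relation on a triangle with one $G_2$-edge and two $G_1$-edges (it writes $x_{jk}x_{ij} = x_{ij}x_{ik} + x_{ik}x_{jk}$ and moves $G_1$-generators to the front, which is your computation read in a mirror). The genuine gap is the injectivity step $\FK_{G_2}\cap\FK_{G_1}^+\FK_G = 0$, which you explicitly leave open. Your proposed route cannot be completed as stated: applying the operators $\Delta_c$ only shows that the top-degree part of a hypothetical kernel element is annihilated by pairing against all of $\FK_{G_2}^q$, so to conclude you need the restriction of $\langle\cdot,\cdot\rangle$ to $\FK_{G_2}$ to be nondegenerate. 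Since the form on $\FK_{G_2}$ is computed inside the complete-graph components, this is essentially Conjecture~\ref{conj-nichols} for those components; already for $G_1 = K_{1,6}$ (so $G_2 = K_6$ plus an isolated vertex) it is the open nondegeneracy question for $\FK_6$. The ``flatness'' you correctly identify as the hard content is therefore not something your toolkit can supply, and the argument as proposed proves the corollary only when every part of the multipartition has at most five vertices.

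The missing idea is that injectivity here is elementary and needs neither the pairing nor any deformation argument: it follows from the $\supp$-grading of Section 2. By your own observation, no $G_1$-edge joins two vertices in the same connected component of $G_2$ (such a vertex would be its own $G_1$-neighbor). Hence every monomial in $\FK_{G_1}^+\FK_G$ contains a $G_1$-edge, so its $\supp$-degree has a part meeting two distinct components of $G_2$; on the other hand, every $\supp$-homogeneous element of $\FK_{G_2}$ has all parts of its $\supp$-degree contained in single components of $G_2$. Since all defining relations of $\FK_n$ are $\supp$-homogeneous, these two families of $\supp$-degrees are disjoint, and therefore $\FK_{G_2}\cap\FK_{G_1}^+\FK_G = 0$. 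This is exactly how the paper proves injectivity in Theorem~\ref{thm-tensor}, of which the corollary is the special case $G = K_n$; it works unconditionally, with no appeal to Conjecture~\ref{conj-nichols}.
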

The case when $G_1 = K_{1, n-1}$ and $G_2 = K_{n-1}$ was proven in \cite{FP, MS}.

\begin{proof}[Proof of Theorem~\ref{thm-tensor}]
By our choice of $G_1$ and $G_2$, if $\overline{ij} \in G_1$ and $\overline{jk} \in G_2$, then $\overline{ik} \in G_1$.

We first show that $\mu$ is surjective. By Theorem~\ref{thm-subgraph}, it suffices to show that the map $\FK_{G_1} \to \FK_G/\FK_G\FK_{G_2}^+$ is surjective. Choose a monomial $p_1\cdots p_d \in \FK_G^d$. We show by induction on $d$ that it lies in the image of this map. Since the image is closed under left multiplication by $\FK_{G_1}$, we are done if $p_1 \in \FK_{G_1}$.

Then suppose $p_1 \in \FK_{G_2}$. By induction, we may assume that $p_2 \in \FK_{G_1}$. If $p_1$ commutes with $p_2$, then $p_1p_2 \cdots p_d = p_2 p_1 \cdots p_d$, and then we are again done because $p_2 \in \FK_{G_1}$. Otherwise, if $p_1 = x_{jk}$ and $p_2=x_{ij}$, then rewrite $x_{jk}x_{ij} = x_{ij}x_{ik} + x_{ik}x_{jk}$. Since $x_{ij}, x_{ik} \in \FK_{G_1}$, we are again done by induction.

To show that $\mu$ is injective, by Theorem~\ref{thm-subgraph}, it suffices to show that the map $\FK_{G_2} \to \FK_G/\FK_{G_1}^+\FK_G$ is injective, that is, that $\FK_{G_2}$ intersects $\FK_{G_1}^+\FK_G$ trivially. But this holds because the $\supp$-degree of any $\supp$-homogeneous element of $\FK_{G_2}$ has each part contained in a connected component of $G_2$, but this is not the case for any (nonzero) element of $\FK_{G_1}^+\FK_G$.
\end{proof}

In fact, it appears that the class of complementary graphs $G_1$ and $G_2$ for which $\FK_G \cong \FK_{G_1} \otimes \FK_{G_2}$ is much more general than Corollary~\ref{cor-tensor} implies. Using Theorem~\ref{thm-tensor} and computations for small graphs, we can prove the following partial result on when such a tensor product decomposition holds.

\begin{cor} \label{cor-complementary}
Let $G_1$ be a graph on $n$ vertices and $G_2$ its complement. The class of graphs $G_1$ for which $\FK_n \cong \FK_{G_1} \otimes \FK_{G_2}$ holds (as in Theorem~\ref{thm-tensor}) contains all graphs with at most five vertices and is closed under disjoint unions and complementation.
\end{cor}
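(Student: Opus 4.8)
Write $\mathcal{C}$ for the class of graphs $G_1$ (on any number of vertices) for which the multiplication map $\mu\colon \FK_{G_1}\otimes\FK_{G_2}\to\FK_n$ is an isomorphism, $G_2$ being the complement of $G_1$. Since the image of $\mu$ is the subspace $\FK_{G_1}\FK_{G_2}$ and each graded piece of $\FK_n$ is finite-dimensional, $G_1\in\mathcal{C}$ is equivalent to the conjunction of surjectivity, $\FK_{G_1}\FK_{G_2}=\FK_n$, and the Hilbert-series identity $\Hilb_{G_1}(t)\Hilb_{G_2}(t)=\Hilb_n(t)$; I will use this reformulation throughout. Closure under complementation is then immediate from the antiautomorphism $\rev$: because $\rev$ reverses products and fixes each $\FK_G$ setwise, one has $\rev\circ\mu=\mu'\circ(\rev\otimes\rev)\circ\tau$, where $\tau$ is the swap and $\mu'\colon\FK_{G_2}\otimes\FK_{G_1}\to\FK_n$ is multiplication; as the other three maps are bijective, $\mu$ is an isomorphism iff $\mu'$ is, i.e. $G_1\in\mathcal{C}$ iff $G_2\in\mathcal{C}$.

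For closure under disjoint unions, let $G_1,G_1'\in\mathcal{C}$ sit on disjoint vertex sets $A,B$, put $H=G_1\sqcup G_1'$ on $A\cup B$, and note that its complement $H^c$ consists of $G_2$ (inside $A$), $G_2'$ (inside $B$), and every cross edge $K_{A,B}$. First I would check the identity $\Hilb_H\Hilb_{H^c}=\Hilb_N$: we have $\FK_H\cong\FK_{G_1}\otimes\FK_{G_1'}$ since $A,B$ are disjoint; Theorem~\ref{thm-tensor} applied inside $H^c$ to the complementary pair $K_{A,B}$ and $G_2\sqcup G_2'$ gives $\FK_{H^c}\cong\FK_{K_{A,B}}\otimes\FK_{G_2}\otimes\FK_{G_2'}$ (its hypothesis holds because every vertex of $A$ sees all of $B$ across $K_{A,B}$, and conversely); and Corollary~\ref{cor-tensor} together with $G_1,G_1'\in\mathcal{C}$ factors $\Hilb_N=\Hilb_{K_{A,B}}\Hilb_{G_1}\Hilb_{G_2}\Hilb_{G_1'}\Hilb_{G_2'}$, which agrees. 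It then remains to prove surjectivity, $\FK_H\FK_{H^c}=\FK_N$. Here the plan is to use complementation (already in hand) to park the cross-edge algebra at the far right: $K_{A,B}$ is complete multipartite, so $K_{A,B}\in\mathcal{C}$ by Corollary~\ref{cor-tensor}, hence $K_A\sqcup K_B\in\mathcal{C}$, giving $\FK_N=\FK_{K_A}\FK_{K_B}\FK_{K_{A,B}}$ as subspaces. Expanding $\FK_{K_A}=\FK_{G_1}\FK_{G_2}$ and $\FK_{K_B}=\FK_{G_1'}\FK_{G_2'}$ (valid since $G_1,G_1'\in\mathcal{C}$) and then commuting the disjoint-support factors $\FK_{G_2}$ and $\FK_{G_1'}$ past one another yields $\FK_N=(\FK_{G_1}\FK_{G_1'})(\FK_{G_2}\FK_{G_2'}\FK_{K_{A,B}})\subseteq\FK_H\FK_{H^c}$, because the first parenthesis equals $\FK_H$ and the second lies in $\FK_{H^c}$. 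With the Hilbert-series identity this makes $\mu_H$ an isomorphism, so $H\in\mathcal{C}$.

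Finally, for graphs on at most five vertices, the two closure properties already do much of the work: $\mathcal{C}$ is closed under disjoint union and (by complementing) under join, hence contains every cograph. The finitely many remaining graphs---those on four or five vertices with an induced $P_4$---I would handle by direct computation, reading $\Hilb_{G_1}$ and $\Hilb_{G_2}$ off the Appendix and verifying $\Hilb_{G_1}\Hilb_{G_2}=\Hilb_n$. The hypothesis $n\le5$ enters precisely here, through the nondegeneracy of $\langle\cdot,\cdot\rangle$ in this range: via Lemma~\ref{lemma-orthogonal} and a dimension count, nondegeneracy promotes the numerical identity to surjectivity of $\mu$, and hence to $G_1\in\mathcal{C}$.

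The hard part is exactly this last promotion from a Hilbert-series identity to an honest module isomorphism. For $n\le5$ it rests on nondegeneracy of the pairing, which is why the containment statement cannot presently be pushed past five vertices; in the disjoint-union step the analogous difficulty is that the five subalgebras involved do not commute, and it is resolved by the one genuinely nonobvious move in the argument---using complementation to slide $\FK_{K_{A,B}}$ to the right, where it is absorbed into $\FK_{H^c}$.
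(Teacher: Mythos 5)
Your reformulation of membership in $\mathcal{C}$ (surjectivity of $\mu$ plus the Hilbert-series identity), your complementation argument via $\rev$, and your disjoint-union argument are all correct. The first two match the paper; the disjoint-union step is a correct subspace-level rendering of the paper's chain of isomorphisms $\FK_{m+n}\cong\FK_{K_m+K_n}\otimes\FK_{K_{m,n}}\cong\FK_{G_1+H_1}\otimes\FK_{G_2+H_2}\otimes\FK_{K_{m,n}}\cong\FK_{G_1+H_1}\otimes\FK_L$, resting on the same inputs (Theorem~\ref{thm-tensor}, Corollary~\ref{cor-tensor}, and complementation).

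The gap is exactly at the step you flag as the hard part: promoting, for $n\le 5$, the identity $\Hilb_{G_1}(t)\Hilb_{G_2}(t)=\Hilb_n(t)$ to surjectivity of $\mu$. What nondegeneracy, Lemma~\ref{lemma-orthogonal}, and a dimension count actually yield is the equality $(\FK_{G_1})^\perp=\FK_n\FK_{G_2}^+$ in each degree: the inclusion $\supseteq$ is Lemma~\ref{lemma-orthogonal}, and the dimensions agree because the Hilbert series of $\FK_n/\FK_n\FK_{G_2}^+$ is $\Hilb_n/\Hilb_{G_2}=\Hilb_{G_1}$ by the freeness in Theorem~\ref{thm-subgraph} together with your numerical identity. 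But surjectivity of $\mu$ needs strictly more, namely $\FK_{G_1}\cap\FK_n\FK_{G_2}^+=0$: granted this, the same dimension count gives $\FK_n=\FK_{G_1}\oplus\FK_n\FK_{G_2}^+$, and then $\FK_{G_1}\FK_{G_2}=\FK_n$ follows by induction on degree. Since $(\FK_{G_1})^\perp=\FK_n\FK_{G_2}^+$, however, the space $\FK_{G_1}\cap\FK_n\FK_{G_2}^+$ is precisely the radical of the restriction of $\langle\cdot,\cdot\rangle$ to $\FK_{G_1}$, and nondegeneracy of a symmetric bilinear form on an ambient space does not imply nondegeneracy of its restriction to a subspace (nondegenerate spaces contain isotropic subspaces). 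One hits the same wall trying to prove injectivity instead: applying $\Delta_P$ for $P\in\FK_{G_1}$ to a relation $\sum_i p_iq_i=0$ with $q_i\in\FK_{G_2}$ linearly independent (legitimate, since $\Delta_{ab}$ annihilates $\FK_{G_2}$ when $\overline{ab}\in G_1$) reduces injectivity of $\mu$ to nondegeneracy of the form on $\FK_{G_1}$ --- which, under your hypotheses, is equivalent to the conclusion you want, not a consequence of the known nondegeneracy for $n\le 5$. So the argument is circular at this point. The paper closes this step by a different route: it verifies the tensor decomposition itself, not merely the Hilbert series, for all graphs on at most five vertices by direct computation with \texttt{bergman}. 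Your cograph reduction genuinely shrinks the list of graphs requiring verification, but the graphs on at most five vertices containing an induced $P_4$ (for instance $P_4$ itself and $C_5$) are not covered by your argument as written.
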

\begin{proof}
We checked the claim for graphs with at most five vertices using \texttt{bergman}. Closure under complementation follows since $\FK_n$ and all $\FK_G$ are isomorphic to their opposite algebras.

For disjoint unions, suppose that the tensor product decomposition exists for graphs $G_1 \cup G_2 = K_m$ and $H_1 \cup H_2 = K_n$. Then the complement of $G_1+H_1$ in $K_{m+n}$ is $L=(G_2+H_2) \cup K_{m,n}$. By Theorem~\ref{thm-tensor}, 
\[
\FK_{m+n}
\cong \FK_{K_m+K_n} \otimes \FK_{K_{m,n}}
\cong \FK_{G_1+H_1} \otimes \FK_{G_2+H_2}\otimes \FK_{K_{m,n}}
\cong \FK_{G_1+H_1}  \otimes \FK_L.\qedhere\]
\end{proof}

However, the tensor product decomposition does not hold in general.

\begin{figure}
\begin{tabular}{ccc}
		\begin{tikzpicture}[scale=0.7]
			\node[v] (1) at (-0.5,0){};
			\node[v] (2) at (1,0){};
			\node[v] (3) at (2,1){};
			\node[v] (4) at (2,-1){};
			\node[v] (5) at (3, 0){};
			\node[v] (6) at (4.5, 0){};
			\draw(1) node[above]{1} -- (2)node[above]{2} -- (3)node[above]{3} -- (5)node[above]{5} -- (6)node[above]{6} (2)--(4)node[below]{4}--(5) (2)--(5);
		\end{tikzpicture}
&\quad&
		\begin{tikzpicture}[scale=0.7]
			\node[v] (1) at (-0.5,0){};
			\node[v] (2) at (1,0){};
			\node[v] (3) at (2,1){};
			\node[v] (4) at (2,-1){};
			\node[v] (5) at (3, 0){};
			\node[v] (6) at (4.5, 0){};
			\draw(1) node[above]{2} -- (2)node[above]{6} -- (3)node[above]{3} -- (5)node[above]{1} -- (6)node[above]{5} (2)--(4)node[below]{4}--(5) (3)--(4) (2)--(5);
		\end{tikzpicture}\\
		\begin{tikzpicture}[scale=0.7]
			\node[v] (1) at (0,0){};
			\node[v] (2) at (18:1.5){};
			\node[v] (3) at (90:1.5){};
			\node[v] (4) at (162:1.5){};
			\node[v] (5) at (234:1.5){};
			\node[v] (6) at (306:1.5){};
			\draw (2) node[right]{2}--(3)node[above]{1}--(4)node[left]{5}--(5)node[below]{4}--(6)node[below]{3}--(2)--(1)node[below]{6}--(4);
		\end{tikzpicture}
&\quad&
		\begin{tikzpicture}[scale=0.7]
			\node[v] (1) at (0,0){};
			\node[v] (2) at (18:1.5){};
			\node[v] (3) at (90:1.5){};
			\node[v] (4) at (162:1.5){};
			\node[v] (5) at (234:1.5){};
			\node[v] (6) at (306:1.5){};
			\draw (2) node[right]{3}--(3)node[above]{1}--(4)node[left]{4}--(5)node[below]{2}--(6)node[below]{5}--(2)--(1)node[below]{6}--(4) (1)--(3);
		\end{tikzpicture}
\end{tabular}
\caption{\label{fig-counter} Either of the graphs on the left can be used for $G_1$ in Proposition~\ref{prop-counter}. On the right are their complements.}
\end{figure}
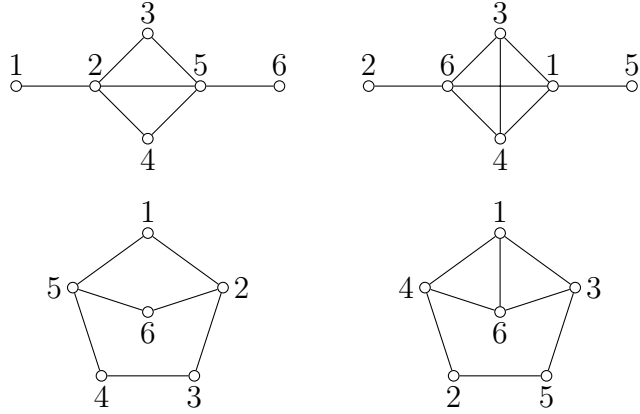

\begin{prop} \label{prop-counter}
Let $G_1$ be either of the two graphs on the left of Figure~\ref{fig-counter} and $G_2$ its complement (shown on the right). Then $\FK_6 \not \cong \FK_{G_1} \otimes \FK_{G_2}$.
\end{prop}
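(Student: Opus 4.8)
The plan is to use the Hilbert series as an invariant that any such isomorphism would have to respect. The multiplication map $\mu\colon \FK_{G_1}\otimes \FK_{G_2}\to \FK_6$ is degree-preserving, so if it were an isomorphism then necessarily $\Hilb_6(t)=\Hilb_{G_1}(t)\,\Hilb_{G_2}(t)$, with equality of the coefficient of $t^d$ in every degree $d$. It therefore suffices to compute $\Hilb_{G_1}$ and $\Hilb_{G_2}$ and to exhibit a single degree $d$ in which the coefficient of $t^d$ in $\Hilb_{G_1}(t)\Hilb_{G_2}(t)$ differs from the known coefficient of $\Hilb_6(t)$ recorded in \S\ref{sec-hilbert}. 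Note that this is a genuine reduction: no assumption on the finite-dimensionality of $\FK_6$ is needed, only the tabulated low-degree coefficients.

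First I would compute $\Hilb_{G_1}(t)$ and $\Hilb_{G_2}(t)$ with \texttt{bergman}, exactly as for the small graphs in the Appendix. A useful preliminary observation is that the obstruction cannot be seen in the smallest degrees. Here $G_1$ has $7$ edges and $G_2$ has $8$, so $\dim\FK_{G_1}^1=7$ and $\dim\FK_{G_2}^1=8$. In degree two one counts monomials $x_ex_f$ with $e\neq f$ modulo the commuting relation (one per disjoint edge-pair) and the triangle relations (two per triangle): since $G_1$ has $7$ disjoint edge-pairs and $2$ triangles, $\dim\FK_{G_1}^2=7\cdot 6-7-2\cdot 2=31$, and since $G_2$ has $10$ disjoint edge-pairs and $4$ triangles, $\dim\FK_{G_2}^2=8\cdot 7-10-2\cdot 4=38$. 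Hence the coefficients of $1,t,t^2$ in $\Hilb_{G_1}\Hilb_{G_2}$ are $1$, $7+8=15$, and $31+7\cdot 8+38=125$, matching $\Hilb_6(t)=1+15t+125t^2+\cdots$ exactly. So the decomposition holds through degree two, and the machine computation is what is needed to locate the first higher-degree discrepancy. I would then read off from \texttt{bergman} the smallest $d$ (necessarily $d\ge 3$) at which the coefficient of $t^d$ in $\Hilb_{G_1}\Hilb_{G_2}$ disagrees with that of $\Hilb_6$, verifying that $d\le 10$ so that the relevant coefficient of $\Hilb_6$ is the one tabulated. The identical computation is then run for the second choice of $G_1$.

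The main obstacle is precisely that the failure is invisible in small degrees: because degrees $0,1,2$ already agree, one cannot argue by a crude dimension mismatch and must rely on the higher-degree output, while also ensuring that the first discrepancy falls inside the finite range in which $\Hilb_6$ is known. Conceptually the reason the decomposition breaks is that $G_2$ fails the hypothesis of Theorem~\ref{thm-tensor}: two vertices lying in one connected component of $G_2$ have different neighborhoods in $G_1$, so the rewriting $x_{jk}x_{ij}=x_{ij}x_{ik}+x_{ik}x_{jk}$ that proves surjectivity of $\mu$ there no longer keeps us inside $\FK_{G_1}\FK_{G_2}$. One could instead attempt to show directly that $\mu$ is not surjective, i.e.\ that $\FK_{G_1}$ does not surject onto $\FK_6/\FK_6\FK_{G_2}^+$, but exhibiting a specific element outside the image is more delicate than the unconditional coefficient comparison, so the Hilbert-series argument is the cleaner and decisive route.
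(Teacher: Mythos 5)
Your reduction to a Hilbert-series comparison is sound, and your hand check that the coefficients of $1$, $t$, $t^2$ agree is correct. But the decisive step of your argument --- locating an explicit degree $3\le d\le 10$ where the coefficient of $t^d$ in $\Hilb_{G_1}(t)\Hilb_{G_2}(t)$ differs from that of $\Hilb_6(t)$ --- is deferred to a \texttt{bergman} computation that you neither perform nor show to be feasible. This is a genuine gap, for two reasons. First, computing $\Hilb_{G_1}$ and $\Hilb_{G_2}$ is not ``exactly as for the small graphs in the Appendix'': those graphs live inside $\FK_5$, whose structure is completely known, whereas here $\FK_{G_1}$ and $\FK_{G_2}$ are subalgebras of $\FK_6$, for which even finite-dimensionality is open; one would need (at least) a partial Gr\"obner basis of $\FK_6$ through degree $10$ and then a degree-by-degree linear-algebra computation of each subalgebra --- precisely the kind of expensive computation that the discussion preceding Algorithm~\ref{alg-mcr} warns against. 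Second, and more importantly, you give no argument that the first discrepancy occurs in a degree where $\Hilb_6$ is actually known ($d\le 10$); if it did not, your method would be inconclusive, and nothing in your proposal rules this out.

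The idea you are missing is the special structure of the two graphs in Figure~\ref{fig-counter}, which is the entire reason they were chosen: in each case the complement $G_2$ is isomorphic to $G_1$ with one extra edge joining two \emph{twin} vertices (vertices with identical neighborhoods). Theorem~\ref{thm-tensor}, applied to the partition of $G_2$ into that single edge and its complementary copy of $G_1$, then gives unconditionally $\Hilb_{G_2}(t)=(1+t)\,\Hilb_{G_1}(t)$. Consequently a decomposition $\FK_6\cong\FK_{G_1}\otimes\FK_{G_2}$ would force $\Hilb_6(t)=(1+t)\,\Hilb_{G_1}(t)^2$, i.e.\ $\Hilb_{G_1}(t)=\bigl(\Hilb_6(t)/(1+t)\bigr)^{1/2}$ as a formal power series. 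The tabulated coefficients of $\Hilb_6$ from Section~\ref{sec-hilbert} determine this square root through degree $7$, and its $t^7$ coefficient is $\tfrac{11623}{2}\notin\ZZ$ --- a contradiction, since Hilbert series have integer coefficients. This argument needs no information about $\FK_{G_1}$ or $\FK_{G_2}$ beyond the twin-vertex observation, and as a by-product it guarantees what your proposal only hopes for: the product $\Hilb_{G_1}\Hilb_{G_2}$ must already disagree with $\Hilb_6$ in some degree $d\le 7$.
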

\begin{proof}
Note that $G_1$ has the property that its complement $G_2$ is isomorphic to $G_1$ but with an extra edge connecting two twin vertices. Hence by Theorem~\ref{thm-tensor}, $\Hilb_{G_2}(t) = \Hilb_{G_1}(t) \cdot (1+t)$. Then if $\FK_6 \cong \FK_{G_1} \otimes \FK_{G_2}$, we would be able to solve for $\FK_{G_1}$ using the first few terms of the Hilbert series for $\FK_6$ as given in Section~\ref{sec-hilbert} to find that
\begin{align*}
\Hilb_{G_1}(t) &= \left(\frac{\Hilb_6(t)}{1+t}\right)^{1/2}\\
&= \left(\frac{1 + 15t + 125t^2 + 765t^3 + 3831t^4 + 16605t^5 + 64432t^6 + 228855t^7+\cdots}{1+t}\right)^{1/2}\\
&= \left(1+14t+111t^2+654t^3+3177t^4+13428t^5+51004t^6+177851t^7+\cdots\right)^{1/2}\\
&= 1+7t+31t^2+110t^3+338t^4+938t^5+2408t^6+\tfrac{11623}{2}t^7+\cdots,
\end{align*}
which is impossible due to the coefficient of $t^7$.
\end{proof}

It would be interesting to try to classify for which graphs and their complements a tensor product decomposition as in Corollary~\ref{cor-tensor} holds.

\subsection{Computation}
 
We include a brief discussion of a method for computing minimal coset representatives. This method of computation was used to achieve some of the results in the next section as well as the conjectures we will present later.

Let $G$ be a graph and $e$ an edge not in $G$. Denote $G \cup \{e\}$ by $G'$. Suppose that we wish to compute a set of minimal coset representatives for $\FK_{G}$ in $\FK_{G'}$, that is, a basis for $\FK_{G'}/\FK_{G'}\FK_{G}^+$. Unfortunately, without prior knowledge of the relations of $\FK_{G'}$ (which would naively require an expensive noncommutative Gr\"obner basis calculation for $\FK_n$), one cannot easily determine whether an element of $\FK_{G'}$ lies in $\FK_{G'}\FK_{G}^+$.

We can, however, give a simple sufficient condition for an element of $\FK_{G'}$ not to lie in $\FK_{G'}\FK_{G}^+$. Let $H$ be the complement of $G'$ and $H' = H \cup \{e\}$ the complement of $G$. By Lemma~\ref{lemma-orthogonal}, every element of $\FK_{G'}\FK_{G}^+$ is orthogonal to $\FK_{H'}$. Hence, any element of $\FK_{G'}$ that pairs nontrivially with some element of $\FK_{H'}$ does not lie in $\FK_{G'}\FK_{G}^+$. But we need not even pair with all elements of $\FK_{H'}$: any element of $\FK_{H'}\FK_H^+$ is orthogonal to every element of $\FK_{G'}$. Hence we only need pair with elements that are linearly independent in $\FK_{H'}/\FK_{H'}\FK_H^+$.

This suggests the following algorithm to calculate linearly independent sets of minimal coset representatives for $\FK_G$ in $\FK_{G'}$ and for $\FK_H$ in $\FK_{H'}$ simultaneously:

\begin{alg}\label{alg-mcr}
Let $G$ and $H$ be graphs and $e$ an edge such that $G \sqcup H \sqcup \{e\}=K_n$. Let $G' = G \cup \{e\}$ and $H' = H \cup \{e\}$, and set $M^0 = N^0 = \{\idelm\}$. For $d \geq 0$:
\begin{itemize}
\item Construct a matrix with rows indexed by $x_{ij}p$ for $x_{ij} \in \FK_{G'}$ and $p \in M^d$, columns indexed by $x_{kl}q$ for $x_{kl} \in \FK_{H'}$ and $q \in N^d$, and entries $\langle x_{ij}p, x_{kl}q \rangle$.
\item Set $M^{d+1}$ to be the indices of a maximal set of linearly independent rows and likewise $N^{d+1}$ for columns.
\end{itemize}
Then $M^d$ and $N^d$ are subsets of degree $d$ minimal coset representatives for $\FK_{G}$ inside $\FK_{G'}$ and $\FK_H$ inside $\FK_{H'}$. (In other words, $M^d$ is linearly independent modulo $\FK_{G'}\FK_G^+$, and likewise $N^d$ modulo $\FK_{H'}\FK_H^+$.)
\end{alg}
\begin{rmk}
Algorithm~\ref{alg-mcr} can be used to give a lower bound on $\Hilb_{G'}(t)/\Hilb_G(t)$, but this will not in general be exact. However, it is usually quite accurate for small graphs. For instance, one can show that if Conjecture~\ref{conj-nichols} holds and $\FK_{H'} \otimes \FK_G \cong \FK_n$ (as in Theorem~\ref{thm-tensor}), then Algorithm~\ref{alg-mcr} will give a complete set of minimal coset representatives for $\FK_G$ in $\FK_{G'}$. In particular, the algorithm is exact for all graphs on at most five vertices.
\end{rmk}


\section{Coxeter groups and nil-Coxeter algebras} \label{sec-coxeter}

In this section, we will describe various ways in which the subalgebras $\FK_G$ share similar properties to Coxeter groups, or more specifically, to their nil-Coxeter algebras.

\subsection{Definitions}

We first recall the definition of (simply-laced) Coxeter groups and nil-Coxeter algebras as well as some of their basic properties. For further details, see \cite{hump} or \cite{BjornerBrenti}.

Let $D$ be a graph, which we will refer to in this context as a \emph{simply-laced Dynkin diagram}.

\begin{defn}
Given a simply-laced Dynkin diagram $D$, the \emph{Coxeter group} $(W, S)$ of $D$ is the group generated by $S$, the set of \emph{simple reflections} $s_i$ for each vertex $i$ of $D$, and relations
\begin{itemize}
	\item $s_i^2 = 1$ for any vertex $i$ of $D$;
	\item $s_is_j = s_js_i$ for nonadjacent vertices $i,j$ of $D$;
	\item $s_is_js_i = s_js_is_j$ for adjacent vertices $i,j$ of $D$ (called a \emph{braid relation}).
\end{itemize}
\end{defn}
\begin{defn}
Given an element $w \in W$, a \emph{reduced word} (or \emph{reduced expression}, or \emph{reduced decomposition}) $s_{i_1}s_{i_2}\cdots s_{i_\ell}$ for $w$ is a minimum length expression of $w$ as a product of generators $s_i$. The \emph{length} of $w$, denoted $\ell(w)$, is the length of any reduced word for $w$. We say that $w = u \cdot v$ is a \emph{reduced factorization} if $\ell(w) = \ell(u)+\ell(v)$.
\end{defn}

Given a Coxeter group, one can define the corresponding nil-Coxeter algebra as follows.

\begin{definition}
Given a simply-laced Dynkin diagram $D$, the \emph{nil-Coxeter algebra $\Nil$} is the associative algebra with a generator $t_i$ for each vertex $i$ of $D$ and relations
\begin{itemize}
	\item $t_i^2 = 0$ for any vertex $i$ of $D$;
	\item $t_{i}t_{j} = t_j t_i$ for nonadjacent vertices $i,j$ of $D$;
	\item $t_{i}t_{j}t_{i} = t_{j}t_{i}t_{j}$ for adjacent vertices $i,j$ of $D$.
\end{itemize}
\end{definition}
Note the similarity of this definition to that of $\FK_n$.

Given an element $w \in W$, we can define an element $t_w \in \Nil$ by choosing any reduced word $w = s_{i_1}s_{i_2} \cdots s_{i_\ell}$ and letting $t_w = t_{i_1}t_{i_2} \cdots t_{i_\ell}$. The element $t_w$ does not depend on the choice of reduced word.
\begin{prop}
The nil-Coxeter algebra $\Nil$ has basis $\{t_w \mid w \in W\}$ with multiplication given by
\[t_ut_v = \begin{cases} t_{uv},&\text{if $\ell(u)+\ell(v) = \ell(uv)$;}\\ 0,& \text{otherwise.}\end{cases}\]
\end{prop}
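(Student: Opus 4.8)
The plan is to establish the two claims of the proposition — that the elements $\{t_w \mid w \in W\}$ form a basis of $\Nil$, and that they multiply according to the stated rule — in a way that handles them together, since neither is self-evidently prior to the other. The standard route is to build a faithful representation. First I would note that the elements $t_w$ (for any fixed choice of reduced words) span $\Nil$: any product $t_{i_1}\cdots t_{i_k}$ of generators is, by the defining relations, either zero or equal to some $t_w$, because the relations are exactly the commutation and braid moves that connect reduced words for the same $w$, together with $t_i^2=0$ which kills any product containing a repeated generator after reduction. This gives $\dim \Nil \leq |W|$.

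The harder direction is linear independence, and I would obtain it via an action on the free module $V = \bigoplus_{w\in W}\QQ\, e_w$. Define operators $T_i$ on $V$ by
\[
T_i \cdot e_w = \begin{cases} e_{s_i w}, & \text{if } \ell(s_i w) > \ell(w);\\ 0, & \text{otherwise.}\end{cases}
\]
The key step is to verify that these $T_i$ satisfy the defining relations of $\Nil$: that $T_i^2 = 0$, that $T_iT_j = T_jT_i$ for nonadjacent $i,j$, and that $T_iT_jT_i = T_jT_iT_j$ for adjacent $i,j$. Checking $T_i^2 = 0$ is immediate from the length condition. The braid and commutation relations reduce, after applying both sides to a basis vector $e_w$, to standard facts from the theory of Coxeter groups about how left multiplication by $s_i, s_j$ interacts with length in the dihedral (or commuting) case; I would cite the length/exchange properties from \cite{hump} or \cite{BjornerBrenti} rather than reproving them. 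Granting these relations, the assignment $t_i \mapsto T_i$ extends to an algebra homomorphism $\Nil \to \operatorname{End}(V)$.

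Linear independence then follows by evaluating at the identity vector $e_{\idelm}$: an induction on $\ell(w)$ shows that $t_w \cdot e_{\idelm} = e_w$, using that some reduced word for $w$ begins with a left descent and that each successive application of a $T_i$ increases length by exactly one along a reduced word. Since the $e_w$ are linearly independent in $V$, any linear relation among the $t_w$ in $\Nil$ would map to a relation among the $e_w$, forcing all coefficients to vanish. Combined with the spanning statement, this proves $\{t_w\}$ is a basis and that the representation is faithful.

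Finally, for the multiplication rule I would argue directly from the basis property. Pick reduced words for $u$ and $v$; their concatenation is a word for $uv$ of length $\ell(u)+\ell(v)$. If this equals $\ell(uv)$, the concatenation is itself reduced, so $t_u t_v = t_{uv}$ by well-definedness of $t_w$. Otherwise $\ell(u)+\ell(v) > \ell(uv)$, and the concatenated word is not reduced; I would show the product is zero either by applying the faithful representation (since $T_uT_v \cdot e_{\idelm} = T_u \cdot e_v = 0$ when $\ell(uv) < \ell(u)+\ell(v)$, as the leftmost reduction fails to strictly increase length) or equivalently by a reduction argument showing some braid/commutation sequence brings a repeated generator adjacent, whence $t_i^2 = 0$ applies. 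The main obstacle is the verification of the braid relation for the $T_i$; everything else is bookkeeping, and that step is where the genuine combinatorics of Coxeter length lives, so I would lean on the cited references to keep it short.
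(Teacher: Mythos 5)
The paper offers no proof of this proposition: it appears in Section 5 as recalled background on nil-Coxeter algebras, with the reader referred to \cite{hump} and \cite{BjornerBrenti} for details (even the well-definedness of $t_w$, i.e.\ Matsumoto's theorem, is asserted there without argument). Your proposal is correct and is essentially the standard argument found in those references: spanning follows from the word property of Coxeter groups (a non-reduced word can be carried by braid and commutation moves to one with a repeated adjacent letter, which $t_i^2=0$ kills), linear independence follows from the regular representation $T_i\cdot e_w = e_{s_iw}$ (length increasing) or $0$, with $t_w\cdot e_{\idelm}=e_w$ separating the $t_w$, and the multiplication rule then follows by evaluating $T_uT_v$ on $e_{\idelm}$, using that a product of generators increases length by at most one per step so $T_u e_v = e_{uv}$ exactly when $\ell(uv)=\ell(u)+\ell(v)$ and vanishes otherwise. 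The only step with real content is the verification of the braid relation for the operators $T_i$, which reduces via the parabolic decomposition $w = u\cdot v$, $u \in W_{\{i,j\}}$, $v$ a minimal coset representative, to a finite check in the dihedral group; deferring that to the cited references is exactly what the paper itself does, so your write-up supplies the proof the paper omits rather than diverging from it.
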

For this reason, we will sometimes abuse notation and use the same letters to refer to both elements of $W$ and elements of $\Nil$.

\subsection{Line graphs}

We briefly describe how to relate the subalgebra $\FK_G$ to a certain (twisted) nil-Coxeter algebra.

\begin{definition}
Given a graph $G$, let $L(G)$ denote the line graph of $G$, which we will think of as a simply-laced Dynkin diagram.
Given a directed graph  $G'$, the \emph{twisted nil-Coxeter algebra} of $L(G')$ is the associative algebra with a generator for each edge of $G'$ and relations
\begin{itemize}
    \item $\e{e} = \e{f}$ for edges $\e{e},\e{f}$ of $G'$ with the same ends and direction;
     \item $\e{e} = -\e{f}$ for any directed 2-cycle $\e{e},\e{f}$ in $G'$;
	\item $\e{e}^2 = 0$ for any edge $\e{e}$ of $G'$;
	\item $\e{ef} = \e{fe}$ for edges $\e{e}, \e{f}$ of $G'$ that do not share an end;
	\item $\e{efe} = \e{fef}$ for edges $\e{e}, \e{f}$ of $G'$ that form a directed path;
\item $\e{efe} = -\e{fef}$ for edges $\e{e}, \e{f}$ of $G'$ that share one end but do not form a directed path.
\end{itemize}
We refer to these last two relations as \emph{positive} and \emph{negative braid relations}, respectively.
\end{definition}
Note that we have abused notation slightly since the algebra depends on $G'$ and not just the line graph $L(G')$.

The nonzero monomials of the twisted nil-Coxeter algebra of $L(G')$ are in bijection with the nonzero monomials of the nil-Coxeter algebra of $L(G)$, where $G$ is the underlying simple undirected graph of $G'$.
Since the generators of $\FK_G$ satisfy the same versions of the braid relation satisfied by the generators of the twisted nil-Coxeter algebra, we have the following proposition.
\begin{proposition}\label{p Coxeter to FK}
For an undirected graph  $G$, let $G'$ be any directed graph whose underlying simple undirected graph is $G$.
The algebra $\FK_G$ is a quotient of the twisted nil-Coxeter algebra
of $L(G')$. Moreover, if the edges of $G$ can be directed so that the
  indegree and outdegree at each vertex are at most one, then $\FK_G$ is a quotient of the nil-Coxeter algebra
  of $L(G)$.
\end{proposition}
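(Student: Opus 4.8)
The plan is to establish the two claims by directly comparing the defining relations of $\FK_G$ with those of the (twisted) nil-Coxeter algebra of $L(G')$. For the first claim, I would note that the generators of $\FK_G$ are indexed by edges of $G$, which are exactly the vertices of the line graph $L(G)$, so after fixing a directed graph $G'$ with underlying graph $G$, there is a natural map from the twisted nil-Coxeter algebra of $L(G')$ onto $\FK_G$ sending each generator to the corresponding $x_{ij}$. To show this is a well-defined surjective algebra homomorphism, it suffices to verify that the images of the generators in $\FK_G$ satisfy each of the twisted nil-Coxeter relations. Surjectivity is automatic since the generators map onto a generating set of $\FK_G$.

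The core of the argument is therefore the relation-checking, which I would organize relation by relation. The relations $\e{e}=\e{f}$ for edges with the same ends and direction, and $\e{e}=-\e{f}$ for a directed $2$-cycle, follow immediately from the convention $x_{ij}=-x_{ji}$. The relation $\e{e}^2=0$ is the first defining relation of $\FK_n$. The commuting relation $\e{ef}=\e{fe}$ for edges not sharing an end corresponds exactly to the relation $x_{ij}x_{kl}=x_{kl}x_{ij}$ for distinct indices. The crucial computations are the two braid relations. For two generators $x_{ij}, x_{jk}$ sharing vertex $j$, I would replay the derivation given in Example~\ref{ex-a3}: starting from the cyclic relation $x_{ij}x_{jk}+x_{jk}x_{ki}+x_{ki}x_{ij}=0$ (the case $m=3$ of Lemma~\ref{lemma-cyclic}), multiply on the right by $x_{ij}$ and on the left by $x_{jk}$ and equate, using $x_{ij}^2=x_{jk}^2=0$, to obtain a braid-type relation. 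The sign in the resulting relation is precisely what distinguishes the \emph{positive} braid relation $\e{efe}=\e{fef}$ (when the two edges form a directed path $i\to j\to k$) from the \emph{negative} one $\e{efe}=-\e{fef}$ (when they share an end but do not form a directed path, e.g. both point into or out of the shared vertex). I expect tracking these signs through the $x_{ij}=-x_{ji}$ convention to be the main delicacy, so I would handle a representative orientation explicitly and let the remaining cases follow by applying the index-permuting automorphisms of $\FK_n$.

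For the second, stronger claim, I would observe that the hypothesis---that $G$ admits an orientation with indegree and outdegree at most one at every vertex---is exactly the condition under which the chosen $G'$ has no vertex where two edges both point in or both point out. Consequently, any two edges of $G'$ sharing a vertex automatically form a directed path, so the negative braid relation never occurs and only positive braid relations $\e{efe}=\e{fef}$ survive. Under this orientation the twisted nil-Coxeter algebra of $L(G')$ has exactly the relations of the ordinary (untwisted) nil-Coxeter algebra of $L(G)$: the sign relations become trivial and both braid relations collapse to the single positive form. Hence $\FK_G$ is a quotient of $\Nil$ for $L(G)$, and the first part already supplies the surjection. The only point requiring care here is confirming that the in/out-degree condition is equivalent to the absence of negative braid relations, which is a direct combinatorial check on pairs of incident edges at each vertex.
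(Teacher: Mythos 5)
Your proposal is correct and follows essentially the same route as the paper, which justifies the proposition by the one-line observation that the generators of $\FK_G$ satisfy the defining relations (in particular the correctly signed braid relations) of the twisted nil-Coxeter algebra of $L(G')$. Your relation-by-relation verification, with the sign analysis via the derivation of Example~\ref{ex-a3} and the observation that the in/outdegree condition forces every incident pair of edges to form a directed path, is exactly the content the paper leaves implicit.
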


It is important to note that the twisted nil-Coxeter algebra of $L(G')$ is almost always infinite-dimensional since, among the Dynkin diagrams of finite type, only those of type $A_n$ are line graphs. Nevertheless, we will use Proposition~\ref{p Coxeter to FK} in our discussion of $\FK_{\tilde{A}_{n-1}}$, utilizing the fact that the line graph of a cycle is again a cycle.

\subsection{Analogy to Fomin-Kirillov algebras}

Many of the results proved about $\FK_G$ in the previous sections are analogues of the following facts about Coxeter groups and nil-Coxeter algebras. Throughout this section, let $D$ be a simply-laced Dynkin diagram, $(W, S)$ its Coxeter group, and $\Nil$ its nil-Coxeter algebra.

There is a notion of \emph{Bruhat order} in $W$ defined as follows. Let $T = \{wsw^{-1} \mid w \in W, s \in S\}$ be the set of \emph{reflections} of $W$. The \emph{Bruhat order $<$} of $W$ is the transitive closure of the relations $tw < w$ for $t\in T$ such that $\ell(tw) < \ell(w)$. The \emph{left weak order $<_L$} of $W$ is the transitive closure of the relations $sw <_L w$ for $s\in S$ such that $\ell(sw) < \ell(w)$.

An equivalent way of formulating Bruhat order is that $v \leq w$ if some reduced word for $w$ contains a substring that gives a reduced word for $v$; in fact, if this is true for some reduced word for $w$, then it is true for any reduced word for $w$. We can also formulate left weak order similarly: $v \leq_L w$ if there exists a reduced word for $w$ that has a final substring that is a reduced word for $v$ (but this depends on the reduced word for $w$).

The analogue of Bruhat order in the Fomin-Kirillov algebra setting is, for $P, Q \in \FK_G$:
\[
Q \leq P \text{\quad if } Q \in \FK_n^\vee \act P.
\]
From the definition of the action of $\FK_n^\vee$, it follows that if $Q \leq P$, then $Q$ lies in the span of the subwords of $P$.
\begin{rmk}
If Conjecture~\ref{conj-nichols} holds, then $Q \leq P$ if and only if $Q=(P)\nabla_x$ for some $x \in \FK_n$.
\end{rmk}
The analogue of left weak order in the Fomin-Kirillov algebra setting is, for $P, Q \in \FK_G$:
\[
Q \leq_L P \text{\quad if there exists } x \in \FK_G \text{ such that } xQ=P.
\]
Theorem~\ref{thm-finitedim} is then the analogue of the following facts about $W$ and $\Nil$: if $W$ is finite, then:
\begin{enumerate}[(a)]
\item there is a unique element $w_0 \in W$ of maximum length (so the top degree part of $\Nil$ is one-dimensional);
\item $w\leq w_0$ for all $w\in W$;
\item $w \leq_L w_0$ for all $w\in W$;
\item the Poincar\'e series of $W$ (and hence the Hilbert series of $\Nil$) is symmetric;
\item every element of $W$ is a subword of any reduced expression for $w_0$; and
\item $w_0 = w_0^{-1}$.
\end{enumerate}

\begin{rmk}
Although $Q \leq P$ is well-defined (independent of the choice of expression for $P$), the span of the subwords of $P$ is not well-defined in general. For instance, in $\FK_3$, $x_{12}x_{23}x_{12} = x_{12}x_{13}x_{23}$, but clearly $x_{13}$ does not lie in the span of the subwords of $x_{12}x_{23}x_{12}$. However, Theorem~\ref{thm-finitedim} shows that this notion is well-defined when $P = w_0^G$.

Likewise, in $\FK_G$, $Q \leq_L P$ does not imply $Q \leq P$ in general: for instance, $x_{13}x_{23} \leq_L x_{12}x_{13}x_{23}$, but $x_{13}x_{23} \not\leq x_{12}x_{13}x_{23}$. But again, Theorem~\ref{thm-finitedim} shows that these notions coincide when $P = w_0^G$.
\end{rmk}

The notion of descent sets introduced in Section~\ref{sec-finitedim} is also the direct analogue of a notion from Coxeter groups: the \emph{left descent set} $L(w)$ of an element $w \in W$ is the set of all $s_i \in S$ such that $\ell(s_iw)<\ell(w)$ (or in $\Nil$, $s_iw=0$). (Define $R(w)$ similarly.) Then Proposition~\ref{prop-monic} is the analogue of the following statement about Coxeter groups:
\begin{itemize}
\item  if $W$ has an element $w$ such that $L(w) = S$, then $W$ is finite-dimensional and $w$ is the longest element of $W$.
\end{itemize}
We can also now prove the following results about descent sets in $\FK_G$, which are again analogues of facts about Coxeter groups.
\begin{prop}
If $w \in \FK_G$, then $L(w),R(w)\subset G$.
\end{prop}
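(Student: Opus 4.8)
The statement asserts that for $w \in \FK_G$, both descent sets $L(w)$ and $R(w)$ are contained in $G$ (as graphs on $[n]$). The plan is to show the contrapositive for each: if $\overline{ij} \notin G$, then $\overline{ij} \notin L(w)$ and $\overline{ij} \notin R(w)$; equivalently, $x_{ij}w \neq 0$ and $wx_{ij} \neq 0$ for any nonzero $w \in \FK_G$. I would handle the right descent set first and deduce the left case by the symmetry already built into the paper.

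First I would recall the adjointness from Proposition~\ref{prop-pairing}: the operator $\Delta_{ij}$ (acting on the left) is adjoint to right multiplication by $x_{ij}$ with respect to $\langle\cdot,\cdot\rangle$, that is, $\langle wx_{ij}, R\rangle = \langle w, \Delta_{ij}(R)\rangle$ for all $R$. Suppose for contradiction that $\overline{ij} \notin G$ but $wx_{ij} = 0$ for some nonzero homogeneous $w \in \FK_G$. The key point is that the form pairs $\FK_G$ nondegenerately with itself in the relevant sense: since $w \neq 0$, there should be some element of $\FK_G$ that detects it. More precisely, I would use Lemma~\ref{lemma-delta}(a), which says $\Delta_{ij}(\FK_G) = 0$ when $\overline{ij} \notin G$, together with the dual right-action statement $(\FK_G)\nabla_{ij} \subset \FK_G$ of Lemma~\ref{lemma-delta}(b). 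The cleanest route is via $\nabla$: from Proposition~\ref{prop-pairing}, $\nabla_P$ is adjoint to left multiplication by $P$, so for the right-multiplication statement I want to show that if $w \in \FK_G$ is nonzero then $wx_{ij}\neq 0$, which I will detect by pairing.

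The crux is to produce a witness. Here is the mechanism I expect to work: if $w \in \FK_G$ is nonzero of degree $d$, then because $\FK_G$ is finite-dimensional in the relevant top-degree sense (or more robustly, because the $\act$ action recovers $\FK_G$), there exists some $P \in \FK_G$ with $\langle wx_{ij}, Px_{ij}\rangle$ or a similarly constructed pairing detecting nonvanishing; but the slicker argument uses $\supp$-degree. Since $\overline{ij}\notin G$, the variable $x_{ij}$ connects $i$ and $j$, which may lie in different connected components relative to $w$'s support inside $G$. I would argue that $wx_{ij} = 0$ forces, via the $\nabla_{ij}$ operator and Lemma~\ref{lemma-delta}(b), a relation that constrains $w$ to lie in $\FK_n x_{ij}^{\text{-annihilator}}$, and then pair against an element of $\FK_G$ whose $\nabla_{ij}$-image recovers $w$. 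The genuinely clean version: right multiplication by $x_{ij}$ is adjoint to $\Delta_{ij}$, and $\Delta_{ij}$ annihilates all of $\FK_G$ when $\overline{ij}\notin G$; so $\langle wx_{ij}, \FK_G\rangle = \langle w, \Delta_{ij}(\FK_G)\rangle = 0$, meaning $wx_{ij}$ is orthogonal to $\FK_G$. This does not immediately kill $wx_{ij}$ unless I can show $wx_{ij} \in \FK_G$ or pair it against something outside $\FK_G$, so the argument must instead run: $w \in \FK_G$ and $\overline{ij}\notin G$ imply $x_{ij}w \in \FK_{G\cup\{ij\}}$ and we compute $\Delta_{ij}(x_{ij}w) = w + \sigma_{ij}(x_{ij})\Delta_{ij}(w) = w$ (using $\Delta_{ij}(w)=0$ by Lemma~\ref{lemma-delta}(a)), so $\Delta_{ij}(x_{ij}w) = w \neq 0$, forcing $x_{ij}w \neq 0$; hence $\overline{ij}\notin L(w)$.

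The main obstacle, and the step I would spend the most care on, is the right-descent analogue, since the Leibniz rule for $\Delta_{ij}$ is naturally suited to detecting left multiplication. For $R(w)$ I would invoke the dual operators: by Proposition~\ref{prop-nabla} and the dual version of Lemma~\ref{lemma-delta}(a) (namely $(\FK_G)\nabla_{ij}$ behaves appropriately, or equivalently by applying $\rev$, which is an antiautomorphism preserving $\FK_G$ as a set since $\rev(\FK_G) = \FK_G$), the computation $(wx_{ij})\nabla_{ij} = w$ goes through symmetrically, giving $wx_{ij}\neq 0$ and hence $\overline{ij}\notin R(w)$. Thus the whole proof reduces to the single clean identity $\Delta_{ij}(x_{ij}w) = w$ (and its $\nabla$-mirror), with all the work concentrated in verifying that $\Delta_{ij}(w) = 0$ for $w \in \FK_G$ when $\overline{ij}\notin G$, which is exactly Lemma~\ref{lemma-delta}(a). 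I expect the only subtlety is bookkeeping the sign and the $\sigma_{ij}$-twist in the Leibniz rule, but since $\Delta_{ij}(x_{ij}) = 1$ and the second term vanishes, no sign issue survives.
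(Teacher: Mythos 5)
Your final argument is correct, and it takes a genuinely different route from the paper. The paper's proof invokes the tensor decomposition: by Corollary~\ref{cor-tensor} (via Theorem~\ref{thm-tensor}), the multiplication map $\FK_{K_n\setminus\{e\}}\otimes\FK_e\to\FK_n$ is an isomorphism, so multiplication by $x_{ij}$ is injective on $\FK_{K_n\setminus\{e\}}$; since $x_{ij}w=0$ and $\FK_G\subset\FK_{K_n\setminus\{e\}}$ whenever $e\notin G$, one concludes $e\in G$. Your proof instead rests on the single Leibniz identity $\Delta_{ij}(x_{ij}w)=\Delta_{ij}(x_{ij})\,w+\sigma_{ij}(x_{ij})\,\Delta_{ij}(w)=w$, where $\Delta_{ij}(w)=0$ by Lemma~\ref{lemma-delta}(a) because $\overline{ij}\notin G$; thus $\Delta_{ij}$ is a left inverse to left multiplication by $x_{ij}$ on $\FK_G$, so $x_{ij}w\neq 0$ for $w\neq 0$. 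This is more elementary, using only Proposition~\ref{prop-delta} and Lemma~\ref{lemma-delta}(a) rather than the substantial Theorem~\ref{thm-tensor}; what the paper's argument buys is the stronger statement that multiplication by $x_{ij}$ is injective on all of $\FK_{K_n\setminus\{e\}}$, not just on $\FK_G$. (The exploratory middle paragraph of your write-up---witnesses, $\supp$-degree, orthogonality---is abandoned by you and plays no role; only the final identity matters.)

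One genuine caution about your treatment of $R(w)$: your primary suggestion, that ``$(wx_{ij})\nabla_{ij}=w$ goes through symmetrically'' using a ``dual version of Lemma~\ref{lemma-delta}(a),'' is wrong, and this is precisely why the paper states part (b) of that lemma as stability $(\FK_G)\nabla_{ij}\subset\FK_G$ rather than vanishing. The right-hand operators twist their edge index by the $\S_n$-degree of the factor to their right, so they do not annihilate $\FK_G$ when $\overline{ij}\notin G$: for example, with $G$ the path with edges $\overline{12},\overline{23}$ and $w=x_{12}x_{23}\in\FK_G$, one computes $(w)\nabla_{13}=(x_{12})\nabla_{\sigma_{23}(1)\sigma_{23}(3)}\cdot x_{23}=(x_{12})\nabla_{12}\cdot x_{23}=x_{23}\neq 0$. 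The honest identity is $(wx_{ij})\nabla_{ij}=w+\bigl((w)\nabla_{ji}\bigr)x_{ij}$, and the extra term cannot be discarded. Your fallback, however, is exactly right and is all you need: $\rev$ is an antiautomorphism of $\FK_n$ with $\rev(\FK_G)=\FK_G$, so $wx_{ij}=0$ if and only if $x_{ij}\rev(w)=0$, and applying the left-descent case to $\rev(w)\neq 0$ gives $wx_{ij}\neq 0$. So your proposal is valid, but only through the $\rev$ reduction, not the $\nabla$ computation.
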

\begin{proof}
Suppose $e = \overline{ij} \in L(w)$. Then Corollary~\ref{cor-tensor} implies that $\FK_e \otimes \FK_H \cong \FK_n$, where $H=K_n \backslash \{e\}$. Thus left multiplication by $x_{ij}$ is injective on $\FK_H$, so since $x_{ij}w = 0$, we must have that $w \not\in \FK_H$. Thus $G \not\subset H$, so $e \in G$.
\end{proof}
\begin{proposition}\label{p descent set longword}
Let $w \in \FK_G$, and suppose $H \subset L(w)$. Then one can write $w = w_0^H g$ for some $g \in \FK_G$. (Similarly, if $H' \subset R(w)$, then $w = g' w_0^{H'}$ for some $g' \in \FK_G$.)
\end{proposition}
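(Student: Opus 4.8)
The plan is to prove the left-descent statement and obtain the right-descent statement by passing to the opposite algebra via the antiautomorphism $\rev$, exactly as in the proof of Theorem~\ref{thm-subgraph}. So I focus on showing that if $H \subset L(w)$ then $w = w_0^H g$ for some $g \in \FK_G$.

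First I would observe that $H \subset L(w)$ means $x_{ij} w = 0$ for every edge $\overline{ij} \in H$. The key prior fact to invoke is that $\FK_H$ is a finite-dimensional Frobenius algebra (Theorem~\ref{thm-finitedim}), with a unique top-degree monomial $w_0^H$, and that for any nonzero $Q \in \FK_H$ there is a monomial $P \in \FK_H$ with $PQ = w_0^H$ (Theorem~\ref{thm-finitedim}(c)). The natural approach is to use the freeness of $\FK_G$ as a left $\FK_H$-module (Theorem~\ref{thm-subgraph}): write $\FK_G \cong \FK_H \otimes (\FK_G / \FK_H^+ \FK_G)$, so that choosing a $\QQ$-linear section we may express $w = \sum_i h_i g_i$ uniquely, where the $h_i$ form a basis of $\FK_H$ and the $g_i$ are (representatives of) minimal coset representatives with $\deg h_i + \deg g_i$ constant on the homogeneous piece containing $w$. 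The goal then is to show that in this expansion every $h_i$ that appears is divisible on the left by $w_0^H$, i.e.\ lies in $w_0^H \FK_H$; equivalently, that $h_i$ lies in the top-degree component $\FK_H^{d_0^H}$, which is spanned by $w_0^H$.

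The mechanism for forcing this is the condition $x_{ij} w = 0$ for all $\overline{ij} \in H$. Left multiplication by $x_{ij}$ acts only on the $\FK_H$-tensor factor (since the $\FK_H$-module structure is by left multiplication and this commutes with the freeness decomposition), so $x_{ij} w = \sum_i (x_{ij} h_i) g_i = 0$, and by uniqueness of the decomposition we get $x_{ij} h_i = 0$ for each $i$ separately. Thus each $h_i$ satisfies $x_{ij} h_i = 0$ for all $\overline{ij} \in H$, i.e.\ $H \subset L(h_i)$ within $\FK_H$ itself. By Proposition~\ref{prop-monic} applied inside $\FK_H$ (with $G$ there taken to be $H$, and $P = h_i$), any such $h_i$ lies in the top-degree part of $\FK_H$, hence $h_i \in \QQ\, w_0^H$. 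Therefore $w = \sum_i c_i\, w_0^H g_i = w_0^H \bigl(\sum_i c_i g_i\bigr)$, and setting $g = \sum_i c_i g_i \in \FK_G$ gives $w = w_0^H g$ as required.

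The main obstacle I anticipate is justifying that the descent condition passes cleanly to each summand, which requires the decomposition $w = \sum_i h_i g_i$ to be genuinely unique and compatible with left $\FK_H$-multiplication. This is exactly what Theorem~\ref{thm-subgraph} supplies, but I must be careful that the $g_i$ (as lifts of a basis of $\FK_G / \FK_H^+\FK_G$) are chosen once and for all and that left multiplication by $x_{ij} \in \FK_H^1$ sends $\FK_H h_i \otimes g_i$ into $\FK_H \otimes g_i$ without mixing the $g_i$ components; this holds because the isomorphism in Theorem~\ref{thm-subgraph} is an isomorphism of left $\FK_H$-modules. The only subtlety is that a priori $x_{ij} h_i = 0$ need only hold for $\overline{ij} \in H$, and Proposition~\ref{prop-monic} requires $H \subset L(h_i)$ as graphs on the vertex set of $H$; since every generator of $\FK_H$ is an $x_{ij}$ with $\overline{ij} \in H$, this is precisely the full left-descent condition inside $\FK_H$, so Proposition~\ref{prop-monic} applies and pins $h_i$ to the one-dimensional top degree. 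Finally, the right-descent claim follows verbatim by applying this argument to $\rev(w)$ in the opposite algebra and using $\rev(w_0^{H'}) = \pm w_0^{H'}$ from Theorem~\ref{thm-finitedim}(f).
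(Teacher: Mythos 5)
Your proof is correct and is essentially the paper's own argument: you expand $w$ over a basis of right minimal coset representatives using the freeness in Theorem~\ref{thm-subgraph}, deduce from uniqueness of the decomposition that each $\FK_H$-coefficient is annihilated by every $x_{ij}$ with $\overline{ij} \in H$, and then invoke Proposition~\ref{prop-monic} to pin each coefficient to a scalar multiple of $w_0^H$. The only cosmetic differences are your phrase that the $h_i$ ``form a basis of $\FK_H$'' (they are just coefficients indexed by the coset representatives, as your final paragraph in effect acknowledges) and your explicit treatment of the right-descent case via $\rev$, which the paper leaves implicit.
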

Recall that by Proposition~\ref{prop-monic}, $\FK_H$ is necessarily finite-dimensional.
\begin{proof}
By Theorem~\ref{thm-subgraph}, there is a unique expression for $w$ of the form  $w= \sum_{m \in M} h_mm$ for some  $h_m \in \FK_H$, where $M$ is a set of right minimal coset representatives for $\FK_H$ in $\FK_G$.
For any $\overline{ij} \in H$, since $\FK_G$ is a free left $\FK_H$-module,
$0=x_{ij}w=\sum_{m \in M} (x_{ij}h_m)m$ implies that $x_{ij} h_m=0$ for all $m\in M$. Hence by Proposition~\ref{prop-monic}, $h_m = c_mw_0^H$ for some constant $c_m$, so $w = w_0^H (\sum_{m \in M} c_mm)$.
\end{proof}
Along similar lines, we can prove the following result.
\begin{prop}
Let $w \in \FK_G$, and suppose $w_0^Hw = 0$ for some $H \subset G$ with $\FK_H$ finite-dimensional. Then $w \in \FK_H^+\FK_G$.
\end{prop}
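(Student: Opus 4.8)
The plan is to exploit the free-module structure from Theorem~\ref{thm-subgraph} together with the fact (Theorem~\ref{thm-finitedim}(a)) that $w_0^H$ spans the one-dimensional top degree $\FK_H^{d_0}$ of the finite-dimensional algebra $\FK_H$. Since $\FK_G$ is a free left $\FK_H$-module, I would fix a set $M$ of right minimal coset representatives for $\FK_H$ in $\FK_G$ and write $w$ uniquely as $w = \sum_{m \in M} h_m m$ with $h_m \in \FK_H$. Recall (from the discussion following Definition~\ref{defn-mcr}) that $M$ projects to a basis of $\FK_G/\FK_H^+\FK_G$, so proving $w \in \FK_H^+\FK_G$ amounts to showing that each $h_m$ has vanishing degree-$0$ component. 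Writing $h_m = c_m + h_m^+$ with $c_m \in \QQ$ and $h_m^+ \in \FK_H^+$, the goal is precisely that all $c_m = 0$.

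The central observation is that left multiplication by $w_0^H$ annihilates $\FK_H^+$: since $\FK_H$ is graded with top degree $d_0$ and $w_0^H \in \FK_H^{d_0}$, for any homogeneous $a \in \FK_H$ of positive degree one has $w_0^H a \in \FK_H^{d_0 + \deg a} = 0$. Consequently $w_0^H h_m = c_m w_0^H + w_0^H h_m^+ = c_m w_0^H$ for each $m$, and hence $w_0^H w = \sum_{m \in M} (w_0^H h_m)\,m = \sum_{m \in M} c_m\, w_0^H m$.

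To finish, I would invoke freeness once more: the hypothesis $w_0^H w = 0$ together with the uniqueness of the expansion in the free left $\FK_H$-module $\FK_G = \bigoplus_{m \in M} \FK_H m$ forces $w_0^H h_m = 0$, i.e.\ $c_m w_0^H = 0$, for every $m$. As $w_0^H \neq 0$, this gives $c_m = 0$ for all $m$, so $w = \sum_{m \in M} h_m^+ m \in \FK_H^+\FK_G$, as desired. I do not anticipate a serious obstacle here; the only subtlety is bookkeeping the two distinct uses of freeness---first to expand $w$, then to read off the $m$-coefficients of $w_0^H w$---and confirming that $M$ indeed descends to a basis of $\FK_G/\FK_H^+\FK_G$, both of which are immediate from Theorem~\ref{thm-subgraph} and the remarks after Definition~\ref{defn-mcr}.
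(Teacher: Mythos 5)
Your proof is correct and follows essentially the same route as the paper's: expand $w = \sum_{m \in M} h_m m$ via the freeness from Theorem~\ref{thm-subgraph}, use uniqueness of coefficients to get $w_0^H h_m = 0$, and conclude each $h_m \in \FK_H^+$ because $w_0^H$ kills $\FK_H^+$ by the grading but acts nontrivially on constants. The only difference is cosmetic---you split off the constant term $c_m$ explicitly, while the paper leaves that step implicit.
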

\begin{proof}
As in Proposition~\ref{p descent set longword}, we can write $w= \sum_{m \in M} h_mm$. Multiplying by $w_0^H$ gives $0= \sum_{m \in M} (w_0^H h_m)m$. Since $M$ is a set of minimal coset representatives, $w_0^H h_m=0$ for all $m$. But then each $h_m$ has degree at least 1, so $w \in \FK_H^+\FK_G$.
\end{proof}

Finally, Coxeter groups have a notion of parabolic subgroups: for $J \subseteq S$, the \emph{parabolic subgroup} $W_J$ is the subgroup of $W$ generated by the set $J$. The pair $(W_J, J)$ is equal to the Coxeter group of the induced subgraph of $D$ with vertex set $J$. Let $\Nil_J$ denote the nil-Coxeter algebra of $W_J$.

The analogues of parabolic subgroups for $\FK_G$ are the subalgebras $\FK_H$ as $H$ ranges over subgraphs of $G$. However the fact about parabolic subgroups just mentioned does not hold---the minimal relations satisfied by the generators of $\FK_H$ are not easily determined from those of $\FK_G$. Despite this, we still have that Theorem~\ref{thm-subgraph} is the analogue of the following fact about parabolic subgroups:
\begin{itemize}
\item Each right (resp. left) coset $W_Jw$ (resp. $wW_J$) of $W_J$ contains a unique element of minimal length called a \emph{minimal coset representative}. Let $\leftexp{J}{W}$ (resp. $W^J$) denote the set of all such elements. Then $\Nil$ is a free left (resp. right) $\Nil_J$-module and as a left (resp. right) $\Nil_J$-module has basis  $\leftexp{J}{W}$ (resp.  $W^J$).
\end{itemize}

Although there exist many parallels between Coxeter groups and Fomin-Kirillov algebras, we do not yet have a good analogue for many of the geometric notions associated with Coxeter groups, such as reflections or root systems. For instance, for finite Coxeter groups, the length of the long word equals the number of positive roots in the corresponding root system, but we know of no combinatorial object that determines the maximum degree that occurs in $\FK_G$.

\section{Dynkin diagrams}

In this section, we will describe $\FK_G$ when $G$ is a (simply-laced) Dynkin diagram of finite type. In particular, we will show that any minimal relation of $\FK_G$ in these cases is of one of three types: it is either a \emph{quadratic relation} inherited from $\FK_n$, a \emph{braid relation} of the form $\e{aba}+\e{bab}=0$ between two edges that share a vertex, or a \emph{claw relation} of the form $\e{abca}+\e{bcab}+\e{cabc}=0$ among three edges that share a vertex. (See the examples in Section~\ref{sec-examples} and Lemma~\ref{lemma-cyclic}.)


\subsection{The case $\FK_{A_n}$}

Define $A_n$ to be the path on $n$ vertices (that is, the Dynkin diagram of type $A_n$).
Then, in accordance with Example~\ref{ex-a3}, we have the following theorem.

\begin{thm} \label{thm-a}
	The only minimal relations in $\FK_{A_n}$ are the quadratic and braid relations. Its Hilbert series is
	\[\Hilb_{A_n}(t) = [2][3][4] \cdots [n].\] In other words, $\FK_{A_n} \cong \Nil_n$, the \textit{nil-Coxeter algebra} of type $A_{n-1}$.
\end{thm}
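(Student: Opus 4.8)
The plan is to show that $\FK_{A_n}$ is a quotient of the nil-Coxeter algebra $\Nil_n$ of type $A_{n-1}$ and that this quotient has dimension at least $|\S_n| = n!$, forcing the surjection $\Nil_n \twoheadrightarrow \FK_{A_n}$ to be an isomorphism. By Proposition~\ref{p Coxeter to FK}, the path $A_n$ can be directed so that every vertex has indegree and outdegree at most one (orient all edges left-to-right), so $\FK_{A_n}$ is automatically a quotient of the nil-Coxeter algebra of the line graph $L(A_n) = A_{n-1}$, namely $\Nil_n$. Since $\dim \Nil_n = n!$ (the order of the symmetric group $\S_n$), it suffices to prove the reverse inequality $\dim \FK_{A_n} \geq n!$, or equivalently to produce a lower bound showing the $n!$ basis monomials $t_w$ remain linearly independent in $\FK_{A_n}$.

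The cleanest way I would obtain the lower bound is by induction on $n$ using the free-module structure from Theorem~\ref{thm-subgraph}. Writing $\e{a}_1, \dots, \e{a}_{n-1}$ for the edges of the path with $\e{a}_{n-1}$ the final edge, let $H = A_{n-1}$ be the subpath on the first $n-1$ edges' vertices; then $\FK_{A_n}$ is a free left $\FK_H$-module by Theorem~\ref{thm-subgraph}. By induction $\FK_H \cong \Nil_{n}$... more precisely $\FK_{A_{n-1}} \cong \Nil_{n-1}$ with Hilbert series $[2][3]\cdots[n-1]$. It then remains to identify the minimal coset representatives: I expect a set of $n$ right coset representatives of the form $\{\idelm, \e{a}_{n-1}, \e{a}_{n-2}\e{a}_{n-1}, \dots, \e{a}_1\e{a}_2\cdots\e{a}_{n-1}\}$, matching the standard parabolic quotient $\leftexp{}{}{\S_{n-1}}\backslash \S_n$ of size $n$. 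Verifying that these $n$ elements are linearly independent modulo $\FK_{A_n}\FK_H^+$ gives the factor of $[n]$, completing the Hilbert series computation $\Hilb_{A_n}(t) = [2][3]\cdots[n]$ and hence $\dim \FK_{A_n} = n!$ by induction.

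To show these coset representatives are genuinely independent (not killed in $\FK_n$), I would pair against carefully chosen elements using the bilinear form $\langle\cdot,\cdot\rangle$ of Proposition~\ref{prop-pairing}, or apply the dual action $\nabla_{ij}$ and $\act$ described in Lemma~\ref{lemma-delta} to separate them. Concretely, I would exhibit for each candidate representative an element of $\FK_n^\vee$ or an operator built from the $\nabla_{ij}$ that detects it while annihilating the others modulo $\FK_{A_n}\FK_H^+$; this is exactly the style of computation used in the proof of Theorem~\ref{thm-subgraph}. Alternatively, since by Corollary~\ref{cor-tensor} (with $G_1 = K_{1,n-1}$ the star and $G_2 = K_{n-1}$) the algebra $\FK_n$ decomposes as a tensor product, one can track the image of these monomials under a suitable projection to guarantee nonvanishing.

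The main obstacle will be establishing the lower bound on the dimension, i.e. confirming that no unexpected relations collapse the nil-Coxeter algebra further in the passage to $\FK_{A_n}$. The surjection $\Nil_n \twoheadrightarrow \FK_{A_n}$ and the upper bound $\dim \FK_{A_n} \leq n!$ are essentially free from Proposition~\ref{p Coxeter to FK}; all the content lies in showing the map is injective. The inductive free-module argument reduces this to proving linear independence of the $n$ proposed coset representatives, and the delicate point there is confirming that the top representative $\e{a}_1 \e{a}_2 \cdots \e{a}_{n-1}$ (and its shifts) does not lie in $\FK_{A_n}\FK_{A_{n-1}}^+$ — this is where a pairing or $\nabla$-computation is indispensable and cannot be bypassed by formal nonsense.
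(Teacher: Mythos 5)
Your proposal is correct in outline, but it proves the theorem by a genuinely different route than the paper does. The paper obtains the surjection $\Theta\colon \Nil_n \twoheadrightarrow \FK_{A_n}$ in the same way, but then proves injectivity directly and all at once: it shows $\langle \Theta(w), \rev(\Theta(v))\rangle = \delta_{w,v}$ for all $w,v \in \S_n$, by induction on $\ell(w)$, using Lemma~\ref{lemma-leibniz}, Proposition~\ref{prop-pairing}, and the strong exchange condition in $\S_n$ to see that exactly one term of the Leibniz expansion survives (the $\S_n$-grading disposes of the case $w \neq v$). There is no induction on $n$, no appeal to Theorem~\ref{thm-subgraph}, and no coset representatives. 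Your argument --- induction on $n$, freeness of $\FK_{A_n}$ over $\FK_{A_{n-1}}$, a chain of $n$ candidate representatives contributing the factor $[n]$, and a pairing to show the top one survives --- is instead exactly the template the paper uses for $D_n$ (Lemmas~\ref{lemma-d1}, \ref{lemma-d3}, and \ref{lemma-d2}); it works for $A_n$ too, at the cost of invoking the Section~4 machinery, whereas the paper's argument is more self-contained but leans on a fact special to Coxeter combinatorics (strong exchange). One terminological slip: by the paper's conventions your elements are \emph{left} minimal coset representatives (they are to be a basis of $\FK_{A_n}$ as a right $\FK_{A_{n-1}}$-module), which is consistent with the quotient $\FK_{A_n}/\FK_{A_n}\FK_{A_{n-1}}^+$ you then use.

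The one genuine gap is that the decisive computation is only named, not performed, and as you note it cannot be bypassed. It can be closed as follows. First, you do not need a functional detecting each representative separately: the $n$ candidates $\idelm,\, x_{n-1,n},\, x_{n-2,n-1}x_{n-1,n},\, \dots,\, x_{12}x_{23}\cdots x_{n-1,n}$ have pairwise distinct degrees, $\FK_{A_n}\FK_{A_{n-1}}^+$ is a graded left ideal of $\FK_{A_n}$, and each candidate is a right factor of the top one; so, exactly as in Lemma~\ref{lemma-d2}, any dependence modulo the ideal forces the single monomial $x_{12}x_{23}\cdots x_{n-1,n}$ into the ideal. Second, to rule that out, the relevant tool is Lemma~\ref{lemma-orthogonal} (rather than the proof of Theorem~\ref{thm-subgraph}, which is the less apt reference): $\FK_n\FK_{A_{n-1}}^+ \supseteq \FK_{A_n}\FK_{A_{n-1}}^+$ is orthogonal to the algebra of the complement of $A_{n-1}$, and that complement contains the star centered at vertex $n$. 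Pairing against the explicit witness $Q = x_{1n}x_{2n}\cdots x_{n-1,n}$ gives $\langle x_{12}x_{23}\cdots x_{n-1,n},\, Q\rangle = 1$: applying $\Delta_{n-1,n}, \Delta_{n-2,n}, \dots, \Delta_{1n}$ in turn, only the rightmost factor pairs at each stage, e.g.\ $\Delta_{n-1,n}(x_{12}\cdots x_{n-1,n}) = x_{12}\cdots x_{n-3,n-2}\,x_{n-2,n}$, and the computation telescopes to $1$. This is precisely the shape of the induction in Lemma~\ref{lemma-d3}, and is if anything easier here. I would discard your alternative suggestion via Corollary~\ref{cor-tensor}: it is not clear how tracking images under a projection of the tensor decomposition would yield the required nonvanishing. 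With these two points supplied, your proof is complete.
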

This follows easily from the existence of the divided difference representation of $\FK_n$ given in \cite{FK}, but we present a different proof as it will be related to our investigation of $\FK_{\tilde{A}_{n-1}}$ later.

\begin{proof}
Since $\fWnil_{n}$ is defined using the quadratic and braid relations, $\FK_{A_n}$ is a quotient of $\fWnil_{n}$. 
We will show that the projection $\Theta\colon \fWnil_{n} \to \FK_{A_n}$ is an isomorphism by showing that the image under $\Theta$ of the basis $\{w\mid w \in \S_n\}$ of $\fWnil_{n}$ is linearly independent.
We prove this by the following pairing computation:
 \begin{equation*}\label{e pairing Sn}
    \text{ for $w,v\in \S_n$,} \qquad\qquad
   \langle \Theta(w), \rev(\Theta(v)) \rangle =
   \begin{cases}
     1 & \text{ if } w = v,\\
     0 & \text{ if } w\neq v.
   \end{cases}
 \end{equation*}

This pairing is zero unless the $\S_n$-degree of $\Theta(w)$ and $\Theta(v)$ are the same, which gives the second case.  To prove $\langle \Theta(w), \rev(\Theta(w)) \rangle = 1$, we induct on the length of $w$.
Let $P = \Theta(w) = p_1\cdots p_d$ be an expression for $\Theta(w)$ as a product of generators of $\FK_{A_n}$, and write $P = P^L_jp_jP^R_j$.
Then by Lemma  \ref{lemma-leibniz} and Proposition \ref{prop-pairing},
\[
\langle P,\rev (P)\rangle
 =\sum_{j=1}^d (p_j) (\sigma_{P^R_j}\nabla_{p_d}) \cdot \langle P^L_j P^R_j,
 \rev(P^L_d) \rangle.\]
 By the strong exchange condition for $\S_n$, there is a unique index $j$ for which $P^L_jP^R_j$ has the same $\S_n$-degree as $P^L_d$; clearly this index is $j=d$. Hence only the $j=d$ term in the sum can be nonzero, so we find that $\langle P, \rev(P) \rangle = \langle P^L_d, \rev(P^L_d)\rangle = 1$ by induction.
\end{proof}

\subsection{The case $\FK_{D_n}$}

The next simplest case is that of the Dynkin diagram of type $D_n$ (which we will simply call $D_n$). We denote the edges of $D_n$ by $\e{a}$, $\e{b}$, $\e{1}$, \dots, $\e{n-3}$ as shown in Figure~\ref{fig-d}. (We will use these labels to denote both the edges of the graph and the corresponding variables.)

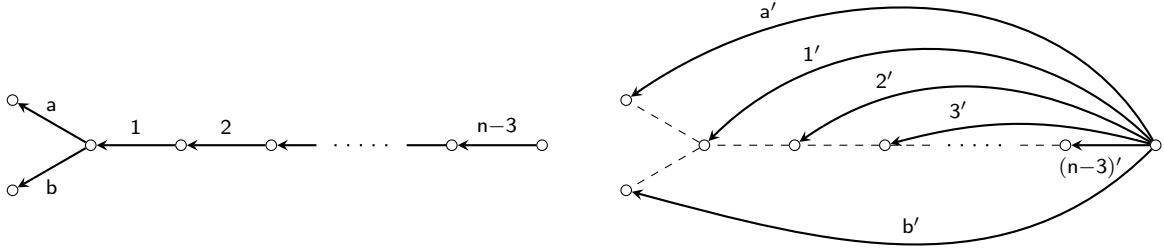
\begin{figure}
	\begin{center}
	\begin{tikzpicture}[scale = 1.2]
			\node[v] (a) at (150:1){};
			\node[v] (b) at (-150:1){};
			\node[v] (1) at (0,0){};
			\node[v] (2) at (1,0){};
			\node[v] (3) at (2,0){};
			\node[v] (4) at (4,0){};
			\node[v] (5) at (5,0){};
			\node at (0,-1.6){};
			\draw[thick, ->] (5)--node[above]{${\scriptstyle \e{n-3}}$}(4);
			\draw[thick] (4)--(3.5,0);
			\draw[thick, ->] (2.5,0)--(3);
			\draw[thick, ->] (3)--node[above]{${\scriptstyle \e{2}}$}(2);
			\draw[thick, ->] (2)--node[above]{${\scriptstyle \e{1}}$}(1);			
			\draw[thick, ->] (1)--node[above]{${\scriptstyle \e{a}}$}(a);
			\draw[thick, ->] (1)--node[below]{${\scriptstyle \e{b}}$}(b);
			\draw[thick, loosely dotted] (2.7,0)--(3.4,0);
		\end{tikzpicture}
	\qquad
	\begin{tikzpicture}[scale = 1.2]
			\node[v] (a) at (150:1){};
			\node[v] (b) at (-150:1){};
			\node[v] (1) at (0,0){};
			\node[v] (2) at (1,0){};
			\node[v] (3) at (2,0){};
			\node[v] (4) at (4,0){};
			\node[v] (5) at (5,0){};
			\node at (0,-1.6){};
			\draw[thick, ->] (5)--node[below, near end]{${\scriptstyle \e{(n-3)'}}$}(4);	
			\draw[thick, loosely dotted] (2.7,0)--(3.4,0);
			\draw[thick, ->] (5) to [out = 165, in=15, near end] node[above]{${\scriptstyle \e{3'}}$}(3);
			\draw[thick, ->] (5) to [out =150, in=35, near end] node[above]{${\scriptstyle \e{2'}}$}(2);
			\draw[thick, ->] (5) to [out =135, in=45, near end] node[above]{${\scriptstyle \e{1'}}$}(1);
			\draw[thick, ->] (5) to [out =120, in=35, near end] node[above]{${\scriptstyle \e{a'}}$}(a);
			\draw[thick, ->] (5) to [out =-135, in=-15] node[above]{${\scriptstyle \e{b'}}$}(b);
			\draw[dashed] (4)--(3.5,0) (2.5,0)--(3)--(2)--(1)--(a) (1)--(b);
	\end{tikzpicture}
	\end{center}
	\caption{\label{fig-d} On the left, the Dynkin diagram $D_n$ with edge labels. On the right, the star $K_{1,n-1}$ with primed edge labels to be used in Lemma~\ref{lemma-d3}.}
\end{figure}

The main result of this section is the following theorem. 
	
\begin{thm} \label{thm-d}
	The only minimal relations in $\FK_{D_n}$ are the quadratic, braid, and claw relations. Its Hilbert series is
	\[\Hilb_{D_n}(t) = [n][n-1] \cdot [4][6][8]\cdots[2n-4].\]
\end{thm}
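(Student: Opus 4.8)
The theorem has two parts: identifying the minimal relations of $\FK_{D_n}$ and computing its Hilbert series. I would tackle these together by constructing an explicit candidate algebra and showing the natural surjection onto $\FK_{D_n}$ is an isomorphism via a dimension count that matches the claimed Hilbert series. Let $B_n$ denote the abstract algebra presented by the quadratic, braid, and claw relations among the generators $\e{a}, \e{b}, \e{1}, \dots, \e{n-3}$ of $D_n$. Since these relations all hold in $\FK_{D_n}$ (the quadratic and braid relations always hold, and the claw relations are the $m=3$ cyclic relations of Lemma~\ref{lemma-cyclic} among three edges sharing the central vertex), there is a surjection $B_n \twoheadrightarrow \FK_{D_n}$. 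Proving the theorem amounts to showing this map is injective and that $\dim B_n$ equals the evaluation of the claimed Hilbert series at $t=1$, namely $\prod$ of $[n][n-1][4][6]\cdots[2n-4]$.

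\textbf{Key steps.} First I would establish an upper bound on $\FK_{D_n}$ by exhibiting a spanning set of monomials of size at most $[n][n-1][4][6]\cdots[2n-4]\big|_{t=1}$, organized by degree to match the stated Hilbert series coefficient-by-coefficient. The natural way to do this is induction on $n$, peeling off the structure of the subalgebra. The factorization of $\Hilb_{D_n}(t)$ suggests a tower: $D_n$ contains $D_{n-1}$ as a subgraph (deleting the last edge $\e{n-3}$), and Theorem~\ref{thm-subgraph} tells us $\FK_{D_n}$ is a free $\FK_{D_{n-1}}$-module. So I would seek a set of minimal coset representatives for $\FK_{D_{n-1}}$ in $\FK_{D_n}$ whose Hilbert series is the ratio $\Hilb_{D_n}(t)/\Hilb_{D_{n-1}}(t) = [n][2n-4]/([n-1][2n-6])$ — I would compute this ratio explicitly and identify an explicit monomial basis for the cosets, likely built from the new generator $\e{n-3}$ together with a controlled number of the older generators, using the braid and claw relations to normalize monomials into a canonical form. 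The base case would be $D_3 = A_3$ (or the star $K_{1,3} = D_4$, handled via Lemma~\ref{lemma-d3} referenced in Figure~\ref{fig-d}), whose Hilbert series is already known.

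\textbf{The lower bound via the bilinear form.} For injectivity (equivalently, that the spanning set is a basis and no further relations are needed), I would use the pairing $\langle \cdot, \cdot \rangle$ from Proposition~\ref{prop-pairing}, exactly as in the proof of Theorem~\ref{thm-a} for type $A$. The idea is to show that the candidate monomial basis, paired against a suitable dual family (their reversals, or coset representatives of a complementary structure), yields a nondegenerate — ideally triangular or diagonal — Gram matrix, forcing linear independence in $\FK_{D_n}$. The $\nabla$ operators and Lemma~\ref{lemma-leibniz} would drive these pairing computations, reducing degree and letting me induct. Combined with the upper bound, this pins $\Hilb_{D_n}(t)$ exactly and shows $B_n \to \FK_{D_n}$ is an isomorphism, so the only minimal relations are the three listed types.

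\textbf{The main obstacle.} The hardest part will be the combinatorial bookkeeping of the canonical monomial form and proving the pairing is nondegenerate. In type $A$ the strong exchange condition guaranteed a \emph{unique} index contributing to each pairing, giving a clean diagonal; for $D_n$ the claw relations break the simple nil-Coxeter structure (the line graph $L(D_n)$ is not a finite-type Dynkin diagram), so there is no underlying Coxeter group to supply an exchange condition for free. I expect that producing the correct normal form for coset representatives — showing every monomial reduces, via quadratic/braid/claw relations, to a unique representative, and that these representatives are genuinely independent — is where the real work lies, and where an auxiliary lemma (such as the one suggested by the primed-edge picture in Figure~\ref{fig-d}, relating $\FK_{D_n}$-computations to the star $K_{1,n-1}$) will be needed to control the top-degree behavior and confirm the claw relations suffice.
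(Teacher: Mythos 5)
Your overall skeleton does match the paper's: induct along the tower $D_{n-1}\subset D_n$, use Theorem~\ref{thm-subgraph} to reduce everything to finding minimal coset representatives, prove spanning by a straightening argument that uses only quadratic, braid, and claw relations (so that independence simultaneously pins the Hilbert series and shows no further relations are needed), and get independence from the bilinear form by pairing against the star $K_{1,n-1}$ --- your ``primed-edge'' guess is exactly the paper's Lemma~\ref{lemma-d3}. But two points in your plan are genuine problems, not just bookkeeping.

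First, the coset ratio is miscomputed:
\[
\frac{\Hilb_{D_n}(t)}{\Hilb_{D_{n-1}}(t)}
= \frac{[n][n-1]\cdot[4][6]\cdots[2n-4]}{[n-1][n-2]\cdot[4][6]\cdots[2n-6]}
= \frac{[n][2n-4]}{[n-2]}
= [n]\,\bigl(1+t^{n-2}\bigr),
\]
not $[n][2n-4]/([n-1][2n-6])$. Your expression is not even a polynomial in general (at $n=5$, $t=1$ it equals $30/16$), so the plan to build a monomial coset basis matching it coefficient-by-coefficient fails at the first step. The correct ratio says the coset basis has exactly $2n$ monomials, one per degree from $0$ to $2n-3$ except that degrees $n-2$ and $n-1$ each get two: a chain $\idelm,\ \e{n-3},\ \e{(n-4)(n-3)},\ \dots$ climbing to the top word $X=\e{(n-3)\factL aba(n-3)\factR}$, with the doubling coming from the choice of $\e{a}$ or $\e{b}$ first.

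Second, the step you flag as ``where the real work lies'' --- nondegeneracy of a full Gram matrix with no exchange condition available --- is precisely the idea you are missing, and the paper shows no Gram matrix is needed. The $2n$ representatives pairwise differ in either degree or $\S_n$-degree, and $I_n=\FK_{D_n}\FK_{D_{n-1}}^+$ is homogeneous for both gradings, so a dependence modulo $I_n$ forces some individual representative into $I_n$. Moreover every representative is a \emph{right} factor of the top word $X$, and $I_n$ is a \emph{left} ideal, so any representative lying in $I_n$ would force $X\in I_n$. Hence independence reduces to a single statement, $X\notin I_n$, which needs exactly one pairing computation: by Lemma~\ref{lemma-orthogonal}, $I_n$ is orthogonal to $\FK_{K_{1,n-1}}$, and the paper exhibits the analogous word $X'$ in the star's edges with $\langle X,X'\rangle=(-1)^{n-1}$, proved by a short $\Delta/\nabla$ induction on $n$. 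This bidegree-plus-right-factor trick is what substitutes for the strong exchange condition of type $A$; without it (or the $2n$ separate dual elements it makes unnecessary), your proposal does not close the independence step.
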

  
In order to prove Theorem~\ref{thm-d}, we will construct a set of minimal coset representatives for $\FK_{D_{n-1}}$ in $\FK_{D_{n}}$. For $n \geq 3$, let
\begin{align*}
M_n=\{&\idelm, \quad \e{n-3}, \quad \e{(n-4)(n-3)}, \quad \dots, \quad \e{(n-3)\factR }, \\
&\e{a(n-3)\factR }, \quad \e{b(n-3)\factR },\quad \e{ab(n-3)\factR }, \quad  \e{ba(n-3)\factR }, \quad \e{aba(n-3)\factR }, \\
& \e{1aba(n-3)\factR },\quad \e{2\factL aba(n-3)\factR },\quad \dots,\quad  \e{(n-3)\factL  aba(n-3)\factR } \},
\end{align*}
where $\e{i\factR}=\e{12\cdots(i-1)i}$ and $\e{i\factL}=\e{i(i-1)\cdots 21}$.

Note that $M_n$ can be given the structure of a (graded) partially ordered set in which $m'$ covers $m$ if there exists an edge variable $\e{e}$ such that $m' = \e{e}\,m$. Thus we can label the edges in the Hasse diagram of $M_n$ by generators of $\FK_{D_n}$, and each element $m$ of $M_n$ is (up to sign) the right-to-left product of the edge labels along a saturated chain that starts at the minimal element $\idelm$ and ends at $m$. See Figure~\ref{fig-mcr} for the case $M_5$.

\begin{figure}
	\begin{center}
		\begin{tikzpicture}
			\node[v] (1) at (0,0){};
			\node[v] (2) at (0,1){};
			\node[v] (3) at (0,2){};
			\node[v] (4) at (-.75, 2.75){};
			\node[v] (5) at (.75, 2.75){};
			\node[v] (6) at (-.75, 3.75){};
			\node[v] (7) at (.75, 3.75){};
			\node[v] (8) at (0, 4.5){};
			\node[v] (9) at (0, 5.5){};
			\node[v] (10) at (0,6.5){};
			\draw(1)node[left]{$\idelm$}--(2)node[left]{$\e{2}$}--(3)node[left]{$\e{12}$}--(4)node[left]{$\e{a12}$}--(6)node[left]{$\e{ba12}$}--(8)node[left]{$\e{aba12}$}--(9)node[left]{$\e{1aba12}$}--(10)node[left]{$\e{21aba12}$} (3)--(5)node[right]{$\e{b12}$}--(7)node[right]{$\e{ab12}$}--(8);
		\end{tikzpicture}
	\end{center}
	\caption{\label{fig-mcr} Hasse diagram for $M_5$, the (left) minimal coset representatives for $D_4$ inside $D_5$.}
\end{figure}
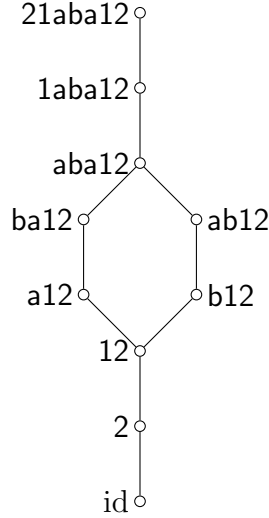

When $n=3$, $D_3=A_3$, and $\FK_{D_3}$ has basis $M_3 = \{\idelm, \e{a}, \e{b}, \e{ab}, \e{ba}, \e{aba}\}$ by Theorem~\ref{thm-a}. For $n \geq 4$, we claim that $M_n$ is a set of (left) minimal coset representatives for $\FK_{D_{n-1}}$ in $\FK_{D_{n}}$. We proceed in two steps. Let $I_n = \FK_{D_n}\FK_{D_{n-1}}^+$.

\begin{lemma} \label{lemma-d1}
The set $M_n$ spans $\FK_{D_n}/I_n$.
\end{lemma}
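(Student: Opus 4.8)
The plan is to show that the quotient $\FK_{D_n}/I_n$ is spanned by the images of the $|M_n|$ listed monomials, which amounts to showing that left-multiplying any $m \in M_n$ by any generator $\e{a}, \e{b}, \e{1}, \dots, \e{n-3}$ of $\FK_{D_n}$ lands back (modulo $I_n = \FK_{D_n}\FK_{D_{n-1}}^+$, that is, modulo right-factors drawn from the edges $\e{a},\e{b},\e{1},\dots,\e{n-4}$) in the span of $M_n$. Since $\FK_{D_n}$ is generated by these edges, establishing such a closure property immediately yields that $M_n$ spans $\FK_{D_n}/I_n$: every monomial is a product of generators, and repeatedly reducing from the left expresses it as a combination of the $m \in M_n$ together with terms in $I_n$.

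First I would set up the bookkeeping: organize $M_n$ by the poset/Hasse-diagram structure indicated in Figure~\ref{fig-mcr}, so that each $m \in M_n$ is the right-to-left product along a saturated chain, and note which generator can be prepended to climb the diagram. For each element $m$ and each generator $\e{e}$, I would compute $\e{e}\,m$ and simplify using the available relations in $\FK_{D_n}$ — namely the quadratic relations $\e{e}^2 = 0$, the commutation relations $\e{ef}=\e{fe}$ for non-adjacent edges, and crucially the braid and claw (cyclic) relations of Lemma~\ref{lemma-cyclic}. The key mechanism is that when $\e{e}$ is \emph{not} the next edge up the chain, one either gets $0$ (if $\e{e}$ repeats adjacently), or a commutation that allows $\e{e}$ to be absorbed into the rightmost part of $m$ — and whenever that rightmost factor involves only edges $\e{a},\e{b},\e{1},\dots,\e{n-4}$ (i.e.\ edges of $D_{n-1}$), the resulting term lies in $I_n$ and may be discarded. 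The braid relation $\e{aba}=\e{bab}$ and the claw relation rewrite the troublesome products $\e{a}\cdot(\e{b\cdots})$, $\e{b}\cdot(\e{a\cdots})$ that occur near the "fork" of the $D_n$ diagram.

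The computation naturally splits into cases according to where $m$ sits: (i) the chain $\idelm, \e{n-3}, \dots, \e{(n-3)\factR}$ of "tail" elements; (ii) the diamond $\e{a(n-3)\factR}, \e{b(n-3)\factR}, \e{ab(n-3)\factR}, \e{ba(n-3)\factR}, \e{aba(n-3)\factR}$ around the fork; and (iii) the final chain $\e{1aba(n-3)\factR}, \dots, \e{(n-3)\factL aba(n-3)\factR}$. In each case I would check each of the three classes of generators (the two fork edges $\e{a},\e{b}$; the adjacent path edge; and the far path edges that commute through). The element at the very top, $\e{(n-3)\factL aba(n-3)\factR}$, should be annihilated (modulo $I_n$) by every generator — this is the anticipated analogue of a longest-word calculation and is the consistency check that $M_n$ is closed.

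The main obstacle I expect is controlling the interaction between the two fork edges $\e{a},\e{b}$ and the path edge $\e{1}$ — a $K_{1,3}$ claw sits at the fork — so the claw relation must be invoked carefully, and one must verify that each prepend either closes within $M_n$ or produces only terms whose rightmost factor lies in $\FK_{D_{n-1}}^+$ (hence in $I_n$). A secondary subtlety is keeping track of signs arising from $\e{e}=-\e{f}$ orientations and from applying $\rev$-type symmetries; since the claim is only about spanning (not a basis), exact signs and coefficients are immaterial, which simplifies matters considerably — I only need each $\e{e}\,m$ to be a $\QQ$-linear combination of elements of $M_n$ modulo $I_n$, and may freely discard any term landing in $I_n$.
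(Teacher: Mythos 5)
Your logical skeleton is valid and is genuinely different in organization from the paper's. You propose to show that $\mathrm{span}_\QQ(M_n)+I_n$ contains $\idelm$ and is closed under left multiplication by every generator of $\FK_{D_n}$, hence equals $\FK_{D_n}$ (this is correct, since $I_n=\FK_{D_n}\FK_{D_{n-1}}^+$ is a left ideal). The paper instead inducts on $n$ and normalizes an \emph{arbitrary} monomial according to its occurrences of the last edge $\e{n-3}$: a monomial with no $\e{n-3}$ is $\idelm$ or lies in $I_n$; one with a single $\e{n-3}$ reduces, via the inductive hypothesis for $D_{n-1}$ and the fact that $\e{n-3}$ commutes with $\FK_{D_{n-2}}$, to an element of $M_{n-1}\e{(n-3)}\subset M_n$; and a repeated occurrence $\e{(n-3)}A\e{(n-3)}$ with $A\in M_{n-1}$ is killed by a braid relation unless $A$ is the top element of $M_{n-1}$, in which case the monomial ends in $X=\e{(n-3)\factL aba(n-3)\factR}$. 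The paper's route needs its one hard computation stated only for this single element $X$; your route never has to reason about arbitrary monomials or multiple occurrences of $\e{n-3}$, but pays by requiring a closure check for every pair (element of $M_n$, generator).

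The concrete gap in your plan is in the upper-chain cases $m=\e{k\factL aba(n-3)\factR}$ with $k\le n-3$. There, absorbing a fork generator or a low path generator is \emph{not} ``a commutation that allows $\e{e}$ to be absorbed into the rightmost part of $m$'': you need genuine conjugation identities that move a generator through the entire partial long word, namely $\e{a}\cdot\e{k\factL aba\,k\factR}=\pm\,\e{k\factL aba\,k\factR}\cdot\e{b}$, $\e{b}\cdot\e{k\factL aba\,k\factR}=\pm\,\e{k\factL aba\,k\factR}\cdot\e{a}$, and $\e{i}\cdot\e{k\factL aba\,k\factR}=\pm\,\e{k\factL aba\,k\factR}\cdot\e{i}$ for $i<k$ (note that $\e{a}$ and $\e{b}$ get interchanged). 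This family of identities is exactly the claim the paper proves for $X$ --- reduced by commutations to the claw-relation computation $\e{1aba1a}=\e{b1aba1}$ and to braid identities such as $\e{(i+1)\factL aba(i+1)\factR i}=\e{i(i+1)\factL aba(i+1)\factR}$ --- and it is the heart of the whole lemma. Your proposal anticipates this phenomenon only for the very top element (``annihilated by every generator''), but your closure argument needs it at every level $k$, since e.g.\ $\e{a}\,\e{k\factL aba(n-3)\factR}$ must produce $\pm\,\e{k\factL aba(n-3)\factR}\,\e{b}\in I_n$. With those identities supplied, your case analysis does close up: the chain elements need only quadratic, braid, and commutation relations; the diamond elements additionally need one direct claw rewrite (for $\e{1}\cdot\e{ab(n-3)\factR}$ and its mirror); and the upper chain needs the conjugation identities. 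So the plan is viable, but as written the essential computation is deferred rather than carried out.
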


In other words, we can write any element of $\FK_{D_n}$ in a \emph{normal form} as a linear combination of monomials, each of which is a product of a minimal coset representative and a monomial in $\FK_{D_{n-1}}$. The proof of this lemma essentially gives a straightening algorithm for $\FK_{D_n}$.

\begin{proof}
We induct on $n$. Assume $n \geq 4$.

Consider any monomial $m \in \FK_{D_n}$. If $m$ does not contain any instance of the variable $\e{n-3}$, then it either lies in $I_n$ or is the identity element $\idelm \in M_n$.

If $m$ contains exactly one occurrence of $\e{n-3}$, then for it not to lie in $I_n$, it must equal $A\e{(n-3)}$ for some $A \in \FK_{D_{n-1}}$. By induction, $A$ is congruent modulo $I_{n-1}$ to a linear combination of elements in $M_{n-1}$. Since $\e{n-3}$ commutes with $\FK_{D_{n-2}}$, we have $I_{n-1}\cdot \e{(n-3)} \subset I_n$, so $m$ is congruent modulo $I_n$ to a linear combination of elements of $M_{n-1} \cdot \e{(n-3)} \subset M_n$.

If $m$ has more than one occurrence of $\e{n-3}$, as above we may assume that it ends with a substring of the form $\e{(n-3)}A\e{(n-3)}$, where $A \in \FK_{D_{n-1}}$. By the previous paragraph, we may assume that $A \in M_{n-1}$. If $A=\idelm$, this clearly vanishes. Otherwise, either $A=B\e{(n-4)}$ for some $B \in M_{n-2}$ or $A = \e{(n-4)\factL  aba(n-4)\factR }$. In the first case, by the braid relation
\[\e{(n-3)}A\e{(n-3)} = B\e{(n-3)(n-4)(n-3)} = B\e{(n-4)(n-3)(n-4)} \in I_n,\]
so $m$ will also lie in $I_n$.

It remains to consider the case when $m$ ends in $X=\e{(n-3)\factL aba(n-3)\factR }$. We claim that $\e{a}X=X\e{b}$, $\e{b}X=X\e{a}$, and $\e{i}X = X\e{i}$ for $\e{i} = \e{1}, \e{2}, \dots, \e{n-4}$. This shows that either $m = X \in M_n$ or $m \in I_n$, which will complete the proof.

To show $X\e{a}=\e{b}X$, since $\e{a}$ and $\e{b}$ commute with $\e{2}, \e{3}, \dots, \e{n-3}$, it suffices to show that $\e{1aba1a} = \e{b1aba1}$. This follows from the quadratic, braid, and claw relations because
\[\e{1aba1a} = \e{1ab1a1} = (\e{ab1a}+\e{b1ab})\e{a1} = \e{ab1aa1}+\e{b1aba1} = \e{b1aba1}.\]
Similarly, $X\e{b}=\e{a}X$. Finally, for $\e{i}=\e{1}, \e{2}, \dots, \e{n-4}$, since $\e{i}$ commutes with $\e{i+2}, \dots, \e{n-3}$, it suffices to show that $\e{(i+1)\factL aba(i+1)\factR i} = \e{i(i+1)\factL aba(i+1)\factR }$. This holds because
\begin{multline*}
\e{(i+1)\factL aba(i-1)\factR i(i+1)i} = \e{(i+1)\factL aba(i-1)\factR (i+1)i(i+1)} \\
= \e{(i+1)i(i+1)(i-1)\factL aba(i+1)\factR } = \e{i(i+1)i(i-1)\factL aba(i+1)\factR }.\qedhere
\end{multline*}
\end{proof}
Note that in order to put an element of $\FK_{D_n}$ into normal form, we used only the quadratic, braid, and claw relations.

\medskip

To show linear independence, we first show that the highest degree minimal coset representative is nonzero.
\begin{lemma}\label{lemma-d3}
The highest degree element $X = \e{(n-3)\factL  aba(n-3)\factR }\in M_n$ does not lie in $I_n$.
\end{lemma}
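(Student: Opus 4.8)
The plan is to detect that $X\notin I_n$ using the bilinear form, by exhibiting a single element of the complementary subalgebra that pairs nontrivially with $X$. First I would reduce to a left-ideal statement in the whole algebra: applying the antiautomorphism $\rev$ to Lemma~\ref{lemma-intersect} (with $H=D_{n-1}\subset G=D_n$) gives $\FK_n\FK_{D_{n-1}}^+\cap\FK_{D_n}=\FK_{D_n}\FK_{D_{n-1}}^+=I_n$. Since $X\in\FK_{D_n}$, it therefore suffices to show $X\notin\FK_n\FK_{D_{n-1}}^+$. Writing $\overline{D_{n-1}}$ for the complement of $D_{n-1}$ in $K_n$, Lemma~\ref{lemma-orthogonal} says that $\FK_n\FK_{D_{n-1}}^+$ is orthogonal to $\FK_{\overline{D_{n-1}}}$ under $\langle\cdot,\cdot\rangle$. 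Hence it is enough to produce an element $Y\in\FK_{\overline{D_{n-1}}}$ with $\langle X,Y\rangle\neq 0$: such a $Y$ gives a linear functional vanishing on $I_n$ but not on $X$.

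To find $Y$ I would exploit the recursive shape of the top representative. Labelling vertices so that $\e{i}=x_{i,i+1}$, $\e{a}=x_{1,n-1}$, $\e{b}=x_{1,n}$, one checks directly that $X=X_n$ satisfies $X_n=\e{n-3}\,X_{n-1}\,\e{n-3}$, where $X_{n-1}=\e{(n-4)\factL aba(n-4)\factR}$ is the analogous element for $D_{n-1}$, and that the $\S_n$-degree of $X_n$ is the single transposition $\sigma_{n-1,n}$ (the $aba$ braid contributes the swap of the two fork tips, and the $\e{(n-3)\factL}$- and $\e{(n-3)\factR}$-factors cancel). The key observation is that removing the edge $\e{n-3}=x_{n-2,n-3}$ isolates the vertex $n-2$ in $D_{n-1}$, so every edge incident to $n-2$, and in particular $\e{n-3}$ itself, lies in $\overline{D_{n-1}}$. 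Using the adjointness in Proposition~\ref{prop-pairing} to peel the flanking $\e{n-3}$'s off $X_n$, one gets
\[
\langle X_n,Y\rangle=\langle \e{n-3}X_{n-1}\e{n-3},Y\rangle=\langle X_{n-1},(\Delta_{n-3}(Y))\nabla_{n-3}\rangle .
\]
I would then construct $Y=Y_n\in\FK_{\overline{D_{n-1}}}$, built from edges through the freed hub vertex $n-2$, so that $(\Delta_{n-3}(Y_n))\nabla_{n-3}=\pm Y_{n-1}$ with $Y_{n-1}\in\FK_{\overline{D_{n-2}}}$ the corresponding test element for $D_{n-1}$. By induction $\langle X_n,Y_n\rangle=\pm\langle X_{n-1},Y_{n-1}\rangle$, reducing to a base case (for example $n=4$, where $D_4=K_{1,3}$, $X_4=\e{1aba1}$, and one checks by hand that $Y_4=x_{12}x_{2a}x_{ab}x_{2b}x_{12}\in\FK_{\overline{D_3}}$ has $\S_n$-degree $\sigma_{a,b}$, full support, and $\langle X_4,Y_4\rangle\neq 0$).

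The main obstacle is the construction of $Y_n$ inside the correct complementary algebra together with the verification of the derivative identity $(\Delta_{n-3}(Y_n))\nabla_{n-3}=\pm Y_{n-1}$. The subtlety is that the naive guess $Y_n=\e{n-3}Y_{n-1}\e{n-3}$ fails, because $Y_{n-1}\in\FK_{\overline{D_{n-2}}}$ may use the edge $\e{n-4}$, which lies in $D_{n-1}$ and hence not in $\overline{D_{n-1}}$; so $Y_n$ must instead route the needed connections through the hub $n-2$ using only edges $x_{n-2,j}$ that survive in the complement. Pinning down this explicit family and tracking the signs produced by $\Delta_{n-3}$, $\nabla_{n-3}$, and the braid/claw straightening (all controlled by Lemma~\ref{lemma-leibniz}) is where the real work lies; once the identity holds, the induction together with the orthogonality reduction immediately gives $X\notin I_n$. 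As an alternative to the induction, one could instead attempt a single explicit pairing computation, expanding $\langle X,Y\rangle$ by Lemma~\ref{lemma-leibniz} and reducing with the normal-form straightening of Lemma~\ref{lemma-d1}, but the inductive peeling seems cleaner and less error-prone.
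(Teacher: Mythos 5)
Your scaffolding is exactly the paper's: reduce via Lemma~\ref{lemma-orthogonal} to exhibiting an element of the complementary subalgebra that pairs nontrivially with $X$, take that element inside the star at the freed end vertex, and compute the pairing by induction, peeling the flanking $\e{n-3}$'s. (The detour through Lemma~\ref{lemma-intersect} is harmless but unnecessary; one only needs $I_n\subseteq\FK_n\FK_{D_{n-1}}^+$, which is immediate.) The problem is that you stop precisely where the lemma lives: you never produce the test element, and you flag its construction and the inductive identity as "where the real work lies." That is the whole content of the proof, so as written this is a strategy outline with a genuine gap, not a proof. The missing idea is small: the fix to your rejected naive guess $Y_n=\e{n-3}\,Y_{n-1}\,\e{n-3}$ is to conjugate first. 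The paper takes $Y_n = X' = \e{(n-3)'\factL a'b'a'(n-3)'\factR}$, i.e.\ the \emph{same word} as $X$ but written in the edges $\e{a'},\e{b'},\e{1'},\dots,\e{(n-3)'}$ of the star $K_{1,n-1}$ centered at the freed end vertex; equivalently $Y_n=\e{n-3}\,\sigma_{\e{n-3}}(Y_{n-1})\,\e{n-3}$, which lies entirely in $\FK_{K_{1,n-1}}\subset\FK_{\overline{D_{n-1}}}$ because $\sigma_{\e{n-3}}$ carries star edges at the end vertex of $D_{n-1}$ to star edges at the end vertex of $D_n$. This is exactly the "routing through the freed hub" you were looking for.

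A second, subtler point: the operator identity you posit, $(\Delta_{n-3}(Y_n))\nabla_{n-3}=\pm Y_{n-1}$, is stronger than what holds, even for the correct $Y_n$. One computes $\Delta_{\e{n-3}}(Y_n)=\sigma_{\e{n-3}}(Y_{n-1})\e{n-3}-\e{n-3}Y_{n-1}$, and the first summand is not annihilated by $\nabla_{\e{n-3}}$; it is killed only \emph{inside the pairing}: $\langle \e{n-3}X_{n-1},\,\sigma_{\e{n-3}}(Y_{n-1})\e{n-3}\rangle=0$ because $\e{n-3}X_{n-1}$ ends in $\e{n-4}$ (or $\e{a}$ when $n=4$), an edge not in the star, and $\Delta_{\e{n-4}}$ annihilates $\FK_{K_{1,n-1}}$ by Lemma~\ref{lemma-delta}(a). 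The surviving term gives $\langle X_n,Y_n\rangle=-\langle X_{n-1},Y_{n-1}\rangle$, using $(\e{n-3}X_{n-1})\nabla_{\e{n-3}}=X_{n-1}$, which requires checking that the letters of $X_{n-1}$ avoid the end vertex and that $\sigma_{X_{n-1}}$ fixes both endpoints of $\e{n-3}$. So the induction should be formulated as a pairing statement, $\langle X_m, Y_m\rangle=(-1)^{m-1}$, not as an exact derivative identity; with that correction and the explicit family above, your outline becomes the paper's proof. (Your base-case computation $\langle \e{1aba1}, x_{12}x_{23}x_{34}x_{24}x_{12}\rangle=1$ is correct, though the paper's uniform choice of star words lets it start the induction at $n=3$ with $\langle\e{aba},\e{aba}\rangle=1$.)
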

Let $K_{1, n-1}$ be the star centered at the end vertex in $D_n$, and label its edges using primed symbols as in Figure~\ref{fig-d}. (Note that, for convenience, we let $\e{(n-3)} = \e{(n-3)'}$.) Define $\e{i'\factL }$ and $\e{i'\factR }$ analogously to the unprimed versions.
\begin{proof}
By Lemma~\ref{lemma-orthogonal}, $I_n$ is orthogonal to $\FK_{K_{1, n-1}}$. Hence we need only show that $X$ is not orthogonal to some element of $\FK_{K_{1, n-1}}$. Let $X' = \e{(n-3)'\factL  a'b'a'(n-3)'\factR }$. We will show that $\langle X, X' \rangle = (-1)^{n-1}$.

For $n=3$, an easy computation shows that $\langle \e{aba}, \e{aba}\rangle = 1$. For $n\geq 4$, let $Y = \e{(n-4)\factL  aba(n-4)\factR }$ and $Y' =  \e{(n-4)'\factL  a'b'a'(n-4)'\factR }$, so that $X=\e{(n-3)}Y\e{(n-3)}$ and $X'=\e{(n-3)}Y'\e{(n-3)}$. Since
\[
\Delta_{\e{n-3}}(X') = Y'\e{(n-3)}+ \sigma_{\e{n-3}}(\e{(n-3)}Y') = Y'\e{(n-3)} - \e{(n-3)}\sigma_{\e{n-3}}(Y'),
\]
we have
\[
\langle X, X' \rangle = \langle  \e{(n-3)}Y, \Delta_{\e{n-3}}(X') \rangle = \langle\e{(n-3)}Y, Y'\e{(n-3)}\rangle - \langle \e{(n-3)}Y, \e{(n-3)}\sigma_{\e{n-3}}(Y') \rangle.
\]
But $\e{(n-3)}Y$ ends in $\e{n-4}$ (or $\e{a}$ if $n=4$), which does not occur in $Y'\e{(n-3)}$, so the first term vanishes. Thus
\[
\langle X, X' \rangle = -\langle \e{(n-3)}Y, \e{(n-3)}\sigma_{\e{n-3}}(Y') \rangle = - \langle (\e{(n-3)}Y)\nabla_{\e{n-3}}, \sigma_{\e{n-3}}(Y')\rangle.
\]
But in fact $(\e{(n-3)}Y)\nabla_{\e{n-3}} = Y$. To see this, note that none of the edges in $Y$ contain the end vertex in $D_n$, meaning $\nabla_{\e{n-3}}$ will not act on any variable in $Y$, and also that $\sigma_{Y}(\e{n-3}) = \e{n-3}$. It follows that $\langle X, X' \rangle = - \langle Y, \sigma_{\e{n-3}}(Y')\rangle$. But note that the edges appearing in $\sigma_{\e{n-3}}(Y')$ are contained in the star centered at the end vertex of $D_{n-1}$. Thus $\langle Y, \sigma_{\e{n-3}}(Y')\rangle$ is the computation for $n-1$, so the result follows by induction.
\end{proof}

\begin{lemma}\label{lemma-d2}
The set $M_n$ is linearly independent in $\FK_{D_n}/I_n$.
\end{lemma}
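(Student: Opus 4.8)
The plan is to exploit the two gradings on $\FK_n$ at once. Since $I_n=\FK_{D_n}\FK_{D_{n-1}}^+$ is a left ideal that is homogeneous with respect to \emph{both} the usual degree and the $\S_n$-degree (it is spanned by $\S_n$-homogeneous monomials, and $\S_n$-degree is multiplicative), any relation $\sum_{m\in M_n}c_m\,m\in I_n$ can be split first by usual degree and then by $\S_n$-degree, with each graded piece again lying in $I_n$. If within each usual degree the monomials of $M_n$ carry pairwise distinct $\S_n$-degrees, this splitting isolates each term $c_m\,m$, and the whole lemma reduces to the single statement that no $m\in M_n$ lies in $I_n$.

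First I would verify the $\S_n$-degree claim. From the list defining $M_n$, the only usual degrees occupied by more than one element are $n-2$ and $n-1$, each by a pair. In degree $n-2$ the monomials $\e{a(n-3)\factR}$ and $\e{b(n-3)\factR}$ have different supports (one omits the $\e b$-vertex, the other the $\e a$-vertex), so their $\S_n$-degrees already differ. In degree $n-1$ the monomials $\e{ab(n-3)\factR}$ and $\e{ba(n-3)\factR}$ share the common right factor $\e 1 \e 2 \cdots \e{n-3}$, while $\sigma_{\e a}\sigma_{\e b}$ and $\sigma_{\e b}\sigma_{\e a}$ are mutually inverse $3$-cycles on the three vertices incident to $\e a,\e b$; since these are distinct and right multiplication in $\S_n$ is injective, the two $\S_n$-degrees differ as well. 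Hence within every usual degree the elements of $M_n$ carry distinct $\S_n$-degrees.

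It then remains to show $m\notin I_n$ for each $m\in M_n$, and here the key observation is that the top element $X=\e{(n-3)\factL aba(n-3)\factR}$ is a left multiple of every $m$. Indeed, $M_n$ is a graded poset (Figure~\ref{fig-mcr}) with unique maximal element $X$, in which $m'$ covers $m$ precisely when $m'=\e e\,m$; following a saturated chain from $m$ up to $X$ expresses $X=\pm g\,m$ for a monomial $g\in\FK_{D_n}$ (the sign arising only from braid rewrites, which is harmless). Because $I_n$ is a left ideal, $m\in I_n$ would force $X=\pm g m\in I_n$, contradicting Lemma~\ref{lemma-d3}. Thus every $m\in M_n$ is nonzero in $\FK_{D_n}/I_n$, and combined with the distinctness of $\S_n$-degrees this gives the linear independence of $M_n$.

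The only genuine input is Lemma~\ref{lemma-d3}; the conceptual step is recognizing that left-divisibility of $X$ by each coset representative, together with $I_n$ being a left ideal, upgrades the single nonvanishing statement of Lemma~\ref{lemma-d3} to nonvanishing of the entire set. The remaining points are routine: the two small $\S_n$-degree computations and the verification that the Hasse diagram of $M_n$ has $X$ as its unique top, both immediate from the explicit description of $M_n$. I expect no serious obstacle, but I would double-check the degree-$(n-1)$ pair carefully, since there the supports coincide and one must use the finer $\S_n$-degree to separate the two monomials. An alternative, more computational route would pair each $m$ with its ``primed'' counterpart in $\FK_{K_{1,n-1}}$ and induct on $n$ by peeling off $\e{n-3}$ exactly as in Lemma~\ref{lemma-d3}, using the $\S_n$-degree bookkeeping to show the pairing matrix is monomial; this works but is strictly more labor than the left-divisibility argument.
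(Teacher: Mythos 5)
Your proposal is correct and follows essentially the same route as the paper's proof: the paper likewise notes that the elements of $M_n$ are pairwise distinguished by degree or $\S_n$-degree (so independence can only fail if some $m$ lies in $I_n$), and then observes that each $m \in M_n$ is, up to sign, a right factor of the top element $X$, so $m \in I_n$ would force $X \in I_n$, contradicting Lemma~\ref{lemma-d3}. Your write-up simply makes explicit the two small bidegree checks and the sign bookkeeping that the paper leaves implicit.
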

\begin{proof}
Note that the elements of $M_n$ all differ in either degree or $\S_n$-degree. Therefore, they are linearly independent unless one of them lies in $I_n$. But each element of $M_n$ is a right factor of the longest element $X$, so since $X$ does not lie in the left ideal $I_n$, none of the elements do.
\end{proof}

\begin{proof} [Proof of Theorem~\ref{thm-d}]
By Lemmas~\ref{lemma-d1} and \ref{lemma-d2}, $M_n$ is a set of minimal coset representatives for $\FK_{D_{n-1}}$ in $\FK_{D_{n}}$. Since the only relations needed in Lemma~\ref{lemma-d1} were the quadratic, braid, and claw relations, Lemma~\ref{lemma-d2} implies that these generate all relations in $\FK_{D_n}$.

We can now prove \[\Hilb_{D_n}(t) = [n][n-1] \cdot [4][6][8]\cdots [2n-4]\] by induction on $n$. The case $n=3$ follows from Theorem~\ref{thm-a}. For $n>3$, since $M_n$ is a set of minimal coset representatives and $\Hilb_{M_n}(t) = [n](1+t^{n-2}) = \frac{[n][2n-4]}{[n-2]}$, we have
\begin{align*}
\Hilb_{D_n}(t) &= \Hilb_{M_n}(t) \cdot \Hilb_{D_{n-1}}(t)\\
&= \frac{[n][2n-4]}{[n-2]} \cdot [n-1][n-2] \cdot [4][6] \cdots [2n-6]\\
&= [n][n-1] \cdot [4][6] \cdots [2n-4]. \qedhere
\end{align*}
\end{proof}

\subsection{The cases $\FK_{E_n}$ for $n=6$, $7$, and $8$}

In this section, we will describe $\FK_{E_n}$ for $n=6$, $7$, and $8$, where $E_n$ is the Dynkin diagram of type $E_n$ as shown in Figure~\ref{fig-e}.
\begin{figure}
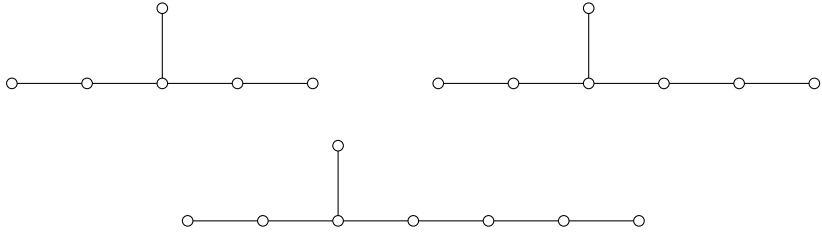

	\begin{center}
		\dr{(0,0)node[v]{}--(1,0)node[v]{}--(2,0)node[v]{}--(3,0)node[v]{}--(4,0)node[v]{} (2,0)--(2,1)node[v]{}} \quad\quad\quad
		\dr{(0,0)node[v]{}--(1,0)node[v]{}--(2,0)node[v]{}--(3,0)node[v]{}--(4,0)node[v]{}--(5,0)node[v]{} (2,0)--(2,1)node[v]{}} \\[5mm]
		\dr{(0,0)node[v]{}--(1,0)node[v]{}--(2,0)node[v]{}--(3,0)node[v]{}--(4,0)node[v]{}--(5,0)node[v]{}--(6,0)node[v]{} (2,0)--(2,1)node[v]{}}
	\end{center}
	\caption{\label{fig-e} The Dynkin diagrams $E_6$, $E_7$, and $E_8$.}
\end{figure}

\begin{thm}
The only minimal relations in $\FK_{E_6}$, $\FK_{E_7}$, and $\FK_{E_8}$ are the quadratic, braid, and claw relations. Their Hilbert series are
	\begin{align*}
	\Hilb_{E_6}(t) &= \frac{[4][5][6]^2[8][9]}{[3]},\\
	\Hilb_{E_7}(t) &= \frac{[6]^2[8][9][10][12][14]}{[3]},\\
	\Hilb_{E_8}(t) &=  \frac{[6][8][10][12][14][15][18][20][24]}{[3][5]}.
	\end{align*}
\end{thm}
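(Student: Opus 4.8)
The plan is to follow the exact template established by the proof of Theorem~\ref{thm-d}, namely to construct explicit sets of minimal coset representatives that express each $E_n$ algebra as a free module over the next-smaller one and to show these representatives are linearly independent via a pairing argument. Since $E_6 \supset D_5$, $E_7 \supset E_6$, and $E_8 \supset E_7$ (each obtained by attaching one more vertex along the ``long arm''), I would set up a chain and compute $\FK_{E_6}$ from the known $\FK_{D_5}$ of Theorem~\ref{thm-d}, then bootstrap to $E_7$ and $E_8$. For each step $H \subset G$ (with $G$ the $E_n$ diagram and $H$ its subgraph obtained by deleting the end vertex of the long arm), I would exhibit a candidate set $M$ of monomials, each of which is a right factor of a single top-degree monomial $w_0 = w_0^G$, organized as a graded poset as in Figure~\ref{fig-mcr}.

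The argument then splits into the same two halves used for $D_n$. First, a spanning lemma (analogous to Lemma~\ref{lemma-d1}): using only the quadratic, braid, and claw relations, I would give a straightening algorithm showing that $M$ spans $\FK_G/\FK_G\FK_H^+$. This reduces to verifying a finite list of commutation and exchange identities among the proposed representatives, exactly as the identities $\e{a}X=X\e{b}$, $\e{i}X=X\e{i}$ were checked in Lemma~\ref{lemma-d1}; the branching structure of $E_n$ means there will be more cases to handle at the trivalent vertex, but each is a routine rewriting using the three relation types. Second, a nonvanishing lemma (analogous to Lemmas~\ref{lemma-d3} and \ref{lemma-d2}): I would show the top representative $w_0$ does not lie in $I = \FK_G\FK_H^+$ by pairing it, via $\langle\cdot,\cdot\rangle$ and Lemma~\ref{lemma-orthogonal}, against an explicit element of $\FK_{\bar H'}$ in the complementary graph, computing this pairing inductively by peeling off the terminal generator with $\Delta$ and $\nabla$ as in the proof of Lemma~\ref{lemma-d3}. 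Since all elements of $M$ differ in degree or $\S_n$-degree and each is a right factor of $w_0$, nonvanishing of $w_0$ forces linear independence of all of $M$. Because Lemma~\ref{lemma-d1}'s analogue uses only quadratic, braid, and claw relations while the independence shows $\dim$ is as large as these relations allow, it follows that these three relation types generate all relations in $\FK_{E_n}$.

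Granting the module decompositions, the Hilbert series follow by multiplying $\Hilb_M(t)$ for each step, exactly as at the end of the proof of Theorem~\ref{thm-d}; one then checks that the telescoping product starting from $\Hilb_{D_5}(t) = [5][4][4][6]$ yields the stated cyclotomic expressions for $E_6$, $E_7$, $E_8$. This is a finite, verifiable computation once the sets $M$ are pinned down.

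The main obstacle, I expect, is discovering the correct explicit set $M$ of minimal coset representatives at each stage and organizing the straightening so that every monomial with a repeated terminal generator is provably pushed into $I$. In the $D_n$ case this worked because the ``long word'' $X=\e{(n-3)\factL aba(n-3)\factR}$ had a clean recursive self-similar structure and the key exchange relations $\e{a}X=X\e{b}$ collapsed neatly; for $E_n$ the interaction of the branch node with the long arm produces longer top words (degrees $d_0 = 16, 28, 40$ can be read off from the Hilbert series) and more intricate exchange relations, so the bulk of the work is verifying the straightening identities and the inductive pairing computation $\langle w_0, w_0'\rangle = \pm 1$ in these larger, less symmetric diagrams. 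Unlike $A_n$ and $D_n$, there is no ambient Coxeter group guaranteeing the strong exchange condition, so each exchange identity must be checked directly from the three relations rather than inherited, and confirming that no additional minimal relations appear is precisely the content that the pairing nondegeneracy argument must certify.
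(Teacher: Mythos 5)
Your plan reproduces the logical skeleton of the paper's argument --- an upper bound on $\Hilb_{E_n}(t)/\Hilb_{E_{n-1}}(t)$ coming from the algebra presented by only the quadratic, braid, and claw relations, a matching lower bound coming from pairing computations, and a telescoping product starting from $\Hilb_{D_5}(t)$ --- but the paper's execution is genuinely different: it is openly computational. For each $n=6,7,8$ the authors use \texttt{bergman} to compute a purported set of coset representatives under the assumption that the three relation types generate all relations (the analogue of your straightening Lemma~\ref{lemma-d1}), and they run Algorithm~\ref{alg-mcr}, which pairs candidate representatives against elements built from the complementary graph and takes ranks of the resulting matrices, to get the lower bound (the analogue of your Lemmas~\ref{lemma-d3} and~\ref{lemma-d2}). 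No explicit minimal coset representatives, straightening identities, or inductive pairing formulas are ever written down; the two machine-computed bounds simply coincide, and the theorem follows.

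Two cautions about your by-hand version. First, the scale is much larger than you state: reading off the Hilbert series, the top degrees of $\FK_{E_6}$, $\FK_{E_7}$, $\FK_{E_8}$ are $30$, $56$, $112$ (not $16$, $28$, $40$); the coset-representative sets have sizes $36$, $84$, $480$, with top representatives of degrees $15$, $26$, $56$; and $\dim \FK_{E_8} = |W(E_8)| \approx 7\times 10^8$. Hand-verifying straightening identities for $480$ representatives of degree up to $56$ is not realistic, which is presumably why the authors resorted to computation. Second, your linear-independence step borrows from Lemma~\ref{lemma-d2} the assertion that all elements of $M$ differ in degree or $\S_n$-degree and are right factors of a single top word; this is a special structural feature of $M_n$ in the $D_n$ case, not something you may assume for $E_n$ (the quotient Hilbert series have coefficients well above $1$, so several representatives share each degree, and coincidence of $\S_n$-degrees would have to be ruled out case by case). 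If that separation fails, nonvanishing of $w_0$ alone does not force independence of $M$, and you are back to computing ranks of full pairing matrices --- which is exactly what Algorithm~\ref{alg-mcr} automates. So your strategy is sound in principle and is the same two-bound sandwich the paper uses, but the parts you defer as ``routine'' (finding $M$, the straightening, the pairings) constitute essentially the entire content of the paper's computer-assisted proof.
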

Our proof of this theorem is largely computational. For this reason, we only summarize the basic strategy.
\begin{proof}
For each $n=6, 7, 8$, we first compute using \texttt{bergman} a purported set of minimal coset representatives for $\FK_{E_{n-1}}$ in $\FK_{E_n}$ (note $E_5=D_5$) assuming that the only relations are the quadratic, braid, and claw relations. This gives an upper bound on $\Hilb_{E_n}(t)/\Hilb_{E_{n-1}}(t)$.

We then use Algorithm~\ref{alg-mcr} to compute a subset of the minimal coset representatives for $\FK_{E_{n-1}}$ in $\FK_{E_n}$. This gives a lower bound on $\Hilb_{E_n}(t)/\Hilb_{E_{n-1}}(t)$.

In each case, the upper and lower bounds coincide, giving the desired Hilbert series.
\end{proof}

This method does not work for $E_9$ (also known as the affine Dynkin diagram $\tilde{E}_8$) since $\FK_{E_9}$ appears to require relations other than the quadratic, braid, and claw relations and is, in any case, beyond our current computational abilities.

\subsection{Connection to Weyl groups}

If we combine the results on $A_n$, $D_n$, and $E_n$ ($n \leq 8$) with known results about Weyl groups, we arrive at the following theorem.
\begin{thm}\label{thm-weyl}
Let $G$ be the graph of a simply-laced Dynkin diagram. Then the relations in $\FK_G$ are generated by quadratic, braid, and claw relations, and
\[\Hilb_G(t) = \frac{W_G(t)}{C_G(t)},\]
where $W_G(t)$ is the Poincar\'e series of the corresponding Weyl group $W$ and $C_G(t)$ is the characteristic polynomial of a Coxeter element in $W$. Moreover, $\dim \FK_G = |W|/f$,
where $f=C_G(1)$ is the index of connection (the index of the root lattice in the weight lattice).
\end{thm}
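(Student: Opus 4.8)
The plan is to assemble Theorem~\ref{thm-weyl} from three essentially independent inputs: the case-by-case Hilbert series already computed for $A_n$, $D_n$, and $E_n$ ($n\le 8$); the classification of simply-laced Dynkin diagrams; and standard facts from the theory of Weyl groups relating the Poincar\'e series, the degrees of the fundamental invariants, the exponents, and the Coxeter element. Since every connected simply-laced finite Dynkin diagram is of type $A_n$, $D_n$, $E_6$, $E_7$, or $E_8$, and since $\FK_{G+H}\cong\FK_G\otimes\FK_H$ reduces the disconnected case to a product (and the right-hand side is multiplicative over connected components as well, by $W_{G+H}=W_G\times W_H$ and $C_{G+H}=C_G\cdot C_H$), it suffices to verify the stated identity connected-case by connected-case. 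The claim about the generating relations is then immediate from Theorems~\ref{thm-a}, \ref{thm-d}, and the $E_n$ theorem.

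The substantive step is to recognize that the product formulas we have already derived are exactly $W_G(t)/C_G(t)$. First I would recall the Poincar\'e series identity for a finite Weyl group, namely
\[
W_G(t)=\prod_{i=1}^{\ell}[d_i],
\]
where $d_1,\dots,d_\ell$ are the degrees of the fundamental invariants and $[k]=1+t+\cdots+t^{k-1}$ as in the paper. Next I would use that the characteristic polynomial of a Coxeter element of order $h$ factors as
\[
C_G(t)=\prod_{i=1}^{\ell}(t-\zeta^{m_i}),
\]
where $\zeta=e^{2\pi i/h}$ and $m_i=d_i-1$ are the exponents; reorganizing this product over the Galois orbits of roots of unity expresses $C_G(t)$ as a product of cyclotomic polynomials, which combine into a ratio of bracket factors $[k]$. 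Substituting both into $W_G(t)/C_G(t)$ and simplifying should reproduce exactly the bracket products appearing in $\Hilb_{A_n}$, $\Hilb_{D_n}$, $\Hilb_{E_6}$, $\Hilb_{E_7}$, and $\Hilb_{E_8}$. For $A_n$, for instance, the degrees are $2,3,\dots,n$ and the Coxeter number is $n$, and one checks that $[2][3]\cdots[n]$ is precisely $W(t)$ with $C(t)=1$ after the bracket cancellations; for $D_n$ and $E_n$ the same bookkeeping matches the formulas already proved.

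The final sentence, $\dim\FK_G=|W|/f$ with $f=C_G(1)$ the index of connection, follows by setting $t=1$: $\dim\FK_G=\Hilb_G(1)=W_G(1)/C_G(1)=|W|/C_G(1)$, since $W_G(1)=|W|$ by counting group elements. It then remains to identify $C_G(1)$ with the index of connection, i.e. the index of the root lattice in the weight lattice, which is a standard fact about the Coxeter element: $\det(I-c)$ (up to sign, the value of the characteristic polynomial at $1$) equals the determinant of the Cartan matrix, which is exactly the index $[P:Q]$ of the weight lattice over the root lattice. I expect the main obstacle to be purely organizational rather than conceptual: one must carefully translate between the cyclotomic factorization of $C_G(t)$ and the bracket notation $[k]$ so that the cancellations in $W_G(t)/C_G(t)$ visibly match the previously computed products, and one must invoke the correct classical identity $C_G(1)=\det(\text{Cartan matrix})=f$ for each type without circular appeal to the Hilbert series computation itself.
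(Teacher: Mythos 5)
Your overall strategy---reduce via the ADE classification (and tensor products for disconnected diagrams), then match the already-computed Hilbert series for $A_n$, $D_n$, $E_6$, $E_7$, $E_8$ against $W_G(t)=\prod_i [d_i]$ and $C_G(t)=\prod_i(t-\zeta^{m_i})$, and finally set $t=1$ using $C_G(1)=\det(\text{Cartan matrix})=f$---is exactly the paper's argument; the paper gives nothing beyond this case-by-case verification and even remarks that it knows no explanation avoiding the explicit Hilbert series computations. The general facts you invoke (Poincar\'e series via degrees, eigenvalues of the Coxeter element via exponents, rationality forcing $C_G$ to be a product of cyclotomic polynomials, $\det(I-c)=[P:Q]$) are all correct and are the right ingredients.

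However, your type-$A$ verification is wrong, and in a way that matters. In the paper's notation $A_n$ is the path on $n$ \emph{vertices}, so ``the corresponding Weyl group'' is the Weyl group of the Dynkin diagram $A_n$, namely $\S_{n+1}$, with degrees $2,3,\dots,n+1$ and Coxeter number $n+1$ --- not $\S_n$. You instead take the degrees to be $2,\dots,n$ (those of $\S_n$, i.e.\ of the rank-$(n-1)$ system attached to the \emph{line graph} of the path) and claim $C(t)=1$. That is impossible on its face: $C_G(t)$ is the characteristic polynomial of a Coxeter element on the rank-$n$ reflection representation, hence monic of degree $n$. The correct bookkeeping is $W_{A_n}(t)=[2][3]\cdots[n+1]$ and $C_{A_n}(t)=(t^{n+1}-1)/(t-1)=[n+1]$ (the Coxeter element of $\S_{n+1}$ is an $(n+1)$-cycle), giving $W_{A_n}(t)/C_{A_n}(t)=[2]\cdots[n]=\Hilb_{A_n}(t)$. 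Your reading also breaks the dimension statement: either $f=C(1)=1$, which is not the index of connection of any type-$A$ root system (it is $n+1$ for $A_n$), or, taking $f=n$ (the index of connection for $A_{n-1}$), one gets $|W|/f=n!/n=(n-1)!\neq n!=\dim\FK_{A_n}$. Once this off-by-one confusion between $G$ and its line graph is repaired, the rest of your outline goes through and agrees with the paper: e.g.\ $C_{D_n}(t)=(t+1)(t^{n-1}+1)$ cancels $W_{D_n}(t)=[2][4]\cdots[2n-2][n]$ down to $[n][n-1][4][6]\cdots[2n-4]$, and $C_{E_6}(t)=\Phi_3(t)\Phi_{12}(t)$ likewise matches $\Hilb_{E_6}$.
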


However, we know of no explanation of this fact that does not require explicitly computing the requisite Hilbert series first.



\section{The case $\cyc$}

In this section, we will describe $\FK_{\cyc}$, where $\cyc$ is the cycle on $n$ vertices (named after the affine Dynkin diagram). Label the edges $\e{0}, \e{1}, \dots, \e{n-1}$ in order as shown in Figure~\ref{fig-cyc}. For convenience, we will take these labels modulo $n$ so that for any integers $i$ and $j$, the edges $\e{i}$ and $\e{j}$ are equal if and only if $i \equiv j \pmod n$.

\begin{figure}
	\begin{center}
		\begin{tikzpicture}[scale = 1.5]
			\node[v] (0) at (90:1){};
			\node[v] (1) at (54:1){};
			\node[v] (2) at (18:1){};
			\node[v] (3) at (-18:1){};
			\node[v] (k-1) at (-90:1){};
			\node[v] (k) at (-126:1){};
			\node[v] (k+1) at (-162:1){};
			\node[v] (n-1) at (126:1){};
			\draw[thick, ->] (0)--node[above]{${\scriptstyle\e{0}}$}(1);
			\draw[thick, ->] (1)--node[right]{${\scriptstyle\e{1}}$}(2);
			\draw[thick, ->] (2)--node[right]{${\scriptstyle\e{2}}$}(3);
			\draw[thick, ->] (0)--node[below left]{${\scriptstyle\e{a}}$}(2);
			\draw[thick, ->] (n-1)--node[above, near start]{${\scriptstyle\e{n-1}}$}(0);
			\draw[thick, ->] (0)--node[left]{${\scriptstyle\e{b}}$}(k);
			\draw[thick, ->] (k-1)--node[below left, near start]{${\scriptstyle\e{n-k-1}}$}(k);
			\draw[thick, ->] (k)--node[below left]{${\scriptstyle\e{n-k}}$}(k+1);
			\draw[thick, loosely dotted] (-36:1) to [bend left = 18] (-72:1) (180:1) to [bend left = 18](144:1);
		\end{tikzpicture}
	\end{center}
	\caption{\label{fig-cyc} The $n$-cycle $\cyc$ with edges labeled $\e{0}, \dots, \e{n-1}$. The additional edges $\e{a}$ and $\e{b}$ will be used in the proofs of Lemma~\ref{l elem sym vanish} and Proposition~\ref{p elem sym vanish}.}
\end{figure}
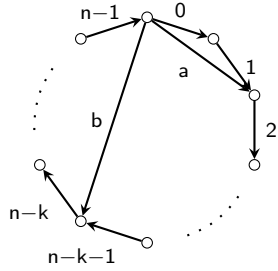

Since the line graph of $\cyc$ is isomorphic to itself, Proposition~\ref{p Coxeter to FK} shows that $\FK_\cyc$ is a quotient of the nil-Coxeter algebra $\widetilde\Nil_n$ of type $\cyc$. The main result of this section will be to describe the kernel of the quotient map explicitly as well as a set of minimal coset representatives for $\FK_{A_n}$ in $\FK_\cyc$. As a corollary, we will obtain the following result. (For a more precise statement, see Theorem~\ref{t cycle}.)

\begin{thm} \label{thm-cyc}
 The algebra $\FK_\cyc$ has a presentation consisting of quadratic relations, braid relations, and $n-1$ additional relations of degrees $k(n-k)$ for $1 \leq k \leq n-1$. The Hilbert series of $\FK_\cyc$ is given by
  \[
  \Hilb_\cyc(t)=[n]\cdot\prod^{n-1}_{k=1}[k(n-k)].
  \]
  In particular, $\dim \FK_\cyc = n! \cdot (n-1)!$, and the top degree of $\FK_\cyc$ is $\binom{n+1}{3}$.
\end{thm}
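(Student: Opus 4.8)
The plan is to realize $\FK_\cyc$ as an explicit finite-dimensional quotient of the affine nil-Coxeter algebra $\widetilde{\Nil}_n$ and to read off both the presentation and the Hilbert series from a basis of minimal coset representatives for $\FK_{A_n}$. By Proposition~\ref{p Coxeter to FK} the surjection $\widetilde{\Nil}_n \twoheadrightarrow \FK_\cyc$ sending each generator to the edge $\e{i}$ already accounts for the quadratic and braid relations, so the whole content is to identify the remaining kernel and show it is generated by $n-1$ explicit elements $E_1,\dots,E_{n-1}$ with $\deg E_k = k(n-k)$. Deleting the edge $\e{n-1}$ from the cycle leaves a path, so $\FK_{A_n} = \langle \e{0},\dots,\e{n-2}\rangle \cong \Nil_n$ with $\Hilb_{A_n}(t) = [2][3]\cdots[n]$ by Theorem~\ref{thm-a}. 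Since $A_n \subset \cyc$, Theorem~\ref{thm-subgraph} says $\FK_\cyc$ is a free $\FK_{A_n}$-module, so it suffices to produce a set $M$ of minimal coset representatives and compute $\Hilb_\cyc(t) = \Hilb_M(t)\cdot[2][3]\cdots[n]$.

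First I would write down the candidate relations $E_k$, generalizing the three non-nil-Coxeter relations of $\FK_{\tilde A_3}$ in Example~\ref{ex-a3tilde} (the two degree-$3$ cyclic sums and the six-term degree-$4$ relation), and prove that each vanishes in $\FK_\cyc$. These are the cyclic analogues of the statement that the elementary symmetric polynomials act as zero in the coinvariant ring, which is the source of the ``$q=0$ coinvariants'' description. To establish $E_k = 0$ I would adjoin the chords $\e{a}$ and $\e{b}$ of Figure~\ref{fig-cyc} (from vertex $0$ to vertices $2$ and $k$), compute inside the larger subalgebra of $\FK_n$ generated by the cycle edges together with these chords, where additional relations inherited from $\FK_n$ are available, and then specialize back to $\FK_\cyc$.

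Next comes a straightening (spanning) step, exactly analogous to Lemma~\ref{lemma-d1} for $D_n$: using only the quadratic, braid, and new relations $E_k=0$, I would rewrite every monomial of $\FK_\cyc$ as $m\cdot w$ with $w\in\FK_{A_n}$ and $m$ in a finite, explicitly described set $M$. The mechanism is that the $E_k$ bound how often the extra generator $\e{n-1}$ can occur in a straightened monomial, collapsing the infinitely many $\FK_{A_n}$-cosets inside $\FK_\cyc$ (reflecting the infinite quotient $\S_n\backslash\aW$) down to the finite set $M$ of size $(n-1)!$. This gives the upper bound $\dim\FK_\cyc \le n!\,(n-1)!$ together with the degree generating function $\Hilb_M(t) = \prod_{k=1}^{n-1}[k(n-k)] / \prod_{j=2}^{n-1}[j]$.

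Finally I would prove the matching lower bound, that $M$ is linearly independent modulo $\FK_\cyc\FK_{A_n}^+$. As in the $D_n$ argument (Lemma~\ref{lemma-d3}) and the Computation section, I would pair against the complementary algebra: letting $H'$ be the complement of $A_n$ in $K_n$, Lemma~\ref{lemma-orthogonal} gives $\FK_\cyc\FK_{A_n}^+ \perp \FK_{H'}$, so it suffices to exhibit, for each $m\in M$, a partner in $\FK_{H'}$ pairing nontrivially with it, the pairings being evaluated through the $\Delta$ and $\nabla$ operators of Propositions~\ref{prop-delta} and~\ref{prop-nabla}. The hard part will be precisely this independence step: since $\widetilde{\Nil}_n$ is infinite-dimensional and the nondegeneracy of $\langle\cdot,\cdot\rangle$ is only conjectural (Conjecture~\ref{conj-nichols}), the independence of the full family $M$ cannot be obtained by a soft dimension count and must be extracted from an explicit, essentially triangular, pairing computation. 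Once the two bounds agree, $M$ is a basis, the three families of relations generate all relations, and \[\Hilb_\cyc(t) = \Hilb_M(t)\cdot[2][3]\cdots[n] = [n]\cdot\prod_{k=1}^{n-1}[k(n-k)].\] Evaluating at $t=1$ yields $\dim\FK_\cyc = n\cdot((n-1)!)^2 = n!\,(n-1)!$, and the top degree, being the degree of $\Hilb_\cyc$, equals $(n-1)+\sum_{k=1}^{n-1}(k(n-k)-1) = \sum_{k=1}^{n-1}k(n-k) = \binom{n+1}{3}$.
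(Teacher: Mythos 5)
Your skeleton matches the paper's in its first stage: realize $\FK_\cyc$ as a quotient of the affine nil-Coxeter algebra, and prove the $n-1$ extra relations of degree $k(n-k)$ --- which are exactly the images of the elementary symmetric functions $e_k(y_1,\dots,y_n)$ in the translations --- by adjoining the chords $\e{a}$ and $\e{b}$ and inducting (this is the paper's Lemma~\ref{l elem sym vanish} and Proposition~\ref{p elem sym vanish}). After that, however, the two load-bearing pillars of your plan are left as assertions whose proposed execution differs from the paper's and is not guaranteed to succeed. First, you never construct the set $M$. The paper identifies it (after passing to the extended algebra $\widehat{\FK}_\cyc = \Pi\cdot\FK_\cyc$) as the set $D^S$ of \emph{primitive elements}, i.e.\ those $v\in\eW$ with $v^{-1}(\mathbf{A}_0)\subseteq\mathbf{B}_0$, and both the spanning argument (Lemma~\ref{lemma-primitive span}: a non-primitive minimal coset representative of $\S_n$ in $\eW$ admits a reduced factorization through a nonzero dominant translation $y^\lambda$, and $y_1\cdots y_k$ is the unique dominant term of $e_k$, so it dies modulo $I_0$ plus higher $\Nil_n$-degree terms) and the generating function $\Hilb_M(t)$ (cited from \cite{SW}) depend essentially on this identification. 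Your statement that the $E_k$ ``bound how often $\e{n-1}$ can occur'' and your closed form for $\Hilb_M(t)$ come with no mechanism; the straightening here is genuinely harder than in Lemma~\ref{lemma-d1}, because the cosets being collapsed are indexed by an infinite group.

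Second, and more seriously, your linear-independence step points in a direction that may not work at all. You propose pairing $M$ against the complementary algebra $\FK_{H'}$ via Lemma~\ref{lemma-orthogonal}, i.e.\ running Algorithm~\ref{alg-mcr}. But orthogonality to $\FK_{H'}$ is only a \emph{necessary} condition for membership in $\FK_\cyc\FK_{A_n}^+$: without Conjecture~\ref{conj-nichols} (plus a tensor decomposition as in Theorem~\ref{thm-tensor}) there is no guarantee that every $m\in M$ outside the ideal has a partner in $\FK_{H'}$ pairing nontrivially with it, and the paper explicitly warns that this algorithm yields only a lower bound in general. The paper's proof avoids the complementary graph entirely: it right-multiplies the putative dependence by $w_0$, which annihilates $\FK_{A_n}^+$ since $\FK_{A_n}\cong\Nil_n$, and then pairs $vw_0$ against $w_0\rev(v)$ --- both elements of $\FK_\cyc$ itself --- proving $\langle w_0\rev(v), vw_0\rangle = 1$ by induction via an alcove-walk/exchange lemma (Lemma~\ref{l rho pairing}) that crucially uses primitivity of $v$; the off-diagonal pairings vanish because distinct primitive elements have distinct $\S_n$-degrees. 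This self-contained diagonal pairing inside the cycle algebra is the crux of the whole theorem, and it is absent from your proposal. Until it (or a workable substitute) is supplied, the central claim --- that the quadratic, braid, and $E_k$ relations generate \emph{all} relations, equivalently that your spanning set is a basis --- remains unproven.
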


It will often be more convenient to work with an extended version of $\FK_\cyc$ defined as follows:
$\widehat{\FK}_\cyc$ is the twisted algebra $\Pi\cdot\FK_\cyc$ generated by $\Pi\cong\ZZ/ n\ZZ$ and $\FK_\cyc$, where the generator $\pi \in \Pi$ satisfies 
$\pi \e{i}=\e{(i+1)}\pi$.
Enlarging the algebra $\FK_\cyc$ to $\widehat{\FK}_\cyc$ is quite harmless---if $B$ is any basis
of $\FK_\cyc$, then  $\{\pi^i b\mid 0 \leq i \leq n-1,\; b\in B\}$ is a
basis of $\widehat{\FK}_\cyc$. We may think of $\pi$ as having degree 0 and $\S_n$-degree equal to the automorphism of $\cyc$ sending each vertex to the next adjacent vertex clockwise. Hence $\sigma_\pi(\e{i}) = \e{i+1} = \pi \e{i} \pi^{-1}$.

We begin by presenting some background on the (extended) affine symmetric group. We will then explicitly describe the relations of $\FK_\cyc$. This will allow us to find minimal coset representatives for $\FK_{A_n} = \Nil_n$ in $\FK_\cyc$ and thereby prove the main result.

\subsection{Background} Here we review some background on the geometry of the extended affine symmetric group. What follows is a somewhat cursory treatment; for a more thorough treatment of this material, see \cite{Haiman}.

The \emph{affine symmetric group} $(\widetilde\S_n, K)$ is the Coxeter group with simple reflections $K=\{s_0, s_1, \dots, s_{n-1}\}$ whose Dynkin diagram is an $n$-cycle. Here $\S_n = (\S_n, S)$ is the symmetric group generated by $S=\{s_1, \dots, s_{n-1}\}$. The \emph{extended affine symmetric group} $\eW$ is the semidirect product $\Pi \ltimes \aW$, where $\Pi$ is the cyclic group of order $n$ generated by $\pi$ satisfying $\pi s_i = s_{i+1}\pi$ (indices taken modulo $n$).

The groups $\S_n$ and $\aW$ can be realized as affine reflection groups as follows. Let $\{\epsilon_1, \dots, \epsilon_n\}$ be the standard basis of $\RR^n$ with the usual inner product $(\cdot,\cdot)$. Let $V \subset \RR^n$ be the subspace spanned by $\alpha_i = \epsilon_i-\epsilon_{i+1}$ for $i = 1, \dots, n-1$. We identify $V$ in the natural way with $\RR^n/\RR\varepsilon$, where $\varepsilon=\epsilon_1+\cdots+\epsilon_n$.

The $\alpha_i$ are the simple roots of the root system $\Phi = \Phi^+ \cup \Phi^-$ of type $A_{n-1}$, where $\Phi^+ = \{\epsilon_i-\epsilon_j \mid 1 \leq i < j \leq n\}$. For a root $\alpha \in \Phi$ and $k \in \ZZ$, denote by $h_{\alpha- k\delta}$ the (affine) hyperplane $\{x \in V \mid (x, \alpha)=k\}$, and let $s_{\alpha-k\delta}$ be the reflection over $h_{\alpha-k\delta}$. Then the map sending $s_i$ to the reflection $s_{\alpha_i}$ gives a faithful representation of $\S_n$. Denoting the highest root by $\bar\alpha  = \epsilon_1-\epsilon_n$ and sending $s_0$ to the reflection $s_{\alpha_0}$ over the hyperplane $h_{\alpha_0} = h_{-\bar\alpha+\delta}=\{x \in V \mid (x, \epsilon_1-\epsilon_n)=1\}$ extends this to a faithful representation of $\widetilde\S_n$.

The connected components of $V-\bigcup_{\alpha \in \Phi^+} h_{\alpha}$ are called \emph{chambers}, and the connected components of $V-\bigcup_{\alpha\in\Phi^+,  k\in \ZZ} h_{\alpha-k\delta}$ are called \emph{alcoves}. The actions of $\S_n$ and $\aW$ are simply transitive on chambers and alcoves, respectively. We define
\begin{align*}
\mathbf C_0 &= \{x \in V \mid (\alpha_i, x)>0, \text{ for } i=1, \dots, n-1\}\\
\mathbf A_0 &= \{x \in \mathbf C_0 \mid (\bar\alpha, x)<1\}
\end{align*}
to be the \emph{fundamental chamber} and \emph{fundamental alcove}. A point in the closure of $\mathbf C_0$ is called \emph{dominant}.

The set of all affine transformations that preserve the set of alcoves can be identified with $\eW$: let $Y$ denote the weight lattice $\ZZ^n/\ZZ\varepsilon \subset V$. Then $\eW = Y \rtimes \S_n$, where elements of $Y$ are treated as translations. In this context, $\aW = \ZZ\Phi \rtimes \S_n$, where $\ZZ\Phi \subset Y$ is the root lattice. We will denote elements $Y \subset \eW$ using the multiplicative notation $y^\lambda = y_1^{\lambda_1}\cdots y_n^{\lambda_n}$ for $\lambda = \lambda_1\epsilon_1 + \cdots + \lambda_n\epsilon_n \in Y$. Note that $y_1y_2\cdots y_n = \idelm$. Here $\Pi$ is the stabilizer of $\mathbf A_0$, and we will take $\pi = y_1s_1s_2\cdots s_{n-1}$ as a generator of $\Pi$.

Recall that if $w$ is an element of a Coxeter group, then the \emph{length} $\ell(w)$ of $w$ is the minimal length of an expression for $w$ as a product of simple reflections. For $w \in \eW$, we set $\ell(w) = \ell(v)$, where $v \in \aW$ is the unique element such that $w = \pi^k v$ for some $k$. Equivalently, $\ell(w)$ is the number of hyperplanes $h_{\alpha-k\delta}$ separating $\mathbf A_0$ from $w^{-1}(\mathbf A_0)$. 

We will sometimes abuse notation by identifying $\S_n$, $\aW$, and $\eW$ with their nil-Coxeter counterparts $\Nil_n$, $\aWnil_n$, and $\eWnil_n$. Note that a monomial $y^\lambda$, when considered as an element of $\eWnil_n$, is assumed to be a reduced expression (and therefore nonzero). We will write $\widetilde\Theta\colon \aWnil_n \to \FK_\cyc$ and $\widehat\Theta\colon \eWnil_n \to \widehat \FK_\cyc$ for the canonical surjections sending $s_i \mapsto \e{i}$. We will sometimes abuse notation by writing $\e{i}$ for $s_i \in \Nil_n$ or omitting $\widetilde\Theta$ or $\widehat\Theta$ when convenient. We will also write $w_0$ for the longest element of $\S_n$.

\subsection{Relations}
In this section, we will describe the extra relations that occur in $\FK_\cyc$.

Consider the translations $y_1, \dots, y_n \in \widehat\S_n$ as described above. Let us write each translation $y_{i_1}y_{i_2}\cdots y_{i_k} \in \widehat\S_n$ (for $1 \leq i_1<\cdots <i_k\leq n$) as a reduced word---we will see below that this has length $k(n-k)$. Let $e_k(y_1, \dots, y_n)$ be the sum of these words in $\widehat\Nil_n$.

\begin{proposition}\label{p elem sym vanish}
The image of
$e_k(y_1, \dots, y_n)$ vanishes in $\widehat{\FK}_\cyc$ for $k = 1, \dots, n-1$.
Therefore, in $\FK_\cyc$,
\[R_k = R_k(\cyc) = \pi^{-k} e_k(y_1, \dots, y_n) = 0.\]
\end{proposition}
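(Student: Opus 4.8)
The goal is to show that the ``elementary symmetric'' element $e_k(y_1,\dots,y_n)$, obtained by summing the reduced words of all degree-$k$ products $y_{i_1}\cdots y_{i_k}$ of translation generators (with $1\le i_1<\cdots<i_k\le n$), maps to zero in $\widehat\FK_\cyc$. The plan is to exploit the left action of $\FK_n^\vee$ (equivalently the operators $\nabla$) together with the rich symmetry built into the problem. Since $\widehat\Theta$ is a surjection from $\eWnil_n$, it suffices to compute $\widehat\Theta(e_k(y_1,\dots,y_n))$ in $\widehat\FK_\cyc$ and verify it vanishes; the final form $R_k=\pi^{-k}e_k(y_1,\dots,y_n)=0$ in $\FK_\cyc$ then follows immediately since $\pi$ is invertible and $e_k$ has $\Pi$-degree $\pi^k$ (each $y_{i_1}\cdots y_{i_k}$ has the same image modulo $\Pi$ as $\pi^k$ times an element of $\aWnil_n$).

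\textbf{Key steps.}
First I would pin down the length claim: each product $y_{i_1}\cdots y_{i_k}$ is a reduced word of length $k(n-k)$. This is a standard fact about translations in the extended affine symmetric group---the length of the translation by a weight $\lambda$ is $\sum_{\alpha\in\Phi^+}|(\lambda,\alpha)|$, and for $\lambda=\epsilon_{i_1}+\cdots+\epsilon_{i_k}$ (viewed in $Y$) this evaluates to $k(n-k)$, the number of inversions between the chosen and unchosen indices. This guarantees each summand is a genuine nonzero monomial in $\eWnil_n$, so $e_k$ is well-defined. Second, I would establish a recursion or a symmetry argument for the image in $\widehat\FK_\cyc$. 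The most promising route is to pair $\widehat\Theta(e_k(y_1,\dots,y_n))$ against arbitrary test elements using $\langle\cdot,\cdot\rangle$ and show the pairing is always zero; by nondegeneracy considerations one must instead argue directly that the element is zero. A cleaner approach uses the $\S_n$-equivariance: the element $e_k(y_1,\dots,y_n)$ is invariant under the $\S_n$-action permuting the $y_i$, and I would try to show it lies in the augmentation-type ideal by applying the operators $\nabla_{ij}$ (which preserve $\FK_\cyc$ by Lemma~\ref{lemma-delta}(b)) and checking that $e_k$ is annihilated or reduced to a known-vanishing expression. The relation $y_1y_2\cdots y_n=\idelm$ together with the telescoping structure of elementary symmetric functions should produce the collapse.

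\textbf{Main obstacle.}
The hard part will be the combinatorial bookkeeping that converts the abstract sum $e_k(y_1,\dots,y_n)$ into an explicit statement in $\widehat\FK_\cyc$ and shows the cancellation. In $\eWnil_n$ these monomials are linearly independent (distinct reduced words), so the vanishing is genuinely a consequence of the extra Fomin-Kirillov relations (quadratic, braid, and the cyclic relations of Lemma~\ref{lemma-cyclic}) that are \emph{not} present in $\aWnil_n$; one must trace exactly how these relations act on the translation monomials. I expect the cleanest mechanism to be an induction on $n$ (or on $k$), peeling off one generator $\e{0}$ or one translation factor $y_n$ and using the auxiliary edges $\e{a},\e{b}$ from Figure~\ref{fig-cyc} to relate $e_k(\cyc)$ to the corresponding expression on a smaller cycle or path, where the result is already known or reduces to the cyclic relation directly. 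Verifying that each reduced-word representative can be chosen compatibly with this peeling---so that the braid moves needed to apply the relations are available---is the delicate point, and it is likely handled in the paper via the preceding Lemma~\ref{l elem sym vanish}.
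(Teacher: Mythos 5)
Your proposal correctly identifies the scaffolding of the paper's argument---induction using the auxiliary chord edges of Figure~\ref{fig-cyc}, with Lemma~\ref{l elem sym vanish} as the base ingredient---but it stops short of the actual cancellation mechanism, and that is a genuine gap rather than a detail. Lemma~\ref{l elem sym vanish} only gives the extreme cases $k=1$ and $k=n-1$ (proved by induction on $n$ with the chord $\e{a}$). For $2\le k\le n-2$ the paper does \emph{not} run an induction of the same shape; it splits
\[
e_k(y_1,\dots,y_n)=e_k(y_1,\dots,y_{n-1})+e_{k-1}(y_1,\dots,y_{n-1})\,y_n
\]
and proves the key identity $-e_k(y_1,\dots,y_{n-1})=e_{k-1}(y_1,\dots,y_{n-1})\,y_n$ in $\widehat{\FK}_{\cyc\cup\{\e{b}\}}$. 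All of the work is there: each translation monomial is given the reduced factorization $y_{i_1}\cdots y_{i_k}=\gamma(\lambda)\,\pi^k\,\gamma(\Omega/\lambda)$ coming from Grassmannian permutations and Winkel's $\delta$-rule; the extreme-case relations from Lemma~\ref{l elem sym vanish} are applied to the two \emph{small} cycles through $\e{b}$ (namely $\e{n-k},\dots,\e{n-1},\e{b}$ and $\e{0},\dots,\e{n-k-1},\e{b}$); and then both sides are recognized as the identical sum $\sum_{w\in\S_{n-1}^J}w\,\pi^k\,\e{b}\,w'$ over minimal coset representatives of $\S_{k-1}\times\S_1\times\S_{n-k-1}$ in $\S_{n-1}$. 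None of this---the splitting, the identity, the reduced-word bookkeeping, or the coset-representative matching---appears in your proposal; "peeling off $y_n$ and using the auxiliary edges" is the right instinct, but deferring the delicate point to what "is likely handled in the paper" leaves the theorem unproved.

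Your two alternative mechanisms also do not work as stated. Pairing $e_k$ against test elements can never establish vanishing, because nondegeneracy of $\langle\cdot,\cdot\rangle$ is precisely Conjecture~\ref{conj-nichols} (you note this yourself, so that route should simply be discarded). The proposed $\S_n$-equivariance/$\nabla_{ij}$ "collapse" is never made concrete: the operators $\nabla_{ij}$ preserve $\FK_\cyc$ but applying them to $e_k$ only produces lower-degree consequences, and there is no telescoping identity for elementary symmetric functions in the $y_i$ available inside $\widehat{\FK}_\cyc$ until one already knows how the quadratic, braid, and cyclic relations act on the translation monomials---which is exactly the computation the paper carries out and your proposal omits. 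The one step you do nail down, that each $y_{i_1}\cdots y_{i_k}$ is reduced of length $k(n-k)$ via the translation-length formula, is correct but is the easy part.
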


Note that $\rev(R_k) = R_{n-k}$. Before we prove this result, we will give an explicit description of $R_k$ in terms of the generators of $\FK_{\cyc}$.

Consider a \emph{Grassmannian permutation} $w \in \S_n$ with $w_1<w_2<\dots<w_k$ and $w_{k+1}<w_{k+2}<\dots<w_n$, i.e., $w$ has at most one descent appearing at position $k$. Equivalently, $w$ is a minimal coset representative of $\S_k \times \S_{n-k}$ in $\S_n$. We associate to $w$ the partition $\lambda = (w_k-k, \dots, w_1-1)$ and write $w = \gamma(\lambda)$. This gives a bijection between such $w$ and partitions $\lambda$ with at most $k$ parts, each of size at most $n-k$.

There is a nice description of the set of reduced words of a Grassmannian permutation called the $\delta$-rule, due to Winkel \cite{Winkel}.
For a partition $\lambda$ as above, let $\delta_\lambda$ be the tableau of shape $\lambda$ with entry $k+j-i$ in the box at row $i$, column $j$.
The $\delta$-rule says that the reduced words of $\gamma(\lambda)$ are obtained by, starting with the tableau $\delta_\lambda$, successively removing outer corners and recording the entries until all the entries are removed. For example, $s_6s_2s_4s_5s_3s_4$
 is the reduced expression for $\gamma(3,2,1,0)$ corresponding to the sequence
\[
\begin{array}{cc}
{\tiny \delta_{(3,2,1,0)}\;=\;\tableau{4&5&6\\3&4\\2}\quad\to\quad \tableau{4&5\\3&4\\2} \quad\to\quad  \tableau{4&5\\3&4} \quad\to\quad \tableau{4&5\\3} \quad\to\quad \tableau{4\\3} \quad\to\quad \tableau{4}}\,.
\end{array}
\]
Given a partition  $\mu \subset \lambda$, define  $\gamma(\lambda/\mu) = \gamma(\lambda) \gamma(\mu)^{-1}$, and let $\delta_{\lambda/\mu}$ be the skew subtableau of $\delta_\lambda$ of shape $\lambda/\mu$. It follows from the $\delta$-rule that the reduced expressions for $\gamma(\lambda/\mu)$ are obtained by removing entries of $\delta_{\lambda/\mu}$ just like the $\delta$-rule.

It is easy to check from the definitions that
$y_1 \cdots y_k = \pi^{k}\cdot \gamma(\Omega)$,
where $\Omega=(n-k)^k$, the partition with $k$ parts of size $n-k$.
%
Now given any  $1 \leq i_1 < i_2 < \dots < i_k \leq n$, let $\lambda = (i_k-k, \dots, i_1-1)$. Then $y_{i_1}\cdots y_{i_k}$ has the reduced factorization
\begin{equation} \label{e y reduced}
  y_{i_1}\cdots y_{i_k} = \gamma(\lambda) y_1 \cdots y_k \gamma(\lambda)^{-1} = \gamma(\lambda) \cdot \pi^k \cdot \gamma(\Omega/\lambda). \tag{$\spadesuit$}
\end{equation}
Therefore
\[
\pi^{-k} y_{i_1}\cdots y_{i_k} = \pi^{-k} \gamma(\lambda) \pi^k \cdot \gamma(\Omega/\lambda) = \sigma_{\pi}^{-k}(\gamma(\lambda)) \cdot \gamma(\Omega/\lambda).
\]
Summing over all $\lambda \subset \Omega$ gives $R_k$.
\begin{ex}
If $n=5$, then
\begin{align*}
R_1 &= \e{4321} + \e{0\cdot 432} + \e{10 \cdot 43} + \e{210 \cdot 4} + \e{3210},\\
R_2 &= \e{342312} + \e{0 \cdot 34231} + \e{10 \cdot 3421} + \e{40 \cdot 3423} + \e{210 \cdot 321}\\
& \qquad + \e{410 \cdot 342} + \e{4210 \cdot 32} + \e{0410 \cdot 34} + \e{04210 \cdot 3} + \e{104210},\\
R_3 &= \e{234123} + \e{0 \cdot 23412} + \e{10 \cdot 2312} + \e{40 \cdot 2341} + \e{410 \cdot 231}\\
& \qquad + \e{340 \cdot 234} + \e{0410 \cdot 21} + \e{3410 \cdot 23} + \e{30410 \cdot 2} + \e{430410},\\
R_4 &= \e{1234}+ \e{0 \cdot 123} + \e{40 \cdot 12} + \e{340 \cdot 1} + \e{2340}.
\end{align*}
See also Example~\ref{ex-a3tilde} for the case $n=4$.
\end{ex}

We now prove the following special case of Proposition~\ref{p elem sym vanish}.
\begin{lemma} \label{l elem sym vanish}
The following relations hold in $\FK_{\cyc}$:
\begin{align*}
R_1&= \sum_{i=0}^{n-1} \e{(i-1)(i-2)\cdots 0(n-1)(n-2)\cdots(i+1)}=0,\\
R_{n-1}&= \sum_{i=0}^{n-1} \e{(i+1)(i+2)\cdots(n-1)01\cdots(i-1)}=0.
\end{align*}
\end{lemma}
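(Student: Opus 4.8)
The plan is to prove $R_{n-1}=0$ first and to obtain $R_1=0$ as a free consequence. Since $\rev$ is the reversal antiautomorphism of $\FK_n$ and the $i$-th monomial of $R_{n-1}$ read backwards is exactly the $i$-th monomial of $R_1$, we have $\rev(R_{n-1})=R_1$; as $\rev$ is linear and $\rev(0)=0$, proving $R_{n-1}=0$ yields $R_1=0$ at once. (This is the case $k=n-1$ of the identity $\rev(R_k)=R_{n-k}$ recorded after the statement of Proposition~\ref{p elem sym vanish}.) So I concentrate on $R_{n-1}$, whose $i$-th summand $P_i=\e{(i+1)(i+2)\cdots(i-1)}$ is the product, in cyclic order, of all edges of $\cyc$ except $\e{i}$.

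To compute, I realize the cycle concretely as $\e{j}=x_{j+1,\,j+2}$ (vertex labels taken mod $n$), so that adjacent generators $\e{j},\e{j+1}$ share a vertex and obey the pivot form $x_{ij}x_{jk}=x_{jk}x_{ik}+x_{ik}x_{ij}$ of the defining relation, while non-adjacent generators commute. Two structural facts will guide the argument. First, $R_{n-1}$ is invariant under the rotation automorphism $\sigma_\pi\colon\e{j}\mapsto\e{j+1}$, which permutes the $P_i$ cyclically. Second, there is a clean telescope $\sum_i(\e{i}P_i-P_i\e{i})=0$, since $\e{i}P_i$ and $P_i\e{i}$ are the two full cyclic products $\e{i}\e{(i+1)}\cdots\e{(i-1)}$ and $\e{(i+1)}\cdots\e{(i-1)}\e{i}$, which cancel in consecutive pairs. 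This last observation is auxiliary, but it records the rigidity that makes the cancellations below possible.

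The heart of the proof is to establish $R_{n-1}=0$ by induction on $n$. The base case $n=3$ is immediate: there $R_2=x_{12}x_{23}+x_{23}x_{31}+x_{31}x_{12}$ is literally one of the defining cyclic relations of $\FK_3$. For the inductive step I would apply the pivot relation to the leading adjacent pair of each summand $P_i$, splitting it into a term in which the new chord generator $x_{a,a+2}$ stands to the left and one in which it stands to the right. Using the skew-symmetry $x_{a,a+2}=-x_{a+2,a}$ together with the commutations, the chord terms produced by $P_i$ cancel against those produced by $P_{i+2}$ (this matching is precisely what the rotational symmetry predicts, and one can check it explicitly already for $n=4$), and the remaining terms are then reduced, by iterating the same moves, down to the base relation.

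The main obstacle will be controlling this cancellation uniformly in $n$. Every application of the pivot relation creates chord generators $x_{a,a+2}$ that are not edges of $\cyc$, and one must verify that across the full cyclic sum every chord monomial is matched and annihilated, with no spurious terms surviving; equivalently, one is exhibiting $R_{n-1}$ as an explicit $\QQ$-linear combination of the quadratic and cyclic relations of $\FK_n$ times monomials. Organizing this bookkeeping—guided throughout by the rotational symmetry, which dictates the pairing—is the genuine work of the argument. Should the direct telescoping become unwieldy, an alternative is to verify the vanishing in the divided-difference representation of $\FK_n$ from \cite{FK}, where the translations $y_1,\dots,y_n$ act by commuting operators whose elementary symmetric functions vanish; but I prefer the combinatorial route, since it stays inside $\FK_\cyc$ and dovetails with the explicit description of $R_k$ furnished by the $\delta$-rule.
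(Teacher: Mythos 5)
Your reduction of $R_1$ to $R_{n-1}$ via $\rev$ and your base case $n=3$ are both fine; the gap is in the inductive step, which is the entire content of the lemma. Concretely: with your labeling $\e{j}=x_{j+1,j+2}$, splitting the leading adjacent pair of $P_i$ produces the chord $x_{i+2,i+4}$ (the one subtending $\e{(i+1)},\e{(i+2)}$), while the same move applied to $P_{i+2}$ produces the chord $x_{i+4,i+6}$. These coincide only when $n=4$ (a $4$-cycle has just two chords, each subtended by two opposite pairs of edges); for $n\geq 5$ the $n$ summands produce $n$ pairwise distinct chords, one each. Since the moves you allow afterwards (commutations and the skew-symmetry $x_{ab}=-x_{ba}$) preserve, up to sign, the multiset of generators occurring in a monomial, a term containing the chord $x_{i+2,i+4}$ can never cancel a term containing only $x_{i+4,i+6}$ and cycle edges. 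So the claimed pairing between $P_i$ and $P_{i+2}$ is an artifact of $n=4$, and the phrase ``the remaining terms are then reduced, by iterating the same moves'' is precisely the unproven content. Note also that, despite being billed as an induction on $n$, your argument never invokes the inductive hypothesis anywhere --- it is an attempted direct expansion. Your fallback is also not viable as stated: checking that $R_{n-1}$ acts by zero in the divided-difference representation cannot prove it vanishes in $\FK_\cyc$ unless that representation is faithful, which is not known; representations can only be used in the opposite direction (to prove nonvanishing or linear independence), which is exactly how the paper uses them in Theorem~\ref{thm-a}.

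What is missing is the way the chord should enter, and here the paper's mechanism is structurally different from yours. Adjoin a \emph{single} chord $\e{a}$ subtending $\e{0}$ and $\e{1}$, and let $C$ be the $(n-1)$-cycle with edge set $\{\e{2},\dots,\e{n-1},\e{a}\}$; the induction hypothesis applies to $C$, and the resulting relation $R_1(C)= \e{(n-1)(n-2)\cdots 2}+\sum_{i=2}^{n-1}X_i=0$, with $X_i = \e{(i-1)\cdots 2\cdot a\cdot(n-1)\cdots(i+1)}$, holds in $\FK_n$ even though $\e{a}$ is not an edge of $\cyc$ (this is harmless since all computations take place in $\FK_n$). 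One then verifies the single identity $R_1(\cyc)=\e{0}\cdot R_1(C)+R_1(C)\cdot\e{1}$: the two chordless products $\e{0}\cdot\e{(n-1)\cdots 2}$ and $\e{(n-1)\cdots 2}\cdot\e{1}$ are the $i=1$ and $i=0$ terms of $R_1(\cyc)$, while for each $i$ the commutation relations push $\e{0}$ and $\e{1}$ up against the chord, giving $\e{0}X_i+X_i\e{1}=\e{(i-1)\cdots 2}\,(\e{0a}+\e{a1})\,\e{(n-1)\cdots(i+1)}$, and the triangle relation $\e{0a}+\e{a1}=\e{10}$ converts this into the $i$-th term $\e{(i-1)\cdots 210(n-1)\cdots(i+1)}$. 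Every monomial of $R_1(\cyc)$ is produced exactly once, so there is no cancellation bookkeeping at all; by your $\rev$ observation the $R_{n-1}$ case follows. Replacing your inductive step with this construction repairs the proof, and the rest of your write-up can stay.
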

\begin{proof}
We prove the $R_1$ relation, the $R_{n-1}$ relation being similar.

We induct on $n$. For the base case $n=3$, $R_1 = \e{21} + \e{02} + \e{10}$ is one of the quadratic relations of $\FK_{\tilde{A}_2}$. For $n>3$, consider the additional edge $\e{a}$ as drawn in Figure~\ref{fig-cyc}, and let $C$ be the cycle $\e{2}, \e{3}, \dots, \e{n-1}, \e{a}$. For $i=2, 3, \dots, n-1$, let \[X_i = \e{(i-1)(i-2)\cdots 2\cdot a\cdot (n-1)(n-2)\cdots (i+1)}.\] Then by induction, $R_1(C) = \e{(n-1)(n-2) \cdots 2} + \sum_{i=2}^{n-1} X_i=0$.
Using the commutation relations and the quadratic relation $\e{0a} + \e{a1} = \e{10}$,
\[\e{0}\cdot X_i + X_i \cdot \e{1} = \e{(i-1)\cdots 2}\cdot (\e{0a}+\e{a1}) \cdot \e{(n-1)\cdots (i+1)} = \e{(i-1)\cdots 210(n-1) \cdots (i+1)}.\]
It now follows easily that $R_1(\cyc)=\e{0} \cdot R_1(C) + R_1(C) \cdot \e{1}=0$, as desired.
\end{proof}

Using Lemma~\ref{l elem sym vanish}, we can complete the proof of Proposition~\ref{p elem sym vanish}.

\begin{proof}[Proof of Proposition \ref{p elem sym vanish}]
Lemma \ref{l elem sym vanish} establishes the cases $k=1$ and $k= n-1$.  Suppose that $n>3$ and $k \in \{2,\dots, n-2\}$.
To prove that $e_k(y_1,\dots, y_n)$ is zero in $\widehat{\FK}_\cyc$, write
\[
  e_k(y_1,\dots, y_n) = e_k(y_1, \dots, y_{n-1}) + e_{k-1}(y_1, \dots, y_{n-1})y_n
\]
We will show that
\begin{equation}\label{e ek split}
   - e_k(y_1, \dots, y_{n-1}) = e_{k-1}(y_1, \dots, y_{n-1})y_n. \tag{$*$}
\end{equation}
in $\widehat{\FK}_{\cyc \cup \{\e{b}\}}$, where $\e{b}$ is the additional edge shown in Figure~\ref{fig-cyc}.

Maintain the notation of \eqref{e y reduced} with $\Omega = (n-k)^k$ and $\Omega^L = (n-k-1)^k$. The left side of \eqref{e ek split} contains terms $y_{i_1}\cdots y_{i_k}$ for which $i_k \neq n$. Then $\lambda_1 = i_k-k < n-k$, so $\gamma(\Omega/\lambda) = \e{(n-k)(n-k+1)\cdots (n-1)} \cdot \gamma(\Omega^L/\lambda)$ is a reduced factorization (corresponding to first removing the rightmost column of $\delta_{\Omega/\lambda}$), which yields
\begin{equation} \label{e y reduced ik less n}
-y_{i_1} \cdots y_{i_k} = -\gamma(\lambda) \pi^k \e{(n-k)(n-k+1)\cdots (n-1)} \gamma(\Omega^L/\lambda).\tag{$**$}
\end{equation}
Using the $R_k$ relation of the cycle $\e{n-k}, \dots, \e{n-1}, \e{b}$, which is
\[
-\e{(n-k)\cdots (n-1)} = \sum_{i=1}^k \e{(n-i+1)\cdots (n-1) \cdot  b \cdot  (n-k) \cdots (n-i-1)},
\]
we can expand the right side of \eqref{e y reduced ik less n} into $k$ monomials.
The left side of \eqref{e ek split} then expands into $k\binom{n-1}{k}$ monomials of the form
\begin{multline*}
\gamma(\lambda) \pi^k \e{(n-i+1) \cdots (n-1) \cdot  b \cdot   (n-k) \cdots (n-i-1)} \gamma(\Omega^L/\lambda)\\
=\gamma(\lambda) \e{(k-i+1) \cdots (k-1)}\cdot \pi^k  \e{ b \cdot (n-k) \cdots (n-i-1)} \gamma(\Omega^L/\lambda),
\end{multline*}
where $\lambda \subset \Omega^L$ and $1 \leq i \leq k$. But $\gamma(\lambda)\e{(k-i+1)\cdots (k-1)}$ ranges over all minimal coset representatives $\S_{n-1}^J$ for the parabolic subgroup $\S_{k-1} \times \S_1 \times \S_{n-k-1}$ (generated by $J=\{s_1, \dots, s_{k-2}, s_{k+1}, \dots, s_{n-2}\}$) in $\S_{n-1}$: indeed, $\gamma(\lambda)$ are the minimal coset representatives for $\S_k \times \S_{n-k-1}$ in $\S_{n-1}$, while $\e{(k-i+1)\cdots (k-1)}$ are the ones for $\S_{k-1} \times \S_1$ in $\S_k$. A similar argument shows that each $\e{(n-k)\cdots (n-i-1)}\gamma(\Omega^L/\lambda)$ is a reduced expression. Hence
\[-e_k(y_1, \dots, y_{n-1}) = \sum_{w \in \S_{n-1}^J} w\pi^k\e{b}w',\]
where $w'$ is the unique element of $\S_n$ such that the $\S_n$-degree of $w\pi^k \e{b}w'$ is the identity.

Similarly, the right side of \eqref{e ek split} contains terms $y_{i_1}\cdots y_{i_k}$ for which $i_k = n$ so that $\lambda_1 = n-k$. Then $\lambda^B=(\lambda_2, \dots, \lambda_k) \subset \Omega^B = (n-k)^{k-1}$, so we find (by removing the top row of $\delta_\lambda$ last) that $\gamma(\lambda) = \gamma(\lambda^B) \cdot \e{(n-1)(n-2) \cdots k}$. Thus
\begin{align*}
y_{i_1}\cdots y_{i_k} &= \gamma(\lambda^B)\e{(n-1)(n-2)\cdots k}\pi^k \gamma(\Omega^B/\lambda^B)\\
&=  \gamma(\lambda^B)\pi^k\e{(n-k-1)(n-k-2)\cdots 0} \gamma(\Omega^B/\lambda^B).
\end{align*}
Then using the $R_1$ relation of the cycle $\e{0}, \e{1}, \dots, \e{n-k-1}, \e{b}$, the right side of \eqref{e ek split} expands into $(n-k)\binom{n-1}{k-1}$ monomials of the form
\begin{multline*}
\gamma(\lambda^B)\pi^k \e{(i-1) \cdots 0\cdot b \cdot (n-k-1) \cdots (i+1)}\gamma(\Omega^B/\lambda^B) \\
=  \gamma(\lambda^B) \e{(k+i-1) \cdots k} \cdot \pi^k  \e{b \cdot(n-k-1) \cdots (i+1)}\gamma(\Omega^B/\lambda^B)
\end{multline*}
for $\lambda^B \subset \Omega^B$ and $0 \leq i \leq n-k-1$. But as above, $\gamma(\lambda^B) \e{(k+i-1) \cdots k}$ ranges over $\S_{n-1}^J$ since $\gamma(\lambda^B)$ are the minimal coset representatives for $\S_{k-1} \times \S_{n-k}$ in $\S_{n-1}$ while $\e{(k+i-1) \cdots k}$ are the ones for $\S_1 \times \S_{n-k-1}$ in $\S_{n-k}$. Thus as above we have 
\[e_{k-1}(y_1, \dots, y_n)y_n = \sum_{w \in \S_{n-1}^J} w\pi^k\e{b}w' = -e_k(y_1, \dots, y_{n-1}).\qedhere\]
\end{proof}

\begin{example}
We illustrate the proof of Proposition \ref{p elem sym vanish} in the case $n = 5$, $k=2$.  There, using the relation $\e{210} = \e{10b} + \e{0b2} + \e{b21}$:
\begin{align*}
\pi^{-2} e_1(y_1, y_2, y_3, y_4)y_5 \quad = & \phantom{{}+{}}  \e{210321} + \e{421032} + \e{042103} + \e{104210}\\[1.5mm]
   =&\phantom{{}+{}}\e{10b321 + 410b32 + 0410b3 + 10410b}\\
  &+\e{0b2321 + 40b232 + 040b23 + 1040b2}\\
  &+\e{b21321 + 4b2132 + 04b213 + 104b21},
\end{align*}
Similarly, using the relation $\e{-34} = \e{4b} + \e{b3}$:
\begin{align*}
-\pi^{-2}e_2(y_1, y_2, y_3, y_4) \quad = & -   \e{342312 - 034231 - 103421 - 403423 - 410342 - 041034}\\[1.5mm]
  =&\phantom{{}+{}}\e{4b2312 + 04b231 + 104b21 + 404b23 + 1404b2 + 04104b}\\
    &+ \e{b32312 + 0b3231 + 10b321 + 40b323 + 140b32 + 0410b3}\\[1.5mm]
  =&\phantom{{}+{}}\e{4b2132 + 04b213 + 104b21 + 040b23 + 1040b2 + 10410b}\\
   &+  \e{b21321 + 0b2321 + 10b321 + 40b232 + 410b32 + 0410b3}.
\end{align*}

  The last equality uses only braid and commutation relations.  We see directly in this case that the twelve terms that appear in each case are the same.
\end{example}

\subsection{Primitive elements} Let $I_0 \subset \widehat\Nil_n$ be the (two-sided) ideal generated by the relations $R_k$, or equivalently by $e_k(y_1, \dots, y_n)$. By Proposition~\ref{p elem sym vanish}, $\widehat\FK_\cyc$ is a quotient of $\widehat \Nil_n/I_0$. To show that they are isomorphic, we first describe a basis of $\widehat \Nil_n/I_0$ in terms of a subset of $\widehat\S_n$ which we call \emph{primitive elements}, previously studied in \cite{LJantzen, Xi, SW, BlasiakFactor, BlasiakCyclage}.  These primitive elements will turn out to form a set of minimal coset representatives of $ \Nil_n = \FK_{A_n}$ in $\widehat\FK_\cyc$. We will work with a geometric description of primitive elements from \cite{LJantzen}.

Let a \emph{box} be a connected component of $H - \bigcup_{i \in [n-1], k \in \ZZ} h_{\alpha_i - k\delta}$. We denote by $\mathbf{B}_0$ the box containing $\mathbf{A}_0$, which lies between the hyperplanes $h_{\alpha_i}$ and $h_{\alpha_i - \delta}$ for $i \in [n-1]$. An element $w$ in $\eW$ is \emph{primitive} if $w^{-1}(\mathbf{A}_0) \subseteq \mathbf{B}_0$. Let $D^S$ denote the set of primitive elements of  $\eW$. Note that since $\mathbf B_0$ is contained in the fundamental chamber $\mathbf C_0$, every primitive element is an element of $\widehat\S_n^S$, that is, a (left) minimal coset representative of $\S_n$ in $\widehat\S_n$.

The elements of $D^S$ are in bijection with elements of $\S_n$: for any $w \in \S_n$, there is a unique $y^\lambda$ such that $y^\lambda w \in D^S$. Also, since $\pi$ stabilizes $\mathbf A_0$, $v \in D^S$ if and only if $\pi v \in D^S$.

\begin{example}\label{ex SL3}
The primitive elements of $\widehat{\S}_3$, expressed as products of simple reflections (top line)
and using the fact that $\widehat{S}_3 = Y\rtimes \S_3$ (bottom line), are:
\[
{
\setlength\arraycolsep{15pt}
\begin{array}{cccccc}
\idelm & \pi & \pi^2 & \pi s_0& \pi^2 s_0 & \pi^3 s_0\\
\idelm&y_1s_1s_2&y_1y_2s_2s_1&y_2s_2&y_1y_3s_1&y_1^2y_2s_1s_2s_1
\end{array}
}
\]
\end{example}

\subsection{Spanning}
We will now show how to construct a spanning set of $\widehat \Nil_n/I_0$ from the primitive elements. We will see in Theorem~\ref{t cycle} that it is actually a basis. We will first need the following lemma about reduced factorizations.

\begin{lemma} \label{lemma-redfac}
Suppose $\lambda \in Y$ is a dominant weight and $w \in \widehat\S_n^S$. Then $y^\lambda \cdot w^{-1}$ is a reduced factorization.
\end{lemma}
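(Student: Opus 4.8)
The plan is to argue geometrically, using the description of length as a count of separating hyperplanes. Throughout I will use the equivalent form $\ell(u)=|\mathrm{Sep}(\mathbf A_0,u\mathbf A_0)|$, where $\mathrm{Sep}(\mathbf A_0,P)$ is the set of affine hyperplanes separating $\mathbf A_0$ from an alcove $P$; this agrees with the definition $\ell(u)=|\mathrm{Sep}(\mathbf A_0,u^{-1}\mathbf A_0)|$ given in the text, since applying the isometry $u$ carries one set onto the other. I normalize each wall as $h_{\alpha-k\delta}$ with $\alpha\in\Phi^+$ and call $k$ its \emph{level} (recall $h_{\alpha-k\delta}=h_{-\alpha+k\delta}$, so one representative must be chosen). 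The basic tool is the standard additivity criterion: for $a,b\in\widehat\S_n$ one has $\mathrm{Sep}(\mathbf A_0,ab\mathbf A_0)=\mathrm{Sep}(\mathbf A_0,a\mathbf A_0)\mathbin{\triangle}a\cdot\mathrm{Sep}(\mathbf A_0,b\mathbf A_0)$ (symmetric difference, from concatenating galleries), so $ab$ is a reduced factorization precisely when these two sets are disjoint. Taking $a=y^\lambda$ and $b=w^{-1}$, the lemma reduces to showing $\mathrm{Sep}(\mathbf A_0,y^\lambda\mathbf A_0)\cap y^\lambda\cdot\mathrm{Sep}(\mathbf A_0,w^{-1}\mathbf A_0)=\varnothing$.

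The two ingredients are explicit computations of these separating sets, both turning on position relative to the finite walls $h_\alpha$ through the origin. First, since $\mathbf A_0\subset\mathbf C_0$, every $\alpha\in\Phi^+$ satisfies $(\alpha,x)\in(0,1)$ on $\mathbf A_0$ (using $(\alpha,x)\le(\bar\alpha,x)<1$), hence $(\alpha,x)\in((\alpha,\lambda),1+(\alpha,\lambda))$ on $y^\lambda\mathbf A_0=\mathbf A_0+\lambda$. As $\lambda$ is dominant, $(\alpha,\lambda)\ge 0$, so the walls crossed are exactly the $h_{\alpha-k\delta}$ with $\alpha\in\Phi^+$ and $1\le k\le(\alpha,\lambda)$; in particular $\mathrm{Sep}(\mathbf A_0,y^\lambda\mathbf A_0)$ uses only levels $k\ge 1$. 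Second, I claim $w\in\widehat\S_n^S$ is equivalent to $w^{-1}\mathbf A_0\subseteq\mathbf C_0$: for a finite simple $s_i$, $\ell(ws_i)<\ell(w)$ holds iff reflecting $w^{-1}\mathbf A_0$ across $h_{\alpha_i}$ moves it toward $\mathbf A_0$, i.e. iff $w^{-1}\mathbf A_0$ lies on the side $(\alpha_i,\cdot)<0$; so having no right descent among $s_1,\dots,s_{n-1}$ means $(\alpha_i,w^{-1}x)>0$ for all $i$, which is exactly $w^{-1}\mathbf A_0\subseteq\mathbf C_0$. Then both $\mathbf A_0$ and $w^{-1}\mathbf A_0$ lie in $\overline{\mathbf C_0}$, so no wall $h_\alpha$ of level $0$ separates them, and a one-line check (both alcoves have $(\alpha,\cdot)>0$) rules out levels $k\le 0$ as well; hence $\mathrm{Sep}(\mathbf A_0,w^{-1}\mathbf A_0)$ also uses only levels $k\ge 1$.

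To conclude, note that the translation $y^\lambda$ sends $h_{\alpha-k\delta}$ to $h_{\alpha-(k+(\alpha,\lambda))\delta}$, raising the level within each family $\alpha\in\Phi^+$ by $(\alpha,\lambda)$. Therefore $y^\lambda\cdot\mathrm{Sep}(\mathbf A_0,w^{-1}\mathbf A_0)$ consists of walls of level $k\ge(\alpha,\lambda)+1$, while $\mathrm{Sep}(\mathbf A_0,y^\lambda\mathbf A_0)$ uses only levels $1\le k\le(\alpha,\lambda)$ in each family. These ranges are disjoint for every $\alpha\in\Phi^+$, so the two sets meet trivially and $y^\lambda w^{-1}$ is reduced.

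I expect the main obstacle to be purely bookkeeping rather than conceptual: fixing the level normalization, pinning down the geometric characterization $w\in\widehat\S_n^S\Leftrightarrow w^{-1}\mathbf A_0\subseteq\mathbf C_0$, and keeping straight the direction in which $y^\lambda$ shifts levels. Once the two separating sets are identified as ``low levels'' versus ``high levels'' in each root family, the disjointness is immediate, and dominance of $\lambda$ enters in exactly one place (forcing $(\alpha,\lambda)\ge 0$ so that no level-$0$ wall appears for $y^\lambda$). An equivalent route rephrases everything through inversion sets of affine roots: $y^\lambda$ for dominant $\lambda$ inverts only positive-level roots, membership in $\widehat\S_n^S$ likewise excludes the finite roots, and the $\delta$-shift under $y^\lambda$ separates the two inversion sets; I would use whichever of the two formulations reads most cleanly alongside the geometric conventions already set up in the preceding subsection.
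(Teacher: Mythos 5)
Your proof is correct and takes essentially the same approach as the paper: both arguments reduce the claim to the disjointness of two sets of separating hyperplanes, using dominance of $\lambda$ to confine one set and the containment $w^{-1}(\mathbf A_0) \subset \mathbf C_0$ to confine the other. The only cosmetic difference is that the paper works with the gallery $y^{-\lambda}(\mathbf A_0) \to \mathbf A_0 \to w^{-1}(\mathbf A_0)$, so its two sets sit at levels $k \leq 0$ and $k > 0$, whereas you translate everything by $y^\lambda$ and get levels $1 \leq k \leq (\alpha,\lambda)$ versus $k \geq (\alpha,\lambda)+1$ --- the same disjointness after applying an isometry.
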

\begin{proof}
It suffices to show that no hyperplane that separates $y^{-\lambda}(\mathbf A_0)$ and $\mathbf A_0$ also separates $\mathbf A_0$ and $w^{-1}(\mathbf A_0)$. Since $(-\lambda, \alpha) \leq 0$ for any $\alpha \in \Phi^+$, any hyperplane separating $y^{-\lambda}(\mathbf A_0)$ from $\mathbf A_0$ has the form $h_{\alpha-k\delta}$, where $\alpha\in \Phi^+$ and $k \leq 0$. Likewise, $w^{-1}(\mathbf A_0)$ lies in $\mathbf C_0$, so any hyperplane separating $\mathbf A_0$ and $w^{-1}(\mathbf A_0)$ has the form $h_{\alpha-k\delta}$ for some $\alpha \in \Phi^+$ and $k>0$.
\end{proof}

Let $\Nil_n^d$ be the degree $d$ part of $\Nil_n$, and let $\Nil_n^+ = \bigoplus_{d>0} \Nil_n^d$.

\begin{lemma} \label{lemma-primitive span}
If $x \in \widehat\S_n^S$ but $x \not\in D^S$, then $x \in I_0+\widehat\Nil_n\Nil_n^+$.
\end{lemma}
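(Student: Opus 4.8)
The plan is to induct on a geometric statistic measuring how far $x^{-1}(\mathbf A_0)$ sits from the fundamental box. For $x\in\widehat\S_n^S$ write $x^{-1}(\mathbf A_0)\subseteq\mathbf C_0$ and let $k_i\ge 0$ be the integer with $k_i<(\alpha_i,p)<k_i+1$ for $p\in x^{-1}(\mathbf A_0)$; set $d(x)=\sum_{i=1}^{n-1}k_i$, so that $x$ is primitive exactly when $d(x)=0$. Assume $d(x)\ge 1$ and fix $k\in[n-1]$ with $k_k\ge 1$. Throughout I write $\omega_k=\epsilon_1+\cdots+\epsilon_k$ for the $k$-th fundamental weight (a dominant element of $Y$) and $\nu_S=\sum_{i\in S}\epsilon_i$.

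First I peel off a fundamental translation on the right. Setting $x_1=x\,y^{\omega_k}$, the alcove $x_1^{-1}(\mathbf A_0)=y^{-\omega_k}\bigl(x^{-1}(\mathbf A_0)\bigr)$ has box coordinates $(k_1,\dots,k_k-1,\dots,k_{n-1})$, so $x_1\in\widehat\S_n^S$ with $d(x_1)=d(x)-1$. Writing $x^{-1}=y^{\omega_k}x_1^{-1}$ and invoking Lemma~\ref{lemma-redfac} (with the dominant weight $\omega_k$ and $x_1\in\widehat\S_n^S$) shows this is a reduced factorization, hence so is $x=x_1\cdot y^{-\omega_k}$. Finally, reducing $-\omega_k$ modulo $\varepsilon$ and using $y_1\cdots y_n=\idelm$ identifies $y^{-\omega_k}=y_{k+1}y_{k+2}\cdots y_n=y_{\{k+1,\dots,n\}}$, which is one of the terms of the elementary symmetric polynomial $e_{n-k}(y_1,\dots,y_n)$.

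Now I apply the relation. Since $e_{n-k}(y)\in I_0$ by Proposition~\ref{p elem sym vanish} and $I_0$ is two-sided,
\[
x=x_1\,y_{\{k+1,\dots,n\}}\equiv-\sum_{\substack{S\subseteq[n],\ |S|=n-k\\ S\neq\{k+1,\dots,n\}}}x_1\,y_S\pmod{I_0},
\]
so it suffices to show each term $x_1y_S$ lies in $I_0+\widehat\Nil_n\Nil_n^+$. For each such $S$ the box coordinates of $(x_1y_S)^{-1}(\mathbf A_0)=y^{-\nu_S}\bigl(x_1^{-1}(\mathbf A_0)\bigr)$ are $k_i^{(1)}-(\alpha_i,\nu_S)$, where $k_i^{(1)}=k_i-\delta_{ik}$. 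Three cases arise: (i) the product $x_1y_S$ is not reduced, so it is $0$ in $\widehat\Nil_n$; (ii) some box coordinate is negative, so $x_1y_S\notin\widehat\S_n^S$ and therefore $x_1y_S=zu$ with $u\in\S_n$ nontrivial, placing it in $\widehat\Nil_n\Nil_n^+$; or (iii) $x_1y_S$ is a reduced element of $\widehat\S_n^S$, of box distance $d(x_1)-(\bar\alpha,\nu_S)$, where $(\bar\alpha,\nu_S)=[1\in S]-[n\in S]$. In case (iii) the distance is strictly less than $d(x)$ precisely when $(\bar\alpha,\nu_S)\ge 0$, and such a term, being non-primitive, is handled by the inductive hypothesis.

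The crux—and the step I expect to be the main obstacle—is controlling case (iii) when the box distance does not drop: the computation a priori permits $S\neq\{k+1,\dots,n\}$ with $1\notin S\ni n$, for which $x_1y_S$ is reduced, lies in $\widehat\S_n^S$, and has box distance exactly $d(x)$, and (when $d(x)\le 2$) it even admits an apparent \emph{primitive} survivor. For the induction to close, every such apparent survivor must actually fall into case (i) or (ii). I would establish this through the reduced-word combinatorics of $\widehat\S_n$—concretely, via the $\delta$-rule description of the reduced words of the Grassmannian pieces $\gamma(\Omega/\lambda)$ together with the strong exchange condition—proving that the unique $S$ for which $x_1y_S$ is simultaneously reduced and an element of $\widehat\S_n^S$ of non-decreasing box distance is $S=\{k+1,\dots,n\}$ itself. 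A clean base case is $d(x)=1$: there $x_1$ is primitive, all $k_i^{(1)}=0$, and since $\nu_S$ is antidominant only for $S=\{k+1,\dots,n\}$, every other term immediately acquires a negative box coordinate and lands in $\widehat\Nil_n\Nil_n^+$.
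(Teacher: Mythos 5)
Your reduction, the identification $y^{-\omega_k}=y_{k+1}\cdots y_n$, and the base case $d(x)=1$ are all correct, but the proof has a genuine gap exactly where you flag it: the inductive step rests on the claim that every term $x_1y_S$ with $S\neq\{k+1,\dots,n\}$ which is reduced and lies in $\widehat\S_n^S$ has box distance strictly between $0$ and $d(x)$, and this is asserted (``I would establish\dots'') rather than proved. The gap matters twice over. For $S$ with $1\notin S\ni n$ the distance does not drop, so the inductive hypothesis is silent; and a surviving term that became \emph{primitive} could never be handled by induction, because the lemma is genuinely false for primitive elements---by Proposition~\ref{prop-primitive ind} (transported through Proposition~\ref{p elem sym vanish}) no primitive element lies in $I_0+\widehat\Nil_n\Nil_n^+$. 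So the assertion in your case (iii) that such a term is ``non-primitive'' begs the question, and the induction already fails to close at $d(x)=2$.

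The missing claim is in fact true, and provable with tools you already invoke, in a way that makes the induction scaffolding unnecessary. Write $x_1=v_1y^{-\lambda_1}$, where $\lambda_1=\sum_i k_i^{(1)}\omega_i$ is dominant and $v_1=x_1y^{\lambda_1}\in D^S$; by Lemma~\ref{lemma-redfac}, $x_1^{-1}=y^{\lambda_1}v_1^{-1}$ is reduced, so $\ell(x_1)=\ell(v_1)+\ell(y^{\lambda_1})$. If $x_1y_S\in\widehat\S_n^S$, its box coordinates $(\alpha_i,\lambda_1-\nu_S)$ are nonnegative, i.e.\ $\lambda_1-\nu_S$ is dominant, so Lemma~\ref{lemma-redfac} applied to $(x_1y_S)^{-1}=y^{\lambda_1-\nu_S}v_1^{-1}$ gives $\ell(x_1y_S)=\ell(v_1)+\ell(y^{\lambda_1-\nu_S})$. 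Since $\ell(y^\mu)=\sum_{\alpha\in\Phi^+}|(\mu,\alpha)|$, reducedness of $x_1\cdot y_S$ then forces $(\lambda_1,\alpha)(\nu_S,\alpha)\le 0$ for every $\alpha\in\Phi^+$; but $S\neq\{k+1,\dots,n\}$ means $S$ is not closed upward, so $(\nu_S,\alpha_i)=1$ for some simple root $\alpha_i$, while dominance of $\lambda_1-\nu_S$ gives $(\lambda_1,\alpha_i)\ge 1$, a contradiction. Hence every $S\neq\{k+1,\dots,n\}$ falls into your case (i) or (ii), and $x\in I_0+\widehat\Nil_n\Nil_n^+$ in a single step. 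For comparison, the paper's proof avoids this analysis entirely by never applying the relation next to a general element: it writes $x^{-1}=y^\lambda v^{-1}$ (reduced, $v\in D^S$, $\lambda$ dominant and nonzero), reduces the lemma to showing $y^\lambda\in I_0+\Nil_n^+\widehat\Nil_n$, and there the relation $e_k$ (with $\lambda_k>\lambda_{k+1}$) is harmless because by \eqref{e y reduced} every non-dominant term of $e_k$ visibly begins with a nontrivial element of $\S_n$; the two sides are then exchanged using $\rev$.
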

\begin{proof}
By our choice of $x$, $x^{-1}(\mathbf A_0)$ lies in a box $\mathbf B = y^\lambda(\mathbf B_0) \neq \mathbf B_0$ contained in the fundamental chamber. Hence by Lemma~\ref{lemma-redfac}, $x^{-1}$ has a reduced factorization of the form $y^\lambda\cdot v^{-1}$, where $\lambda$ is a nonzero dominant weight and $v \in D^S$. Thus it suffices to show that the image of $y^{\lambda}$ lies in $I_0 + \Nil_n^+\widehat\Nil_n$.

Choose any $k$ such that $\lambda_k>\lambda_{k+1}$. The only term of $e_k(y_1, \dots, y_n)$ that corresponds to a dominant weight is $y^\mu = y_1\cdots y_k$; hence all other terms lie in $\Nil_n^+\widehat\Nil_n$ and so $y^\mu \in I_0 + \Nil_n^+\widehat\Nil_n$. By Lemma~\ref{lemma-redfac}, $y^{\lambda} = y^\mu \cdot y^{\lambda-\mu}$ is a reduced factorization, so the result follows.
\end{proof}

\begin{prop} \label{prop-primitive span}
The image of $\{vw \mid v \in D^S, w \in \Nil_n\}$ spans $\widehat\Nil_n/I_0$.
\end{prop}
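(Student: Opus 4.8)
The plan is to prove that the image $U$ of $\operatorname{span}\{vw\mid v\in D^S,\ w\in\Nil_n\}$ equals all of $\widehat\Nil_n/I_0$ by showing, via induction on degree, that the image of each graded piece $\widehat\Nil_n^d$ lies in $U$. The starting point is the parabolic (minimal coset representative) factorization for the extended affine symmetric group: every basis element $t_g$ of $\widehat\Nil_n$ factors as a reduced product $t_g=t_xt_w$ with $x\in\widehat\S_n^S$ a left minimal coset representative of $\S_n$ and $w\in\S_n$, so that $\ell(g)=\ell(x)+\ell(w)$. Hence $\widehat\Nil_n$ is spanned by $\{xw\mid x\in\widehat\S_n^S,\ w\in\Nil_n\}$, and it suffices to rewrite, modulo $I_0$, each such $xw$ in terms of products whose left factor is primitive.

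For the base case $d=0$, the only length-zero elements of $\widehat\S_n$ are the powers $\pi^k$, and since $\pi$ stabilizes $\mathbf A_0$ we have $\pi^k\in D^S$; thus $\widehat\Nil_n^0$ already maps into $U$. For the inductive step, fix a spanning element $xw$ of degree $d$. If $x\in D^S$ there is nothing to prove. Otherwise $x\in\widehat\S_n^S\setminus D^S$, and Lemma~\ref{lemma-primitive span} gives $x\in I_0+\widehat\Nil_n\Nil_n^+$. Because $I_0$ is generated by the homogeneous elements $e_k(y_1,\dots,y_n)$ and is therefore a graded ideal, I may take the resulting congruence homogeneous of degree $\ell(x)$: writing $x\equiv\sum_i a_ib_i\pmod{I_0}$ with $a_i\in\widehat\Nil_n$ and $b_i\in\Nil_n^+$, each term satisfies $\deg a_i+\deg b_i=\ell(x)$, so $\deg a_i\le\ell(x)-1<d$.

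Multiplying on the right by $w$ (valid since $I_0$ is two-sided) gives $xw\equiv\sum_i a_i(b_iw)\pmod{I_0}$, and each $b_iw$ lies in $\Nil_n$ since $\Nil_n$ is a subalgebra. Each $a_i$ is a combination of basis elements of degree $<d$, which by the inductive hypothesis map into $U$; thus $a_i\equiv\sum_j v_{ij}u_{ij}\pmod{I_0}$ with $v_{ij}\in D^S$ and $u_{ij}\in\Nil_n$. Multiplying on the right by $b_iw$ and again absorbing the $\Nil_n$ factors (so that $u_{ij}b_iw\in\Nil_n$) yields $a_i(b_iw)\equiv\sum_j v_{ij}(u_{ij}b_iw)\in U$. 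Summing over $i$ shows $xw\in U$, which closes the induction.

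Since the substantive reduction of a non-primitive coset representative is already packaged in Lemma~\ref{lemma-primitive span}, the only genuine work here is the bookkeeping that makes the induction terminate: one must verify that $I_0$ is graded, so that the auxiliary factors $a_i$ have strictly smaller degree, and that every multiplication involved keeps the right-hand factor inside the subalgebra $\Nil_n$. The point that deserves the most care is the interplay between the base case and the degree drop in the inductive step: one uses that $x\notin D^S$ forces $\ell(x)\ge 1$ (because all length-zero elements are the primitive powers $\pi^k$), which is precisely what guarantees $\deg a_i<d$ rather than merely $\deg a_i\le d$.
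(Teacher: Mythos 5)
Your proof is correct, and it rests on the same two pillars as the paper's argument: the parabolic factorization basis $\{xw \mid x \in \widehat\S_n^S,\ w \in \Nil_n\}$ of $\widehat\Nil_n$ and Lemma~\ref{lemma-primitive span}. What differs is the organization of the induction. You induct \emph{upward} on the total degree of $xw$, which obliges you to check that $I_0$ is a graded ideal (true, since each $e_k(y_1,\dots,y_n)$ is homogeneous of degree $k(n-k)$), so that the left factors $a_i$ produced by the lemma have strictly smaller degree and the inductive hypothesis can be applied to them and then re-multiplied on the right. The paper instead inducts \emph{downward} on the degree $d$ of the right factor $w \in \Nil_n^d$: from the lemma it gets $xw \in I_0 + \widehat\Nil_n\Nil_n^+ w \subset I_0 + \widehat\Nil_n\Nil_n^{d+1}$, works with the filtration by the subspaces $\widehat\Nil_n\Nil_n^d$, and terminates at $d = \ell(w_0)$ because $\Nil_n^+ w_0 = 0$. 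The paper's bookkeeping is lighter: it never needs gradedness of $I_0$, only that right multiplication by $\Nil_n^+$ strictly raises the right-factor degree, and it never has to re-expand the $a_i$ and absorb factors as you do; your version, in exchange, makes no use of the finite-dimensionality of $\Nil_n$. One quibble with your closing remark: the strict inequality $\deg a_i < d$ comes from $\deg b_i \geq 1$ (i.e., $b_i \in \Nil_n^+$), since $\deg a_i \le \ell(x)-1 \le d-1$, and not from $\ell(x)\ge 1$; the latter is only what makes the non-primitive case nonvacuous. This misattribution is in your commentary, not in the proof, and does not affect its validity.
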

\begin{proof}
Since $\{xw \mid x \in \widehat\S_n^S, w \in \Nil_n\}$ is a basis for $\widehat\Nil_n$, it spans $\widehat\Nil_n/I_0$. Suppose $w \in \Nil_n^{d}$. By Lemma~\ref{lemma-primitive span}, if $x \not \in D^S$, then $xw \in I_0 + \widehat\Nil_n\Nil_n^+w \subset I_0 + \widehat\Nil_n\Nil_n^{d+1}$. Hence $\widehat\Nil_n\Nil_n^d / \widehat\Nil_n\Nil_n^{d+1}$ is spanned by $\{vw \mid v \in D^S, w \in \Nil_n^d\}$ and elements of $I_0$. (In particular, $\widehat\Nil_n\Nil_n^{\ell(w_0)}$ is spanned by $\{vw_0 \mid v \in D^S\}$.) The desired result then follows from a straightforward induction on $\ell(w_0)-d$.
\end{proof}

\subsection{Linear independence}
In this section, we will prove that the images of the primitive elements under $\widehat\Theta$ are linearly independent as (left) minimal coset representatives of $\FK_{A_n} = \Nil_n$ in $\widehat \FK_\cyc$ by computing appropriate evaluations of the bilinear form $\langle \cdot, \cdot \rangle$.

First we recall some facts about Bruhat order in $\widetilde\S_n$. A product of simple reflections $s_{i_1}s_{i_2}\cdots s_{i_d}$ can be visualized by the alcove walk (see, e.g., \cite{Ram})
\[
\mathbf D_0 = \mathbf{A}_0,\quad \mathbf D_1 = s_{i_1}(\mathbf{A}_0),\quad \mathbf D_2 = s_{i_1}s_{i_2}(\mathbf{A}_0),\quad\dots,\quad \mathbf D_d = s_{i_1}s_{i_2}\cdots s_{i_d}(\mathbf{A}_0).
\]
Here $\mathbf D_j$ is the reflection of $\mathbf D_{j-1}$ across one of its facets, namely the one contained in the hyperplane $h_{\beta^j}= s_{i_1}\cdots s_{i_{j-1}}(h_{\alpha_{i_j}})$.

Suppose we omit a simple reflection $s_{i_j}$ from the product $s_{i_1}\cdots s_{i_d}$, giving the product 
$s_{i_1}\cdots \widehat{s_{i_j}}\cdots s_{i_d}$. Then the resulting alcove walk is obtained from that
of $s_{i_1}\cdots s_{i_d}$ by reflecting the second part of the walk across $h_{\beta^j}$. More
precisely, its alcove walk is
\[
\mathbf{D}_0, \dots,\,\mathbf{D}_{j-1},\, s_{\beta^j}(\mathbf{D}_{j+1}),\,\dots,\, s_{\beta^j}(\mathbf{D}_d).
\]

It is well known that $s_{i_1}\cdots s_{i_d}$ is a reduced expression if and only if its alcove walk
never crosses a given hyperplane more than once. We claim that
\begin{equation}\label{etext crossed twice}
\parbox{14cm}{
If $s_{i_1}\cdots s_{i_d}$ and $s_{i_1}\cdots \widehat{s_{i_j}}\cdots s_{i_d}$
are reduced expressions, then $h_{\beta^j}$ is either the first or last hyperplane parallel to $h_{\beta^j}$ crossed in the alcove walk of $s_{i_1}\cdots s_{i_d}$.
} \tag{$\diamondsuit$}
\end{equation}
If not, then the alcove walk of $s_{i_1}\cdots s_{i_d}$ crosses $h_{\beta^j+\delta}$ and $h_{\beta^j-\delta}$. But then the alcove walk of $s_{i_1}\cdots \widehat{s_{i_j}}\cdots s_{i_d}$ crosses one of them twice (since $s_{\beta^j}(h_{\beta^j+\delta}) = h_{\beta^j-\delta}$).

\begin{lemma}\label{l rho pairing}
Suppose $w = p_1p_2\cdots p_d \in \widetilde\S_n$ is a reduced expression with $p_1\cdots p_{\ell(w_0)} = w_0$ and $w^{-1}w_0 = p_dp_{d-1} \cdots p_{\ell(w_0)+1}$ primitive. If $j \in [d]$ is an index such that $p_1\cdots \widehat{p_j} \cdots p_d$ is a reduced expression and $p_j = \sigma_{p_{j+1}\cdots p_d}(p_d)$, then $j=d$.
\end{lemma}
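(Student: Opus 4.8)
The plan is to translate both hypotheses into statements about the alcove walk of $w=p_1\cdots p_d$ and the hyperplanes $h_{\beta^j}$, where $\beta^j=p_1\cdots p_{j-1}(\alpha_{p_j})$ as in \eqref{etext crossed twice}, and then combine the crossing criterion \eqref{etext crossed twice} with the geometry of the primitive element $v=w^{-1}w_0$. First I would rewrite the arithmetic condition $p_j=\sigma_{p_{j+1}\cdots p_d}(p_d)$ in terms of finite (linear) parts of roots. Writing $\bar\beta$ for the image of an affine root $\beta$ under the projection $\widetilde{\S}_n\to\S_n$, a direct computation shows that $\sigma_{p_j}=\sigma_{p_{j+1}\cdots p_d}\,\sigma_{p_d}\,\sigma_{p_{j+1}\cdots p_d}^{-1}$ is equivalent to $\bar\beta^j=\pm\bar\beta^d$, that is, to $h_{\beta^j}$ being parallel to $h_{\beta^d}$. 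The other hypothesis, that $p_1\cdots\widehat{p_j}\cdots p_d$ is reduced, then feeds into \eqref{etext crossed twice}: it forces $h_{\beta^j}$ to be the first or the last hyperplane parallel to it crossed in the walk of $w$. Since $h_{\beta^d}$ is crossed at the very last step $d$ and the $\beta^k$ are distinct (as $w$ is reduced), the ``last'' alternative immediately gives $h_{\beta^j}=h_{\beta^d}$, hence $j=d$.

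It remains to exclude the ``first'' alternative with $j\neq d$, and this is the main obstacle; it is exactly where primitivity enters. The key is to locate the first hyperplane parallel to $h_{\beta^d}$. Because $p_1\cdots p_{\ell(w_0)}=w_0$, the initial segment of the walk is a reduced walk for $w_0\in\S_n$ and crosses every finite positive root exactly once, all at level $0$. Because $v=w^{-1}w_0$ is primitive, the remaining segment is $w_0$ applied to the reduced walk of $v^{-1}$; the latter stays inside $\mathbf B_0$, since $\mathbf B_0$ is an intersection of half-spaces bounded by hyperplanes of the arrangement and both $\mathbf A_0$ and $v^{-1}(\mathbf A_0)$ lie in $\mathbf B_0$ (a minimal gallery crosses each such bounding hyperplane at most once, hence not at all). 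Thus every alcove $p_1\cdots p_k(\mathbf A_0)$ with $k\ge \ell(w_0)$ lies in $w_0\mathbf B_0=\{x:-1<(\alpha_i,x)<0\text{ for all }i\}$. Consequently the first hyperplane of direction $\bar\beta^d$ crossed is the level-$0$ wall $h_\theta$, where $\theta\in\Phi^+$ is the positive root with $\bar\beta^d=\pm\theta$, and the ``first'' alternative would force $\beta^j=\theta$.

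The argument then finishes by showing $\theta$ is \emph{not} simple. The last step of the walk crosses $h_{\beta^d}$ between the alcoves $p_1\cdots p_{d-1}(\mathbf A_0)$ and $w(\mathbf A_0)$, both of which lie in $w_0\mathbf B_0$ by the previous paragraph; hence $h_{\beta^d}$ meets the interior of this box. But a simple root $\alpha_i$ satisfies $(\alpha_i,x)\in(-1,0)$ throughout $w_0\mathbf B_0$, so no hyperplane $h_{\alpha_i-k\delta}$ meets its interior; therefore only non-simple directions arise as interior walls of $w_0\mathbf B_0$, giving $\operatorname{ht}\theta\ge 2$ and $\ell(s_\theta)\ge 3$. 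Now deleting $p_j$ produces the element $s_{\beta^j}w=s_\theta w$, and using $\ell(s_\theta w_0)=\ell(w_0)-\ell(s_\theta)$ together with the reduced factorization $w=w_0v^{-1}$,
\[
\ell(s_\theta w)=\ell(s_\theta w_0\,v^{-1})\le \ell(s_\theta w_0)+\ell(v^{-1})=\ell(w)-\ell(s_\theta)\le \ell(w)-3<\ell(w)-1 .
\]
This contradicts the assumption that $p_1\cdots\widehat{p_j}\cdots p_d$ is reduced, so the ``first'' alternative is impossible for $j\neq d$, and we conclude $j=d$.

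Finally, the degenerate case $v=e$ (so $w=w_0$, $d=\ell(w_0)$) should be noted separately but is immediate: the $\beta^k$ are then distinct finite positive roots, all at level $0$, so no two share a direction, and $\bar\beta^j=\pm\bar\beta^d$ already forces $j=d$. I expect the bulk of the write-up to be the verification that condition (2) is equivalent to $\bar\beta^j=\pm\bar\beta^d$ and the box computation isolating $\theta$ as the first parallel wall; the length estimate and the application of \eqref{etext crossed twice} are then routine.
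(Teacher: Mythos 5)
Your proof is correct, and its first half coincides with the paper's: you translate $p_j = \sigma_{p_{j+1}\cdots p_d}(p_d)$ into parallelism of $h_{\beta^j}$ and $h_{\beta^d}$, apply \eqref{etext crossed twice}, and settle the ``last'' alternative exactly as the paper does. Where you genuinely diverge is in killing the ``first'' alternative. The paper notes that all hyperplane directions are already crossed in the initial $w_0$-segment, so $j \le \ell(w_0)$; deleting $p_j$ from that prefix leaves a reduced word of length $\ell(w_0)-1$ in $\S_n$, hence equal to $s_c w_0$, so $\beta^j = \alpha_c$ is \emph{simple}; primitivity, used only through $w(\mathbf A_0) \subset w_0(\mathbf B_0)$, then shows $h_{\alpha_c}$ is the unique separating hyperplane in its direction, forcing $h_{\beta^d} = h_{\beta^j}$ and hence $j=d$ uniformly, with no case split. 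You instead identify $h_{\beta^j}$ with the level-zero wall $h_\theta$, use the stronger consequence of primitivity that the entire tail of the walk stays in $w_0\mathbf B_0$ (this needs your gallery-convexity argument, which is valid), conclude that $h_{\beta^d}$ meets the open box and hence that $\theta$ is \emph{not} simple, and then reach a contradiction via $\ell(s_\theta w) \le \ell(s_\theta w_0) + \ell(v^{-1}) = \ell(w) - \ell(s_\theta) \le \ell(w)-3$, which is incompatible with the deleted word being reduced. Both endgames are sound; the paper's is leaner (no separate $v=\idelm$ case, no walk-stays-in-box lemma, no appeal to $\ell(uw_0)=\ell(w_0)-\ell(u)$ or to lengths of non-simple reflections), while yours is amusingly complementary to it: in the (vacuous) bad case the paper proves the direction of $\beta^j$ is simple and you prove it is not, and either conclusion suffices.
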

\begin{proof}
Since $p_j = \sigma_{p_{j+1}\cdots p_d}(p_d)$, it follows that $p_j \cdots p_{d-1}$ and $p_{j+1} \cdots p_{d}$ have the same $\S_n$-degree. Hence $p_1 \cdots \widehat{p_j} \cdots p_d$ and $p_1 \cdots p_{d-1}$ have the same $\S_n$-degree, so they differ by a translation in $\widetilde \S_n$. Defining $\beta^j$ as in the discussion above, we have that $p_1 \cdots \widehat{p_j} \cdots p_d = s_{\beta^j}w$ and $p_1 \cdots p_{d-1} = s_{\beta^d}w$, so the hyperplanes $h_{\beta^j}$ and $h_{\beta^d}$ are parallel.

By \eqref{etext crossed twice} and the assumptions of the lemma, $h_{\beta^j}$ must be the first
or last hyperplane parallel to $h_{\beta^j}$ crossed in the alcove walk of $p_1 p_2\cdots p_d$. If
it is the last, then $d=j$ and we are done, so assume it is the first.
Since $p_1\cdots p_{\ell(w_0)}$ is a reduced expression for $w_0$, the first $\ell(w_0)$ hyperplanes
 crossed in the alcove walk of $p_1\cdots p_d$ contain all $\ell(w_0)$ different hyperplane directions.
 Therefore, $j\leq \ell(w_0)$. Set $y=p_1\cdots \widehat{p_j}\cdots p_{\ell(w_0)}$, which
 is a reduced expression for $y$. Therefore $\ell(y)=\ell(w_0)-1$, so it follows that
 $y= s_cw_0$ for some simple reflection $s_c \in \S_n$. Hence $s_{\beta^j}w = p_1 \cdots \widehat{p_j} \cdots p_d = s_cw$, so $\beta^j = \alpha_c$.

Since $w^{-1}w_0$ is primitive, $w_0w(\mathbf A_0) \subset \mathbf B_0$, or equivalently $w(\mathbf A_0) \subset w_0(\mathbf B_0)$. The region $w_0(\mathbf{B}_0)$ is also a box and lies between $h_{\alpha_i}$ and $h_{\alpha_i+\delta}$ for $i\in [n-1]$. We conclude that the only hyperplane separating $\mathbf{A}_0$ and $w(\mathbf{A}_0)$ and parallel to $h_{\alpha_c}$ is $h_{\alpha_c}$ itself. Hence $j=d$, as desired.
\end{proof}

We can now prove the following result along the same lines as the proof of Theorem~\ref{thm-a}. For $w, w' \in \widetilde \Nil_n$, write $\langle w, w' \rangle = \langle \widetilde\Theta(w), \widetilde\Theta(w') \rangle$.
\begin{prop}\label{p primitive pairing}
If  $v \in \aWnil_n$ is primitive, then $\langle w_0\rev (v), vw_0\rangle=1$.
\end{prop}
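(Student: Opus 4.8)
The plan is to imitate the pairing computation in the proof of Theorem~\ref{thm-a}, using Lemma~\ref{l rho pairing} to isolate a single surviving term and then inducting. First I would recast the pairing symmetrically. Set $R = w_0\rev(v)$, viewed both as an element of $\aWnil_n$ and, via $\widetilde\Theta$, as a monomial in $\FK_\cyc$. Since $v$ is primitive, it is a minimal coset representative of $\S_n$, so $vw_0$ is a reduced factorization; hence $\ell(w_0 v^{-1}) = \ell(w_0)+\ell(v)$ and concatenating a reduced word for $w_0$ with one for $v^{-1}=\rev(v)$ produces a reduced word $p_1\cdots p_d$ for $R$. By construction $p_1\cdots p_{\ell(w_0)} = w_0$ and $R^{-1}w_0 = v$ is primitive, so this word satisfies exactly the hypotheses of Lemma~\ref{l rho pairing}. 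Moreover $\rev(R) = vw_0$, so the target equals $\langle R, \rev(R)\rangle$.

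Next I would expand $\langle R, \rev(R)\rangle$ by the same manipulation as in Theorem~\ref{thm-a}: writing $R = P_j^L p_j P_j^R$ and applying Lemma~\ref{lemma-leibniz} and Proposition~\ref{prop-pairing},
\[
\langle R, \rev(R)\rangle = \sum_{j=1}^{d} (p_j)\bigl(\sigma_{P_j^R}\nabla_{p_d}\bigr)\cdot \langle P_j^L P_j^R,\, \rev(P_d^L)\rangle.
\]
The scalar $(p_j)(\sigma_{P_j^R}\nabla_{p_d})$ is nonzero only when $\sigma_{p_j} = \sigma_{P_j^R}\sigma_{p_d}\sigma_{P_j^R}^{-1}$, i.e. $p_j = \sigma_{p_{j+1}\cdots p_d}(p_d)$; and $\langle P_j^L P_j^R, \rev(P_d^L)\rangle$ can be nonzero only if $P_j^L P_j^R = \widetilde\Theta(p_1\cdots\widehat{p_j}\cdots p_d)$ is nonzero, which (since $\widetilde\Theta$ kills non-reduced words of $\aWnil_n$) forces $p_1\cdots\widehat{p_j}\cdots p_d$ to be a reduced expression. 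These are precisely the two hypotheses of Lemma~\ref{l rho pairing}, so every surviving term has $j=d$. For $j=d$ the factor $P_d^R$ is empty, $(p_d)\nabla_{p_d} = \Delta_{p_d}(p_d) = 1$, and we are left with $\langle R, \rev(R)\rangle = \langle P_d^L, \rev(P_d^L)\rangle$.

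I would then close by induction on $\ell(v)$. Deleting the last letter $p_d$ of $R$ deletes the first letter of the reduced word for $v$, so $P_d^L = w_0\rev(v')$, where $v' = p_d v$ is $v$ with the left descent $p_d$ removed; thus $\langle P_d^L, \rev(P_d^L)\rangle = \langle w_0\rev(v'), v'w_0\rangle$. The base case $v = \idelm$ gives $R = w_0$ and $\langle w_0, \rev(w_0)\rangle = \langle w_0, w_0\rangle = 1$ by the computation in Theorem~\ref{thm-a} (applied inside $\FK_{A_n} = \Nil_n \subseteq \FK_\cyc$). The inductive step goes through as soon as $v'$ is again primitive, and then $P_d^L$ satisfies the same hypotheses so $\langle w_0\rev(v'), v'w_0\rangle = 1$ by the inductive hypothesis.

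The main obstacle is this closure property: if $v$ is primitive and $s$ is a left descent of $v$, then $sv$ is primitive. I would verify it geometrically. Primitivity means $v^{-1}(\mathbf A_0)\subseteq \mathbf B_0$, where $\mathbf B_0 = \{x : 0 < (\alpha_i,x) < 1 \text{ for all } i\in[n-1]\}$ is convex and contains $\mathbf A_0$. The alcove $(sv)^{-1}(\mathbf A_0)$ is the neighbour of $v^{-1}(\mathbf A_0)$ obtained by crossing one wall back toward $\mathbf A_0$ (as $\ell(sv)<\ell(v)$). If this step left $\mathbf B_0$, the crossed wall would be a bounding hyperplane $h_{\alpha_i}$ or $h_{\alpha_i-\delta}$; but crossing such a wall out of $\mathbf B_0$ moves to the side of it not containing $\mathbf A_0$, so it would \emph{add} a separating hyperplane rather than remove one, a contradiction. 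Hence $(sv)^{-1}(\mathbf A_0)\subseteq \mathbf B_0$ and $v'$ is primitive, closing the induction. A secondary bookkeeping point is matching the $\S_n$-degree condition from the Leibniz expansion with the reflection condition $p_j = \sigma_{p_{j+1}\cdots p_d}(p_d)$ of Lemma~\ref{l rho pairing}, but this is the same conjugation calculation used in the proof of Proposition~\ref{prop-pairing}.
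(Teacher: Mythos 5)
Your proof is correct and follows essentially the same route as the paper's: the same expansion of $\langle w_0\rev(v), vw_0\rangle$ via Lemma~\ref{lemma-leibniz} and Proposition~\ref{prop-pairing}, the same use of Lemma~\ref{l rho pairing} to kill all terms except $j=d$, and the same induction with base case $\langle w_0, w_0\rangle = 1$ from Theorem~\ref{thm-a}. The only cosmetic difference is the justification that $v' = p_d v$ is again primitive: you argue by a wall-crossing/length count (crossing a bounding hyperplane of $\mathbf B_0$ would increase length, contradicting the descent), while the paper notes that the alcove walk of $\rev(v)$ stays inside $\mathbf B_0$; these are equivalent observations.
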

\begin{proof}
First note that if $p_1\cdots p_d$ is not a reduced expression in $\aW$, then $\langle p_1\cdots p_d, Q\rangle=0$ for all $Q\in \FK_n$.  (This is because $p_1\cdots p_d = 0$ in $\widetilde\Nil_n$ and hence also in $\FK_\cyc$.)

Let $p_1\cdots p_d=w_0\rev(v)$ as in Lemma \ref{l rho pairing}. If $v = \idelm$, then $\langle w_0, w_0 \rangle = 1$ by the proof of Theorem~\ref{thm-a}. Otherwise, by Lemma \ref{lemma-leibniz}, Proposition \ref{prop-pairing}, and Lemma \ref{l rho pairing},
\begin{align*}
\langle p_1\cdots p_d,\;p_d \cdots p_1\rangle
 &=\sum_{j=1}^d (p_j) (\sigma_{p_{j+1} \cdots p_d}\nabla_{p_d}) \cdot \langle p_1 \cdots \widehat{p_j} \cdots p_d,\;
 p_{d-1}p_{d-2}\cdots p_1 \rangle\\
 &=\langle p_1\cdots p_{d-1},\; p_{d-1} \cdots p_1\rangle.
 \end{align*}
But $p_1 \cdots p_{d-1} = w_0\rev (v')$ for some primitive element $v'$: since $v$ is primitive, the alcove walk of $\rev(v)$ does not leave $\mathbf B_0$, so neither does that of $\rev(v')$. The result then follows by induction.
\end{proof}

It is now straightforward to prove that the primitive elements satisfy an appropriate linear independence property.

\begin{prop} \label{prop-primitive ind}
The images of the primitive elements $D^S \subset \widehat\Nil_n$ under $\widehat\Theta$ form a subset of the (left) minimal coset representatives of $\Nil_n = \FK_{A_n}$ in $\widehat\FK_\cyc$, i.e., they are linearly independent modulo $\widehat\FK_\cyc \FK_{A_n}^+$.
\end{prop}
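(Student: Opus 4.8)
The plan is to produce, for each primitive element $v_0 \in D^S$, a linear functional on $\widehat\FK_\cyc$ that annihilates the left ideal $\widehat\FK_\cyc\FK_{A_n}^+$ and isolates the coefficient of $\widehat\Theta(v_0)$ among the primitive elements. The functional I would use is
\[
\phi_{v_0}(m) = \langle w_0\,\rev(v_0),\; m\,w_0 \rangle, \qquad m \in \widehat\FK_\cyc,
\]
where $w_0$ is the longest element of $\S_n$ viewed inside $\FK_{A_n} = \Nil_n$ via Theorem~\ref{thm-a}. Granting the two evaluations below, the proposition is immediate: if $\sum_{v\in D^S} c_v\,\widehat\Theta(v) \in \widehat\FK_\cyc\FK_{A_n}^+$, then applying $\phi_{v_0}$ gives $0 = \sum_v c_v\,\phi_{v_0}(\widehat\Theta(v)) = c_{v_0}$ for every $v_0$, whence all $c_v=0$, which is exactly linear independence modulo $\widehat\FK_\cyc\FK_{A_n}^+$.

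First I would verify that $\phi_{v_0}$ kills the ideal. A spanning element of $\widehat\FK_\cyc\FK_{A_n}^+$ has the form $x\,b$ with $x \in \widehat\FK_\cyc$ and $b \in \FK_{A_n}^+$, so $\phi_{v_0}(xb) = \langle w_0\,\rev(v_0),\; x\,(b\,w_0)\rangle$. The key observation is that $b\,w_0 = 0$: in $\Nil_n = \FK_{A_n}$ one has $t_w t_{w_0} = 0$ for every $w \neq \idelm$, since $\ell(w)+\ell(w_0) > \ell(w_0)-\ell(w) = \ell(ww_0)$, and $b$ is a linear combination of such $t_w$. Hence $x\,b\,w_0 = 0$ and $\phi_{v_0}$ vanishes on the whole ideal.

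Next I would compute $\phi_{v_0}(\widehat\Theta(v')) = \langle w_0\,\rev(v_0),\; v'w_0\rangle$ and show it equals $\delta_{v_0,v'}$. The diagonal entries are $1$ by Proposition~\ref{p primitive pairing}; one should first extend that identity from $\widetilde\Nil_n$ to all of $D^S$, which is harmless since $\pi$ is an $\S_n$-degree-homogeneous unit. For the off-diagonal entries I would invoke the $\S_n$-grading: the form pairs homogeneous $P,Q$ nontrivially only when $\sigma_P = \sigma_Q^{-1}$. Writing $\overline v$ for the image of $v$ under $\widehat\S_n \to \S_n$ and using that the $\S_n$-degree of $\widehat\Theta(v)$ is exactly $\overline v$ (because $\sigma_\pi = \overline\pi$), together with $\sigma_{\rev P} = \sigma_P^{-1}$ and $w_0^{-1}=w_0$, one finds $\sigma_{w_0\rev(v_0)} = w_0\,\overline{v_0}^{-1}$ and $\sigma_{v'w_0} = \overline{v'}\,w_0$, so the condition $\sigma_{w_0\rev(v_0)} = \sigma_{v'w_0}^{-1}$ reduces to $\overline{v_0} = \overline{v'}$. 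Since $v \mapsto \overline v$ is a bijection between $D^S$ and $\S_n$, this forces $v_0 = v'$, so all off-diagonal pairings vanish and the pairing matrix is the identity.

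The substantive computation is already done in Proposition~\ref{p primitive pairing}, and the two fresh ingredients, namely $\FK_{A_n}^+ w_0 = 0$ and the separation of primitive elements by $\S_n$-degree, are short. I expect the only real care to lie in the bookkeeping for the extended algebra: one must transport the bilinear form and the identity $\langle w_0\rev(v),vw_0\rangle=1$ correctly along the $\pi$-grading of $\widehat\FK_\cyc$, and check that $\rev$ interacts with $\pi$ so that $\sigma_{\rev P} = \sigma_P^{-1}$ persists. This is the main (but minor) obstacle; once it is handled, the result follows by reading off coefficients from the identity pairing matrix.
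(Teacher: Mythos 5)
Your proposal is correct and is essentially the paper's own proof, just repackaged: the functional $\phi_{v_0}(m)=\langle w_0\rev(v_0),\,mw_0\rangle$ is exactly the paper's two steps of right-multiplying the relation by $w_0$ (which kills $\widehat\FK_\cyc\FK_{A_n}^+$ since $\FK_{A_n}^+w_0=0$) and then pairing with $\widehat\Theta(w_0\rev(v))$, with the diagonal entries given by Proposition~\ref{p primitive pairing} and the off-diagonal vanishing by the distinct $\S_n$-degrees of primitive elements. The only cosmetic difference is the handling of the $\Pi$-grading: the paper simply reduces at the outset to $v\in\widetilde\Nil_n$, which disposes of the bookkeeping you flag as the ``main obstacle.''
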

\begin{proof}
We need to show that if $\sum_{v \in D^S} c_v\widehat\Theta(v) \in \widehat\FK_\cyc \FK_{A_n}^+$ for some $c_v \in \QQ$, then all the $c_v$ vanish. We may assume that all $v$ lie in $\widetilde\Nil_n$. Multiplying the above expression by $w_0$, we have that
\begin{equation}\label{e linear ind FK}
\sum_{ v\in D^S} c_{v} \widehat{\Theta} (v w_0) =0. \tag{$*$}
\end{equation}
Since all elements of $D^S$ have different $\S_n$-degree, $\langle w_0 \rev(v'), vw_0 \rangle=0$ unless $v=v'$, in which case it equals 1 by Lemma~\ref{l rho pairing}. Thus pairing \eqref{e linear ind FK} with $\widehat\Theta(w_0 \rev(v))$ gives $c_v=0$.
\end{proof}


\subsection{Proof of main theorem}
Combining the results of the previous sections, we can now prove Theorem~\ref{thm-cyc}. It is an immediate consequence of the following more explicit result.

\begin{thm} \label{t cycle}
The algebra $\widehat\FK_\cyc$ is isomorphic to $\widehat\Nil_n/I_0$, where $I_0$ is the ideal generated by $e_k(y_1, \dots, y_n)$ for $k=1, \dots, n-1$. Moreover, $D^S$ is a set of (left) minimal coset representatives for $\Nil_n = \FK_{A_{n}}$ in $\widehat\FK_\cyc$. 
\end{thm}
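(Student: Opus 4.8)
The plan is to prove simultaneously that the canonical surjection $q\colon \widehat\Nil_n/I_0 \to \widehat\FK_\cyc$ is an isomorphism and that $D^S$ is a complete set of left minimal coset representatives. First I would record the map $q$ itself: the canonical surjection $\widehat\Theta\colon \widehat\Nil_n \to \widehat\FK_\cyc$ sends $s_i \mapsto \e{i}$, and by Proposition~\ref{p elem sym vanish} each generator $e_k(y_1, \dots, y_n)$ of $I_0$ lies in $\ker \widehat\Theta$. Hence $\widehat\Theta$ factors through $\widehat\Nil_n/I_0$, inducing a surjection $q$. Since everything in sight is graded by degree and by $\S_n$-degree, with finite-dimensional graded pieces, it suffices to prove that $q$ is injective.

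The two essential inputs are Propositions~\ref{prop-primitive span} and \ref{prop-primitive ind}, connected through the module structure provided by the earlier structural results. By Theorem~\ref{thm-a} we may identify $\FK_{A_n}$ with $\Nil_n$, and by Theorem~\ref{thm-subgraph} the algebra $\FK_\cyc$ is a free right $\FK_{A_n}$-module; since $\widehat\FK_\cyc$ is free over $\FK_\cyc$ (with basis $1, \pi, \dots, \pi^{n-1}$), it follows that $\widehat\FK_\cyc$ is itself a free right $\FK_{A_n}$-module. The role of this freeness is to upgrade Proposition~\ref{prop-primitive ind}: that proposition asserts that $\{\widehat\Theta(v) \mid v \in D^S\}$ is linearly independent modulo the right ideal $\widehat\FK_\cyc\FK_{A_n}^+$, and freeness then forces the full family of products $\{\widehat\Theta(vw) = \widehat\Theta(v)\widehat\Theta(w) \mid v \in D^S,\ w \in \Nil_n\}$, with $w$ ranging over the standard basis of $\Nil_n$ indexed by $\S_n$, to be linearly independent in $\widehat\FK_\cyc$.

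With these facts in place the conclusion is formal. By Proposition~\ref{prop-primitive span} the family $\{vw \mid v \in D^S, w \in \Nil_n\}$ spans $\widehat\Nil_n/I_0$, while by the previous paragraph its image under $q$ is the linearly independent family $\{\widehat\Theta(vw)\}$. Thus if $x \in \ker q$, then writing $x$ as a linear combination of this spanning family and applying $q$ forces all coefficients to vanish, so $x = 0$; hence $q$ is injective and therefore an isomorphism, which proves the first assertion. For the second, transporting the spanning statement of Proposition~\ref{prop-primitive span} across the isomorphism $q$ shows that $\{\widehat\Theta(v) \mid v \in D^S\}$ spans $\widehat\FK_\cyc$ as a right $\FK_{A_n}$-module; combined with the linear independence from Proposition~\ref{prop-primitive ind}, this exhibits $\widehat\Theta(D^S)$ as a basis of $\widehat\FK_\cyc$ over $\FK_{A_n}$, which is precisely the assertion that $D^S$ is a complete set of left minimal coset representatives in the sense of Definition~\ref{defn-mcr}.

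The genuine mathematical content resides in Propositions~\ref{prop-primitive span} and \ref{prop-primitive ind}, which are already established, so this last assembly is largely bookkeeping. The one step demanding care—and the point at which a naive argument breaks down—is the upgrade from linear independence modulo $\widehat\FK_\cyc\FK_{A_n}^+$ to linear independence of the full set of products $\{\widehat\Theta(vw)\}$: reducing a purported relation $\sum_{v,w} c_{v,w}\,\widehat\Theta(vw) = 0$ modulo the augmentation ideal only annihilates the terms with $w = \idelm$, so one genuinely needs the freeness of $\widehat\FK_\cyc$ over $\FK_{A_n}$ (Theorem~\ref{thm-subgraph}) in order to peel off the remaining layers and conclude that every $c_{v,w}$ vanishes.
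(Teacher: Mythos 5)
Your proposal is correct and follows essentially the same route as the paper's own proof: Proposition~\ref{p elem sym vanish} gives the surjection, Proposition~\ref{prop-primitive span} gives spanning of $\widehat\Nil_n/I_0$, and Proposition~\ref{prop-primitive ind} together with the freeness from Theorem~\ref{thm-subgraph} gives linear independence of the image, whence the spanning set is a basis of both sides. The only difference is that you spell out explicitly the freeness upgrade (from independence modulo $\widehat\FK_\cyc\FK_{A_n}^+$ to independence of the full family $\{\widehat\Theta(vw)\}$), which the paper compresses into the single citation of Theorem~\ref{thm-subgraph}.
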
 
\begin{proof}
By Proposition~\ref{p elem sym vanish}, $\widehat\FK_\cyc$ is a quotient of $\widehat\Nil_n/I_0$. By Proposition~\ref{prop-primitive span}, $\{vw \mid v \in D^S, w \in \Nil_n\}$ spans $\widehat\Nil_n/I_0$, and by Proposition~\ref{prop-primitive ind} and Theorem~\ref{thm-subgraph}, its image is linearly independent in $\widehat\FK_\cyc$. It follows that it must be a basis of both.
\end{proof}

\begin{cor}
 The Hilbert series of $\FK_\cyc$ is given by
  \[
  \mathcal{H}_\cyc(t)=[n]\cdot\prod^{n-1}_{i=1}[i(n-i)].
  \]
\end{cor}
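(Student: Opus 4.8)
The plan is to read the Hilbert series off the module basis furnished by Theorem~\ref{t cycle} and then evaluate the resulting length generating function using the box geometry of the previous subsections together with the Poincar\'e series of the affine symmetric group. Since $D^S$ is a set of (left) minimal coset representatives for $\Nil_n=\FK_{A_n}$ in $\widehat\FK_\cyc$, the algebra $\widehat\FK_\cyc$ is a free right $\FK_{A_n}$-module on $D^S$, so $\Hilb_{\widehat\FK_\cyc}(t)=A(t)\cdot\prod_{i=1}^{n}[i]$, where $A(t)=\sum_{v\in D^S}t^{\ell(v)}$ (the degree of $\widehat\Theta(v)$ is $\ell(v)$) and $\prod_{i=1}^n[i]=\Hilb_{A_n}(t)$ is the Poincar\'e polynomial of $\S_n$ by Theorem~\ref{thm-a}. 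Because $\pi$ has degree $0$ and $\{\pi^i b\}$ is a basis of $\widehat\FK_\cyc$ over any basis $\{b\}$ of $\FK_\cyc$, we have $\Hilb_{\widehat\FK_\cyc}(t)=n\,\Hilb_\cyc(t)$. Thus $\Hilb_\cyc(t)=\tfrac1n A(t)\prod_{i=1}^n[i]$, and everything reduces to computing the length generating function $A(t)$ of the primitive elements.

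To evaluate $A(t)$ I would relate it to the full dominant chamber. Set $C(t)=\sum_{v\in\widehat\S_n^S}t^{\ell(v)}$ and $T(t)=\sum_{\lambda\in Y^+}t^{\ell(y^\lambda)}$, the latter summed over dominant weights. The dominant chamber $\mathbf C_0$ is tiled by the boxes $y^\lambda(\mathbf B_0)$ for $\lambda\in Y^+$, so every $v\in\widehat\S_n^S$ factors uniquely as $v^{-1}=y^\lambda u^{-1}$ with $\lambda\in Y^+$ and $u\in D^S$; by Lemma~\ref{lemma-redfac} this factorization is reduced, whence $\ell(v)=\ell(y^\lambda)+\ell(u)$ and $C(t)=T(t)\,A(t)$. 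On the other hand, the coset decomposition of $\eW$ over $\S_n$ gives $\sum_{w\in\eW}t^{\ell(w)}=C(t)\prod_{i=1}^n[i]$, while $\eW=\Pi\ltimes\aW$ with $\pi$ length-preserving gives $\sum_{w\in\eW}t^{\ell(w)}=n\sum_{w\in\aW}t^{\ell(w)}$. Substituting these into the formula from the first paragraph collapses the cosets entirely:
\[
\Hilb_\cyc(t)=\frac1n A(t)\prod_{i=1}^n[i]=\frac{1}{n}\,\frac{C(t)\prod_{i=1}^n[i]}{T(t)}=\frac{1}{n}\,\frac{n\sum_{w\in\aW}t^{\ell(w)}}{T(t)}=\frac{\sum_{w\in\aW}t^{\ell(w)}}{T(t)}.
\]

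It then remains to evaluate the two standard generating functions on the right. Writing $\mu_m=\lambda_m-\lambda_{m+1}\ge 0$ for a dominant $\lambda$, one computes $\ell(y^\lambda)=\sum_{\alpha\in\Phi^+}(\lambda,\alpha)=\sum_{m=1}^{n-1}m(n-m)\mu_m$, since the pair $(i,j)$ with $i\le m<j$ occurs $m(n-m)$ times; as $\mu$ ranges over $\ZZ_{\ge0}^{n-1}$ this yields $T(t)=\prod_{m=1}^{n-1}(1-t^{m(n-m)})^{-1}$. Bott's product formula for the affine symmetric group of type $\affineA{n-1}$ (or a direct alcove count) gives $\sum_{w\in\aW}t^{\ell(w)}=[n]/(1-t)^{n-1}$. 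Substituting, $\Hilb_\cyc(t)=[n]\prod_{m=1}^{n-1}\frac{1-t^{m(n-m)}}{1-t}=[n]\prod_{m=1}^{n-1}[m(n-m)]$, which is the claim; as consistency checks, $t=1$ gives $\dim\FK_\cyc=n\cdot((n-1)!)^2=n!\,(n-1)!$ and the top degree is $\sum_{m=1}^{n-1}m(n-m)=\binom{n+1}{3}$.

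The main obstacle is the second paragraph: establishing $C(t)=T(t)A(t)$ cleanly. This requires fixing the left/right coset conventions so that the primitive elements $D^S$, the minimal coset representatives $\widehat\S_n^S$, and the dominant translations match up consistently, verifying that the box tiling of $\mathbf C_0$ is exactly indexed by $Y^+$, and confirming that Lemma~\ref{lemma-redfac} yields the length additivity $\ell(v)=\ell(y^\lambda)+\ell(u)$ with no correction terms. All of these are identities of formal power series (each length occurs finitely often), so once the geometric bookkeeping is in place the two closed-form evaluations of $T(t)$ and the affine Poincar\'e series are routine and immediately produce the stated product of cyclotomic factors.
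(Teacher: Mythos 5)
Your proof is correct, and it takes a genuinely different route from the paper's at the key step. Both arguments share the same frame: by Theorem~\ref{t cycle}, $\widehat\FK_\cyc$ is a free right $\FK_{A_n}$-module on $D^S$, so $\Hilb_{\widehat\FK_\cyc}(t)=A(t)\cdot[n]!$ with $A(t)=\sum_{v\in D^S}t^{\ell(v)}$, and $\Hilb_\cyc(t)=\tfrac1n\Hilb_{\widehat\FK_\cyc}(t)$. The difference is how $A(t)$ is evaluated: the paper simply cites Remark 1.5 and Lemma 2.2 of \cite{SW} for the closed form $A(t)=n\prod_{i=1}^{n-1}\frac{1-t^{i(n-i)}}{1-t^i}$, whereas you rederive it from the geometry already present in the paper. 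Your identity $C(t)=T(t)A(t)$ follows from the unique reduced factorization $v^{-1}=y^\lambda u^{-1}$ ($\lambda\in Y^+$, $u\in D^S$), which is exactly the factorization the paper itself establishes inside the proof of Lemma~\ref{lemma-primitive span} via the box tiling of $\mathbf C_0$ and Lemma~\ref{lemma-redfac}; your conventions ($v\in\widehat\S_n^S\Leftrightarrow v^{-1}(\mathbf A_0)\subseteq\mathbf C_0$) match the paper's usage, so the step you flag as the ``main obstacle'' is in fact already secured by the paper's own machinery. The remaining inputs — $\ell(y^\lambda)=\sum_m m(n-m)\mu_m$, hence $T(t)=\prod_m(1-t^{m(n-m)})^{-1}$, and Bott's formula $\sum_{w\in\aW}t^{\ell(w)}=[n]/(1-t)^{n-1}$ — are standard and computed correctly, and one can check your route reproduces the \cite{SW} formula for $A(t)$ exactly. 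What each approach buys: the paper's proof is shorter but opaque, resting on a citation specific to primitive elements; yours is self-contained modulo the classical Bott formula, and as a bonus makes visible why the factors $[i(n-i)]$ appear (they are the lengths of the dominant translations $y_1\cdots y_i$ generating the box tiling).
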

\begin{proof}
By Remark 1.5 and Lemma 2.2 of \cite{SW}, the length generating function for the subset
$D^S\subset \eWnil_n$ is given by
\[
\sum_{v\in D^S }t^{\ell(v)}= n\cdot \prod_{i=1}^{n-1}\frac{1-t^{i(n-i)}}{1-t^{i}}.
\]
Hence by Theorem \ref{t cycle}, the Hilbert series of $\widehat{\FK}_\cyc$ is given by
\[
\sum_{\substack{v \in D^S\\ w\in\fWnil_n}} t^{\ell(v w)}
= [n]!\cdot n\cdot \prod_{i=1}^{n-1}\frac{1-t^{i(n-i)}}{1-t^i}
=n\cdot\prod_{i=1}^{n}\frac{1-t^i}{1-t}\cdot \prod_{i=1}^{n-1}\frac{1-t^{i(n-i)}}{1-t^i}
=n\cdot [n]\cdot\prod_{i=1}^{n-1}[i(n-i)].
\]
Since the Hilbert series of $\widehat{\FK}_\cyc$ is equal to $n$ times the
Hilbert series of $\FK_\cyc$, the desired result follows.
\end{proof}

\begin{rmk}\label{r coinvariants}
Let $I$ be the ideal of $\QQ[y_1, \dots, y_n]$ generated by symmetric polynomials of positive degree.
The quotient of $\QQ [y_1, y_2, \dots, y_n]$
by $I$ is called the \emph{ring of coinvariants} and is known to have dimension $n!$.

The reference \cite{BlasiakCyclage} identifies a  subalgebra $\pH_n$ of the extended affine Hecke algebra $\eH_n$ of type $A$.
The subalgebra $\pH_n$ is a $q$-analogue of $\QQ\S_n \ltimes \QQ [y_1,\dots,y_n]$
and inherits a canonical basis from that of $\eH_n$.
Let $\mathcal{I}$ denote the two-sided ideal of $\pH_n$ generated by the symmetric polynomials of
positive degree in the Bernstein generators.
Corollary $6.7$ of \cite{BlasiakCyclage} states that the $\pH_n$-module $\pH_n \C_{w_0}/ \mathcal{I}\C_{w_0}$
has canonical basis $\{\C_{vw_0} \mid v\in D^S\}$, where $\C_w$ denotes the canonical basis element labeled by
$w\in\eW$. The basis $\{vw_0 \mid v\in D^S\}$ of $\eWnil_n w_0/I_0 w_0$ is essentially
the $q=0$ specialization of this canonical basis. (It is not exactly the $q=0$ specialization because the
canonical basis of \cite{BlasiakCyclage} is for the $G=GL_n$ extended version of the affine Weyl group,
whereas here we have used the $G=SL_n$ version.)
\end{rmk}


\section{Questions and conjectures}

We conclude with some questions and conjectures to guide further research.

\begin{question}
What is the Hilbert series $\Hilb_G(t)$ of $\FK_G$?
\end{question}
Towards this end, we have a conjecture for the Hilbert series of the affine Dynkin diagrams $\tilde{D}_n$ as shown in Figure~\ref{fig-dntilde}. This conjecture was obtained by using Algorithm~\ref{alg-mcr} to compute minimal coset representatives for $\FK_{D_n}$ in $\FK_{\tilde D_n}$. Here, $[n]!! = [n][n-2][n-4]\cdots$.

\begin{figure}
	\begin{center}
		\begin{tikzpicture}
			\draw (150:1)node[v]{}--(0,0)node[v]{}--(1,0)node[v]{}--(1.5,0) (-150:1)node[v]{}--(0,0);
			\draw[thick, loosely dotted] (1.5,0)--(2.5,0);
			\draw (2.5,0)--(3,0)node[v]{}--(4,0)node[v]{}--+(30:1)node[v]{} (4,0)--+(-30:1)node[v]{};
		\end{tikzpicture}
	\end{center}
\caption{\label{fig-dntilde}The affine Dynkin diagram $\tilde D_n$ with $n+1$ vertices.}
\end{figure}
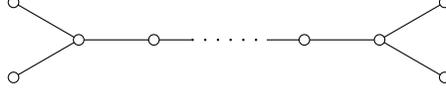

\begin{conj} \label{conj-dtilde}
The Hilbert series for $\FK_{\tilde{D}_n}$ is
\[
\frac{[2n-2]!!}{[2n-3]!!} \cdot \frac{[n][n+1]}{[2]^2[n-1][n-2]} \cdot \left[\frac{n^2-n}{2}\right]^2 \cdot \left[\frac{n^2-n-2}{2}\right]^2 \cdot \prod_{i=1}^{n-3} [i(2n-i-1)].
\]
In particular, $\dim \FK_{\tilde D_n} = (n+1)^2\cdot 2^{2n-8}$, and the top degree of $\FK_{\tilde D_n}$ is $\frac13n(n-1)(2n-1)$.
\end{conj}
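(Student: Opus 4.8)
The plan is to follow the template of Theorem~\ref{thm-d} and the $E_n$ cases rather than the affine-Weyl-group argument of Section~7. Since $D_n$ sits inside $\tilde D_n$ as the induced subgraph obtained by deleting one of the four fork-tips, Theorem~\ref{thm-subgraph} makes $\FK_{\tilde D_n}$ a free $\FK_{D_n}$-module, so
\[
\Hilb_{\tilde D_n}(t) = \Hilb_{D_n}(t)\cdot \Hilb_M(t),
\]
where $\Hilb_{D_n}(t)=[n][n-1][4][6]\cdots[2n-4]$ is known from Theorem~\ref{thm-d} and $M$ is a set of left minimal coset representatives for $\FK_{D_n}$ in $\FK_{\tilde D_n}$. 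Everything then reduces to exhibiting an explicit monomial basis $M$ of $\FK_{\tilde D_n}/\FK_{\tilde D_n}\FK_{D_n}^+$ whose length generating function is the conjectured quotient, and reading off the top degree $\tfrac13 n(n-1)(2n-1)$ and the dimension from the resulting product. The representatives returned by Algorithm~\ref{alg-mcr} would supply the candidate $M$.

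First I would fix a presentation. Unlike the finite-type diagrams, affine diagrams can require minimal relations beyond the quadratic, braid, and claw relations---this is exactly what obstructs $E_9=\tilde E_8$---so the initial task is to find the extra relations of $\FK_{\tilde D_n}$, which I expect to be affine analogues of the relations $R_k$ of Theorem~\ref{thm-cyc}, obtained by pushing the cyclic relations of Lemma~\ref{lemma-cyclic} around the two cycles that pass through the branch vertices. With a candidate relation set in hand, the spanning step becomes a straightening algorithm in the spirit of Lemma~\ref{lemma-d1}: one shows that every monomial can be rewritten, modulo $\FK_{\tilde D_n}\FK_{D_n}^+$ and using only the chosen relations, as a combination of elements of $M$. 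The new feature relative to the $D_n$ argument is that adjoining the fork generator $\e{c}$ at the far end of the spine turns that end into a second branch point, so one must control the commutation of $\e{c}$ past \emph{both} claws rather than a single one.

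For linear independence it suffices, exactly as in Lemmas~\ref{lemma-d3} and \ref{lemma-d2}, to prove that the unique top-degree candidate $w_0\in M$ does not lie in the left ideal $I=\FK_{\tilde D_n}\FK_{D_n}^+$; since every element of $M$ is a right factor of $w_0$, the nonvanishing of $w_0$ forces the whole set to be independent. By Lemma~\ref{lemma-orthogonal}, $I$ is orthogonal to the Fomin-Kirillov algebra of the complementary graph, so one needs a companion monomial $w_0'$ there with $\langle w_0,w_0'\rangle\neq 0$; I would compute this pairing by induction, stripping the four fork generators in pairs using the Leibniz rules of Lemma~\ref{lemma-leibniz} and the adjointness of Proposition~\ref{prop-pairing}, so that the computation collapses onto the $D_n$ value already established in Lemma~\ref{lemma-d3}.

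The hard part will be the spanning step, and behind it the identification of the full relation ideal. The cycle method of Section~7 is unavailable: the two degree-three branch vertices prevent any orientation of $\tilde D_n$ with in- and out-degree at most one, so Proposition~\ref{p Coxeter to FK} presents $\FK_{\tilde D_n}$ only as a quotient of the twisted nil-Coxeter algebra of the line graph $L(\tilde D_n)$, which is \emph{not} the affine Weyl group of type $\tilde D_n$, and the primitive-element/alcove-walk model does not transfer. Concretely, one would try to sandwich $\Hilb_{\tilde D_n}(t)/\Hilb_{D_n}(t)$ between an upper bound coming from straightening with the conjectured relations and the lower bound produced by Algorithm~\ref{alg-mcr}, as in the $E_n$ proof; the statement remains conjectural precisely because making these two bounds provably coincide for \emph{all} $n$ seems to require a genuinely new combinatorial index set for $M$, together with enough of Conjecture~\ref{conj-nichols} to guarantee that Algorithm~\ref{alg-mcr} returns a complete rather than merely a linearly independent set.
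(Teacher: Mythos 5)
This statement is a conjecture in the paper, not a theorem: the authors give no proof, only the remark that it was obtained by running Algorithm~\ref{alg-mcr} to compute minimal coset representatives for $\FK_{D_n}$ inside $\FK_{\tilde D_n}$. Your proposal reconstructs exactly that approach---freeness via Theorem~\ref{thm-subgraph}, so that $\Hilb_{\tilde D_n}(t)=\Hilb_{D_n}(t)\cdot\Hilb_M(t)$, plus computation of $M$---and your closing assessment of why it cannot currently be completed (the unknown relation ideal of $\FK_{\tilde D_n}$, the failure of the Section~7 cycle method at the two degree-three branch vertices, and the dependence of Algorithm~\ref{alg-mcr}'s completeness on Conjecture~\ref{conj-nichols}) accurately matches the paper's own framing of why this remains open.
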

For small values of $n$ (including $n=3$, for which $\tilde{D}_3=\tilde{A}_3$), this gives the following:
\begin{align*}
n=3&&&[3]^2[4]^2\\
n=4&&&\frac{[4]^2[5]^2[6]^4}{[2]^2[3]^2}\\
n=5&&&\frac{[6]^2[8]^2[9]^2[10]^2[14]}{[2][3]^2[7]}\\
n=6&&&\frac{[6]^2[8][10]^2[14]^2[15]^2[18][24]}{[2][3][5]^2[9]}\\
n=7&&&\frac{[4][8]^2[10][12]^2[20]^2[21]^2[22][30][36]}{[2][3][5]^2[9][11]}
\end{align*}

\begin{figure}
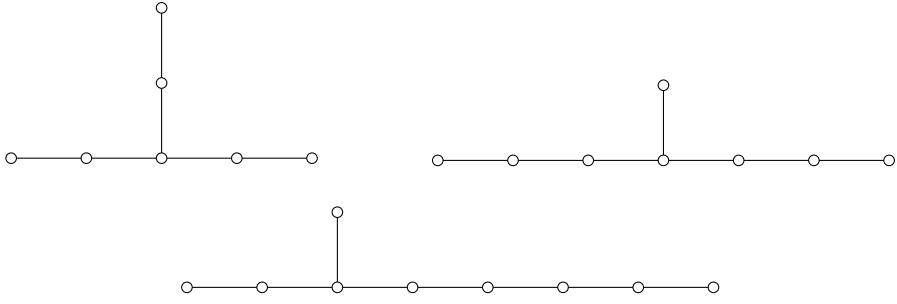

	\begin{center}
		\dr{(-2,0)node[v]{}--(-1,0)node[v]{}--(0,0)node[v]{}--(1,0)node[v]{}--(2,0)node[v]{} (0,0)--(0,1)node[v]{}--(0,2)node[v]{}}
		\qquad\quad
		\dr{(-3,0)node[v]{}--(-2,0)node[v]{}--(-1,0)node[v]{}--(0,0)node[v]{}--(1,0)node[v]{}--(2,0)node[v]{}--(3,0)node[v]{} (0,0)--(0,1)node[v]{} (0,2)node{}}\\[5mm]
		\dr{(-2,0)node[v]{}--(-1,0)node[v]{}--(0,0)node[v]{}--(1,0)node[v]{}--(2,0)node[v]{}--(3,0)node[v]{}	--(4,0)node[v]{}--(5,0)node[v]{} (0,0)--(0,1)node[v]{}}	
	\end{center}
\caption{\label{fig-entilde}The affine Dynkin diagrams $\tilde E_6$, $\tilde E_7$, and $\tilde E_8$.}
\end{figure}

We also have conjectures regarding the affine Dynkin diagrams $\tilde{E}_6$ and $\tilde{E}_7$, as shown in Figure~\ref{fig-entilde}.
\begin{conj} \label{conj-etilde}
The Hilbert series for $\FK_{\tilde{E}_6}$ and $\FK_{\tilde{E}_7}$ are:
\begin{align*}
\Hilb_{\tilde{E}_6}(t) &= \frac{[6][9][12][14]^2[16]^2[21][22][30]^2}{[3]^2[4][7][11]},\\
\Hilb_{\tilde{E}_7}(t) &= \frac{[6][8][10][12][14][18][24][27][32][34][48][49][52][66][75]}{[3][4][5][7][9][11][13][17]}.
\end{align*}
\end{conj}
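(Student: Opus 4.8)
The plan is to imitate the argument that succeeds for the finite Dynkin diagrams and for the cycle, exploiting a convenient subgraph. For $n=6,7$ the diagram $E_n$ embeds in $\tilde{E}_n$ as the subgraph obtained by deleting one pendant edge (so that the three legs of lengths $2,2,2$ in $\tilde{E}_6$ become legs $2,2,1$ in $E_6$, and analogously for $\tilde{E}_7$). By Theorem~\ref{thm-subgraph}, $\FK_{\tilde{E}_n}$ is then a free left $\FK_{E_n}$-module, so $\Hilb_{\tilde{E}_n}(t)=\Hilb_{E_n}(t)\cdot Q(t)$, where $Q(t)$ is the Hilbert series of the module of minimal coset representatives $\FK_{\tilde{E}_n}/\FK_{\tilde{E}_n}\FK_{E_n}^+$. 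Since $\Hilb_{E_n}(t)$ is already known (Theorem~\ref{thm-weyl}), the entire task reduces to determining $Q(t)$; dividing the conjectured formula by $\Hilb_{E_n}(t)$ predicts $Q(t)$ exactly, and the corollary to Theorem~\ref{thm-subgraph} guarantees that it must be a genuine polynomial with nonnegative coefficients---a first consistency check to verify.

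For the upper bound I would construct an explicit spanning set of coset representatives by a straightening algorithm, exactly in the spirit of Lemma~\ref{lemma-d1}. One first guesses that the minimal relations of $\FK_{\tilde{E}_n}$ are the quadratic, braid, and claw relations together with a finite list of higher relations $R$, and then gives a rewriting procedure that, using only these relations, reduces any monomial to a coset representative times an element of $\FK_{E_n}$. Termination of the rewriting yields a finite spanning set of $\FK_{\tilde{E}_n}/\FK_{\tilde{E}_n}\FK_{E_n}^+$ and hence an upper bound for $Q(t)$. For the lower bound I would run Algorithm~\ref{alg-mcr}, pairing candidate representatives against the complementary subalgebra via Lemma~\ref{lemma-orthogonal}: any monomial pairing nontrivially with an element of the complement cannot lie in $\FK_{\tilde{E}_n}\FK_{E_n}^+$, so a maximal linearly independent family of such monomials is a lower bound. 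If the two bounds coincide, the conjectured Hilbert series follows, and as a byproduct the relations $R$ used in straightening are confirmed to generate all relations.

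The hard part will be the complete absence of the structural crutches available in the earlier cases. Although $\tilde{E}_6$ and $\tilde{E}_7$ are trees, they are of affine (not finite) type, so the associated nil-Coxeter algebra is infinite-dimensional, and the asserted finite-dimensionality of $\FK_{\tilde{E}_n}$ must come entirely from the higher relations $R$ cutting it down. Unlike the cycle, these diagrams are not line graphs, so Proposition~\ref{p Coxeter to FK} does not present $\FK_{\tilde{E}_n}$ as a quotient of an affine nil-Coxeter algebra, and there is no geometric ``primitive element'' model (as in Theorem~\ref{t cycle}) to supply a ready-made index set for a basis or a translation-based family of relations like the $R_k$. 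Thus the correct list $R$ is of a genuinely new type and must be discovered, and one must prove both that the straightening terminates with exactly the conjectured representatives and that the lower bound saturates. This last point is itself delicate: since nondegeneracy of $\langle\cdot,\cdot\rangle$ (Conjecture~\ref{conj-nichols}) is known only for $n\le 5$, Algorithm~\ref{alg-mcr} is only guaranteed to give a lower bound, so matching it with the upper bound requires either an exact conceptual model for the coset representatives or a finite but presently infeasible noncommutative Gr\"obner computation in $\FK_n$.
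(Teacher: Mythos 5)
There is a genuine gap, and it is one your own closing paragraph concedes: nothing is actually proved. The statement you were given is, in the paper, exactly what its label says---a conjecture. The authors offer no proof; as with Conjecture~\ref{conj-dtilde}, the formulas were inferred from computations with Algorithm~\ref{alg-mcr}, which (as the paper's remark following that algorithm makes explicit) yields only a \emph{lower} bound on $\Hilb_{G'}(t)/\Hilb_G(t)$ unless one knows both nondegeneracy of the form (Conjecture~\ref{conj-nichols}) and a tensor decomposition as in Theorem~\ref{thm-tensor}---neither of which is known for graphs on seven or more vertices. Your plan has the same holes in its two load-bearing steps: the upper bound requires exhibiting a finite list of higher relations $R$ and a terminating straightening procedure in the spirit of Lemma~\ref{lemma-d1}, but you never produce $R$ (and the paper's own comment on $E_9=\tilde{E}_8$ warns that relations beyond quadratic, braid, and claw are required in the affine regime); the lower bound is conditional as just described; and the coincidence of the two bounds is precisely the assertion that cannot currently be checked. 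So the proposal is a strategy statement plus an accurate diagnosis of why the strategy cannot be executed, not a proof.

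That said, the strategy itself is faithful to what the paper actually does where it \emph{can}: the reduction via Theorem~\ref{thm-subgraph} to the quotient series $Q(t)=\Hilb_{\tilde{E}_n}(t)/\Hilb_{E_n}(t)$, the positivity consistency check from the corollary to that theorem, and the upper-bound-by-straightening versus lower-bound-by-pairing (Lemma~\ref{lemma-orthogonal}, Algorithm~\ref{alg-mcr}) scheme are exactly the two-sided method used to prove the finite-type cases $E_6$, $E_7$, $E_8$, and the lower-bound half is how the affine conjectures were generated. Your identification of the missing structural inputs---no finite nil-Coxeter model since $\tilde{E}_n$ is affine, no line-graph or primitive-element description as for $\cyc$ in Theorem~\ref{t cycle}, hence no ready-made candidate for $R$ or for the coset representatives---is correct and is, in effect, the reason the statement remains a conjecture in the paper rather than a theorem.
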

As for the remaining affine Dynkin diagram, $\FK_{\tilde{E}_8}$ appears finite-dimensional but too large for us to confidently infer a possible Hilbert series.

Even more basic is the following question.
\begin{question}
For which graphs $G$ is $\FK_G$ finite-dimensional?
\end{question}
While $\FK_G$ is finite-dimensional for all graphs $G$ on at most five vertices, it appears that some graphs on six vertices may have $\FK_G$ infinite-dimensional. While we are not yet able to prove that any $\FK_G$ is infinite-dimensional, our computations do suggest the following conjecture.
\begin{conj} \label{conj-infinite}
Let $G$ be a graph on six vertices. Then $\FK_G$ is infinite-dimensional if and only if it contains a subgraph isomorphic to one of the graphs shown in Figure~\ref{fig-infinite}.
\end{conj}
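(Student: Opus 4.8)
The plan is to exploit the fact that, by Theorem~\ref{thm-subgraph}, finite-dimensionality of $\FK_G$ is monotone under passing to subgraphs: if $H\subseteq G$ then $\FK_G$ is a free $\FK_H$-module of rank at least one, so $\dim\FK_H\leq\dim\FK_G$, and in particular $\FK_H$ infinite-dimensional forces $\FK_G$ infinite-dimensional. Consequently the family of six-vertex graphs with $\FK_G$ infinite-dimensional is upward closed, hence determined by its minimal members, and the content of the conjecture is exactly that these minimal members are the graphs of Figure~\ref{fig-infinite}. This splits the proof into two halves: (i) every six-vertex graph avoiding all the forbidden subgraphs has $\FK_G$ finite-dimensional, and (ii) each graph in Figure~\ref{fig-infinite} has $\FK_G$ infinite-dimensional. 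Monotonicity then upgrades (ii) to ``$G$ contains a forbidden subgraph $\Rightarrow$ $\FK_G$ is infinite-dimensional,'' and minimality of the forbidden graphs falls out of (i) as a byproduct.

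For half (i), I would use that there are only finitely many graphs on six vertices, so this is in principle a finite verification. For each graph $G$ avoiding the forbidden list I would certify finite-dimensionality by one of the structural tools already available: recognizing $G$ (up to isolated vertices) as a disjoint union, a simply-laced Dynkin diagram, or a cycle and invoking Theorems~\ref{thm-a}, \ref{thm-d}, the $E_n$ result, and Theorem~\ref{thm-cyc}; peeling off a tensor factor via Theorem~\ref{thm-tensor} and Corollary~\ref{cor-tensor}; or, in the remaining cases, running \texttt{bergman} to exhibit a one-dimensional top-degree component. By Proposition~\ref{prop-monic}, producing a single element $P\in\FK_G$ with left descent set $L(P)=G$ already certifies that $\FK_G$ is finite-dimensional, so in practice the verification reduces to searching for such a ``long word'' $w_0^G$ for each candidate graph.

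For half (ii), which is the genuine obstacle, one must produce for each forbidden graph $G$ an explicit infinite set of linearly independent elements of $\FK_G$. The natural certificate of nonvanishing is the bilinear form: whenever $\langle w, u\rangle\neq 0$ for some $u\in\FK_n$, the element $w$ is automatically nonzero, regardless of whether the form is degenerate (degeneracy, which is only controlled for $n\leq 5$ by Conjecture~\ref{conj-nichols}, would only affect the converse). By Lemma~\ref{lemma-orthogonal}, good dual partners are to be sought among elements built from the edges of the complement of $G$, which is precisely what Algorithm~\ref{alg-mcr} exploits. I would therefore look for a self-similar or periodic family $w_1,w_2,w_3,\dots$ with $\deg w_d\to\infty$ together with partners $u_d$ so that $\langle w_d,u_d\rangle\neq 0$ for all $d$; an inductive construction in the spirit of the $\delta$-rule and the straightening arguments used for $\FK_{D_n}$ and $\FK_\cyc$ would then give $\dim\FK_G^{\deg w_d}\geq 1$ for unboundedly many $d$. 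Equivalently, one could try to exhibit a degree-raising operator, or a quotient of $\FK_G$ that is manifestly infinite-dimensional.

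The hard part is exactly half (ii): as the authors note, no $\FK_G$ has yet been proven infinite-dimensional, and the difficulty is intrinsic. Without a presentation of the (minimal, possibly high-degree) relations of $\FK_G$, one cannot rule out that a proposed high-degree family $w_d$ silently collapses to zero via a relation of large degree---precisely the phenomenon illustrated by the sextic relation of $K_{1,4}$ in Example~\ref{ex-star4}. Thus the crux is to find an infinite family whose nonvanishing can be certified independently of knowing all the relations, for which the pairing against the complementary algebra (computable degree by degree through the $\Delta$ and $\nabla$ operators) is the most promising certificate; making such a family \emph{provably} nonterminating, rather than merely verified up to the computational horizon of Algorithm~\ref{alg-mcr}, is where a genuinely new idea would be required.
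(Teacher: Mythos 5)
There is no proof in the paper to compare against: the statement you are addressing is Conjecture~\ref{conj-infinite}, and the authors explicitly state that they are ``not yet able to prove that any $\FK_G$ is infinite-dimensional''---the forbidden-subgraph characterization is supported only by computations. Your structural framing is sound and consistent with the paper's tools: Theorem~\ref{thm-subgraph} does give $\dim \FK_H \leq \dim \FK_G$ for $H \subseteq G$, so infinite-dimensionality is upward closed and the conjecture is equivalent to identifying the minimal infinite-dimensional graphs; Proposition~\ref{prop-monic} does give a finite certificate of finite-dimensionality (exhibit $P \in \FK_G$ with $L(P) = G$, verifiable by a bounded-degree computation in $\FK_6$); and a nonzero pairing $\langle w, u \rangle \neq 0$ does certify $w \neq 0$ independently of Conjecture~\ref{conj-nichols}. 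None of this is in dispute.

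The genuine gap is the one you yourself flag in your half (ii), and it is fatal to the proposal as a proof: you never construct, for even one graph in Figure~\ref{fig-infinite}, an infinite family of elements together with provably nonvanishing pairings. The suggestion to find a ``self-similar or periodic family'' paired against the complementary algebra is a research direction, not an argument---Algorithm~\ref{alg-mcr} and the $\Delta$, $\nabla$ calculus only certify nonvanishing degree by degree, and without control over the (unknown, possibly high-degree) minimal relations of $\FK_G$ nothing rules out collapse beyond any finite computational horizon, exactly as your own $K_{1,4}$ example illustrates. Your half (i) has the same character: it presumes that a long word $w_0^G$ exists for every non-forbidden six-vertex graph, which is precisely the finite-dimensional half of the conjecture and is likewise only computationally suggested in the paper. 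So what you have written is a correct reduction of the conjecture to two open verification problems, accompanied by an honest admission that the second one requires ``a genuinely new idea''; that is a reasonable research plan, and it mirrors the paper's own state of knowledge, but it is not a proof of the statement.
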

\begin{figure}
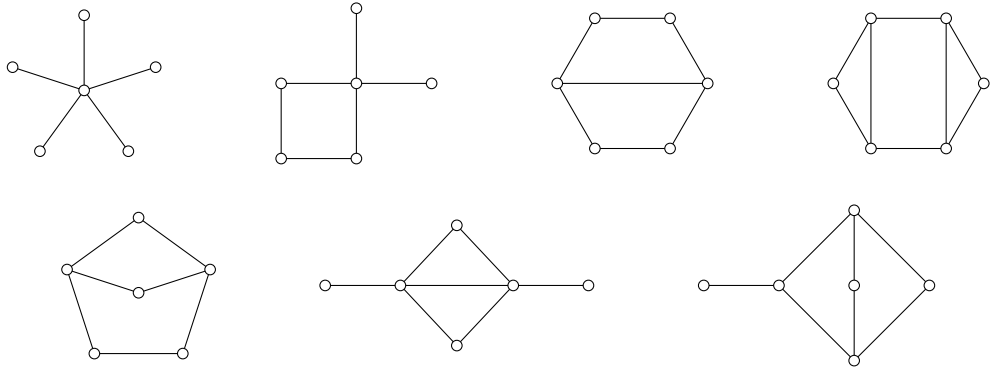

	\begin{center}
		\dr{(0,0)node[v]{}--(90:1)node[v]{} (0,0)--(18:1)node[v]{} (0,0)--(162:1)node[v]{} (0,0)--(234:1)node[v]{} (0,0)--(306:1)node[v]{}} \quad\quad\quad
		\dr{(1,0)node[v]{}--(0,0)node[v]{}--(-1,0)node[v]{}--(-1,-1)node[v]{}--(0,-1)node[v]{}--(0,0)--(0,1)node[v]{}}
		\quad\quad\quad
		\dr{(1,0)node[v]{}--(60:1)node[v]{}--(120:1)node[v]{}--(180:1)node[v]{}--(240:1)node[v]{}--(300:1)node[v]{}--(1,0)--(-1,0)}
		\quad\quad\quad
		\dr{(1,0)node[v]{}--(60:1)node[v]{}--(120:1)node[v]{}--(180:1)node[v]{}--(240:1)node[v]{}--(300:1)node[v]{}--(1,0) (60:1)--(-60:1) (120:1)--(-120:1)}\\[.5cm]
		\dr{(18:1)node[v]{}--(90:1)node[v]{}--(162:1)node[v]{}--(234:1)node[v]{}--(306:1)node[v]{}--(18:1)--(0,0)node[v]{}--(162:1)}\quad\quad\quad
		\dr{(0,0)node[v]{}--(1,0)node[v]{}--(2.5,0)node[v]{}--(1.75,.8)node[v]{}--(1,0)--(1.75,-.8)node[v]{}--(2.5,0)--(3.5,0)node[v]{}}\quad\quad\quad
		\dr{(-1,0)node[v]{}--(0,0)node[v]{}--(1,1)node[v]{}--(2,0)node[v]{}--(1,-1)node[v]{}--(0,0) (1,1)--(1,0)node[v]{}--(1,-1)}
	\end{center}
	\caption{\label{fig-infinite}Minimal graphs $G$ on six vertices for which $\FK_G$ appears to be infinite-dimensional. See Conjecture~\ref{conj-infinite}.}
\end{figure}

In particular, this would imply that $\FK_6$ is infinite-dimensional.

When $\FK_G$ is finite-dimensional, it appears that its Hilbert series is especially nice.
\begin{conj}
If $\FK_G$ is finite-dimensional, then $\Hilb_G(t)$ is a product of cyclotomic polynomials.
\end{conj}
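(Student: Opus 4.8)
The plan is to derive the conjecture from Kronecker's theorem: a monic polynomial with integer coefficients whose roots all lie in the closed unit disk is (up to a power of $t$) a product of cyclotomic polynomials. Most of the hypotheses are already in hand. The series $\Hilb_G(t)$ has nonnegative integer coefficients by definition; its constant term is $1$ since $\FK_G^0=\QQ$; and by Theorem~\ref{thm-finitedim}(a) its top-degree coefficient is also $1$, so $\Hilb_G$ is monic with nonzero constant term. By Theorem~\ref{thm-finitedim}(d) it is palindromic, so its roots occur in reciprocal pairs $\alpha,\alpha^{-1}$; consequently, if every root satisfied $|\alpha|\le 1$, reciprocity would force $|\alpha|=1$ for all of them. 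Since the irreducible factors of a monic integer polynomial are minimal polynomials of its roots, the bound $|\alpha|\le 1$ would apply to all Galois conjugates of each root, and Kronecker's theorem would then identify every root as a root of unity, making $\Hilb_G$ a product of cyclotomic polynomials. Thus the entire conjecture reduces to the single analytic statement that \emph{all roots of $\Hilb_G(t)$ lie in the unit disk}.

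To establish this, the natural strategy is to exhibit $\Hilb_G(t)$ as built from factors $[k]=(t^k-1)/(t-1)$, each of which is manifestly a product of cyclotomic polynomials. I would exploit the freeness in Theorem~\ref{thm-subgraph}: choosing a chain $\varnothing=G_0\subset G_1\subset\cdots\subset G_r=G$ obtained by adding one edge at a time, the corollary to Theorem~\ref{thm-subgraph} gives
\[
\Hilb_G(t)=\prod_{i=1}^{r}\frac{\Hilb_{G_i}(t)}{\Hilb_{G_{i-1}}(t)},
\]
where each factor is the Hilbert series of the module $\FK_{G_i}/\FK_{G_i}\FK_{G_{i-1}}^+$ of minimal coset representatives and has nonnegative integer coefficients. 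If each coset-representative factor were itself a product of cyclotomic polynomials, the conjecture would follow by induction on the number of edges. This is exactly the mechanism behind every case resolved in the paper: for $A_n$, $D_n$, $E_n$, and the cycle $\cyc$, the coset-representative factors are the explicit $q$-integer products appearing in Theorems~\ref{thm-a},~\ref{thm-d}, and~\ref{thm-cyc}, all of which are products of cyclotomics.

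The main obstacle is precisely that there is no known reason for the individual coset-representative factors to be cyclotomic for a general finite-dimensional $\FK_G$: the minimal relations can have arbitrarily high degree (Section~\ref{sec-examples}), and we have no analogue of a PBW theorem furnishing a uniform product formula. A more representation-theoretic alternative would invoke the Frobenius structure (the remark following Theorem~\ref{thm-finitedim}) together with the conjectural Nichols-algebra structure of $\FK_n$ (Conjecture~\ref{conj-nichols}): one would hope to transport a root-vector filtration from $\FK_n$ to the coideal subalgebra $\FK_G$ and read off a product formula directly. The difficulty is that $\FK_n$ is a Nichols algebra of non-diagonal (symmetric-group) type, for which Kharchenko's PBW theorem with its cyclotomic Hilbert series does not apply; even granting Conjecture~\ref{conj-nichols}, this route would require genuinely new structural input. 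I therefore expect the uniform bound $|\alpha|\le 1$ on the roots, equivalently the existence of a general cyclotomic product formula, to be the essential difficulty, and I would first stress-test the Kronecker reduction against the Appendix data before committing to either the inductive or the Nichols-algebra route.
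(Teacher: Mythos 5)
You should first be aware that this statement is an open \emph{conjecture} in the paper, not a theorem: the authors offer only computational evidence (the Appendix and the solved cases $A_n$, $D_n$, $E_n$, and $\cyc$), so there is no proof of record to compare against, and the only question is whether your argument settles it. It does not, as you yourself concede. Your Kronecker reduction is internally sound: by Theorem~\ref{thm-finitedim}(a) and (d), $\Hilb_G(t)$ is a monic integer polynomial with constant term $1$, nonnegative coefficients, and palindromic symmetry, so \emph{if} every root lay in the closed unit disk, the reciprocal pairing $\alpha \leftrightarrow \alpha^{-1}$ would force all roots onto the unit circle, Kronecker's theorem would make them roots of unity, and $\Hilb_G(t)$ would be a product of cyclotomic polynomials. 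The genuine gap is that the disk bound is not supplied by anything in the paper and is not implied by the structural facts you invoke: palindromicity, monicity, and positive integer coefficients are strictly weaker. For instance, $1+3t+t^2$ enjoys all three properties, yet its roots $(-3\pm\sqrt{5})/2$ are not roots of unity, so it is not a product of cyclotomics. Nothing in Theorem~\ref{thm-finitedim}, the freeness of Theorem~\ref{thm-subgraph}, or the Frobenius structure constrains where the roots of $\Hilb_G(t)$ lie; your ``single analytic statement'' therefore carries the entire weight of the conjecture, and the reduction, while correct, makes no progress on it.

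Your fallback routes stall at exactly the point you identify, so the diagnosis is honest but the gap remains. In the edge-by-edge induction, each factor $\Hilb_{G_i}(t)/\Hilb_{G_{i-1}}(t)$ is indeed a polynomial with nonnegative integer coefficients (it is the Hilbert series of $\FK_{G_i}/\FK_{G_i}\FK_{G_{i-1}}^+$, and each $\FK_{G_i}$ is finite-dimensional as a subalgebra of $\FK_G$), but these factors need not even be palindromic, and no result in the paper --- nor any known PBW-type statement for these algebras --- forces them to be products of the $[k]$'s; the case-by-case product formulas of Theorems~\ref{thm-a}, \ref{thm-d}, and~\ref{thm-cyc} are obtained from explicit monomial bases, not from a uniform mechanism that could be run for general $G$. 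Likewise, Conjecture~\ref{conj-nichols} would identify $\FK_n$ as a Nichols algebra but, as you note, not one of diagonal type, so Kharchenko-style root-vector factorizations are unavailable. In short: your write-up is a reasonable research plan that correctly locates the crux, but it is not a proof, and the conjecture remains open.
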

This holds for all Hilbert series we have been able to compute, and it also appears plausible for those Hilbert series that we cannot compute in full. Note that the corresponding statement is also true for all finite Coxeter groups.

In Section~\ref{sec-coxeter}, we discussed many similarities between Coxeter groups and Fomin-Kirillov algebras.
\begin{question}
What other results about Coxeter groups have analogues for Fomin-Kirillov algebras? Are there corresponding notions to reflections/root systems/etc.?
\end{question}

We have studied the cases when $G$ is a Dynkin diagram of finite type because $\FK_G$ appears to be relatively simple in these cases. However, despite results such as Theorem~\ref{thm-weyl}, we know of no direct explanation for the apparent connection between $\FK_G$ and the corresponding Weyl group in these cases.
\begin{question}
Is there a uniform explanation for the structure of  $\FK_G$ when $G$ is a (simply-laced) Dynkin diagram of finite type? What if $G$ is an affine Dynkin diagram?
\end{question}

Our next question is related to the discussion in Section~\ref{sec-complementary}. By Corollary~\ref{cor-complementary}, there is a large class of graphs for which $\FK_{G_1} \otimes \FK_{G_2} \cong \FK_n$, but by Proposition~\ref{prop-counter}, this does not hold for all pairs of complementary graphs.
\begin{question}
For which complementary graphs $G_1$ and $G_2$ is it true that $\FK_{G_1} \otimes \FK_{G_2} \cong \FK_n$? In general, is the multiplication map $\FK_{G_1} \otimes \FK_{G_2} \to \FK_n$ always injective? What is the structure of $\FK_n$ as an $\FK_{G_1}$-$\FK_{G_2}$-bimodule?
\end{question}

Our next set of questions concerns relations in $\FK_G$.
\begin{question}
Is there a straightforward description of the relations of $\FK_G$?
\end{question}
In all of the examples we have discussed above, the minimal relations have a very special form: their $\S_n$-degrees are always automorphisms of the support of the relation (the graph containing the edges that appear in the relation).
\begin{question}
Must the $\S_n$-degree of a minimal relation be an automorphism of the support of the relation?
\end{question}
Also for many of the examples considered above, the minimal coset representatives of $\FK_H$ inside $\FK_G$ are relatively simple: the choice of minimal coset representatives is unique up to scalar factors (if we stipulate that they be representable by monomials). This allows us to describe a poset structure on the minimal coset representatives using the analogue of weak order. Then Theorem~\ref{thm-subgraph} suggests the following question.
\begin{question}
For which graphs $H \subset G$ does $\FK_G/\FK_H^+\FK_G$ have a unique monomial basis (up to scalar factors)?
\end{question}

Finally, recall Conjecture~\ref{conj-nichols}.
\begin{conj-nichols} \cite{MS}
The bilinear form $\langle\cdot,\cdot\rangle$ is nondegenerate on $\FK_n$.
\end{conj-nichols}
The quotient of $\FK_n$ by the kernel of this bilinear form is a certain type of braided Hopf algebra called a \emph{Nichols algebra} (see \cite{AS}). Nichols algebras exist in much greater generality than the examples just mentioned. For instance, the analogues of Fomin-Kirillov algebras for types other than $A$ as defined in \cite{KirillovMaeno} can be described in terms of Nichols algebras \cite{Bazlov}.  As such, it would be interesting to investigate the following question.
\begin{question}
To what extent can one extend the results of this paper to Fomin-Kirillov algebras of other types or more general Nichols algebras?
\end{question}

\section{Acknowledgments}

The authors would like to thank John Stembridge for his valuable input throughout the duration of this research, as well as Thomas Lam and Sergey Fomin for interesting discussions.

\section{Appendix}

Here we give the Hilbert series $\Hilb_G(t)$ for any connected graph on at most five vertices along with its top degree and total dimension. Note that $[k] = 1+t+t^2+\dots+t^{k-1}$.
\bigskip

\tikzset{every picture/.append style={scale=0.6}}
\begin{longtable}{c>{\qquad$}c<{$\qquad}cc}
$G$&\mbox{Hilb}&deg&dim\\
\hline\hline

\dr{(0,0) node[v]{} -- (1,0)node[v]{}}
&[2]&1&2\\

\hline

\dr{(0,0) node[v]{} -- (1,0)node[v]{} -- (2,0)node[v]{}}
&[2][3]&3&6\\

\dr{(0,0) node[v]{} -- (1,0)node[v]{} -- +(120:1)node[v]{} -- cycle}
&[2]^2[3]&4&12\\

\hline

\dr{(0,0) node[v]{} -- (1,0) node[v]{} -- (2,0) node[v]{} -- (3,0) node[v]{}}
&[2][3][4]&6&24\\

\dr{(0,0) node[v]{} -- +(150:1) node[v]{}  +(-150:1) node[v]{}--(0,0)--(1,0) node[v]{}}
&[3][4]^2&8&48\\

\dr{(0,0) node[v]{} -- +(150:1) node[v]{} -- +(-150:1) node[v]{}--(0,0)--(1,0) node[v]{}}
&[2][3][4]^2&9&96\\

\dr{(0,0)node[v]{}--(1,0)node[v]{}--(1,1)node[v]{}--(0,1)node[v]{}--cycle}
&[3]^2[4]^2&10&144\\

\dr{(0,0)node[v]{}--(1,0)node[v]{}--(1,1)node[v]{}--(0,1)node[v]{}--cycle--(1,1)}
&[2][3]^2[4]^2&11&288\\

\dr{(0,0)node[v]{}--(1,0)node[v]{}--(1,1)node[v]{}--(0,1)node[v]{}--cycle--(1,1) (1,0)--(0,1)}
&[2]^2[3]^2[4]^2&12&576\\

\hline

\dr{(0,0)node[v]{}--(1,0)node[v]{}--(2,0)node[v]{}--(3,0)node[v]{}--(4,0)node[v]{}}
&[2][3][4][5]&10&120\\

\dr{(0,0) node[v]{} -- +(150:1) node[v]{}  +(-150:1) node[v]{}--(0,0)--(1,0) node[v]{}--(2,0)node[v]{}}
&[4]^2[5][6]&15&480\\

\dr{(0,0)node[v]{} -- ++(45:1)node[v]{} -- +(45:1)node[v]{} +(135:1)node[v]{} -- +(0,0) -- +(-45:1)node[v]{}}
&[2]^{-2}[3]^{-2}[4]^2[5]^2[6]^4&28&14400\\
 
\dr{(0,0)node[v]{} -- ++(150:1)node[v]{} -- ++(-90:1)node[v]{} -- (0,0) -- (1,0)node[v]{} -- (2,0)node[v]{}}
&[2][4]^2[5][6]&16&960\\

\dr{(0,0)node[v]{}--(-1,0)node[v]{}--+(-150:1)node[v]{}--(-1,-1)node[v]{}--(-1,0)(-1,-1)--(0,-1)node[v]{}}&[4]^2[5][6]^2&20&2880\\

\dr{(0,0)node[v]{}--++(36:1)node[v]{}--++(-36:1)node[v]{}--++(-108:1)node[v]{}--++(-1,0)node[v]{}--cycle}&[4]^2[5][6]^2&20&2880\\

\dr{(0,0)node[v]{}--(0,1)node[v]{}--(-1,1)node[v]{}--(-1,0)node[v]{}--(0,0)--(1,0)node[v]{}}&[2]^{-1}[4]^2[5][6]^3&24&8640\\

\dr{(0,0)node[v]{} -- ++(-150:1)node[v]{} -- ++(-150:1)node[v]{} -- ++(90:1)node[v]{} -- ++(-30:1) -- ++(-30:1)node[v]{}}
&[2]^{-1}[3]^{-2}[4]^2[5]^2[6]^4&29&28800\\

\dr{(2,0)node[v]{}--(1,0)node[v]{}--(0,0)node[v]{}--(0,1)node[v]{}--(1,1)node[v]{}--(1,0)node[v]{} (0,0)--(1,1)}&[4]^2[5][6]^3&25&17280\\ 

\dr{(0,0)node[v]{}--++(150:1)node[v]{}--++(0,-1)node[v]{}--++(30:1)--++(30:1)node[v]{}--++(0,-1)node[v]{}--cycle}&[3]^{-2}[4]^2[5]^2[6]^4&30&57600\\

\dr{(0,0)node[v]{}--(-1,0)node[v]{}--(-1,1)node[v]{}--(0,1)node[v]{}--(30:1)node[v]{}--(0,0)--(0,1)node[v]{}}
&[2]^{-1}[3]^{-1}[4]^3[5][6]^4&30&69120\\

\dr{(2,0)node[v]{}--(1,0)node[v]{}--(0,0)node[v]{}--(0,1)node[v]{}--(1,1)node[v]{}--(1,0)node[v]{} (1,0)--(0,1)}&[2]^{-1}[3]^{-1}[4]^2[5]^2[6]^4&31&86400\\

\dr{(0,0)node[v]{}--(1,0)node[v]{}--(1,1)node[v]{}--(0,1)node[v]{}--cycle--(1,1) (.5,.5)node[v]{}}
&[2]^{-3}[3]^{-1}[4]^4[5]^2[6]^4&35&345600\\

\dr{(1,0)node[v]{}--(0,0)node[v]{}--(-1,0)node[v]{}--(-1,1)node[v]{}--(0,1)node[v]{}--(0,0)--(-1,1) (-1,0)--(0,1)}
&[3]^{-1}[4]^2[5]^2[6]^4&32&172800\\

\dr{(120:1)node[v]{}--(-1,0)node[v]{}--(0,0)node[v]{}--(120:1)--(60:1)node[v]{}--(1,0)node[v]{}--(0,0)--(60:1)}
&[2]^{-1}[3]^{-1}[4]^3[5]^2[6]^4&34&345600\\

\dr{(0,0)node[v]{}--(1,0)node[v]{}--(1,1)node[v]{}--(0,1)node[v]{}--cycle--(1,1) (.5,.5)node[v]{}--(1,0)}
&[2]^{-2}[3]^{-1}[4]^4[5]^2[6]^4&36&691200\\

\dr{(0,0)node[v]{}--(-1,0)node[v]{}--(0,1)node[v]{}--(0,0)--(-1,1)node[v]{}--(0,1)--(30:1)node[v]{}--(0,0)}
&[2]^{-2}[3]^{-1}[4]^4[5]^2[6]^4&36&691200\\

\dr{(-1,0)node[v]{}--(-1,1)node[v]{}--(0,1)node[v]{}--(30:1)node[v]{}--(0,0)node[v]{}--(-1,0)--(0,1)--(0,0)--(-1,1)}
&[2]^{-1}[3]^{-1}[4]^4[5]^2[6]^4&37&1382400\\

\dr{(0,0)node[v]{}--(1,0)node[v]{}--(1,1)node[v]{}--(0,1)node[v]{}--(0,0)--(1,1) (1,0)--(0,1) (.5,.5)node[v]{}}
&[2]^{-2}[4]^4[5]^2[6]^4&38&2073600\\

\dr{(306:1)node[v]{}--(18:1)node[v]{}--(90:1)node[v]{}--(162:1)node[v]{}--(234:1)node[v]{}
--(18:1)--(162:1)--(306:1)--(90:1)--(234:1)}
&[2]^{-1}[4]^4[5]^2[6]^4&39&4147200\\

\dr{(306:1)node[v]{}--(18:1)node[v]{}--(90:1)node[v]{}--(162:1)node[v]{}--(234:1)node[v]{}
--(18:1)--(162:1)--(306:1)--(90:1)--(234:1)--(306:1)}
&[4]^4[5]^2[6]^4&40&8294400

\end{longtable}

\bibliography{fk}

\begin{thebibliography}{10}

\bibitem{AS}
Nicol{\'a}s Andruskiewitsch and Hans-J{\"u}rgen Schneider.
\newblock Pointed {H}opf algebras.
\newblock In {\em New directions in {H}opf algebras}, volume~43 of {\em Math.
  Sci. Res. Inst. Publ.}, pages 1--68. Cambridge Univ. Press, Cambridge, 2002.

\bibitem{bergman}
J{\"o}rgen Backelin.
\newblock The {G}r{\"o}bner basis calulator \texttt{bergman}.
\newblock Software available at \texttt{http://servus.math.su.se/bergman}.

\bibitem{Bazlov}
Yuri Bazlov.
\newblock Nichols-{W}oronowicz algebra model for {S}chubert calculus on
  {C}oxeter groups.
\newblock {\em J. Algebra}, 297(2):372--399, 2006.

\bibitem{BjornerBrenti}
A.~Bj{\"o}rner and F.~Brenti.
\newblock {\em Combinatorics of Coxeter Groups}.
\newblock Graduate Texts in Mathematics. Springer, 2005.

\bibitem{BlasiakFactor}
Jonah Blasiak.
\newblock A factorization theorem for affine {K}azhdan-{L}usztig basis
  elements.
\newblock {\em Preprint}, 2009.
\newblock {\tt arXiv:0908.0340v1}.

\bibitem{BlasiakCyclage}
Jonah Blasiak.
\newblock Cyclage, catabolism, and the affine {H}ecke algebra.
\newblock {\em Adv. Math.}, 228(4):2292--2351, 2011.

\bibitem{FK}
Sergey Fomin and Anatol~N. Kirillov.
\newblock Quadratic algebras, {D}unkl elements, and {S}chubert calculus.
\newblock In {\em Advances in geometry}, volume 172 of {\em Progr. Math.},
  pages 147--182. Birkh\"auser Boston, Boston, MA, 1999.

\bibitem{FP}
Sergey Fomin and Claudio Procesi.
\newblock Fibered quadratic {H}opf algebras related to {S}chubert calculus.
\newblock {\em J. Algebra}, 230(1):174--183, 2000.

\bibitem{Grana}
Mat{\'i}as Gra\~na.
\newblock {N}ichols algebras of non-abelian group type: zoo of examples.
\newblock Web page available at \texttt{http://mate.dm.uba.ar/\textasciitilde
  matiasg/zoo.html}.

\bibitem{Haiman}
Mark Haiman.
\newblock Cherednik algebras, {M}acdonald polynomials and combinatorics.
\newblock In {\em International {C}ongress of {M}athematicians. {V}ol. {III}},
  pages 843--872. Eur. Math. Soc., Z\"urich, 2006.

\bibitem{hump}
James~E. Humphreys.
\newblock {\em Reflection groups and {C}oxeter groups}, volume~29 of {\em
  Cambridge Studies in Advanced Mathematics}.
\newblock Cambridge University Press, Cambridge, 1990.

\bibitem{Kirillov2}
Anatol~N. Kirillov.
\newblock Personal communication.

\bibitem{Kirillov}
Anatol~N. Kirillov.
\newblock On some algebraic and combinatorial properties of {D}unkl elements.
\newblock {\em Internat. J. Modern Phys. B}, 26(27-28):1243012, 28, 2012.

\bibitem{KirillovMaeno}
Anatol~N. Kirillov and Toshiaki Maeno.
\newblock Noncommutative algebras related with {S}chubert calculus on {C}oxeter
  groups.
\newblock {\em European J. Combin.}, 25(8):1301--1325, 2004.

\bibitem{Lenart}
Cristian Lenart.
\newblock The {$K$}-theory of the flag variety and the {F}omin-{K}irillov
  quadratic algebra.
\newblock {\em J. Algebra}, 285(1):120--135, 2005.

\bibitem{LenartMaeno}
Cristian Lenart and Toshiaki Maeno.
\newblock Alcove path and {N}ichols-{W}oronowicz model of the equivariant
  {$K$}-theory of generalized flag varieties.
\newblock {\em Int. Math. Res. Not.}, pages Art. ID 78356, 14, 2006.

\bibitem{LJantzen}
George Lusztig.
\newblock Hecke algebras and {J}antzen's generic decomposition patterns.
\newblock {\em Adv. in Math.}, 37(2):121--164, 1980.

\bibitem{Majid}
Shahn Majid.
\newblock Noncommutative differentials and {Y}ang-{M}ills on permutation groups
  {$S_n$}.
\newblock In {\em Hopf algebras in noncommutative geometry and physics}, volume
  239 of {\em Lecture Notes in Pure and Appl. Math.}, pages 189--213. Dekker,
  New York, 2005.

\bibitem{Masuoka}
Akira Masuoka.
\newblock Freeness of {H}opf algebras over coideal subalgebras.
\newblock {\em Comm. Algebra}, 20(5):1353--1373, 1992.

\bibitem{MPP}
Karola M{\'e}sz{\'a}ros, Greta Panova, and Alexander Postnikov.
\newblock {S}chur times {S}chubert via the {F}omin-{K}irillov algebra.
\newblock {\em Preprint}, 2012.
\newblock {\tt arXiv:1210.1295}.

\bibitem{MS}
Alexander Milinski and Hans-J{\"u}rgen Schneider.
\newblock Pointed indecomposable {H}opf algebras over {C}oxeter groups.
\newblock In {\em New trends in {H}opf algebra theory ({L}a {F}alda, 1999)},
  volume 267 of {\em Contemp. Math.}, pages 215--236. Amer. Math. Soc.,
  Providence, RI, 2000.

\bibitem{Nichols}
Warren~D. Nichols.
\newblock Bialgebras of type one.
\newblock {\em Comm. Algebra}, 6(15):1521--1552, 1978.

\bibitem{P}
Alexander Postnikov.
\newblock On a quantum version of {P}ieri's formula.
\newblock In {\em Advances in geometry}, volume 172 of {\em Progr. Math.},
  pages 371--383. Birkh\"auser Boston, Boston, MA, 1999.

\bibitem{Ram}
Arun Ram.
\newblock Alcove walks, {H}ecke algebras, spherical functions, crystals and
  column strict tableaux.
\newblock {\em Pure Appl. Math. Q.}, 2(4, part 2):963--1013, 2006.

\bibitem{SW}
John~R. Stembridge and Debra~J. Waugh.
\newblock A {W}eyl group generating function that ought to be better known.
\newblock {\em Indag. Math. (N.S.)}, 9(3):451--457, 1998.

\bibitem{Vendramin}
L.~Vendramin.
\newblock Nichols algebras associated to the transpositions of the symmetric
  group are twist-equivalent.
\newblock {\em Proc. Amer. Math. Soc.}, 140(11):3715--3723, 2012.

\bibitem{Winkel}
Rudolf Winkel.
\newblock A combinatorial bijection between standard {Y}oung tableaux and
  reduced words of {G}rassmannian permutations.
\newblock {\em S\'em. Lothar. Combin.}, 36:Art. S36h, approx. 24 pp.
  (electronic), 1996.

\bibitem{Xi}
Nan~Hua Xi.
\newblock The based ring of the lowest two-sided cell of an affine {W}eyl
  group.
\newblock {\em J. Algebra}, 134(2):356--368, 1990.

\end{thebibliography}
\bibliographystyle{plain}

\end{document}